\documentclass{elsarticle}

\usepackage[utf8]{inputenc}
\usepackage[english]{babel}
\usepackage{hhline}
\usepackage{lscape}
\usepackage[table]{xcolor}
\usepackage{multirow}
\usepackage{amsmath,amssymb,amsfonts,amsthm}
\usepackage[hidelinks,colorlinks = true]{hyperref}
\usepackage{enumitem}

\usepackage{tcolorbox}
\newtcolorbox{mybox}{colback=red!5!white,colframe=red!75!black, sharp corners = all}
\usepackage{bclogo} 
\def\ucsign{\bcpanchant}

\DeclareMathOperator{\rk}{rk}
\DeclareMathOperator{\id}{id}
\DeclareMathOperator{\Ker}{ker}

\DeclareMathOperator{\Char}{char}
\DeclareMathOperator{\Span}{span}
\DeclareMathOperator{\Res}{Res}
\DeclareMathOperator{\D}{D}
\DeclareMathOperator{\B}{{\mathfrak B}}

\DeclareMathOperator{\rad}{rad}
\DeclareMathOperator{\diag}{diag}
\DeclareMathOperator{\colsp}{\mathbf C}
\DeclareMathOperator{\rowsp}{\mathbf R}

\DeclareMathOperator{\tr}{tr}

\def\F{{\mathbb F}}
\def\R{{\mathfrak R}}
\def\M{{\mathcal M}}
\def\L{{\mathbf L}}
\def\FL{{\mathcal L}}
\def\mbf{\mathbf}

\newenvironment{psmallmatrix}
  {\left(\begin{smallmatrix}}
  {\end{smallmatrix}\right)}

\newtheorem{theorem}{Theorem}[section]
\newtheorem*{theorem*}{Theorem}
\newtheorem{lemma}[theorem]{Lemma}

\newtheorem{corollary}[theorem]{Corollary}
\numberwithin{equation}{section}

\theoremstyle{definition}
\newtheorem{definition}[theorem]{Definition}
\newtheorem{example}[theorem]{Example}
\newtheorem{remark}[theorem]{Remark}

\begin{document}

\title{Linear maps preserving the matrix discriminant. An enhanced approach}
\author{Andrey Yurkov}
\date{}

\ead{andrey.yurkov@biu.ac.il}

\affiliation{organization={Department of Mathematics, Bar-Ilan University},addressline={Ramat Gan},postcode={5290002}, country={Israel}}

\begin{keyword}
\MSC[2020] 15A15 \sep 15A86 \sep 47B49\\
matrix discriminant \sep linear preservers    
\end{keyword}

\begin{abstract}
The discriminant of a square matrix is defined as a discriminant of its characteristic polynomial. In this paper we provide a description of linear maps preserving the matrix discriminant assuming that the ground field is large enough. Our description generalizes the known results to the case where the field may have a finite cardinality and its characteristic may divide the size of considered matrices. 

To obtain this description, we extend the approach used in our previous work on the subject and, in addition to that, introduce new invariants for the action of all bijective linear maps preserving a given polynomial, which are on independent interest and could be applied to other problems in the theory of linear preservers.
\end{abstract}

\maketitle

\section{Introduction}
\label{sec:Introduction}

The problem of finding the description of linear maps preserving different matrix invariants attracted the attention of many mathematicians since the late 19th century, when Frobenius in~1897 provided the characterisation of linear maps preserving the determinant of a square matrix (see~\cite{GF}). This field of mathematics advanced significantly in the 20th century and remains an active area of research, with applications in quantum information theory, operator theory and functional analysis. We refer the reader to~\cite{hogben_handbook_2014} for a brief introduction and to~\cite{LAMA199233} for a overview of the studies of the subject up to the end of the 20th century.

The matrix discriminant, i.e., the discriminant of the characteristic polynomial is an example of a polynomial matrix invariant. In particular, it indicates whether the eigenvalues are all distinct. This property of the matrix discriminant finds applications in atomic and molecular physics (see, e.g.,~\cite{Bhattacharya2007}). The studies of the matrix discriminant originated by Kummer, who used it in~\cite{Kummer1843} to show that all eigenvalues of a real symmetric $3\times 3$ matrix are real. 

The first result in characterising linear maps preserving the matrix discriminant appeared in 1978 in the work of Pierce~\cite{Pierce1979}. It is formulated as follows:

\begin{theorem}[Cf.~{\cite[Main Theorem]{Pierce1979}}]\label{thm:PierceThm}
Assume that $n \ge 6$ and $\F$ is an infinite field. If $T$ is a linear map on the space of all $n \times n$ matrices preserving the matrix discriminant, then $T$ has one of two forms:
$$
T(X) = cS^{-1}XS+\alpha(X)I_n\qquad\mbox{or}\qquad T(X) = cS^{-1}X^tS+\alpha(X)I_n
$$
where $S$ is an invertible matrix, $c^n = 1$, and $\alpha$ is a linear functional on the space of all $n\times n$ matrices, $X^t$ stands for the transpose of $X$, $I_n$ is the identity matrix.
\end{theorem}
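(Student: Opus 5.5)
The plan is to reduce the discriminant problem to a known preserver problem for a simpler invariant, after first removing the scalar part. Write $\D(X)$ for the discriminant of the characteristic polynomial. Since $\D(X+\lambda I_n)=\D(X)$ for every scalar $\lambda$, $\D$ is constant along the line $\F I_n$. The first step is to show that $T$ maps $\F I_n$ into $\F I_n$. For this I would use the polynomial identity $\D(T(X+tI_n))=\D(X)$, valid for all $t\in\F$; because $\F$ is infinite, it is an identity in $t$. Expanding $\D(T(X)+tT(I_n))$ in $t$, the coefficient of the top power of $t$ is a nonzero multiple of $\D(T(I_n))$ times a power of something, and this must vanish for every $X$. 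A more careful argument looks at the invariance group of $\D$ and the line of directions $A$ satisfying $\D(Y+tA)=\D(Y)$ for all $Y,t$. I would show this set of directions is exactly $\F I_n$: differentiate in $t$ at $t=0$ to get $\partial_A \D\equiv 0$, then test on diagonal matrices $Y=\diag(y_i)$ with $A$ general, where $\D=\prod_{i<j}(y_i-y_j)^2$ makes the derivative computation explicit. Since $T$ conjugates this set of directions to itself, $T(I_n)=cI_n$ with $c\neq 0$.

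Next, pass to the quotient $\M_n/\F I_n$, or equivalently to the trace-zero matrices when $\Char\F\nmid n$, which is harmless since $\F$ is infinite of characteristic possibly $p$. The map $T$ induces a bijective linear map on this quotient. Bijectivity needs an argument: if $T(X)\in\F I_n$ with $X\notin\F I_n$, then $\D(X)=\D(T(X))=0$ for all such $X$, and the same holds for $X+Y$, contradicting the fact that $\D$ is not a polynomial in fewer variables. After this, $\D$ becomes a polynomial on the quotient, and the task is to identify its linear invariance group.

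The key step, and the main obstacle, is determining the stabilizer of $\D$ modulo scalars. My first attempt would go through the zero locus $\{\D=0\}$, the matrices with a repeated eigenvalue. $T$ preserves this hypersurface, and I would extract a rank-type invariant from it. Concretely, the singular locus and the higher-order vanishing loci of $\D$ are $T$-invariant: $\D$ vanishes to order $\binom{k}{2}$-ish at matrices with an eigenvalue of multiplicity $k$. The deepest such stratum, apart from scalars, is $\{\lambda I+N: N^2=0,\ \rk N=1\}$-type sets, and in general the set of matrices of the form $\lambda I_n+R$ with $\rk R\le 1$ (and $R$ of any kind) is characterized by maximal vanishing order. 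Hence $T$, up to scalar shifts, preserves the set of rank-one matrices modulo $\F I_n$. Handling the "$+\alpha(X)I_n$" ambiguity carefully here is the delicate part; the assumption $n\ge 6$ should be what separates the strata cleanly. From that, one gets a map preserving rank one that can be corrected by a functional so that it preserves rank one exactly, and the classical rank-one preserver theorem (Marcus–Moyls) gives $X\mapsto cSXS'$ or the transpose form.

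Finally, I would pin down $S'=S^{-1}$ and $c^n=1$. Testing on diagonal matrices, $\D(cSDS')=\D(D)$ and the requirement that the characteristic-polynomial structure be preserved force $S'S$ to be scalar: otherwise one can take $D$ with distinct eigenvalues and obtain $\D(cSDS')=0$ for a suitable choice, for instance by making $SDS'$ nilpotent-plus-scalar. After absorbing that scalar into $c$, homogeneity gives $\D(cX)=c^{n(n-1)}\D(X)$, so $c^{n(n-1)}=1$. Strengthening this to $c^n=1$ as stated needs one more comparison, presumably on a matrix where the scalar functional $\alpha$ interacts with $c$. I expect this last normalization to be routine, with the stratum/rank-one extraction being the real work.
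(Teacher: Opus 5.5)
The normalization you defer as routine cannot be carried out, because the conclusion $c^n=1$ is false. The paper does not prove the statement in this form. By property~\ref{enum:DiscPropertyH}, $\D(cX)=c^{n(n-1)}\D(X)$, so $T(X)=cX$ preserves $\D$ whenever $c^{n(n-1)}=1$. Concrete instances: $T(X)=-X$ for odd $n\ge 7$ over $\mathbb{Q}$, or $c=e^{2\pi i/(n(n-1))}$ over $\mathbb{C}$ for any $n\ge 6$, where $c^n=e^{2\pi i/(n-1)}\neq 1$. Such a $T$ has neither of the stated forms. Suppose $cX=c'S^{-1}XS+\alpha(X)I_n$ (or the same with $X^t$) for all $X$. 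At $X=E_{1\,1}$ the spectra $\{c,0,\dots,0\}$ and $\{c'+a,a,\dots,a\}$, with $a=\alpha(E_{1\,1})$, must agree. Since $n-1\ge 2$, this forces $a=0$ and $c'=c$, so $(c')^n\neq 1$. The footnote to Theorem~\ref{thm:MainThmInIntro} records exactly this. Theorem~\ref{thm:DPresHasForm} proves the corrected conclusion $c^{n(n-1)}=1$, which your homogeneity computation already gives, so that is where your argument should stop.

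Three earlier steps also need repair.

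\emph{First}, $T(I_n)=cI_n$ with $c\neq 0$ fails for non-bijective $T$: $T(X)=X-x_{1\,1}I_n$ preserves $\D$ and kills $I_n$. What holds is $\ker T\subseteq\rad(\D)=\langle I_n\rangle$ (Lemma~\ref{lem:KernelContainedInRad}, Corollary~\ref{cor:DRad}). Then either $T$ is bijective or $T(I_n)=0$, and the paper reduces the second case to the first by passing to $T+x_{1\,1}I_n$. Also, at a diagonal $Y$ the derivative $\partial_A\D(Y)$ depends only on the diagonal of $A$. Your test therefore only shows that $A$ has constant diagonal; you must also use that $\rad(\D)$ is conjugation-invariant, or argue as in Lemma~\ref{lem:LinearAdditionPreserverEqualToScalar}.

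\emph{Second}, your vanishing-order invariant is the paper's invariant in disguise, since $\deg_\lambda\D_{(X)}=n(n-1)-\operatorname{mult}_X(\D)$. The correct criterion is the threshold $\operatorname{mult}_X(\D)\ge (n-1)(n-2)$, i.e.\ $\deg_\lambda\D_{(X)}\le 2n-2$, not ``maximal'' order. The maximum $n(n-1)$ is attained only on $\langle I_n\rangle$, and for $\Char(\F)\neq 2$ the matrices $E_{1\,1}$ and $E_{1\,2}$ already have different orders. Showing that this threshold cuts out exactly $\R_n(\F)$ for $n\ge 6$ is the heart of the proof, and your proposal only asserts it. In the paper it is Lemmas~\ref{lem:RankOneScalImpliesLowDeg} and~\ref{lem:LowDegImpliesLowRankOrSeveralCasesForAlgebraicallyClosed}: a Jordan-form case analysis over $\overline{\F}$, followed by descent via Lemma~\ref{lem:RankOneScalarFromExtToGroundField}.

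\emph{Third}, $T(\R_n(\F))\subseteq\R_n(\F)$ does not immediately yield a rank-one preserver after subtracting a functional. The scalar part of $T(R)$ is a priori defined only pointwise on rank-one $R$, and its linearity must be proved. The paper avoids this issue. It shows that $T$ maps each $(n+1)$-dimensional space $\L(\mathbf v)$ onto some $\L(\mathbf u)$ or $\L^t(\mathbf u)$ (Lemma~\ref{lem:TPresDiscSendsLtoLorLt}, resting on Lemma~\ref{lem:RepAsRkOnePScalarNP1EqToL} and Meshulam's theorem). It then normalizes $T$ through the incidences of these spaces, with no appeal to Marcus--Moyls.
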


Accordingly, the cases where $\F$ is a finite field or $n \ge 6$ are uncovered by this theorem. In addition, as it is discussed in~\cite{Yurkov2025}, the paper of Pierce requires several improvements and corrections. In particular, the proof of Pierce does not carry out properly the case when $\Char(\F)$ divides $n$ (we refer the reader to~\cite[Proposition~A.8]{Yurkov2025} for our detailed comment on that issue). This case also is not considered in our previous work~\cite{Yurkov2025} on the subject. Accordingly, in the current paper we extend the existing description to the case when $\Char(\F)$ may divide $n$ and $n$ may be less than $6$. Thus, our main result states the following:

\begin{theorem}[Cf.~{Theorem~\ref{thm:DPresHasForm}}]\label{thm:MainThmInIntro} Assume that $n \ge 3$ and $|\F| > n(n-1)$. Let $T$ be a linear map on the space of all $n\times n$ square matrices preserving the matrix discriminant. Then there are an invertible matrix $M$, a linear functional $\alpha$ on the space of all $n\times n$ matrices and $c \in \F$ such that $c^{n(n-1)} = 1$ and
\begin{equation}\notag 
T(X) = cM^{-1}XM + \alpha(X)I_n\;\;\mbox{for every $n\times n$ matrix $X$}
\end{equation}
or
\begin{equation}\notag 
T(X) = cM^{-1}X^tM + \alpha(X)I_n\;\;\mbox{for every $n\times n$ matrix $X$}.\footnote{The reader may notice that in contrast to the requirement $c^{n} = 1$ in the formulation of Theorem~\ref{thm:PierceThm} we require only that $c^{n(n-1)} = 1$. Indeed, from the property~\ref{enum:DiscPropertyH} of the matrix discriminant presented below we obtain that the linear map $X \mapsto cX$ preserves the matrix discriminant for all $c \in \F$ with $c^{n(n-1)} = 1$.}
\end{equation}
\end{theorem}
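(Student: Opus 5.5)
We plan to reduce the statement to known descriptions of linear maps preserving the determinant or rank-type structures, passing through the geometry of the hypersurface $\{\D(X)=0\}$. Throughout, $\D$ denotes the matrix discriminant, a homogeneous polynomial of degree $n(n-1)$ in the entries of $X$. It is invariant under $X\mapsto X+tI_n$, under similarity and under transposition, and it satisfies $\D(cX)=c^{n(n-1)}\D(X)$.

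\textbf{Step 1: bijectivity modulo scalars.} First we show that $T$ is essentially bijective. Since $|\F|>n(n-1)=\deg \D$, a nonzero polynomial of this degree cannot vanish identically on $\F^{n^2}$, so polynomial identities may be checked pointwise. Suppose $T(A)\in \F I_n$ for some $A\notin \F I_n$. Then $\D(X+tA)=\D(T(X))$ for all $t\in\F$. Taking $X$ to be a suitable diagonal or companion matrix, $t\mapsto \D(X+tA)$ becomes a nonconstant polynomial of degree at most $n(n-1)<|\F|$, which is a contradiction. Hence $T$ induces an injective, hence bijective, linear map $\bar T$ on $M_n(\F)/\F I_n$. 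We then replace $T$ by $T-\alpha(\cdot)I_n$ for a suitable functional $\alpha$, chosen so that $T$ is bijective and $T(I_n)=I_n$ up to a scalar.

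\textbf{Step 2: structure of the zero locus.} Next we study the zero set of $\D$ via invariants of bijective linear preservers of a polynomial. The natural invariant is the singular locus, or more concretely the maximal linear subspaces contained in $\{\D=0\}$ modulo $\F I_n$. Examples are the spaces of matrices whose characteristic polynomial has a fixed repeated root structure, such as $\{X: X e_1=\lambda e_1,\ e_1^t X=\lambda e_1^t \text{ for a common }\lambda\}$ modulo scalars. It is cleaner to use the stratum of matrices $X$ with $X-\lambda I_n$ of rank at most $n-2$ for some $\lambda$; these are points where $\D$ vanishes to higher order, which we detect with directional derivatives. The aim is to show that $T$ maps the set $\mathcal N$ of square-zero rank-one matrices, modulo $\F I_n$, onto itself. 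We would characterise $\mathcal N+\F I_n$ as the set of $A$ for which $\D(X+tA)$ is divisible by a high power of $t$ for all $X$ in a Zariski-dense set, or for which the line $X+\F A$ stays in the zero locus through every zero of a certain type. We expect this step to be the main obstacle, especially when $\Char\F\mid n$. In that case $\F I_n$ meets the trace-zero hyperplane, the natural splitting $M_n=\mathfrak{sl}_n\oplus\F I_n$ fails, and the usual reduction to nilpotents breaks. To handle it, we would work in $M_n/\F I_n$ throughout, and compute the invariants using only the Hasse derivatives of $\D$, which remain valid in positive characteristic.

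\textbf{Step 3: identifying $T$.} Once $T$ preserves rank-one nilpotents modulo scalars, we lift to matrices and use the fundamental theorem of projective geometry, or the known description of maps preserving rank-one square-zero matrices. This yields $T(X)=cM^{-1}XM+\alpha(X)I_n$ or the transpose form, possibly after a further adjustment by $\alpha$. Finally, evaluating $\D(T(X))=c^{n(n-1)}\D(X)$ at a matrix with $\D(X)\ne0$, for instance a diagonal matrix with distinct entries (which exists since $|\F|>n$), gives $c^{n(n-1)}=1$.
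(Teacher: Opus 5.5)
\textbf{The gap is Step~2.} This step carries essentially all of the difficulty of the theorem, and the proposal gives no working mechanism for it. Your concrete suggestions fail as written:
\begin{itemize}
\item The set $\{X : Xe_1=\lambda e_1,\ e_1^tX=\lambda e_1^t\}$ consists of the matrices $(\lambda)\oplus X'$. It is not contained in $\{\D=0\}$: every diagonal matrix with pairwise distinct entries lies in it.
\item The condition that $\D(X+tA)$ be divisible by a high power of $t$ for Zariski-dense $X$ holds for no $A$. Its constant term is $\D(X)$, and $\D$ does not vanish on a Zariski-dense set.
\item You give no argument connecting the stratum $\{\rk(X-\lambda I_n)\le n-2\}$ to square-zero rank-one matrices.
\end{itemize}

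The correct way to make ``$\D$ vanishes to high order at $A$'' precise is the multiplicity of $\D$ at $A$. By homogeneity, $\D(A+tX)$ is divisible by $t^{(n-1)(n-2)}$ for all $X$ if and only if $\deg_t\D(X+tA)\le 2n-2$ for all $X$, and this is preserved by bijective preservers. This is exactly the paper's invariant. However, over an algebraically closed field, the matrices satisfying it are those of $\R_n(\F)$ (rank at most one plus scalar, nilpotent \emph{or not}) together with three exceptional similarity classes (Lemma~\ref{lem:LowDegImpliesLowRankOrSeveralCasesForAlgebraicallyClosed}): $E_{1\,2}+E_{2\,3}+cI_3$, $E_{1\,2}+E_{3\,4}+cI_4$, and, in characteristic $2$, $E_{1\,2}+E_{3\,4}+cI_5$.

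So this invariant never isolates $\mathcal N+\F I_n$, and removing the exceptions needs the finer invariants $\dim\FL_{\D^{(i)}_{(X)}}$, or in characteristic $2$ the analogous invariants of $\B$. Worse, as remarked after Corollary~\ref{cor:IfPresDThenPresRk1PScFull}, these invariants coincide for the nilpotent $E_{1\,2}$ and for $E_{1\,2}+E_{3\,4}$ when $n=4$ and $\Char\F=2$. Square-zero rank-one matrices are therefore precisely the class these tools cannot separate.

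\textbf{The missing idea} is to target instead the matrices $J+cI_n$ with $J$ of rank one and \emph{non-nilpotent}, i.e.\ similar to $eE_{1\,1}+cI_n$. The invariants do separate these from every exceptional class, giving $T(J+cI_n)\in\R_n(\F)$ (Lemma~\ref{lem:IfPresDThenPresRk1PSc}, working over $\overline{\F}$ and descending by Lemma~\ref{lem:RankOneScalarFromExtToGroundField}). The nilpotent ones are then recovered by density:
\begin{itemize}
\item Among the $J(\mbf a,\mbf u)$, the non-nilpotent ones are those with $(\mbf a,\mbf u)\neq 0$, i.e.\ off the zero set of a nonzero linear form.
\item $\R_n(\F)$ is cut out by polynomials of degree at most $4$ (Lemmas~\ref{lem:GeneralR1Crit} and~\ref{lem:R1IndicatorPolyDeg}).
\item Hence Lemma~\ref{lem:RNonzeroRAll} yields $T(\L(\mbf u))\subseteq\R_n(\F)$, and Meshulam's theorem (Lemma~\ref{lem:MaxSpaceBoundedRank}) yields $T(\L(\mbf u))=\L(\mbf v)$ or $\L^t(\mbf v)$.
\end{itemize}
The spaces $\L(\mbf v)$ contain $I_n$, and their rank-at-most-one parts are extracted inside them (Lemma~\ref{lem:SpaceInR1CapR1}). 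So this route also handles $\Char\F\mid n$, which your Step~3 leaves open.

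\textbf{Smaller issues in Step~1.} You assert without proof that for every non-scalar $A$ some $X$ makes $\D(X+tA)$ nonconstant, i.e.\ $\rad(\D)=\langle I_n\rangle$. You also need $T(I_n)\in\F I_n$ for $\bar T$ to be well defined. Both are easy to supply:
\begin{itemize}
\item $\rad(\D)$ is a conjugation-invariant subspace, so if it contained a non-scalar matrix it would contain all trace-zero matrices. Then a diagonal $X$ with distinct entries would satisfy $\D(X)=\D(\diag(0,\ldots,0,\tr X))=0$, since the latter has eigenvalue $0$ at least twice for $n\ge 3$. This is a contradiction.
\item Let $\pi\colon\M_n(\F)\to\M_n(\F)/\F I_n$ be the projection. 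Then $\ker(\pi\circ T)=T^{-1}(\F I_n)\subseteq\F I_n$, and a dimension count forces $T(I_n)\in\F I_n$.
\end{itemize}
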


The case of $2\times 2$ matrices is discussed in Remark~\ref{rem:CaseN2}. As it is shown in Example~\ref{rem:CaseN2Ch2Example}, if $\Char(\F) = 2$, then there exist linear maps preserving the matrix discriminant which do not admit the description given in Theorem~\eqref{thm:MainThmInIntro}.

For $n = 1$, the matrix discriminant of every $1\times 1$ matrix is equal to $1$ and consequently every linear map on the space of all $1\times 1$ matrices preserves it. Thus, in this paper we solve the linear preserver problem for the matrix discriminant for all fields with sufficiently large cardinality (i.e., $|\F| > n(n-1)$) and all possible $n$.

The proof of  Theorem~\eqref{thm:MainThmInIntro} is based on the similar ideas as in~\cite{Yurkov2025}. We begin with the following observation: if $P$ is a polynomial function of matrix coefficients, then for the given matrices $A, B$ the expression $P(A + \lambda B)$ is a polynomial function of $\lambda,$ and for a certain $P$ it is possible to relate $\max_A (\deg_\lambda P(A + \lambda B))$ with various properties of $B.$ Namely, assuming that $P$ is equal to matrix discriminant and $B$ is a given $n\times n$ matrix, we establish that $\deg_\lambda P(A + \lambda B) \le 2n - 2$ for all $A$ if and only if $B$ is a linear combination of a scalar matrix and a rank-one matrix, where several exceptions are made for $B$ when $n \le 5$ (Lemma~\ref{lem:LowDegImpliesLowRankOrSeveralCasesForAlgebraicallyClosed}).

In order to characterise the matrices $B$ which are equal to a linear combination of a scalar matrix and a rank-one matrix in exceptional cases, we expand the approach developed in~\cite{YurkovPolyPres2026} and introduce finer invariants for the action of all bijective linear maps preserving a given polynomial $P$ on $\F^n$  (Definition~\ref{def:PolyPointInv} and Lemma~\ref{lem:DegAndPartPreservance}). Using these invariants, we show that the orbit of linear combination of a scalar matrix and a rank-one $n\times n$ matrix consists of matrices of the same form (Corollary~\ref{cor:IfPresDThenPresRk1PScFull}). In our opinion, these invariants may have other applications in the theory of linear preservers.

The further proof is carried out by considering the action of a given linear map $T$ preserving the matrix discriminant on the set of linear spaces of $n\times n$ matrices, which are generated by scalar matrices and matrices of rank 1 (see Definition~\ref{def:LMatSpace}). This allows to transform $T$ to the required form using matrix similarity operation (Lemma~\ref{lem:TSendLtoLHasRightForm}) and establish the main result of this paper.

\vspace{\topsep}

The paper is organised as follows. In Section~\ref{sec:NotationDefinitions}, Section~\ref{sec:ResDiscPrelim} and Section~\ref{sec:FunctionRad} we provide necessary notations and definitions, introduce the notion of the matrix discriminant and the notion of the radical of a function, correspondingly; in Section~\ref{sec:SupLinAlg} we formulate and prove the necessary basic facts from the theory of polynomials, linear algebra and study the properties of the set $\R_n(\F) \subseteq \M_{n}(\F)$ of matrices which could be presented as a sum of rank-one matrix and a scalar matrix; in Section~\ref{sec:OrbInvariants} we extend the results from~\cite{YurkovPolyPres2026} and introduce new invariants which are preserved by any linear maps preserving the given polynomial; in Section~\ref{sec:MatsSmallDegDesc} we provide a description of $n\times n$ matrices $X$ satisfying the inequality $\deg_\lambda \left(\D(A + \lambda X)\right) \le 2n - 2$ for all $A \in \M_{n}(\F)$; Section~\ref{sec:InvariantsOfSmallMatrices} is devoted to calculation of necessary explicit values and estimates of the invariants introduced in Section~\ref{sec:OrbInvariants}; in Section~\ref{sec:RadDAndRadB} the radical of $\D$ is calculated; Section~\ref{sec:VecSpacesInR1} contains the theory of vector spaces of the form  $\mbf L(\mbf v) \subseteq \R_n(\F)$ corresponding to every $0 \neq \mbf v \in \F^n$ (Definition~\ref{def:LMatSpace}); finally, in Section~\ref{sec:LinDiscPres} we provide the characterisation of  linear maps preserving the matrix discriminant as it is stated in Theorem~\ref{thm:MainThmInIntro}.

\section{Notation and basic definitions}
\label{sec:NotationDefinitions}

Throughout this paper, by $\F$ we denote a field without any preliminary restrictions on its cardinality and characteristic. By $\F[x_1,\ldots, x_n]$ we denote the ring of multivariate polynomials with coefficients belonging to $\F$, and\linebreak $\F^{\le d}[x_1,\ldots, x_n] \subseteq \F[x_1,\ldots, x_n]$  denotes the vector space of all polynomials whose degree not greater than $d$. If $P \in \F[x; x_1, \ldots, x_n]$ and $k \ge 0$ is an integer, then $[k]P \in \F[x_1,\ldots, x_n]$ denotes the coefficient of $x^k$ in $P$.

If $\mathsf R$ is equal to $\F$ or $\F[x_1,\ldots, x_n]$, then $\M_{m}(\mathsf R)$ denotes the set of all square matrices of size $m\times m$ with entries belonging to $\mathsf R$. Let $I_m \in \M_{m}(\F)$ denote the unit matrix, $O_{m}$ denote the zero matrix and $E_{i\,j}\in \M_{m}(\F)$ the matrix unit, i.e. all elements of are $E_{i\,j}$ equal to 0 except the element belonging to the intersection of the $i$-th row and $j$-th column which is equal to 1. By $\diag (a_1,\ldots, a_m) \in \M_{m}(\F)$ we denote a \emph{diagonal} matrix, in which the entries of the diagonal are given by a vector $(a_1,\ldots, a_m)$ and the entries outside the main diagonal are all equal to zero.

$x_{i\,j}$ denotes the entry of the matrix $X \in \M_{m}(\F)$ belonging to the intersection of the $i$-th row and $j$-th column. 

The matrix $X \in \M_{m}(\F)$ is called \emph{scalar} if $X = cI_m$ for some $c \in \F.$

Let $X^t \in \M_{m}(\F)$ denote a \emph{transpose} of $X,$ i.e.  $x^{t}_{i\,j} = x_{j\,i}$ for all $1 \le i, j \le m.$

For given $X \in \M_k(\F),\; Y \in \M_{m}(\F)$  we denote by $X \oplus Y \in \M_{(k + m)}(\F)$ a \emph{direct sum} of matrices $X$ and $Y,$ i.e.
\[
X \oplus Y = \left[\begin{array}{ c | c }
    X & 0 \\
    \hline
    0 & Y
  \end{array}\right].
\]

\begin{definition}For a given $A \in \M_{m}(\F)$ ,we denote by $p_A(x) \in \F[x]$  its \emph{characteristic polynomial}, i.e.
\[
p_A(x) = \det(xI_m - A).
\]
We will use formula 
\begin{equation}\label{eqqq:formulaChPoly}
p_A(x) = x^m - E_1(A)x^{m-1} + E_2(A)x^{m-2} + \ldots + (-1)^m E_{m}(A),
\end{equation}
where $E_k(A)$ denotes the sum of all maximal minors of $A$. This formula could be found, e.g., in~\cite{Horn_Johnson_1985}.
\end{definition}

Let $V$ be a vector space. By $V^*$ denote a vector space which is \emph{dual} to $V$, i.e., $V^*$ consists of all linear functionals $\alpha \colon V \to \F$. If $V$ is a vector space and $S \subseteq$ is a set, then $\langle S \rangle$ denotes a vector space \emph{generated} by $S,$ i.e. the set of all linear combinations of elements of $S$. In addition, by $S^0 \subseteq V^*$ we denote an \emph{annihilator} of $S$, i.e., a vector space defined by $S^0 = \{f \in V^*: f(\mbf x) = 0\;\;\mbox{for all}\;\; \mbf x \in S\}.$ If vector spaces $U, V \subset W$ are subspaces of vector space $W$, then by $U + V$ we denote a \emph{sum of spaces} $U$ and $V$ which is defined by $U + V = \langle U \cup V\rangle$. Similarly we define and denote a sum of three and more vector subspaces of the same vector space. By $\mbf e^{(1)}, \ldots, \mbf e^{(m)}$ we denote the elements of \emph{standard basis} of $\F^m$, i.e., $\mbf e^{(1)} = (1, 0, \ldots, 0), \ldots, \mbf e^{(m)} = (0, \ldots, 0, 1)$. If $\mbf v, \mbf w \in \F^m,$ then $(\mbf v,\mbf w)$ denotes their \emph{inner product} defined by $(\mbf v, \mbf w) = \mbf v_1\mbf w_1 + \ldots + \mbf v_m\mbf w_m$.

If $X \in \M_{n}(F)$ then by $\colsp(X) \subseteq \F^n$ and $\rowsp(X) \subseteq \F^n$ we denote the \emph{column space} of $X$ and the \emph{row space} of $X$, respectively. That is, if $X = (x_{i\,j}) \in \M_{n}(\F)$, then $\colsp(X) = \left\langle \begin{psmallmatrix}x_{1\,1} \\ \vdots \\ x_{n\,1} \end{psmallmatrix}, \ldots, \begin{psmallmatrix}x_{1\,n} \\ \vdots \\ x_{n\,n} \end{psmallmatrix}\right\rangle$ and $\rowsp(X) = \left\langle \begin{psmallmatrix}x_{1\,1} \\ \vdots \\ x_{1\,n} \end{psmallmatrix}, \ldots, \begin{psmallmatrix}x_{n\,1} \\ \vdots \\ x_{n\,n} \end{psmallmatrix}\right\rangle$.

\section{Preliminary facts about the discriminant and the matrix discriminant}
\label{sec:ResDiscPrelim}

Before we provide the definition of the discriminant, we need to introduce the notion of the resultant of two polynomials.

\begin{definition}Let $f,g \in \F[x]$ be polynomials and
\[f(x) = a_0 + \ldots + a_m x^m,\qquad g(x) = b_0 + \ldots + b_k x^k,\]
where $m + k \ge 1$, but $a_m$ or $b_k$ may be equal to zero. Then by\linebreak $\Res_{m\, k} (f,g) = \Res_{m\, k}(a_0,\ldots, a_m; b_0, \ldots, b_k)$ we denote the \emph{resultant} of $f$ and $g$ which is defined by
\begin{equation}\label{eq:DefRes}
\Res_{m\, k} (f,g) =
\begin{vmatrix}
a_m &  \multicolumn{1}{c}{\cdots} & a_1 & a_{0} & \multicolumn{1}{c}{\cdots} & 0\\
\vdots & \multicolumn{1}{c}{\ddots} & \vdots & \vdots & \multicolumn{1}{c}{\ddots} & \vdots\\
0 & \multicolumn{1}{c}{\cdots} & a_m & a_{m-1} & \multicolumn{1}{c}{\cdots} & a_0\\
b_{k} & \multicolumn{1}{c}{\cdots} & b_1 & b_0 & \multicolumn{1}{c}{\cdots} & 0 \\

\vdots & \multicolumn{1}{c}{\ddots} & \vdots & \vdots & \multicolumn{1}{c}{\ddots} & \vdots\\
0 & \multicolumn{1}{c}{\cdots} & b_k & b_{k-1} & \multicolumn{1}{c}{\cdots} & b_{0}
\end{vmatrix}.
\end{equation}
\end{definition}

\begin{remark}The order of the determinant in the equality~(\ref{eq:DefRes}) is equal to $m + n.$
\end{remark}

The following properties of the resultant follow immediately from the formula~\eqref{eq:DefRes}.

\begin{corollary}\label{lem:ResMultOneByScal}If $m + k \ge 1$, $f(x) = a_0 + \ldots + a_m x^m$ and $g(x) = b_0 + \ldots + b_k x^k$, then 
\begin{equation}\label{lem:ResMultOneByScal:eqq1}
\Res_{m\, k}(\lambda f, \mu g) = \lambda^k \mu^m \Res_{m\, k}(f, g).
\end{equation}
\end{corollary}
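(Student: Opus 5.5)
The identity follows from multilinearity of the determinant in its rows, applied directly to the Sylvester matrix in~\eqref{eq:DefRes}. The plan is to track which rows are built from the coefficients of $f$ and which from those of $g$, and to count them.

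First I will fix the layout of the Sylvester matrix for $\Res_{m\,k}(f,g)$. It has order $m+k$. Its first $k$ rows are shifted copies of the coefficient string $(a_m,\ldots,a_0)$, padded with zeros. Its last $m$ rows are shifted copies of $(b_k,\ldots,b_0)$, padded with zeros. Every entry of the first block is either $0$ or some $a_i$, and every entry of the second block is either $0$ or some $b_j$.

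Next, replacing $f$ by $\lambda f$ replaces every $a_i$ by $\lambda a_i$ and leaves the zero entries unchanged. Hence each of the first $k$ rows is multiplied by $\lambda$. In the same way, replacing $g$ by $\mu g$ multiplies each of the last $m$ rows by $\mu$. Note that $\lambda f$ is still regarded as a formal polynomial of degree at most $m$ with coefficient list $(\lambda a_0,\ldots,\lambda a_m)$. This holds even when $\lambda=0$ or $a_m=0$, so $\Res_{m\,k}(\lambda f,\mu g)$ is the determinant of exactly this rescaled matrix.

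Finally, linearity of the determinant in each row lets us pull out one factor $\lambda$ from each of the $k$ rows of the first block and one factor $\mu$ from each of the $m$ rows of the second block. This gives $\lambda^k\mu^m\Res_{m\,k}(f,g)$, which is~\eqref{lem:ResMultOneByScal:eqq1}.

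There is no genuine obstacle. The only point needing care is the row count: the number of $a$-rows is $k$, the degree bound of $g$, and not $m$. This swap is what produces the exponents $\lambda^k\mu^m$. The degenerate cases $k=0$ or $m=0$, where one block is empty, are covered by the same argument, with the corresponding exponent equal to $0$.
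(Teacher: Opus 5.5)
Your proof is correct and takes the same approach as the paper. The paper simply says that this property follows immediately from the Sylvester determinant~\eqref{eq:DefRes}. Your argument supplies that verification: there are $k$ rows of $a$'s, each scaled by $\lambda$, and $m$ rows of $b$'s, each scaled by $\mu$, and multilinearity of the determinant in its rows then gives $\lambda^k\mu^m$.
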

\begin{corollary}\label{lem:ResReverse}If $m + k \ge 1$, $f(x) = a_0 + \ldots + a_m x^m$, $g(x) = b_0 + \ldots + b_k x^k$ and $f' = a_m + \ldots + a_0x^m$, $g' = b_k + \ldots + b_0x^k$, then 
\begin{equation}\label{lem:ResReverse:eqq1}
\Res_{m\, k}(f',g') = (-1)^{mk}\Res_{m\, k}(f,g).
\end{equation}
\end{corollary}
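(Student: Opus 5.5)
The plan is to compare the two Sylvester matrices in~\eqref{eq:DefRes} directly and show that they differ only by a permutation of rows and columns, and then compute the sign of that permutation. Write $N = m + k$ and let $S$ be the $N\times N$ matrix in~\eqref{eq:DefRes} for $(f,g)$, and $S'$ the one for $(f',g')$. The coefficients of $f'$ and $g'$ are $a'_p = a_{m-p}$ and $b'_q = b_{k-q}$.

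First I record the entries of $S$ by position:
\begin{itemize}
\item in the $i$-th row of the $f$-block ($1 \le i \le k$), the entry $a_{m-j}$ stands in column $i + j$ for $0 \le j \le m$, and all other entries are zero;
\item similarly, in the $i$-th row of the $g$-block ($1 \le i \le m$), the entry $b_{k-j}$ stands in column $i + j$.
\end{itemize}
In $S'$, row $i'$ of the $f'$-block carries $a'_{m-j} = a_j$ in column $i' + j$, that is, $a_l$ sits in column $i' + l$.

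Next I reverse the order of all $N$ columns of $S$, i.e.\ apply $c \mapsto N+1-c$. Under this map, $a_l$ in row $i$ of the $f$-block moves to column $N+1-i-(m-l) = (k+1-i) + l$. This is exactly the position of $a_l$ in row $i' = k+1-i$ of the $f'$-block of $S'$. The same computation with $m$ and $k$ interchanged applies to the $g$-block.

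Hence $S'$ is obtained from $S$ by three operations: reversing all $N$ columns, reversing the order of the $k$ rows of the first block, and reversing the order of the $m$ rows of the second block. Reversing $r$ rows or columns is a permutation with sign $(-1)^{r(r-1)/2}$. Therefore
\[
\Res_{m\,k}(f',g') = (-1)^{\frac{N(N-1)}{2} + \frac{k(k-1)}{2} + \frac{m(m-1)}{2}}\,\Res_{m\,k}(f,g).
\]
Finally, since $\frac{(m+k)(m+k-1)}{2} - \frac{k(k-1)}{2} - \frac{m(m-1)}{2} = mk$, and exponents only matter mod $2$, the sign equals $(-1)^{mk}$.

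The only delicate step is the index bookkeeping in the column-reversal. I expect it to be the main source of error, particularly in checking that the zero pattern matches. It works because each row's nonzero band has length $m+1$ (respectively $k+1$) and is mapped onto a band of the same length.
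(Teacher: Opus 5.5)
Your proof is correct and is exactly the direct computation with the Sylvester matrix that the paper intends when it says this property ``follows immediately from~\eqref{eq:DefRes}'': reversing all $m+k$ columns and reversing the rows within each of the two blocks turns the matrix for $(f,g)$ into the one for $(f',g')$. The resulting sign $(-1)^{\binom{m+k}{2}+\binom{k}{2}+\binom{m}{2}}=(-1)^{mk}$ is computed correctly; the paper gives no further argument, so your index bookkeeping supplies the details it leaves implicit.
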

\begin{corollary}\label{cor:ResXX0G}If $f(x) = x - x_0$, $g \in \F^{\le k}$, then 
\begin{equation}\label{cor:ResXX0G:eqq1}
\Res_{1\, k}(f,g) = g(x_0).
\end{equation}
\end{corollary}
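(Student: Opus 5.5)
We compute $\Res_{1\,k}(f,g)$ straight from the definition~\eqref{eq:DefRes} with $m=1$ and $f(x)=x-x_0$, so $a_1=1$, $a_0=-x_0$. Write $g(x)=b_0+b_1x+\ldots+b_kx^k$. The determinant has order $1+k$. The first $k$ rows come from $f$, and row $i$ ($1\le i\le k$) has $1$ in column $i$, $-x_0$ in column $i+1$, and zeros elsewhere. The last row is $(b_k, b_{k-1},\ldots,b_0)$.

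The plan is to eliminate the off-diagonal entries $-x_0$ by column operations that do not change the determinant. Going from left to right, add $x_0$ times column $j$ to column $j+1$, for $j=1,\ldots,k$. In the $f$-rows this turns the $-x_0$ in row $j$ into $0$, and because the operations are done sequentially, the later entries in those rows stay zero. In the last row the effect is Horner's scheme. Column $1$ holds $b_k$, column $2$ becomes $b_{k-1}+x_0b_k$, and so on. By induction, after the operations column $j$ of the last row holds $b_{k-j+1}+x_0b_{k-j+2}+\ldots+x_0^{j-1}b_k$. In particular the last column holds $\sum_{i=0}^k b_ix_0^i=g(x_0)$.

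The resulting matrix is lower triangular. Its first $k$ diagonal entries are $1$ and its last diagonal entry is $g(x_0)$, so the determinant equals $g(x_0)$, which gives~\eqref{cor:ResXX0G:eqq1}.

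The only point needing care is the degenerate case $k=0$, where the determinant is $1\times1$ and equals $b_0=g(x_0)$ directly. It should also be stated that $b_k$ is allowed to vanish; this is harmless because the argument never divides by $b_k$. No step is a genuine obstacle. The main thing to check is the bookkeeping of the induction in the Horner step, which is routine.
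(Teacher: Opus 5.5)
Your proof is correct: the Horner-type column operations reduce the determinant in \eqref{eq:DefRes} to a lower triangular matrix with diagonal $1,\ldots,1,g(x_0)$, and you also handle the case $k=0$. This is the paper's route as well. The paper only says the identity follows immediately from the defining determinant \eqref{eq:DefRes}, and you have written out the computation it leaves implicit.
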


Recall the main properties of resultant (in the square brackets we include the references to the corresponding equalities from~\cite[Chapter~12]{Gelfand1994}).

\begin{enumerate}[label=\textsc{R\arabic*.}, ref=\textsc{R\arabic*}]
\item\label{enum:ResTwoSplit} \[\Res_{m\, k}\left(\prod_{i = 1}^{m} (x - x_i) ,\prod_{j = 1}^{k} (x - y_j)\right) = \prod_{\substack{1 \le i \le m\\ 1 \le j \le k}} (x_i - y_j)\] for all $x_1, \ldots, x_m, y_1, \ldots, y_k \in \F.$ \hfill [1.3]
\item $\Res_{m\, k}\left(\prod_{i = 1}^{m} (x - x_i), \prod_{j = 1}^{k} (x - y_j)\right) = 0 \Leftrightarrow \exists 1 \le i \le m, 1 \le j \le k \colon x_i = y_j.$
\item\label{enum:ResProduct} $\Res_{(m_1 + m_2)\, k}(f_1\cdot f_2, g) = \Res_{m_1\, k}(f_1,g)\cdot\Res_{m_2\, k}(f_2,g)$ for all $f_1 \in \F^{\le m_1}[x],$ $f_2 \in \F^{\le m_2}[x],$ $g \in \F^{\le k}[x].$ \hfill [1.4]
\item\label{enum:ResPropertyC}$\Res_{m\,k}(f(x + c), g(x+c)) = \Res_{m\,k}(f(x),g(x))$ for all $f \in \F^{\le m}[x],$ $g \in \F^{\le k}[x],$ $c \in \F.$ \hfill [1.8]
\item\label{enum:ResWeightedMulLin} $\Res_{m\,k}(\lambda^0 a_0, \ldots, \lambda^m a_m; \lambda^0 b_0, \ldots, \lambda^k b_k) = \lambda^{mk} \Res(a_0, \ldots, a_m; b_0, \ldots, b_k)$ for all $f = a_0 + \ldots + a_mx^m, g = b_0 + \ldots + b_kx^k, \lambda \in \F[x].$ \hfill [1.6]
\end{enumerate}

\begin{corollary}[from the property~\ref{enum:ResProduct} and Corollary~\ref{cor:ResXX0G}]\label{cor:ResSplitPoly}
\begin{equation}\label{cor:ResSplitPoly:eqq1}
\Res_{m\,k}\left(\prod_{i = 1}^{k} (x - x_i), g \right) = \prod_{i= 1}^m g(x_i)\quad\mbox{for all}\;\; x_1,  \ldots, x_m \in \F, g \in \F^{\le k}[x].
\end{equation}

\end{corollary}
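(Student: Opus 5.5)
The plan is to split the first argument into its linear factors using the multiplicativity property~\ref{enum:ResProduct}, and then evaluate each one-factor resultant with Corollary~\ref{cor:ResXX0G}. I read the product in the first argument as running over $i = 1, \ldots, m$, matching the subscript $m$ of $\Res_{m\,k}$ and the right-hand side; the upper limit $k$ in the statement is presumably a typo.

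I would argue by induction on $m \ge 1$, with $k$ and $g \in \F^{\le k}[x]$ fixed. For $m = 1$ we have $1 + k \ge 1$, so $\Res_{1\,k}$ is defined. Corollary~\ref{cor:ResXX0G} then gives $\Res_{1\,k}(x - x_1, g) = g(x_1)$ directly. If one also wants $m = 0$ (allowed when $k \ge 1$), the Sylvester matrix in~\eqref{eq:DefRes} for $f = 1$ consists of $k$ rows of the form $(1, 0, \ldots, 0)$ shifted along the diagonal, i.e.\ the identity matrix. Its determinant is $1$, which equals the empty product, consistent with the formula.

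For the inductive step, write $\prod_{i=1}^{m}(x - x_i) = f_1 \cdot f_2$ with $f_1 = \prod_{i=1}^{m-1}(x - x_i) \in \F^{\le m-1}[x]$ and $f_2 = x - x_m \in \F^{\le 1}[x]$. Property~\ref{enum:ResProduct} with $m_1 = m-1$ and $m_2 = 1$ gives
\[
\Res_{m\,k}\Bigl(\prod_{i=1}^{m}(x - x_i), g\Bigr) = \Res_{(m-1)\,k}(f_1, g)\cdot \Res_{1\,k}(x - x_m, g).
\]
The first factor equals $\prod_{i=1}^{m-1} g(x_i)$ by the induction hypothesis. The second equals $g(x_m)$ by Corollary~\ref{cor:ResXX0G}. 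Multiplying the two gives the claim.

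The only point needing care is that every resultant used is well defined, i.e.\ the condition ``sum of indices $\ge 1$''. This is automatic for $\Res_{1\,k}$. For $\Res_{(m-1)\,k}$ it can fail only when $m - 1 = 0$ and $k = 0$, and then the base case $m = 1$ is used directly instead. I therefore expect no real obstacle; the argument is a direct consequence of~\ref{enum:ResProduct} and Corollary~\ref{cor:ResXX0G}.
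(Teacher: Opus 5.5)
Your proof is correct and is essentially the paper's argument: the paper gives no separate proof and only cites property~\ref{enum:ResProduct} and Corollary~\ref{cor:ResXX0G}, and your induction, which peels off one linear factor at a time, is the natural way to make that citation precise. You are also right that the upper limit of the product should be $m$ rather than $k$.
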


\begin{corollary}[from the property~\ref{enum:ResProduct} and Corollary~\ref{cor:ResSplitPoly}]\label{cor:ResCommonRoot}
If $m + k \ge 1$ and $f \in \F^{\le m}[x], g \in \F^{\le k}[x] $ have a common root in $\F$, then $\Res_{m\,k} (f,g) = 0$.
\end{corollary}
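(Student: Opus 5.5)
The claim follows by factoring out the linear factor through a common root and then applying Corollary~\ref{cor:ResXX0G}. Let $x_0 \in \F$ be a common root of $f$ and $g$. Since $f(x_0) = 0$, divide $f$ by $x - x_0$ over $\F$ to get $f(x) = (x - x_0)\,h(x)$.

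If $m \ge 1$, then $h \in \F^{\le m-1}[x]$. Property~\ref{enum:ResProduct}, with $m_1 = 1$ and $m_2 = m - 1$, gives
\[
\Res_{m\,k}(f,g) = \Res_{1\,k}(x - x_0, g)\cdot \Res_{(m-1)\,k}(h, g).
\]
By Corollary~\ref{cor:ResXX0G}, the first factor is $g(x_0) = 0$, so $\Res_{m\,k}(f,g) = 0$. This step needs $(m-1) + k \ge 1$ for the second resultant to be defined. The only case where this fails is $m = 1$, $k = 0$, and there $\Res_{1\,0}(f,g) = \Res_{1\,0}(x - x_0, g)$ up to a scalar factor by Corollary~\ref{lem:ResMultOneByScal}. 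Corollary~\ref{cor:ResXX0G} then again gives a multiple of $g(x_0) = 0$.

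If $m = 0$, then $f = a_0$ is constant, and $f(x_0) = 0$ forces $f \equiv 0$. Since $m + k \ge 1$, we have $k \ge 1$. The Sylvester matrix in \eqref{eq:DefRes} then has $k$ rows built from the coefficients of $f$, and all of them are zero, so the determinant vanishes. In the same way, if $f \equiv 0$ or $g \equiv 0$ with the corresponding degree bound positive, a whole block of rows is zero and the resultant is zero.

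The main point needing care is that the formal degree $m$ in $\Res_{m\,k}$ is only an upper bound. Leading coefficients may vanish, so one must check that the formal-degree version of property~\ref{enum:ResProduct} applies with $h$ regarded as an element of $\F^{\le m-1}[x]$, even when its actual degree is smaller. The property is stated for all $f_1 \in \F^{\le m_1}[x]$, so this is covered, and the degenerate cases above are handled directly from the determinant.
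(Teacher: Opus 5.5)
Your proof is correct and is essentially the argument the paper has in mind: factor $f=(x-x_0)h$, split the resultant using property~\ref{enum:ResProduct}, and note that the linear factor contributes $\Res_{1\,k}(x-x_0,g)=g(x_0)=0$ (this is the case $m=1$ of Corollary~\ref{cor:ResSplitPoly}, i.e.\ Corollary~\ref{cor:ResXX0G}). Your separate treatment of the degenerate formal-degree cases $m=0$ and $(m,k)=(1,0)$, where $\Res_{0\,0}$ would be undefined, is a correct refinement that the paper leaves implicit.
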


\begin{lemma}\label{lem:ResReverseGradHomogen}If $f = a_0 + \ldots + a_mx^m, g = b_0 + \ldots + b_kx^k, \lambda \in \F[x],$ then \begin{equation}\label{lem:ResReverseGradHomogen:eqq1}
\Res_{m\,k}(\lambda^m a_0, \ldots, \lambda^0 a_m; \lambda^k b_0, \ldots, \lambda^0 b_k) = \lambda^{mk} \Res_{m\,k}(a_0, \ldots, a_m; b_0, \ldots, b_k).
\end{equation}
\end{lemma}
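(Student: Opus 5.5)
The plan is to derive the identity from the two reversal/weighting properties already available: Corollary~\ref{lem:ResReverse} (reversing coefficients costs a sign $(-1)^{mk}$) and property~\ref{enum:ResWeightedMulLin} (weighting the $i$-th coefficient by $\lambda^i$ multiplies the resultant by $\lambda^{mk}$). Write $a'_i = a_{m-i}$ and $b'_j = b_{k-j}$ for the reversed coefficient sequences, so that $f' = a'_0 + \ldots + a'_m x^m$ and $g' = b'_0 + \ldots + b'_k x^k$ are the reversed polynomials of Corollary~\ref{lem:ResReverse}.

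First I observe that the tuple $(\lambda^m a_0, \ldots, \lambda^0 a_m)$ is the reversal of the tuple $(\lambda^0 a'_0, \ldots, \lambda^m a'_m)$: the entry in position $i$ of the former is $\lambda^{m-i}a_i = \lambda^{m-i}a'_{m-i}$. The same holds for the $b$'s. Hence, by Corollary~\ref{lem:ResReverse} applied to the polynomials with coefficients $\lambda^i a'_i$ and $\lambda^j b'_j$,
\[
\Res_{m\,k}(\lambda^m a_0, \ldots, \lambda^0 a_m; \lambda^k b_0, \ldots, \lambda^0 b_k) = (-1)^{mk}\Res_{m\,k}(\lambda^0 a'_0, \ldots, \lambda^m a'_m; \lambda^0 b'_0, \ldots, \lambda^k b'_k).
\]
Next, property~\ref{enum:ResWeightedMulLin} turns the right-hand side into $(-1)^{mk}\lambda^{mk}\Res_{m\,k}(a'_0,\ldots,a'_m; b'_0,\ldots,b'_k)$. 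Finally, Corollary~\ref{lem:ResReverse} once more gives $\Res_{m\,k}(f',g') = (-1)^{mk}\Res_{m\,k}(f,g)$. The two signs cancel, which yields \eqref{lem:ResReverseGradHomogen:eqq1}.

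The only delicate point is bookkeeping. Corollary~\ref{lem:ResReverse} is stated for coefficients in $\F$, while here $\lambda \in \F[x]$, as in property~\ref{enum:ResWeightedMulLin}. Both facts, however, are identities of determinants with entries in a commutative ring; the reversal follows from permuting rows and columns of \eqref{eq:DefRes}, and no field axioms are used. So they hold verbatim over $\F[x]$, or one may argue over $\F(\lambda)$ with $\lambda$ an indeterminate and then specialize. I would state this remark explicitly.

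Alternatively, one can verify the identity directly from \eqref{eq:DefRes}. Multiply the $i$-th $a$-row by $\lambda^{i}$ and the $j$-th $b$-row by $\lambda^{j}$, then pull $\lambda$-powers out of the columns. The entry $\lambda^{m-s}a_s$ sits in column $i+s$ (with the leading coefficient $a_m$ in column $i$, counting rows and columns from $0$ and placing $a_s$ in column $i+m-s$), and similar exponent bookkeeping handles the $b$'s. I would keep this as a fallback in case the composition argument above runs into indexing trouble.
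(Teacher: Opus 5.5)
Your argument is correct and is exactly the paper's proof. You use Corollary~\ref{lem:ResReverse} to rewrite the left-hand side as $(-1)^{mk}$ times the resultant of the reversed, $\lambda$-weighted sequences, extract $\lambda^{mk}$ by property~\ref{enum:ResWeightedMulLin}, and reverse back with Corollary~\ref{lem:ResReverse} so the two signs $(-1)^{mk}$ cancel; your indexing with $a'_i=a_{m-i}$, $b'_j=b_{k-j}$ is cleaner than the paper's intermediate line, which misprints the weights on the $b$'s. One small slip in your optional fallback: in the Sylvester row indexed by $i$, the entry $\lambda^{m-s}a_s$ lies in column $i+m-s$ (as your parenthetical says), not in column $i+s$.
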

\begin{proof}
Indeed,
\begin{multline*}
\Res_{m\,k}(\lambda^m a_0, \ldots, \lambda^0 a_m; \lambda^k b_0, \ldots, \lambda^0 b_k)\\
\overset{\eqref{lem:ResReverse:eqq1}}{=\joinrel=} (-1)^{mk} \Res_{m\,k}(\lambda^0 a_m, \ldots, \lambda^m a_0; \lambda^0 b_k, \ldots, \lambda^0 b_0)\\
\overset{\eqref{enum:ResWeightedMulLin}}{=\joinrel=} (-1)^{mk}\cdot \lambda^{mk} \Res_{m\,k}(a_m, \ldots, a_0; b_k, \ldots,  b_0)\\
\overset{\eqref{lem:ResReverse:eqq1}}{=\joinrel=}  (-1)^{mk}\cdot \lambda^{mk} \cdot (-1)^{mk} \Res_{m\,k}(a_0, \ldots, a_m; b_0, \ldots, b_k)\\
 = \lambda^{mk}\Res_{m\,k}(a_0, \ldots, a_m; b_0, \ldots, b_k).
\end{multline*}
\end{proof}

\vspace{\topsep}

\begin{definition}\label{def:PolyDiscriminant}Let $m \ge 1$ be integer and let $f \in \F[x]$ be a polynomial with 
\begin{equation}\label{def:PolyDiscriminant:eq1}
f(x) = a_0 + \ldots + a_m x^m,\quad a_0, \ldots, a_m \in \F
\end{equation}
We denote by\linebreak $\D_{m}(f) = \D(a_0,\ldots, a_m)$  \emph{a discriminant} of $f,$ which is defined by
\[
\D_m(f)  = \frac{(-1)^{n(n-1)/2}}{a_m} \Res_{m\,(m-1)}(f, f'),
\]
where $f'$ is a formal derivative $f$. In addition, if $f$ in~\eqref{def:PolyDiscriminant:eq1} is monic, then we allow to omit the subscript and use the notion $\D(f)$ instead of $\D_m(f)$.
\end{definition}

Recall the main properties of discriminant (in the square brackets we include the references to the corresponding equalities from~\cite[Chapter~12]{Gelfand1994}).

\begin{enumerate}[label=\textsc{$\D$\arabic*.}, ref=\textsc{$\D$\arabic*}]
\item $\D\left(\prod_{i = 1}^{m} (x - x_i)\right) = \prod_{1 \le i < j \le m} (x_i - x_j)^2$ for all $x_1, \ldots, x_m \in \F.$
\item\label{enum:DPropertySplit} $\D\left(\prod_{i = 1}^{m} (x - x_i)\right) = 0 \Leftrightarrow \exists 1 \le i < j \le m \colon x_i = x_j.$
\item $\D(f\cdot g) = \D(f)\cdot \D(g)\cdot \left(\Res_{m\,k}(f,g)\right)^2$ for all $f \in \F^{\le m}[x], g \in \F^{\le k}[x].$  \hfill [1.32]
\item $\D(f(x + c)) = \D(f(x))$ for all $f \in \F[x], c \in \F.$ \hfill [1.26]
\item $\D(\lambda^0 a_0, \lambda^1 a_1, \ldots, \lambda^m a_m) = \lambda^{m(m-1)} \D(a_0, \ldots, a_m)$ for all $\lambda \in \F.$ \hfill [1.24]
\end{enumerate}

In~\cite[Chapter~12]{Gelfand1994} there are also explicit formulas for discriminant of polynomial of degree 2, 3, 4 and 5 (formulas [1.33-1.36]).

\begin{lemma}[Cf.~{\cite[Formula~1.33 in Chapter~12]{Gelfand1994}}]\label{lem:DPolyN2}
Let $f =  x^2 + a_1 x + a_0$. Then 
\begin{equation}\label{lem:DPolyN2:eqq1}
\D(f) = 4a_0 - a_1^2.
\end{equation}
\end{lemma}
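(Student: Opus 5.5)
The plan is to compute $\D(f)$ straight from Definition~\ref{def:PolyDiscriminant} with $m = 2$ by writing out the Sylvester determinant~\eqref{eq:DefRes}. We do not need the general properties $\D1$--$\D5$. Here $f = x^2 + a_1x + a_0$ has $a_2 = 1$, so the factor $1/a_m$ equals $1$. Its formal derivative is $f' = 2x + a_1$, so $b_1 = 2$ and $b_0 = a_1$.

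In $\Res_{2\,1}(f,f')$ the polynomial $f$ contributes $k = 1$ row and $f'$ contributes $m = 2$ rows. This gives the $3\times 3$ determinant
\[
\Res_{2\,1}(f,f') =
\begin{vmatrix}
1 & a_1 & a_0\\
2 & a_1 & 0\\
0 & 2 & a_1
\end{vmatrix}.
\]
Expanding along the first row gives $a_1^2 - a_1\cdot 2a_1 + a_0\cdot 4 = 4a_0 - a_1^2$. The argument is characteristic-free: no division by $2$ occurs, so it is valid even when $\Char(\F) = 2$.

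The only delicate point, and the one I expect to be the main obstacle, is the sign prefactor in Definition~\ref{def:PolyDiscriminant}. It is written as $(-1)^{n(n-1)/2}$, but $n$ is not a variable of that definition, so the sign convention has to be fixed before the conclusion follows.

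\begin{itemize}
\item If the exponent is read with $n = m = 2$, the prefactor is $-1$. The formula then gives $a_1^2 - 4a_0$, which agrees with property $\D1$: $(x_1 - x_2)^2 = (x_1+x_2)^2 - 4x_1x_2 = a_1^2 - 4a_0$.
\item The stated value~\eqref{lem:DPolyN2:eqq1} equals the raw Sylvester determinant above. So the lemma holds exactly when the prefactor is $+1$ in degree $2$, or else it holds up to a global sign.
\end{itemize}

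I would therefore first pin down the normalisation the paper intends, most likely the one matching Gelfand's formula~1.33, and then read off the result. For the later applications only the vanishing of $\D$, and how its degree behaves, matter. A global sign $\pm 1$ affects neither, so the choice is immaterial there.
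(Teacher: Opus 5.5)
Your Sylvester determinant is right: $\Res_{2\,1}(f,f')=4a_0-a_1^2$. A direct computation is the appropriate route, since the paper gives no argument beyond the citation to Gelfand.

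The gap is in how you finish. Under the only sensible reading of Definition~\ref{def:PolyDiscriminant}, the exponent $n(n-1)/2$ is a misprint for $m(m-1)/2$, and your computation then yields $\D(f)=a_1^2-4a_0$. You cannot get from there to \eqref{lem:DPolyN2:eqq1} by ``pinning down the normalisation.'' Appealing to Gelfand's convention changes nothing, because Definition~\ref{def:PolyDiscriminant} already is that convention. Its factor $(-1)^{m(m-1)/2}$ is there precisely so that property~$\D1$ holds. In fact no normalisation compatible with $\D1$ gives the printed formula. For $f=x^2-x=x(x-1)$, property~$\D1$ gives $\D(f)=(0-1)^2=1$, while \eqref{lem:DPolyN2:eqq1} gives $4\cdot 0-(-1)^2=-1$. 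These differ whenever $\Char(\F)\neq 2$.

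So the correct conclusion of your argument is that the statement as printed is false in characteristic $\neq 2$ and should read $\D(f)=a_1^2-4a_0$. Your determinant computation, combined with the sign factor $-1$, proves this corrected identity in every characteristic. The same sign slip carries over to Lemma~\ref{lem:N3CharPoly}, which should read $\D(A)=\tr(A)^2-4\det(A)$; you can check this with $A=\diag(a,b)$ against property~\ref{enum:DiscPropertyDiag}.

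Your remark that the sign is harmless downstream is correct, and it can be made precise. The two expressions coincide in characteristic $2$, which is the only setting where the formula is actually used: Corollary~\ref{cor:D2PolyN2Char2}, and through it Remark~\ref{rem:CaseN2}. Lemma~\ref{lem:N3CharPoly} is never invoked later. State and prove the corrected identity rather than leaving the sign to an unspecified convention.
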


\begin{corollary}\label{cor:D2PolyN2Char2}
Assume that $\Char(\F) = 2$. If $f = x^2 + a_1 x + a_0$, then $\D(f) = \B^2(f)$ for $\B(f)$ defined by
\begin{equation}\label{lem:D2PolyN2Char2:eqq1}
\B(f) = a_1.
\end{equation}
\end{corollary}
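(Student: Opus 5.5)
This corollary is a direct specialisation of Lemma~\ref{lem:DPolyN2}, so the plan is a one-line substitution followed by a small characteristic-two simplification. First apply formula~\eqref{lem:DPolyN2:eqq1} to the monic quadratic $f = x^2 + a_1x + a_0$, which gives $\D(f) = 4a_0 - a_1^2$. This identity holds over any field, since it comes from expanding the $3\times 3$ resultant determinant of $f$ and $f' = 2x + a_1$. The resulting polynomial identity has integer coefficients, so it survives reduction modulo $2$.

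Next, impose $\Char(\F) = 2$. Then $4 = 0$ in $\F$, so the term $4a_0$ vanishes. Also $-1 = 1$, hence $-a_1^2 = a_1^2$. Therefore $\D(f) = a_1^2$, and with $\B(f) = a_1$ as in~\eqref{lem:D2PolyN2Char2:eqq1} this reads $\D(f) = \B^2(f)$.

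The only point that needs care is that Lemma~\ref{lem:DPolyN2} remains valid in characteristic $2$. In that case $f' = a_1$ is a constant, so the resultant degenerates. However, the definition of $\Res_{2\,1}(f, f')$ allows a vanishing leading coefficient $b_1 = 2 = 0$, and the determinant formula is a polynomial identity over $\mathbb{Z}$. As a sanity check one can compute directly. With $f' = 0\cdot x + a_1$, the Sylvester matrix has rows $(1, a_1, a_0)$, $(0, a_1, 0)$ and $(0, 0, a_1)$, whose determinant is $a_1^2$. Up to the sign convention in Definition~\ref{def:PolyDiscriminant}, and since signs are trivial in characteristic $2$, this again gives $\D(f) = a_1^2$, confirming the claim.
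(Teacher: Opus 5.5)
Your proposal is correct and matches the paper, which gives no separate proof and treats this as an immediate specialisation of Lemma~\ref{lem:DPolyN2}: in characteristic $2$ the term $4a_0$ vanishes and $-a_1^2 = a_1^2$, so $\D(f) = a_1^2 = \B^2(f)$. Your direct Sylvester-matrix computation is a useful extra check. It shows that the degenerate case $f' = a_1$ causes no trouble, and that any sign convention in the discriminant is irrelevant in characteristic $2$.
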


\begin{lemma}[Cf.~{\cite[Formula~1.34 in Chapter~12]{Gelfand1994}}]\label{lem:DPolyN3}
Let $f = x^3 + a_2 x^2 + a_1 x + a_0$. Then 
\begin{equation}\label{lem:DPolyN3:eqq1}
\D(f) = -27a_0^2 - 4a_1^3 + 18a_0a_1a_2 + a_1^2a_2^2 - 4a_0^2a_2^3.
\end{equation}
\end{lemma}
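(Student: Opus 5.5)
The plan is to compute the resultant in Definition~\ref{def:PolyDiscriminant} directly as a Sylvester determinant. Both sides of~\eqref{lem:DPolyN3:eqq1} are given by universal polynomial expressions with integer coefficients in $a_0,a_1,a_2$. It therefore suffices to verify the identity in $\mathbb Z[a_0,a_1,a_2]$ and then specialize to $\F$. This sidesteps any issue with $\Char(\F)\in\{2,3\}$; in particular, we do not use property~$\D4$ to remove $a_2$, since that would require dividing by $3$.

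Here $m=3$, the leading coefficient is $a_3=1$, and the sign factor is $(-1)^{3\cdot 2/2}=-1$, so $\D(f)=-\Res_{3\,2}(f,f')$ with $f'=3x^2+2a_2x+a_1$. By~\eqref{eq:DefRes}, the resultant is the $5\times 5$ determinant with rows
\[
(1,a_2,a_1,a_0,0),\ (0,1,a_2,a_1,a_0),\ (3,2a_2,a_1,0,0),\ (0,3,2a_2,a_1,0),\ (0,0,3,2a_2,a_1).
\]
The first step is to subtract $3$ times the first row from the third row. The first column then becomes $\mbf e^{(1)}$, and expanding along it leaves the $4\times 4$ determinant with rows
\[
(1,a_2,a_1,a_0),\ (-a_2,-2a_1,-3a_0,0),\ (3,2a_2,a_1,0),\ (0,3,2a_2,a_1).
\]

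Repeating the elimination, we add $a_2$ times the first row to the second row and subtract $3$ times the first row from the third row. This reduces the problem to the $3\times3$ determinant
\[
\begin{vmatrix}
a_2^2-2a_1 & a_1a_2-3a_0 & a_0a_2\\
-a_2 & -2a_1 & -3a_0\\
3 & 2a_2 & a_1
\end{vmatrix}.
\]
Expanding this by the rule of Sarrus and collecting monomials should give $27a_0^2+4a_1^3-18a_0a_1a_2-a_1^2a_2^2+4a_0^2a_2^3$. Multiplying by $-1$ then yields~\eqref{lem:DPolyN3:eqq1}.

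The only real obstacle is the bookkeeping in this last expansion: the six products generate several cancelling terms (for instance, multiple $a_0a_1a_2$ and $a_1^2a_2^2$ contributions), and the overall sign convention is easy to get wrong. As an independent check, I would verify the result over an algebraically closed field of characteristic $0$ using property~$\D1$. For instance, take $f=x^3-x=x(x-1)(x+1)$, so $a_0=a_2=0$ and $a_1=-1$. Then $\D1$ gives $1\cdot1\cdot4=4$, and the formula gives $-4(-1)^3=4$. Checking one or two further specializations with $a_2\neq0$ would confirm the mixed terms. Since these are polynomial identities in the coefficients, agreement on enough specializations also confirms the full identity.
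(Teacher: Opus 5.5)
Your reduction of the Sylvester determinant is correct. Working in $\mathbb Z[a_0,a_1,a_2]$ and then specializing is the right way to make this lemma self-contained in every characteristic; the paper itself gives no proof, only the citation.

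\textbf{The gap.} The gap is the one step you did not carry out: the final $3\times3$ determinant does not equal what you say it ``should give''. Expanding along its first row,
\[
(a_2^2-2a_1)(6a_0a_2-2a_1^2)-(a_1a_2-3a_0)(9a_0-a_1a_2)+a_0a_2(6a_1-2a_2^2)=27a_0^2+4a_1^3-18a_0a_1a_2-a_1^2a_2^2+4a_0a_2^3 .
\]
So your argument proves
\[
\D(f)=-27a_0^2-4a_1^3+18a_0a_1a_2+a_1^2a_2^2-4a_0a_2^3 ,
\]
with $a_0$ to the first power in the last term.

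\textbf{The printed identity is false.} No computation can yield \eqref{lem:DPolyN3:eqq1} as printed. The printed and correct right-hand sides differ by $4a_0(1-a_0)a_2^3$, which is nonzero in every characteristic other than $2$. Concretely, take $f=(x-1)(x-2)(x-3)$, so $(a_0,a_1,a_2)=(-6,11,-6)$. Then $\D(f)=4$ by $\D1$; the corrected formula gives $4$, while the printed one gives $36292$. There is also a structural reason. Replacing the roots $x_i$ by $\lambda x_i$ replaces $a_i$ by $\lambda^{3-i}a_i$ and $\D$ by $\lambda^6\D$. Hence every monomial of $\D$ has weight $6$ when $a_i$ is given weight $3-i$, whereas $a_0^2a_2^3$ has weight $9$.

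\textbf{Consequences for the paper.} The term $-4a_0^2a_2^3$ in the statement is a misprint for $-4a_0a_2^3$. The paper's later computations, in Lemmas~\ref{lem:DistinctN3CH3RK1NNIL} and~\ref{lem:DistinctN3CHN23RK1NNIL}, actually use the term linear in the constant coefficient. Corollary~\ref{cor:DPolyN3Char3} should accordingly read $2a_1^3+a_1^2a_2^2+2a_0a_2^3$.

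\textbf{Your sanity check.} It could not detect the discrepancy, because $x^3-x$ has $a_0=a_2=0$. Any specialization with $a_2=0$ or $a_0\in\{0,1\}$ is equally blind. Replace ``should give'' by the explicit expansion above and state the corrected identity; with that change your proof is complete.
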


\begin{corollary}\label{cor:D2PolyN3Char2}
Assume that $\Char(\F) = 2$. If $f = x^3 + a_2 x^2 + a_1 x + a_0$, then $\D(f) = \B^2(f)$ for $\B(f)$ defined by
\begin{equation}\label{lem:D2PolyN3Char2:eqq1}
\B(f) = a_0 + a_1a_2.
\end{equation}
\end{corollary}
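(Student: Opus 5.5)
Since Corollary~\ref{cor:D2PolyN3Char2} follows from the explicit formula, I will reduce the explicit expression~\eqref{lem:DPolyN3:eqq1} of Lemma~\ref{lem:DPolyN3} modulo~$2$. The formula is an identity in $\mathbb{Z}[a_0,a_1,a_2]$ (the discriminant of a monic polynomial is given by a universal integer polynomial in its coefficients). It therefore specializes to any field, in particular to one with $\Char(\F) = 2$, and it suffices to compute its image there.

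\emph{Step 1 (reduction mod $2$).} Reduce each integer coefficient modulo $2$: $-27 \equiv 1$, $-4 \equiv 0$, $18 \equiv 0$, $1 \equiv 1$. This gives
\[
\D(f) = a_0^2 + a_1^2a_2^2 - 4a_0^2a_2^3 .
\]
At this point I would double-check the last term against the standard cubic discriminant $a_1^2a_2^2 - 4a_1^3 - 4a_0a_2^3 - 27a_0^2 + 18a_0a_1a_2$. The printed term $-4a_0^2a_2^3$ looks like a typo for $-4a_0a_2^3$, but either way it has coefficient $-4 \equiv 0$ and vanishes in characteristic~$2$. Hence $\D(f) = a_0^2 + a_1^2a_2^2$.

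\emph{Step 2 (Frobenius).} In characteristic $2$ the map $u\mapsto u^2$ is additive, so $(a_0 + a_1a_2)^2 = a_0^2 + 2a_0a_1a_2 + a_1^2a_2^2 = a_0^2 + a_1^2a_2^2$. This equals $\D(f)$, and so $\D(f) = \B^2(f)$ with $\B(f) = a_0 + a_1a_2$ as in~\eqref{lem:D2PolyN3Char2:eqq1}.

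The only delicate point is justifying that the formula of Lemma~\ref{lem:DPolyN3} may be used verbatim in characteristic~$2$, given that Definition~\ref{def:PolyDiscriminant} divides by $a_m$. Since $f$ is monic, $a_3 = 1$ and the definition is just $\pm\Res_{3\,2}(f,f')$. This is a determinant with integer-polynomial entries, so its expansion commutes with reduction mod~$2$. As an alternative check that avoids the cited formula, I would use property $\D1$ over the algebraic closure: $\D(f)=\prod_{i<j}(x_i-x_j)^2=\bigl(\prod_{i<j}(x_i+x_j)\bigr)^2$. Moreover, $(x_1+x_2)(x_1+x_3)(x_2+x_3) = e_1e_2 - e_3$, which in characteristic~$2$ equals $a_2a_1 + a_0$, because $e_1 = a_2$, $e_2 = a_1$ and $e_3 = a_0$ up to signs, and signs are irrelevant mod~$2$.
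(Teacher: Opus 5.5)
Your proof is correct and is the argument the paper intends. The paper states the corollary without proof, as an immediate consequence of Lemma~\ref{lem:DPolyN3}, obtained by reducing the integer coefficients modulo~$2$ and using additivity of squaring in characteristic~$2$. You are also right that the printed term $-4a_0^2a_2^3$ should read $-4a_0a_2^3$; this is harmless here, since its coefficient vanishes in characteristic~$2$.
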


\begin{corollary}\label{cor:DPolyN3Char3}
Assume that $\Char(\F) = 3$. Let $f = x^3 + a_2 x^2 + a_1 x + a_0$. Then 
\begin{equation}\label{lem:DPolyN3Char3:eqq1}
\D(f) = 2a_1^3 + a_0^2a_1^2 + 2a_0^2a_2^3.
\end{equation}
\end{corollary}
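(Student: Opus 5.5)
The plan is to specialise the general cubic formula of Lemma~\ref{lem:DPolyN3} to characteristic $3$; no new computation with resultants is needed. Formula~\eqref{lem:DPolyN3:eqq1} is an identity whose coefficients $-27, -4, 18, 1, -4$ are integers. It is therefore valid over every field once these integers are read as their images in $\F$.

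The reduction is a term-by-term check. With $\Char(\F)=3$ we have $27 = 0$ and $18 = 0$ in $\F$, so the summands $-27a_0^2$ and $18a_0a_1a_2$ vanish. We also have $-4 = -1 = 2$ in $\F$. Hence $-4a_1^3$ becomes $2a_1^3$, and the last summand $-4a_0^2a_2^3$ of~\eqref{lem:DPolyN3:eqq1} becomes $2a_0^2a_2^3$. These are exactly the first and third terms of~\eqref{lem:DPolyN3:eqq1} as stated in the corollary. The only summand of~\eqref{lem:DPolyN3:eqq1} with coefficient $1$ is the one carrying $a_1^2$, and it survives unchanged. It is to be matched with the middle term of~\eqref{lem:DPolyN3Char3:eqq1}.

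The key steps, in order, are:
\begin{enumerate}
\item justify that~\eqref{lem:DPolyN3:eqq1} holds over $\F$ with integer coefficients reduced mod $3$;
\item kill the terms with coefficients $27$ and $18$;
\item rewrite $-4$ as $2$;
\item collect the remaining three monomials.
\end{enumerate}

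For step~1, I would use that $\D$ is defined via the Sylvester determinant~\eqref{eq:DefRes}. That determinant is a polynomial with integer coefficients in $a_0,a_1,a_2$, and its expansion is compatible with the ring map $\mathbb Z\to\F$. The sign factor and the division by $a_3=1$ are harmless.

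I expect the main obstacle to be bookkeeping in the middle monomial, not any conceptual difficulty. The surviving coefficient-$1$ term of~\eqref{lem:DPolyN3:eqq1} is $a_1^2a_2^2$, while the corollary writes $a_0^2a_1^2$. So I would recheck that term directly from the $5\times 5$ Sylvester determinant for $\Res_{3\,2}(f,f')$ with $f'=3x^2+2a_2x+a_1=2a_2x+a_1$ in characteristic $3$. In that setting the determinant simplifies a lot, because the leading entries of the $f'$-rows vanish.

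A simpler cross-check uses the split case. Take $f=(x-r_1)(x-r_2)(x-r_3)$ and compute $\prod_{i<j}(r_i-r_j)^2$ mod $3$. Then specialise to $f = x^3 + a_1x$ and to $f=x^3+a_2x^2+a_1x$. This pins down which monomials in $a_1$ survive and confirms that the middle term reads exactly as in~\eqref{lem:DPolyN3Char3:eqq1}.
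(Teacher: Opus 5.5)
The reduction modulo~$3$ is exactly how the paper intends this corollary (it gives no separate proof), and your justification of step~1 via the Sylvester determinant is fine. The gap is step~4: it cannot be completed, because \eqref{lem:DPolyN3Char3:eqq1} is false as printed.

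The coefficient-one summand that survives the reduction is $a_1^2a_2^2$, whereas the printed middle term is $a_0^2a_1^2$. These are different monomials, and no bookkeeping turns one into the other.

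The last summand $-4a_0^2a_2^3$ that you reduce is also a misprint, already in \eqref{lem:DPolyN3:eqq1}. Scaling the roots by $\mu$ multiplies $a_k$ by $\mu^{3-k}$ and $\D(f)$ by $\mu^{6}$. So every monomial $a_0^ia_1^ja_2^k$ of the discriminant satisfies $3i+2j+k=6$. This rules out both $a_0^2a_2^3$ (weight $9$) and $a_0^2a_1^2$ (weight $10$); the correct summand is $-4a_0a_2^3$.

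The cross-check you propose refutes the printed formula rather than confirming it:
\begin{itemize}
\item In characteristic~$3$ one has $x^3+x^2+x=x(x-1)^2$, so $\D(f)=0$. The printed right-hand side equals $2a_1^3=2\neq 0$.
\item For $f=x^3+x^2-1$, the integer cubic formula gives $-23$, i.e.\ $\D(f)=1$ in characteristic~$3$. The printed formula gives $2a_0^2a_2^3=2$.
\end{itemize}

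What your argument does prove, once the general cubic formula is taken in its correct form $-27a_0^2-4a_1^3+18a_0a_1a_2+a_1^2a_2^2-4a_0a_2^3$, is
\[
\D(f)=2a_1^3+a_1^2a_2^2+2a_0a_2^3\qquad(\Char(\F)=3).
\]
This is the version the paper actually uses later, in the proofs of Lemma~\ref{lem:DistinctN3CH3RK1NNIL} and Lemma~\ref{lem:DistinctN3CH3RK2}. There the displayed expressions have the shape $2(\cdot)^3+(\cdot)^2(\cdot)^2+2(\cdot)(\cdot)^3$, with the arguments playing the roles of $a_1$, $a_2$, $a_0$.

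So do not assert that the middle term will match. State explicitly that \eqref{lem:DPolyN3Char3:eqq1}, and the last term of \eqref{lem:DPolyN3:eqq1}, contain misprints. Then prove the corrected identity by your steps~1--3, which go through verbatim.
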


\begin{lemma}[Cf.~{\cite[Formula~1.35 in Chapter~12]{Gelfand1994}}]\label{lem:DiscN4FormulaCHN2}Let $f = x^4 + a_3 x^3 + a_2 x^2 + a_1 x + a_0$. Then 
\begin{multline}\label{lem:DiscN4FormulaCHN2:eqq1}
\D(f) = 256 a_0^3 - 27 a_1^4 + 144 a_0 a_1^2 a_2 - 128 a_0^2 a_2^2 - 4 a_1^2 a_2^3 + 16 a_0 a_2^4\\
 - 192 a_0^2 a_1 a_3 + 18 a_1^3 a_2 a_3 - 80 a_0 a_1 a_2^2 a_3 - 6 a_0 a_1^2 a_3^2 + 144 a_0^2 a_2 a_3^2\\
  + a_1^2 a_2^2 a_3^2 - 4 a_0 a_2^3 a_3^2 - 4 a_1^3 a_3^3 + 18 a_0 a_1 a_2 a_3^3 - 27 a_0^2 a_3^4.
\end{multline}
\end{lemma}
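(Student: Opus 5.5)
The statement is the explicit quartic discriminant formula cited from Gelfand--Kapranov--Zelevinsky, so I will derive it from Definition~\ref{def:PolyDiscriminant} rather than take it on trust. Since $f$ is monic of degree $4$, $a_4 = 1$, and $\D(f) = (-1)^{4\cdot 3/2}\Res_{4\,3}(f,f') = \Res_{4\,3}(f,f')$, where
\[
f' = 4x^3 + 3a_3x^2 + 2a_2x + a_1 .
\]
Both sides of~\eqref{lem:DiscN4FormulaCHN2:eqq1} are polynomials with integer coefficients in $a_0,\dots,a_3$: the left side is the $7\times 7$ Sylvester determinant~\eqref{eq:DefRes} with entries in $\mathbb Z[a_0,\dots,a_3]$. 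It therefore suffices to prove the identity in $\mathbb Z[a_0,a_1,a_2,a_3]$ and then specialize to $\F$ through the canonical ring map $\mathbb Z\to\F$. This reduction is what makes the formula valid in every characteristic, including $2$ and $3$, which the paper needs later.

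To prove the integer identity I will work over $\mathbb C$, or over the splitting field of the generic polynomial over $\mathbb Q(a_0,\dots,a_3)$. Write $f=\prod_{i=1}^4(x-x_i)$. By Corollary~\ref{cor:ResSplitPoly},
\[
\Res_{4\,3}(f,f') = \prod_{i=1}^4 f'(x_i) = \prod_i \prod_{j\ne i}(x_i-x_j) = \prod_{i<j}(x_i-x_j)^2 ,
\]
where the sign is $(-1)^{6}=1$; this is property $\D1$. So both sides are symmetric polynomials in the roots, and the task is to express $\prod_{i<j}(x_i-x_j)^2$ in the elementary symmetric functions, with $a_3=-e_1$, $a_2=e_2$, $a_1=-e_3$, $a_0=e_4$.

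The main obstacle is purely computational: identifying all $16$ coefficients of a weighted-homogeneous polynomial of degree $12$ in the roots (weight $i$ for $a_{4-i}$). I would avoid expanding the determinant blindly. Instead I would use property $\D4$ to shift $x\mapsto x - a_3/4$ over $\mathbb Q$, which reduces to the depressed quartic $x^4+px^2+qx+r$. The standard formula for it is
\[
256r^3-128p^2r^2+144pq^2r-27q^4+16p^4r-4p^3q^2 ,
\]
which can be checked directly from the $7\times7$ determinant, since many of its entries vanish.

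Finally I would substitute back $p,q,r$ as polynomials in $a_0,\dots,a_3$ and expand. As a check, the resulting weighted-homogeneous expression must agree with~\eqref{lem:DiscN4FormulaCHN2:eqq1}. I would confirm this on a few specializations, for instance $a_3=0$ (which returns the depressed formula), $f=x^4+a_0$ giving $256a_0^3$, and $f=x^3(x+a_3)$ giving $0$, and do the full expansion with a computer algebra check.
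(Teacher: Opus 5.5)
Your derivation is correct. It necessarily differs from the paper, which gives no proof of this lemma: the formula is quoted from Gelfand--Kapranov--Zelevinsky (Formula~1.35 of Chapter~12), a source whose setting is the complex numbers.

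The main thing your route buys is the passage to arbitrary characteristic, which the paper uses without comment. Corollary~\ref{cor:Disc2N4FormulaCH2} is exactly the reduction of~\eqref{lem:DiscN4FormulaCHN2:eqq1} modulo $2$, and Lemma~\ref{lem:DistinctN4CHN2RK2} applies the formula over any field with $\Char(\F)\neq 2$. Your observation that $\D(f)=\Res_{4\,3}(f,f')$ is a $7\times 7$ Sylvester determinant over $\mathbb Z[a_0,\dots,a_3]$ is precisely the justification needed there. The point is that it is taken with the \emph{formal} degrees $(4,3)$, so it still specializes correctly when the leading coefficient $4$ of $f'$ becomes $0$. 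The order of your steps also matters: the shift $x\mapsto x-a_3/4$ is only available because you perform it over $\mathbb Q$ before specializing.

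The price is that the substantive step is the back-substitution of $p=a_2-\frac{3}{8}a_3^2$, $q=a_1-\frac{1}{2}a_2a_3+\frac{1}{8}a_3^3$ and $r=a_0-\frac{1}{4}a_1a_3+\frac{1}{16}a_2a_3^2-\frac{3}{256}a_3^4$ into the depressed formula, followed by expansion into the sixteen monomials. Your spot checks are only consistency tests; you could add $f=x(x-1)(x-2)(x-3)$, where both sides equal $144$. They cannot determine all the coefficients. So the full expansion, by hand or by computer algebra, is an essential part of the proof rather than a check.
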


\begin{corollary}\label{cor:Disc2N4FormulaCH2}Assume that $\Char(\F) = 2$. If $f = x^4 + a_3 x^3 + a_2 x^2 + a_1 x + a_0$, then $\D(f) = \B^2(f)$ for $\B(f)$ defined by 
\begin{equation}\label{lem:Disc2N4FormulaCH2:eqq1}
\B(f) = a_1^2  + a_1 a_2 a_3 + a_0 a_3^2.
\end{equation}
\end{corollary}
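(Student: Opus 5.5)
The plan is to specialise the universal formula~\eqref{lem:DiscN4FormulaCHN2:eqq1} of Lemma~\ref{lem:DiscN4FormulaCHN2} to characteristic $2$. This is justified because $f$ is monic, so Definition~\ref{def:PolyDiscriminant} involves no genuine division. Hence $\D(f)$ is the image in $\F$ of an integer polynomial in $a_0,\ldots,a_3$, namely the Sylvester determinant~\eqref{eq:DefRes} evaluated at $a_4 = 1$. Therefore~\eqref{lem:DiscN4FormulaCHN2:eqq1}, being an identity in $\mathbb Z[a_0,a_1,a_2,a_3]$, remains valid after reduction modulo $2$. This is exactly the same mechanism as in Corollaries~\ref{cor:D2PolyN2Char2} and~\ref{cor:D2PolyN3Char2}.

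Next I reduce each coefficient of~\eqref{lem:DiscN4FormulaCHN2:eqq1} modulo $2$. The coefficients $256, 144, -128, -4, 16, -192, 18, -80, -6, 144, -4, -4, 18$ are all even, so the corresponding monomials vanish when $\Char(\F) = 2$. The only odd coefficients are $-27$ (twice) and $1$, which equal $1$ in $\F$. Therefore
\[
\D(f) = a_1^4 + a_1^2a_2^2a_3^2 + a_0^2a_3^4 .
\]

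Finally I expand $\B^2(f) = (a_1^2 + a_1a_2a_3 + a_0a_3^2)^2$. In characteristic $2$ the Frobenius map $x\mapsto x^2$ is additive, so all cross terms carry a factor $2$ and disappear. What remains is
\[
\B^2(f) = a_1^4 + a_1^2a_2^2a_3^2 + a_0^2a_3^4,
\]
which coincides with the expression for $\D(f)$ above.

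There is no real obstacle here. The only point needing care is the justification in the first step that the characteristic-free formula may be reduced modulo $2$. After that, the argument is a parity check of the sixteen coefficients.
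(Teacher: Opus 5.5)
Your proposal is correct and takes the same route the paper implicitly takes. You specialise the explicit quartic formula of Lemma~\ref{lem:DiscN4FormulaCHN2} to characteristic $2$, keep only the three odd-coefficient terms $a_1^4$, $a_1^2a_2^2a_3^2$, $a_0^2a_3^4$, and recognise their sum as $\B^2(f)$ via additivity of the Frobenius map; your reduction-mod-$2$ justification is sound but not strictly needed, since the lemma is already stated over an arbitrary field $\F$.
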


We will not use the explicit formula for the discriminant of the polynomial of the 5th degree, but we will use the following direct consequence from it:

\begin{corollary}[Corollary from {\cite[Formula~1.36 in Chapter~12]{Gelfand1994}}]\label{cor:Disc2N5FormulaCH2}
Assume that $\Char(\F) = 2$. If $f = x^5 + a_4x^4 + a_3 x^3 + a_2 x^2 + a_1 x + a_0$, then $\D(f) = \B^2(f)$ for $\B(f)$ defined by 
\begin{equation}\label{lem:Disc2N5FormulaCH2:eqq1}
\B(f) = a_0^2 + a_1 a_2^2 + a_0 a_2 a_3 + a_1 a_2 a_3 a_4 + a_0 a_3^2 a_4 + a_1^2 a_4^2.
\end{equation}
\end{corollary}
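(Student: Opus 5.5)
The plan is to reduce the explicit integer formula for the discriminant of a monic quintic, \cite[Formula~1.36 in Chapter~12]{Gelfand1994}, modulo $2$. We then compare the result with the expansion of $\B^2(f)$ for $\B(f)$ as in~\eqref{lem:Disc2N5FormulaCH2:eqq1}, exactly as Corollaries~\ref{cor:D2PolyN2Char2}, \ref{cor:D2PolyN3Char2} and \ref{cor:Disc2N4FormulaCH2} are obtained from Lemmas~\ref{lem:DPolyN2}, \ref{lem:DPolyN3} and \ref{lem:DiscN4FormulaCHN2}. Formula~1.36 is a polynomial in $a_0,\ldots,a_4$ with integer coefficients (put $a_5 = 1$). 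Its image in $\F[a_0,\ldots,a_4]$ for $\Char(\F) = 2$ is obtained by reducing each coefficient modulo $2$. It therefore suffices to prove the identity $\D(f) \equiv \B^2(f) \pmod 2$ in $\mathbb Z[a_0,\ldots,a_4]$.

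The right-hand side is easy to expand. In characteristic $2$ the Frobenius map is additive, so all cross terms vanish and
\[
\B^2(f) = a_0^4 + a_1^2a_2^4 + a_0^2a_2^2a_3^2 + a_1^2a_2^2a_3^2a_4^2 + a_0^2a_3^4a_4^2 + a_1^4a_4^4 .
\]
Hence the claim reduces to a single statement about Formula~1.36: among its monomials, exactly these six have odd coefficients, and every other monomial has an even coefficient. As a consistency check, assign weight $5-i$ to $a_i$, which is the degree of $a_i$ as an elementary symmetric function of the roots. Then $\D$ is isobaric of weight $20$, and every monomial of $\B$ has weight $10$: for instance $a_1a_2^2$ has weight $4+6$, and $a_0a_3^2a_4$ has weight $5+4+1$. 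So both sides live in the same finite space of isobaric polynomials, and only finitely many coefficients need to be checked.

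I expect the only real obstacle to be the bookkeeping of the coefficient parities in the long quintic formula, which has $59$ terms. The most error-prone point is making sure that no odd coefficient is missed, in particular for monomials that are not squares. I would carry this out term by term.

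As an independent cross-check I would also argue conceptually. Over $\mathbb Z$ we have $\D = V^2$ with $V = \prod_{i<j}(x_i - x_j)$. Modulo $2$, $V$ agrees with the symmetric polynomial $\sum_{i<j}\ldots$ arising from $V \equiv \prod_{i<j}(x_i + x_j)$. This gives a symmetric polynomial $\B$ with $\B^2 \equiv \D$, and it is unique since squaring is injective on $\mathbb F_2[a_0,\ldots,a_4]$. It then suffices to verify $\B$ on enough specializations: for example, split quintics over $\mathbb F_2(t)$, or the specializations setting all but two of the $a_i$ to zero. These determine the six coefficients of the weight-$10$ candidate and confirm~\eqref{lem:Disc2N5FormulaCH2:eqq1}.
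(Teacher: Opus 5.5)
Your main argument is how the paper obtains this corollary: reduce Gelfand's Formula~1.36 modulo $2$ and match its odd-coefficient monomials against the Frobenius expansion of $\B^2(f)$. The paper offers nothing beyond the citation, and the identity is indeed correct. Your optional cross-check is weaker than you state: setting all but two $a_i$ to zero cannot detect monomials in three or more distinct variables (e.g.\ $a_0a_2a_3$, $a_0a_3^2a_4$, $a_1a_2a_3a_4$), and the weight-$10$ space has twelve candidate monomials even with total degree at most $4$, not six. A rigorous conceptual check is $\prod_{i<j}(x_i+x_j)=s_{(4,3,2,1)}$ evaluated by the dual Jacobi--Trudi determinant, whose reduction mod $2$ (with $e_k\equiv a_{5-k}$) gives exactly the displayed $\B(f)$.
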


From the facts above we conclude that if $\Char(\F) = 2$ and $1 \le n \le 5$, then $\D$ is a square of another polynomial. We will use this polynomial and its properties to introduce matrix invariants which are not expressible through $\D$ (see, for example, Lemma~\ref{lem:DistinctN3CH2RK1NNIL} and Lemma~\ref{lem:DistinctN3CH2RK2} and further lemmas in the corresponding section). 

\begin{lemma}\label{lem:D2ExistsAndUniqueN2to5}Assume that $1 \le n \le 5$ and $\Char(\F) = 2$. Then there exists a unique polynomial $\B \in \F[a_{0}, \ldots, a_{n-1}]$ such that $\D(f) = \B^2(f)$ for all monic $f \in \F^{\le n}[x]$.
\end{lemma}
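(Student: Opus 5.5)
Existence and uniqueness can be proved separately, with existence handled case by case for $n=1,\ldots,5$.

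\emph{Existence.} For $n = 2, 3, 4, 5$ the required polynomial is the one given explicitly in Corollary~\ref{cor:D2PolyN2Char2}, Corollary~\ref{cor:D2PolyN3Char2}, Corollary~\ref{cor:Disc2N4FormulaCH2} and Corollary~\ref{cor:Disc2N5FormulaCH2}, respectively. Each of these corollaries reduces the characteristic-zero formula for $\D$ modulo $2$, and in characteristic $2$ the Frobenius map is additive, so $(\sum c_\alpha a^\alpha)^2 = \sum c_\alpha^2 a^{2\alpha}$. For $n = 1$ the discriminant of a monic linear polynomial is $1$, so we take $\B = 1$. The only care needed is the degree convention. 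The statement concerns monic $f \in \F^{\le n}[x]$, and we read this as monic of degree exactly $n$, with $\D = \D_n$ and coefficients $a_0,\ldots,a_{n-1}$. With that reading, $\D(f)=\B^2(f)$ holds as an identity of polynomials in $\F[a_0,\ldots,a_{n-1}]$, hence for every choice of coefficients in $\F$.

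\emph{Uniqueness.} Suppose $\B_1,\B_2 \in \F[a_0,\ldots,a_{n-1}]$ both satisfy $\B_1^2 = \D = \B_2^2$. If this equality holds as polynomials, then $(\B_1-\B_2)^2 = \B_1^2 - \B_2^2 = 0$ in characteristic $2$. Since $\F[a_0,\ldots,a_{n-1}]$ is an integral domain, this gives $\B_1 = \B_2$.

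The main obstacle is that the statement only says $\D(f)=\B^2(f)$ ``for all monic $f$''. If this is read as an equality of polynomial functions on $\F^n$, then over a small finite field it does not determine the polynomial. I would therefore interpret the identity as holding over every extension of $\F$, or equivalently as an identity in $\F[a_0,\ldots,a_{n-1}]$; this is the sense in which the corollaries are stated, since they come from formal identities. Under the function reading, uniqueness as a polynomial must be stated with a degree bound. In that case I would note that both candidates have degree in each variable small compared with $|\F|$, invoke the standard fact that a nonzero polynomial with $\deg_{a_i} < |\F|$ for every $i$ does not vanish identically on $\F^n$, and apply it to $\B_1+\B_2 = \B_1 - \B_2$. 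Under the formal reading no cardinality hypothesis is needed, since squaring is injective on the polynomial ring in characteristic $2$.
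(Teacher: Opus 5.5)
Your proof is correct and follows the paper's route. Existence comes from the explicit characteristic-$2$ formulas in Corollaries~\ref{cor:D2PolyN2Char2}--\ref{cor:Disc2N5FormulaCH2}, together with $\B=1$ for $n=1$. Uniqueness comes from $(\B_1-\B_2)^2=\B_1^2-\B_2^2=0$ in the integral domain $\F[a_0,\ldots,a_{n-1}]$. You also point out that this uniqueness argument treats $\D=\B^2$ as a formal polynomial identity rather than an equality of functions on $\F^n$, which over a small finite field would not determine $\B$; the paper leaves this implicit, and your clarification is valid.
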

\begin{proof}The existence of such a polynomial follows from Corollary~\ref{cor:D2PolyN2Char2}, Corollary~\ref{cor:D2PolyN3Char2}, Corollary~\ref{cor:Disc2N4FormulaCH2}, Corollary~\ref{cor:Disc2N5FormulaCH2} and from the definition of the discriminant for $n =1$. The uniqueness follows from the fact that if $\left(P_1\right)^2 = \left(P_2\right)^2$ for some $P_1, P_2 \in  \F[a_{0}, \ldots, a_{n-1}]$, then $(P_1 - P_2)^2 = 0$ and consequently $P_1 - P_2 = 0$ because the ring  $\F[a_{0}, \ldots, a_{n-1}]$ is a domain.
\end{proof}

\begin{remark}The statement of Lemma~\ref{lem:D2ExistsAndUniqueN2to5} is true for all $n$, but we do not need to establish it in the full generality in this paper.
\end{remark}

\begin{definition}Assume that $1 \le n \le 5$  and $\Char(\F) = 2$. We define a polynomial $\B \in \F[a_{0}, \ldots, a_{n-1}]$ which is defined for the monic polynomials belonging to $\F^{\le n}[x]$ as it is done in Corollary~\ref{cor:D2PolyN2Char2}, Corollary~\ref{cor:D2PolyN3Char2}, Corollary~\ref{cor:Disc2N4FormulaCH2}  and Corollary~\ref{cor:Disc2N5FormulaCH2}. For $n = 1$ we define $\B$ by $\B = 1$.

Lemma~\ref{lem:D2ExistsAndUniqueN2to5} implies that $\B$ is uniquely determined by the equality $\D = \B^2$.
\end{definition}

Let us list the properties of $\B$ following immediately from the definition of $\D$ (here we assume that $1 \le n \le 5$). 

\begin{enumerate}[label=\textsc{$\B$\arabic*.}, ref=\textsc{$\B$\arabic*}]
\item If $f  = (x - a_1)\ldots (x - a_n),$ then
$$
\B(a_1,\ldots, a_n)=\B(f) = \prod_{i < j} (a_i + a_j).
$$
\item Suppose that $f \in \F[x]$ is monic and $g(x) = f(x+c)$ for some $c \in \F$. Then $\B(g) = \B(f).$
\item $\B(f\cdot g) = \B(f) \Res_{m\, k} (f, g) \B(g)$ for all monic $f \in \F^{\le m}[x], g \in \F^{\le k}[x]$ such that $1 \le m \le 5$. 
\end{enumerate}

\vspace{\topsep}

\begin{definition}If $X \in \M_{m}(\F)$, then by $\D (X)$ we denote a discriminant of the characteristic polynomial of $X.$ It is called  \emph{a discriminant} of $X.$ In other words,
\[
\D(X) = \D(p_X(x)) = \D (\det (x I_m - X)).
\]
\end{definition}

It is clear that matrix discriminant is a polynomial of matrix coefficients. The following properties of the matrix discriminant are derived from the corresponding properties of the polynomial discriminant.

\begin{enumerate}[label=\textsc{M$\D$\arabic*.}, ref=\textsc{M$\D$\arabic*}, leftmargin=*]
\item\label{enum:DiscPropertyDiag} If $X = \diag(a_1, \ldots, a_m)$, then
\[
\D(X) = \prod_{1 \le i < j \le m} (a_i - a_j)^2.
\]
\item\label{enum:DiscPropertyProd} If $X \in \M_{m}, Y \in \M_{k}$, then
\[
\D(X\oplus Y) = \D\left(p_{X}(x)p_{Y}(x)\right) = \D(X)\cdot \D(Y) \cdot \left(\Res_{m\,k}(p_{X}(x),p_{Y}(x))\right)^2.
\]
\item\label{enum:DiscPropertyC} If $c \in \F$,  then $\D(X + cI_m) = \D(X).$
\item\label{enum:DiscPropertyT} If $X \in \M_{n}(\F),$ then $\D(X^t) = \D(X)$.
\item\label{enum:DiscPropertyS} $\D(M^{-1}XM) = \D(X)$ for all invertible matrices $M \in \M_m(\F).$
\item\label{enum:DiscPropertyH} $\D(\lambda X) = \lambda^{m(m-1)}\D(X)$ for all $\lambda \in \F.$
\end{enumerate}
\noindent In addition, the property~\ref{enum:DiscPropertyH} implies that:
\begin{enumerate}[resume, label=\textsc{M$\D$\arabic*.}, ref=\textsc{M$\D$\arabic*}, leftmargin=*]
\item\label{enum:DiscPropertyHP} $\D$ is a homogeneous polynomial of matrix coefficients of degree $m(m-1).$
\end{enumerate}

\begin{remark}
The above formulas show how the matrix discriminant of a matrix $X$ is expressed as a resultant of its characteristic polynomial $p_X$ and the derivative of $p_X$. Apart from that, is also possible to express $\D(X)$ through the traces of powers of $X$  (see~\cite{Parlett2002}). 
\end{remark}

\begin{lemma}\label{lem:N3CharPoly}Let $A \in \M_{2}(\F)$. Then 
\begin{equation}\label{eq:D2Formula}
\D(A) = 4\det(A) - \tr(A)^2.
\end{equation}
\end{lemma}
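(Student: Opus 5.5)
The plan is to compute $\D(A)$ from the definition of the matrix discriminant, $\D(A) = \D(p_A(x))$, and then apply Lemma~\ref{lem:DPolyN2} to the characteristic polynomial. For $A = (a_{i\,j}) \in \M_{2}(\F)$, formula~\eqref{eqqq:formulaChPoly} gives
\[
p_A(x) = x^2 - E_1(A)x + E_2(A).
\]
Here $E_1(A)$ is the sum of the diagonal entries, so $E_1(A) = \tr(A)$, and $E_2(A)$ is the single $2\times 2$ minor, so $E_2(A) = \det(A)$. One can also see this directly by expanding
\[
\det(xI_2 - A) = (x - a_{1\,1})(x - a_{2\,2}) - a_{1\,2}a_{2\,1},
\]
which avoids any reliance on the precise wording of the minors formula. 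In either case $p_A$ is monic of degree $2$, with $a_1 = -\tr(A)$ and $a_0 = \det(A)$ in the notation of Lemma~\ref{lem:DPolyN2}.

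Substituting into~\eqref{lem:DPolyN2:eqq1} gives
\[
\D(A) = 4a_0 - a_1^2 = 4\det(A) - (-\tr(A))^2 = 4\det(A) - \tr(A)^2,
\]
which is~\eqref{eq:D2Formula}.

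The only point needing care is the sign convention for $\D$. Lemma~\ref{lem:DPolyN2} is stated with $4a_0 - a_1^2$ rather than $a_1^2 - 4a_0$, which reflects the normalising factor $(-1)^{m(m-1)/2}$ in Definition~\ref{def:PolyDiscriminant}. Since the present statement is derived from that lemma, no independent sign check is needed; the result is simply consistent with it. The argument makes no assumption on $\Char(\F)$, because the computation is a polynomial identity over $\mathbb Z$.
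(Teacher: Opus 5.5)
Your argument is the paper's own one-line proof: you read off $p_A(x)=x^2-\tr(A)x+\det(A)$ from~\eqref{eqqq:formulaChPoly} and substitute into Lemma~\ref{lem:DPolyN2}. As a deduction from that lemma it is valid. Your last paragraph, however, is wrong, and the sign check you declined to make actually fails. With the Sylvester convention~\eqref{eq:DefRes}, for $f=x^2+a_1x+a_0$ and $f'=2x+a_1$,
\[
\Res_{2\,1}(f,f')=\begin{vmatrix}1&a_1&a_0\\2&a_1&0\\0&2&a_1\end{vmatrix}=4a_0-a_1^2,
\]
so $4a_0-a_1^2$ is the bare resultant. The normalising factor of Definition~\ref{def:PolyDiscriminant}, which equals $-1$ for $m=2$, is exactly what turns it into $\D_2(f)=a_1^2-4a_0$. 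Property~$\D$1 confirms this: for $f=(x-x_1)(x-x_2)$ it gives $(x_1-x_2)^2=(x_1+x_2)^2-4x_1x_2=a_1^2-4a_0$.

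Hence Lemma~\ref{lem:DPolyN2}, and with it the statement you are proving, has the wrong sign whenever $\Char(\F)\neq 2$. For $A=\diag(1,0)$, property~\ref{enum:DiscPropertyDiag} gives $\D(A)=1$, while $4\det(A)-\tr(A)^2=-1$. Once the input is corrected, your computation establishes $\D(A)=\tr(A)^2-4\det(A)$. The two versions coincide only in characteristic $2$, which is the case the paper actually relies on later (Remark~\ref{rem:CaseN2}). Instead of asserting that the printed sign ``reflects'' the normalisation, you should have tested it against Definition~\ref{def:PolyDiscriminant} or property~\ref{enum:DiscPropertyDiag}. That one-line check exposes the error.
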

\begin{proof}Indeed, if $A \in \M_{2}(\F)$, then $p_A(x) = x^2 - \tr(A)x + \det(A)$ by the formula~\eqref{eqqq:formulaChPoly}. Then, by applying Lemma~\ref{lem:DPolyN2} to $p_A$, we obtain~\eqref{eq:D2Formula}.
\end{proof}

\begin{lemma}\label{lem:DiscDirSum}Let $A \in \M_{(n + m)}(\F), A_0 \in \M_n(\F)$ and $A' \in \M_{m}(\F)$ be such that $A'$ is strictly upper triangular and
\[
A = \left[\begin{array}{ c | c }
    A_0 & * \\
    \hline
    0 & A'
  \end{array}\right],
\]
where $*$ is an $n\times m$ matrix. Then for every $B_0 \in \M_{n}(\F)$ and $b_1,\ldots, b_m \in \F$ 
\begin{equation}\label{lem:DiscDirSum:eqq1}
\D\left(A + B\right) = \D(A_0 + B_0)  \prod_{1 \le k \le m} p_{(A_0 + B_0)}^2 (b_k) \prod_{1 \le i < j \le m} (b_i - b_j)^2,
\end{equation}
where $B = \left(B_0 \oplus \diag(b_1, \ldots, b_k)\right).$
\end{lemma}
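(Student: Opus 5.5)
Since $A'$ is strictly upper triangular and $B$ is block diagonal, the matrix $A+B$ is block upper triangular:
\[
A + B = \left[\begin{array}{ c | c }
    A_0 + B_0 & * \\
    \hline
    0 & A' + \diag(b_1,\ldots,b_m)
  \end{array}\right].
\]
The plan is to compute the characteristic polynomial of $A+B$ and then apply the multiplicativity of the discriminant. The determinant of a block upper triangular matrix is the product of the determinants of the diagonal blocks, so
\[
p_{A+B}(x) = p_{A_0+B_0}(x)\cdot p_{A'+\diag(b_1,\ldots,b_m)}(x).
\]
The block $A' + \diag(b_1,\ldots,b_m)$ is upper triangular with diagonal entries $b_1,\ldots,b_m$. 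Hence its characteristic polynomial is $\prod_{k=1}^m (x - b_k)$. (Here I read the statement's $\diag(b_1,\ldots,b_k)$ as $\diag(b_1,\ldots,b_m)$, which is evidently what is meant.)

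Next we use property~\textsc{$\D$3}, $\D(fg)=\D(f)\D(g)\Res_{n\,m}(f,g)^2$, with $f = p_{A_0+B_0}$ and $g=\prod_k (x-b_k)$. The first factor $\D(f)$ is $\D(A_0+B_0)$ by definition. The factor $\D(g)$ equals $\prod_{i<j}(b_i-b_j)^2$ by property~\textsc{$\D$1}.

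For the resultant, it is simplest to write $g$ first. By Corollary~\ref{cor:ResSplitPoly}, $\Res_{m\,n}(g,f) = \prod_{k} f(b_k) = \prod_k p_{A_0+B_0}(b_k)$. Swapping the arguments of a resultant changes it at most by the sign $(-1)^{nm}$, which disappears after squaring. Alternatively, one can apply \textsc{$\D$3} directly to the product in the order $g\cdot f$, since $\D(fg)$ is symmetric in $f$ and $g$. Multiplying the three factors gives exactly~\eqref{lem:DiscDirSum:eqq1}.

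The only potential obstacle is bookkeeping: the order of arguments in the resultant and the degree subscripts in \textsc{$\D$3} and Corollary~\ref{cor:ResSplitPoly}. Both are harmless because of the square. Alternatively, one could cite~\ref{enum:DiscPropertyProd} after observing that $\D$ depends only on the characteristic polynomial, which coincides with that of $(A_0+B_0)\oplus\diag(b_1,\ldots,b_m)$.
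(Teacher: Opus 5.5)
Your proposal is correct and follows essentially the same route as the paper. Both arguments use the block triangular structure to write $p_{A+B}=p_{A_0+B_0}\prod_k(x-b_k)$, then apply the product formula for the discriminant (the paper cites its matrix form \ref{enum:DiscPropertyProd}), the diagonal formula for $\D(\diag(b_1,\ldots,b_m))$, and the evaluation of the resultant at the roots $b_k$. Your explicit handling of the argument order in the resultant, where the sign $(-1)^{nm}$ disappears after squaring, is slightly more careful than the paper's direct appeal to \ref{enum:ResProduct}.
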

\begin{proof}Indeed, since $A'$ is strictly upper triangular and $*$ does not affect on $p_{(A+B)}$, then
\begin{equation}\label{lem:DiscDirSum:eq1}
p_{(A+B)} = p_{(A_0 + B_0)} \oplus \diag(b_1, \ldots, b_m)
\end{equation}
and consequently
\begin{multline*}
\D\left(A + B\right) = \D(p_{(A + B)})
\overset{\eqref{lem:DiscDirSum:eq1}}{=\joinrel=\joinrel=}\D \left(p_{(A_0 + B_0)} \oplus p_{\diag(b_1,\ldots, b_m)}\right)\\
 \overset{\mbox{\scriptsize Property~\ref{enum:DiscPropertyProd}}}{=\joinrel=\joinrel=\joinrel=\joinrel=\joinrel=\joinrel=\joinrel=\joinrel=} \D(A_0 + B_0) \Res_{n\,m}^2(p_{(A_0 + B_0)}, p_{\diag(b_1,\ldots, b_m)}) D(\diag(b_1,\ldots, b_m))\\
= \D(A_0 + B_0) \Res_{n\,m}^2(p_{(A_0 + B_0)}, \prod_{1 \le k \le m} (x - b_k) D(\diag(b_1,\ldots, b_m)\\
\overset{\mbox{\scriptsize Property~\ref{enum:DiscPropertyDiag}}}{=\joinrel=\joinrel=\joinrel=\joinrel=\joinrel=\joinrel=\joinrel=} \D(A_0 + B_0) \Res_{n\,m}^2(p_{(A' + B_0)}, \prod_{1 \le k \le m} (x - b_k) \prod_{1 \le i < j \le n} (b_i - b_j)^2\\
\overset{\mbox{\scriptsize Property~\ref{enum:ResProduct}}}{=\joinrel=\joinrel=\joinrel=\joinrel=\joinrel=\joinrel=\joinrel=} \D(A_0 + B_0)   \prod_{1 \le k \le n} p_{(A_0 + B_0)}^2 (b_k) \prod_{1 \le i < j \le n} (b_i - b_j)^2.
\end{multline*}
\end{proof}

\begin{definition}Assume that $\Char(\F) = 2$ and $1 \le m \le 5$. If $X \in \M_{m}(\F)$, then $\B(X)$ is defined by
\[
\B(X) = \B(p_X(x)) = \B(\det (x I_m - X)).
\]
\end{definition}

The following properties of $\B$ are derived from the corresponding properties of $\D$.

\begin{enumerate}[label=\textsc{M$\B$\arabic*.}, ref=\textsc{M$\B$\arabic*}, leftmargin=*]
\item If $X = \diag(a_1, \ldots, a_m)$, then
\[
\B(X) =\prod_{i < j} (a_i + a_j).
\]
\item If $X \in \M_{m}, Y \in \M_{k}$, then
\[
\B(X\oplus Y) = \B\left(p_{X}(x)p_{Y}(x)\right) = \B(X) \cdot \left(\Res_{m\, k}(p_{X}(x),p_{Y}(x))\right) \cdot \B(Y).
\]
\item\label{enum:D2PropertyC} If $c \in \F$,  then $\B(X + cI_m) = \B(X).$
\item\label{enum:D2PropertyS} $\B(M^{-1}XM) = \B(X)$ for all invertible matrices $M \in \M_m(\F).$
\item\label{enum:D2PropertyH} $\B(\lambda X) = \lambda^{\frac{m(m-1)}{2}}\B(X)$ for all $\lambda \in \F.$
\end{enumerate}

\begin{lemma}\label{lem:PresDEqPresD2}
Assume that $\Char(\F) = 2$. Let $T\colon \M_{n}(\F) \to \M_{n}(\F)$ be a linear map. Then $T$ preserves $\D$ if and only if $T$ preserves $\B$.\end{lemma}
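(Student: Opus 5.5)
Implicitly $1\le n\le 5$ here, since $\B$ is only defined in that range. The plan is to compare $\D\circ T$ and $\B\circ T$ as polynomials, not merely as functions. Both are polynomials in the entries of $X$, and Lemma~\ref{lem:D2ExistsAndUniqueN2to5} gives the identity of polynomials $\D = \B^2$ on $\M_n(\F)$: $\D(X)$ and $\B(X)$ are obtained by substituting the coefficients of $p_X$, which are polynomials in the $x_{i\,j}$, into $\D = \B^2$. Composing with the linear map $T$ keeps everything polynomial, so $\D\circ T = (\B\circ T)^2$ as polynomials. I will read ``$T$ preserves $P$'' as $P(T(X)) = P(X)$ identically, i.e.\ as an identity of polynomials. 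If the paper means an identity of functions, then one adds the assumption $|\F|$ large, namely greater than $\deg \D = n(n-1)$, so that function equality implies polynomial equality.

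\emph{($\Leftarrow$)} Suppose $\B\circ T = \B$. Squaring gives $\D\circ T = (\B\circ T)^2 = \B^2 = \D$.

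\emph{($\Rightarrow$)} Suppose $\D\circ T = \D$. Then $(\B\circ T)^2 = \B^2$ in $\F[x_{1\,1},\ldots,x_{n\,n}]$. Since $\Char(\F) = 2$, we have $(\B\circ T)^2 - \B^2 = (\B\circ T - \B)^2$ by the Frobenius identity. The polynomial ring is a domain, so $\B\circ T - \B = 0$, exactly as in the uniqueness part of Lemma~\ref{lem:D2ExistsAndUniqueN2to5}. Notably, no sign ambiguity $\pm\B$ arises, because $-1 = 1$ in characteristic $2$.

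The main obstacle is the case where preservation is understood pointwise over a finite field. There, $(\B(T(X)) + \B(X))^2 = 0$ for every $X$ still gives $\B(T(X)) = \B(X)$ for every $X$, because $\F$ is a field. So pointwise preservation transfers in both directions with no cardinality assumption at all. I would therefore present the argument as this pointwise computation, $\D(T X) = \D(X) \iff (\B(TX)+\B(X))^2 = 0 \iff \B(TX) = \B(X)$, which works under either interpretation. The polynomial version then follows by applying the same computation in the ring of polynomials.
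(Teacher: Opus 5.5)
Your proof is correct and is essentially the paper's: the paper notes that in characteristic $2$ the conditions $a^2=b^2$ and $a=b$ are equivalent for $a,b\in\F$, and applies this pointwise with $a=\B(T(X))$ and $b=\B(X)$, using $\D=\B^2$. Your polynomial-identity version is also valid but not needed, since, as you observe, the pointwise argument requires no assumption on $|\F|$.
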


\begin{proof}Indeed, if $\Char (\F) = 2$, then the conditions $a^2 = b^2$ are $a = b$ equivalent for $a, b \in \F$. 
\end{proof}

\section{The radical of a function}
\label{sec:FunctionRad}

The property~\ref{enum:DiscPropertyC} of the matrix discriminant motivates us to use the notion of the radical of function which was introduced by Waterhouse in~\cite{Waterhouse1983}.

\begin{definition}[Cf.~{\cite[text at the beginning of Section~1]{Waterhouse1983}}]\label{def:radical}Let $\F$ be a field, $V$ be a finite-dimensional vector space over $\F$, and let $f$ be a function from $V$ to $\F$. The \emph{radical} of $f$, denoted by $\rad (f)$ is a subset of $V$ defined by
\[
\rad (f) = \{\mathbf w \mid f(\mathbf v + \lambda \mathbf w) = f(\mathbf v)\;\;\mbox{for all}\;\; \mathbf v \in V,\;\; \lambda \in \F\}.
\]
\end{definition}

The following properties of the radical are used throughout this paper.

\begin{lemma}\label{lem:KernelContainedInRad}If $T\colon V \to V$ is a linear map preserving $f$, then $\Ker T \subseteq \rad(f)$.
\end{lemma}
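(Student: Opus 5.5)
The plan is to take $\mathbf w \in \Ker T$ and check directly that it satisfies the defining condition of $\rad(f)$ in Definition~\ref{def:radical}. Here ``$T$ preserves $f$'' is understood as $f(T(\mathbf v)) = f(\mathbf v)$ for all $\mathbf v \in V$.

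Fix arbitrary $\mathbf v \in V$ and $\lambda \in \F$. By linearity of $T$ and $T(\mathbf w) = 0$, we get $T(\mathbf v + \lambda \mathbf w) = T(\mathbf v) + \lambda T(\mathbf w) = T(\mathbf v)$. Applying the preservation property twice then gives the chain
\[
f(\mathbf v + \lambda \mathbf w) = f\bigl(T(\mathbf v + \lambda \mathbf w)\bigr) = f\bigl(T(\mathbf v)\bigr) = f(\mathbf v).
\]
Since $\mathbf v$ and $\lambda$ were arbitrary, $\mathbf w \in \rad(f)$, and therefore $\Ker T \subseteq \rad(f)$.

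There is no real obstacle here. The only point to watch is that preservation is meant pointwise on all of $V$, rather than merely on $\rad(f)$ or up to a scalar. With that reading the argument above is complete, and it uses no finite-dimensionality and no properties of $f$ beyond preservation.
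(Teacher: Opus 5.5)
Your proof is correct and follows the same approach as the paper. Both take $\mathbf w \in \Ker T$ and use linearity together with preservation to obtain $f(\mathbf v + \lambda \mathbf w) = f(T(\mathbf v) + \lambda T(\mathbf w)) = f(T(\mathbf v)) = f(\mathbf v)$ for all $\mathbf v \in V$ and $\lambda \in \F$.
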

\begin{proof}Indeed, assume that $\mathbf w \in \Ker T.$ Then
\begin{equation*}
f(\mathbf v + \lambda\mathbf w)= f(T(\mathbf v + \lambda\mathbf w))\\
= f(T(\mathbf v) + \lambda T(\mathbf w)) = f(T(\mathbf v) + \mathbf 0) = f(\mathbf v)
\end{equation*}
for all $\mathbf v \in V$ and $\lambda \in \F$. Therefore, $\mathbf w \in \rad(f)$.
\end{proof}

\begin{remark}The lemma above is implicitly proved in~\cite{Waterhouse1983} (see~\cite[Proposition~1]{Waterhouse1983}).
\end{remark}

\begin{lemma}[Cf.~{\cite[Proposition~2]{Waterhouse1983}}]\label{lem:RadicalInvariant}
Let $V$ be a vector space over $\F$, $f\colon V \to \F$ be an arbitrary function. If $f$ has trivial radical, then the set of all linear maps preserving $f$ form a group. If $f$ has nontrivial radical $\rad(f)$, let $\pi_{rad}\colon V \to V/\rad(f)$ be the projection, and write $f = f_{\rad} \circ \pi_{\rad}$. Then the linear map $T \colon V \to V$ preserves $f$ if and only if
\begin{enumerate}[label=(\alph*), ref=(\alph*)]
\item\label{lem:RadicalInvariant:part1} $T(\rad(f)) \subseteq \rad(f)$  and
\item\label{lem:RadicalInvariant:part2} the map $T_{\rad} \colon V / \rad(f) \to V / \rad(f)$ that is given by $T_{\rad}(\pi_{\rad}(\mathbf v)) = \pi_{\rad}(T(\mathbf v))$ (and is well defined because of~\ref{lem:RadicalInvariant:part1}) preserves $f_{\rad}$.
\end{enumerate}
\end{lemma}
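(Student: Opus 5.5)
We first check that $\rad(f)$ is a linear subspace of $V$, so that the quotient $V/\rad(f)$ and the projection $\pi_{\rad}$ make sense. Closure under scalars is immediate from the definition. For sums, if $\mathbf w_1,\mathbf w_2\in\rad(f)$, then for all $\mathbf v$ and $\lambda$ we have $f(\mathbf v+\lambda\mathbf w_1+\lambda\mathbf w_2)=f(\mathbf v+\lambda\mathbf w_1)=f(\mathbf v)$. Hence $f$ is constant on cosets of $\rad(f)$, and $f_{\rad}$ is well defined by $f_{\rad}(\pi_{\rad}(\mathbf v))=f(\mathbf v)$.

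\emph{Trivial radical.} Let $T$ preserve $f$. By Lemma~\ref{lem:KernelContainedInRad}, $\Ker T\subseteq\rad(f)=\{0\}$, so $T$ is injective. Since $V$ is finite-dimensional, $T$ is bijective. Its inverse also preserves $f$, because $f(T^{-1}\mathbf v)=f(T(T^{-1}\mathbf v))=f(\mathbf v)$. Compositions of maps preserving $f$ clearly preserve $f$, and the identity preserves $f$. Thus the set of such maps is a subgroup of $GL(V)$.

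\emph{Nontrivial radical, necessity.} Let $T$ preserve $f$. We will show $T(\rad(f))\subseteq\rad(f)$; this is the main obstacle, since $T$ need not be surjective. Take $\mathbf w\in\rad(f)$. For a vector $\mathbf u$ in the image of $T$, say $\mathbf u=T\mathbf v$, we get
\[
f(\mathbf u+\lambda T\mathbf w)=f(T(\mathbf v+\lambda\mathbf w))=f(\mathbf v+\lambda\mathbf w)=f(\mathbf v)=f(\mathbf u).
\]
This proves the required identity only for $\mathbf u\in\operatorname{Im}T$. To handle arbitrary $\mathbf u\in V$, we use the preceding observation that $f$ factors through $V/\rad(f)$. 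Choose a complement $W$ with $V=W\oplus\rad(f)$. Since $\Ker T\subseteq\rad(f)$, the restriction of $T$ to $W$ is injective. The plan is then to show that $\operatorname{Im}T+\rad(f)=V$, by comparing dimensions after passing to the quotient. If this sum equals $V$, write any $\mathbf u$ as $T\mathbf v+\mathbf r$ with $\mathbf r\in\rad(f)$. Then
\[
f(\mathbf u+\lambda T\mathbf w)=f(T\mathbf v+\lambda T\mathbf w)=f(T\mathbf v)=f(\mathbf u),
\]
so $T\mathbf w\in\rad(f)$, as required.

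The dimension claim is the delicate point. If it fails, I would instead follow Waterhouse's argument in~\cite[Proposition~2]{Waterhouse1983}. That argument first proves that the map induced on $V/\rad(f)$ is injective, using that $f_{\rad}$ has trivial radical together with Lemma~\ref{lem:KernelContainedInRad} applied on the quotient. Injectivity on the quotient then gives bijectivity, which is exactly the statement $\operatorname{Im}T+\rad(f)=V$. Alternatively, the paper may read~\ref{lem:RadicalInvariant:part1} as a hypothesis needed to make the rest meaningful, in which case this step is not required.

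\emph{Nontrivial radical, sufficiency.} Assume~\ref{lem:RadicalInvariant:part1}. Then $T_{\rad}$ is well defined, and a direct check gives $f_{\rad}\circ\pi_{\rad}\circ T=f_{\rad}\circ T_{\rad}\circ\pi_{\rad}$. Since $\pi_{\rad}$ is surjective, $T$ preserves $f$ if and only if $T_{\rad}$ preserves $f_{\rad}$. This gives the converse direction and also condition~\ref{lem:RadicalInvariant:part2} in the forward direction.
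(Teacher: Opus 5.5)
You handle the trivial-radical case and the sufficiency direction correctly. Your reduction of (a) to the claim $\operatorname{Im}T+\rad(f)=V$ is also valid. The gap is that this claim is never proved, and it is the whole content of the necessity of (a).

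Injectivity of $T|_W$ only gives $\dim T(W)=\dim V-\dim\rad(f)$. To conclude $T(W)+\rad(f)=V$ you also need $T(W)\cap\rad(f)=\{\mathbf 0\}$, and nothing in your argument supplies this. Your fallback does not repair it. The ``map induced on $V/\rad(f)$'' is well defined only once $T(\rad(f))\subseteq\rad(f)$ is known, which is exactly what you are trying to prove, so that route is circular. Reading (a) as a hypothesis is not available either, since the lemma asserts (a) as a consequence of $T$ preserving $f$.

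The missing idea is a one-line strengthening of Lemma~\ref{lem:KernelContainedInRad}: $T^{-1}(\rad(f))\subseteq\rad(f)$. If $T\mathbf x\in\rad(f)$, then for all $\mathbf v$ and $\lambda$,
\[
f(\mathbf v+\lambda\mathbf x)=f(T\mathbf v+\lambda T\mathbf x)=f(T\mathbf v)=f(\mathbf v),
\]
so $\mathbf x\in\rad(f)$.

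Equivalently, the kernel of $\phi=\pi_{\rad}\circ T\colon V\to V/\rad(f)$ lies in $\rad(f)$. Rank--nullity then gives $\dim\phi(V)\ge\dim V-\dim\rad(f)=\dim(V/\rad(f))$. Hence $\phi$ is onto and $\dim\Ker\phi=\dim\rad(f)$, so $\Ker\phi=T^{-1}(\rad(f))=\rad(f)$. This yields (a) directly, and the surjectivity of $\phi$ is precisely your claim $\operatorname{Im}T+\rad(f)=V$.

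In your notation, the needed fact $T(W)\cap\rad(f)=\{\mathbf 0\}$ follows at once from $T^{-1}(\rad(f))\subseteq\rad(f)$ and $W\cap\rad(f)=\{\mathbf 0\}$. The paper does not reprove this lemma; it cites Waterhouse's Proposition~2. The argument above uses that $V$ is finite-dimensional, as in the paper's definition of the radical.
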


\section{Supplementary facts from the theory of polynomials and linear algebra}
\label{sec:SupLinAlg}

In order to have the possibility to convert the set-theoretic condition ``$P(\mathbf x) = Q(\mathbf x)$ for all $\mathbf x \in \F^n$'' for given polynomials $P$ and $Q$ to the algebraic condition on the coefficients of the polynomial, we use the following general facts regarding polynomials over a field. 

\begin{lemma}[{Cf.~\cite[IV, \S 1, Corollary 1.7.]{Lang2002}}]Let $\F$ be an infinite field. Let $f$ be a
polynomial in $n$ variables over $\F$. If $f$ defines the zero function on $\F^{n}$, then $f = 0$.
\end{lemma}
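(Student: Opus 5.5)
We argue by induction on the number of variables $n$. The base case $n = 1$ is the classical fact that a nonzero univariate polynomial of degree $d$ over a field has at most $d$ roots. To justify it, note that if $a$ is a root of $g \in \F[x]$, then division with remainder by $x - a$ gives $g = (x - a)h$ with $\deg h = \deg g - 1$. Since $\F$ is a field, and hence an integral domain, any other root of $g$ must be a root of $h$. Iterating this bounds the number of roots by $\deg g$. Consequently, if $f \in \F[x]$ vanishes on all of the infinite set $\F$, then $f$ has infinitely many roots, which forces $f = 0$.

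For the induction step, suppose $n \ge 2$ and $f \in \F[x_1, \ldots, x_n]$ defines the zero function on $\F^n$. We view $f$ as a polynomial in $x_n$ with coefficients in $\F[x_1, \ldots, x_{n-1}]$:
\[
f = \sum_{k = 0}^{d} g_k(x_1, \ldots, x_{n-1})\, x_n^k .
\]
Fix an arbitrary point $(c_1, \ldots, c_{n-1}) \in \F^{n-1}$. Then the univariate polynomial
\[
\sum_{k} g_k(c_1, \ldots, c_{n-1})\, x_n^k \in \F[x_n]
\]
vanishes at every point of $\F$. By the base case, every one of its coefficients is zero, so $g_k(c_1, \ldots, c_{n-1}) = 0$ for all $k$. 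Since the point was arbitrary, each $g_k$ defines the zero function on $\F^{n-1}$. By the induction hypothesis, each $g_k = 0$, and therefore $f = 0$.

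The only step requiring any care is the base case, specifically the root-count bound. It uses that $\F$ has no zero divisors and that division by the monic polynomial $x - a$ is possible. The infinitude of $\F$ is used exactly once, to produce more roots than the degree allows. We record for later use that the same argument yields a finite-field version: if $|\F|$ exceeds the degree of $f$ in each variable, then $f$ vanishing on $\F^n$ still forces $f = 0$. This variant is presumably what the paper needs under the hypothesis $|\F| > n(n-1)$.
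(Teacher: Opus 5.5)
Your proof is correct and follows essentially the same route as the paper's source. The paper gives no proof of its own and simply cites Lang's Corollary~1.7, which Lang proves by exactly this induction on the number of variables, with the at-most-$\deg$-roots bound in one variable as the base case. Your closing remark about the finite-field variant is also correct; it is the paper's next cited lemma (Lang's Corollary~1.8).
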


\begin{lemma}[{Cf.~\cite[IV, \S 1, Corollary 1.8]{Lang2002}}]Let $\F$ be a finite field with $q$ elements. Let $f$ be a
polynomial in $n$ variables over $\F$ such that the degree of $f$ in each variable
is less than $q$. If $f$ defines the zero function on $\F^{n}$, then $f = 0$.
\end{lemma}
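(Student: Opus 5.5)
We argue by induction on the number of variables $n$. For $n = 1$, write $f = c_0 + c_1 x + \ldots + c_d x^d$ with $d \le q - 1$. If $f$ vanishes at every point of $\F$, then $f$ has at least $q$ distinct roots. A nonzero polynomial of degree at most $q - 1$ has at most $q - 1$ roots (since $\F[x]$ is a domain, each root splits off a linear factor). Hence $f = 0$.

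For the inductive step, suppose $n \ge 2$ and regard $f$ as a polynomial in $x_n$ with coefficients in $\F[x_1, \ldots, x_{n-1}]$:
\[
f = \sum_{k=0}^{d} g_k(x_1, \ldots, x_{n-1})\, x_n^k, \qquad d \le q - 1 .
\]
Each $g_k$ has degree less than $q$ in each of $x_1, \ldots, x_{n-1}$, since every monomial of $g_k$ arises from a monomial of $f$. Fix an arbitrary point $(a_1, \ldots, a_{n-1}) \in \F^{n-1}$. The one-variable polynomial $f(a_1, \ldots, a_{n-1}, x_n)$ has degree less than $q$ in $x_n$ and vanishes on all of $\F$. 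By the case $n = 1$, all its coefficients $g_k(a_1, \ldots, a_{n-1})$ are zero.

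Since the point was arbitrary, each $g_k$ defines the zero function on $\F^{n-1}$. Because $g_k$ satisfies the degree hypothesis, the induction hypothesis gives $g_k = 0$ as a polynomial for every $k$, and hence $f = 0$.

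The only point needing care is that the degree bound is inherited both by the coefficients $g_k$ and by the specialization in $x_n$; everything else is routine.
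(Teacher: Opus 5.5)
Your proof is correct. It is essentially the argument of the cited source (Lang, IV, \S 1, Corollary~1.8): induction on the number of variables, with the one-variable root count as the base case and specialization of the first $n-1$ variables to kill each coefficient $g_k$. The paper itself only quotes this result and gives no proof.
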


\begin{corollary}\label{DegFLessZero}Let $\F$ be an arbitrary field. Let $f$ be a
polynomial in $n$ variables over $\F$ such that either $\F$ is infinite or the degree of $f$ in each variable
is less than $|\F|$. If $f$ defines the zero function on $\F^{n}$, then $f = 0$.
\end{corollary}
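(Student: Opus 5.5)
The statement is Corollary~\ref{DegFLessZero}. It follows by splitting into the two cases covered by the two preceding lemmas from~\cite{Lang2002}.

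If $\F$ is infinite, the hypothesis that $f$ defines the zero function on $\F^n$ is exactly the hypothesis of the first lemma (Corollary~1.7 of~\cite[IV, \S 1]{Lang2002}). That lemma gives $f = 0$ directly, and no degree assumption is needed.

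If $\F$ is finite, the disjunction in the hypothesis forces the second alternative. Hence $\deg_{x_i} f < |\F| = q$ for every variable $x_i$. Together with $f$ vanishing on all of $\F^n$, this is precisely the hypothesis of the second lemma (Corollary~1.8 of~\cite[IV, \S 1]{Lang2002}), which again gives $f = 0$.

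If one prefers not to rely on the citations, both lemmas admit the same self-contained proof by induction on $n$.
\begin{itemize}
\item For $n = 1$: a nonzero univariate polynomial of degree $d$ has at most $d$ roots. In the infinite case it cannot vanish at infinitely many points. In the finite case $d < q$, so it cannot vanish at all $q$ points.
\item For $n > 1$: write $f = \sum_k g_k(x_1,\ldots,x_{n-1})\, x_n^k$, where $k$ ranges over degrees less than $q$ in the finite case.
\item Fix any $(a_1,\ldots,a_{n-1}) \in \F^{n-1}$. The univariate polynomial $\sum_k g_k(a_1,\ldots,a_{n-1})\, x_n^k$ vanishes on all of $\F$, so by the base case all its coefficients vanish. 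Thus each $g_k(a_1,\ldots,a_{n-1}) = 0$.
\item So each $g_k$ vanishes on $\F^{n-1}$. Each $g_k$ still satisfies the degree bound in the remaining variables, so the induction hypothesis gives $g_k = 0$, and therefore $f = 0$.
\end{itemize}

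There is no real obstacle here. The only point needing care is that, in the finite case, the degree bound is inherited by the coefficient polynomials $g_k$, which is what allows the induction to close.
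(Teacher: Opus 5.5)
Your proposal is correct and follows the paper's approach: the paper gives no separate proof and treats this corollary as an immediate consequence of the two preceding lemmas from \cite{Lang2002}, via the same case split on whether $\F$ is infinite. Your optional self-contained induction on $n$ is also sound, since the bound on the degree in each variable carries over to the coefficient polynomials $g_k$.
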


\begin{corollary}\label{DegFGLessEqual}Let $f, g$ be a
polynomials in $n$ variables over $\F$ such that either $\F$ is infinite or the degree of $f$ and $g$ in each variable
is less than $|\F|$. If $f$ and $g$ define the same function on $\F^{n}$, then $f = g$.
\end{corollary}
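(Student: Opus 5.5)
Apply Corollary~\ref{DegFLessZero} to the difference $h = f - g$. The first step is to check that $h$ satisfies its hypotheses. Being a difference of two polynomials in $n$ variables over $\F$, $h$ is again such a polynomial. Its degree in each variable $x_i$ is at most $\max(\deg_{x_i} f, \deg_{x_i} g)$. Hence, in the case where $\F$ is finite, the assumption that $f$ and $g$ have degree less than $|\F|$ in each variable gives the same bound for $h$. In the case where $\F$ is infinite, no degree condition is needed.

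The second step is to observe that $h$ defines the zero function on $\F^n$. Evaluation at a point $\mathbf x \in \F^n$ is a ring homomorphism $\F[x_1,\ldots,x_n] \to \F$, so $h(\mathbf x) = f(\mathbf x) - g(\mathbf x)$. This equals $0$ for every $\mathbf x$, because by assumption $f$ and $g$ define the same function.

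Corollary~\ref{DegFLessZero} then yields $h = 0$ as a polynomial, that is, $f = g$. There is no real obstacle. The only point needing care is the per-variable degree bound on $h$ in the finite case, and that holds because subtraction cannot raise the degree in any variable.
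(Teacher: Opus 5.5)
Your proof is correct and is exactly the intended argument. The paper gives no separate proof because the corollary follows immediately from Corollary~\ref{DegFLessZero} applied to $f-g$, which is what you do, including the check that subtraction does not raise the degree in any variable.
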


\begin{lemma}\label{lem:ManyPolyDensityArg}
Let $\mathcal S \subseteq \F[x_1, \ldots, x_n]$ and $0 \neq Q \in \F[x_1,\ldots, x_n]$  be such that $\deg(P) + \deg(Q) < |\F|$ for all $P \in \mathcal S$. If 
\begin{equation}\label{lem:ManyPolyDensityArg:eqq1}
\left(Q(\mathbf x) \neq 0\right) \Rightarrow \left(\forall P \in \mathcal S\colon P(\mathbf x) = 0 \right)\;\;\mbox{for all}\;\;\mathbf x \in \F^n,
\end{equation}
  then 
\begin{equation}\label{lem:ManyPolyDensityArg:eqq2}
P(\mathbf x) = 0\;\;\mbox{for all}\;\;\mathbf x \in \F^n, P \in \mathcal S\;\;\mbox{(i.e., $\mathcal S = \{0\}$)}.
\end{equation}
\end{lemma}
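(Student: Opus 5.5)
The idea is to multiply each $P\in\mathcal S$ by $Q$ and show that the product vanishes identically as a polynomial. Then, since $\F[x_1,\ldots,x_n]$ is a domain and $Q\neq 0$, we get $P=0$.

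Fix $P\in\mathcal S$ and set $R=PQ\in\F[x_1,\ldots,x_n]$. By hypothesis~\eqref{lem:ManyPolyDensityArg:eqq1}, for every $\mathbf x\in\F^n$ either $Q(\mathbf x)=0$ or $P(\mathbf x)=0$. Hence $R(\mathbf x)=0$ for all $\mathbf x\in\F^n$, so $R$ defines the zero function on $\F^n$.

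Next we check the degree condition needed for Corollary~\ref{DegFLessZero}. If $P=0$ there is nothing to prove, so assume $P\neq0$. Then $\deg R=\deg P+\deg Q<|\F|$. The degree of $R$ in each single variable is at most its total degree, so it is also less than $|\F|$. If $\F$ is infinite the hypothesis is automatic. In either case Corollary~\ref{DegFLessZero} gives $R=0$ as a polynomial.

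Finally, $PQ=0$ with $Q\neq 0$ in the integral domain $\F[x_1,\ldots,x_n]$ forces $P=0$. Since $P\in\mathcal S$ was arbitrary, $\mathcal S\subseteq\{0\}$, and in particular $P(\mathbf x)=0$ for all $\mathbf x$ and all $P\in\mathcal S$, which is~\eqref{lem:ManyPolyDensityArg:eqq2}.

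There is no real obstacle here. The only point needing care is that the hypothesis bounds the total degree, and this has to be passed to the per-variable bound that Corollary~\ref{DegFLessZero} requires.
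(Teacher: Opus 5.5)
Your proof is correct and is essentially the paper's argument, which runs by contradiction: if $P\neq 0$, then $PQ\neq 0$ has degree below $|\F|$, so Corollary~\ref{DegFLessZero} gives a point where $PQ$, and hence both $P$ and $Q$, is nonzero, contradicting the hypothesis. You run the same steps directly through $PQ\equiv 0$ and the domain property, and your explicit passage from the total-degree bound to the per-variable bound is a detail the paper leaves implicit.
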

\begin{proof}Indeed, if $0 \neq P \in \mathcal S$, then $PQ \neq 0$ and $\deg(PQ) = \deg(P) + \deg(Q) \le |\F|$ by the assumptions of the lemma. Hence, there is $\mbf x \in \F^n$ such that $PQ(\mathbf x) \neq 0$ by Corollary~\ref{DegFLessZero}. This implies that $Q(\mathbf x) \neq 0$ and $P(\mbf x) \neq 0$, which contradicts~\eqref{lem:ManyPolyDensityArg:eqq1}. Therefore, the equality~\eqref{lem:ManyPolyDensityArg:eqq2} holds for all $\mathbf x \in \F^n, P \in \mathcal S.$
\end{proof}

As we mentioned in the introduction, the proof of the main theorem involves in particular matrices of rank one. For the convenience, the following representation of such matrices is used.

\begin{definition}[{\cite[Definition~2.14]{Yurkov2025}}]\label{def:JabMat}
Given $\mbf a, \mbf b \in \F^n,$ we denote
\[
J(\mbf a, \mbf b) = \begin{pmatrix}
\mbf a_1\begin{pmatrix}
\mbf b_1\\
\vdots\\
\mbf b_n\\
\end{pmatrix} & \cdots & \mbf a_n\begin{pmatrix}
\mbf b_1\\
\vdots\\
\mbf b_n\\
\end{pmatrix}
\end{pmatrix}.
\]
i.e., $(J(\mbf a,\mbf b))_{i\,j} = \mbf b_i \mbf a_j,$ $1 \le i, j \le n.$
\end{definition}

\begin{remark}
It is possible to use the language of tensor algebra  to discuss about the matrices of rank 1 as it done in certain papers (e.g. in~\cite{Botta1983}). However, it requires the explicit specification of correspondence between tensors and matrices.
\end{remark}

Let us provide the main properties of matrices having the form $J(a,b)$ for some $a, b\in \F^n$. The proofs of the properties~\ref{prop:J1} and~\ref{prop:J2} are trivial. The properties~\ref{prop:JABUNiqExpr}, \ref{prop:RankOneExpression} and~\ref{prop:JNilpotentCrit} are proven in~\cite{Yurkov2025}. 

\begin{enumerate}[label=\textsc{J\arabic*.}, ref=\textsc{J\arabic*}]
\item\label{prop:J0} $\rk \left(J(\mbf a, \mbf b)\right) \le 1$ for all $\mbf a, \mbf b \in \F^n$.
\item\label{prop:J1} $J(\alpha \mbf a, \mbf b) = \alpha J(\mbf a, \mbf b)$  and $J(\mbf a,\beta \mbf b) = \beta J(\mbf a,\mbf b)$ for all $\mbf a, \mbf b \in \F^n$ and $\alpha, \beta \in \F.$
\item\label{prop:J2} $J(\mbf a,\mbf b+\mbf c) = J(\mbf a,\mbf b) + J(\mbf a,\mbf c)$ and $J(\mbf a + \mbf b, \mbf c) = J(\mbf a, \mbf c) + J(\mbf b,\mbf c)$ for all $\mbf a, \mbf b, \mbf c \in \F^n.$
\item\label{prop:JABUNiqExpr} If $J(\mbf a,\mbf b) = J(\mbf a',\mbf b')$ for some $\mbf a, \mbf b, \mbf a', \mbf b' \in \F^n$, then either $J = O_n$ or $\mbf b' = \frac{1}{\alpha}\mbf b$ for some nonzero $\alpha \in \F.$
\item\label{prop:RankOneExpression} Let $X \in \M_n(\F)$ and $0 \neq \mathbf v \in \F^n$. Then $\colsp(X) = \langle\mathbf v\rangle$ if and only if $X = J(\mathbf a,\mathbf v)$ for some nonzero $a \in \F^n.$
\item\label{prop:JNilpotentCrit} $J(\mbf a,\mbf b)$ is a nilpotent matrix if and only if $(\mbf a, \mbf b) = 0.$

\end{enumerate}

The following properties of certain kinds of rank one matrices are also used.

\begin{lemma}\label{lem:MatRkOneConstDiagDecomp} If $C \in \M_{n}(\F)$ is such that $\rk(C) = 1$ and $c_{i\,i} = c_{j\,j} \neq 0$ for all $1 \le i, j \le n$, then there are $0\neq c, c_1, \ldots, c_n \in \F$ such that
\begin{equation}\label{lem:MatRkOneConstDiagDecomp:eqq1}
c_{i\,j} = c c_{i} c^{-1}_{j}\;\;\mbox{for all}\;\;1 \le i, j \le n.
\end{equation}
\end{lemma}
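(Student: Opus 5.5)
Since $\rk(C) = 1$, I would first invoke property~\ref{prop:RankOneExpression} (or simply the elementary description of rank-one matrices) to write $C = J(\mbf a, \mbf b)$ for some nonzero $\mbf a, \mbf b \in \F^n$, so that $c_{i\,j} = \mbf b_i \mbf a_j$ for all $1 \le i, j \le n$. The hypothesis on the diagonal then reads $\mbf b_i \mbf a_i = d$ for every $i$, where $d := c_{1\,1} = \ldots = c_{n\,n} \neq 0$. In particular every $\mbf a_i$ and every $\mbf b_i$ is nonzero, which is the key point: no coordinate of either vector can vanish.

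Next I would solve for $\mbf b_i$ in terms of $\mbf a_i$: $\mbf b_i = d\,\mbf a_i^{-1}$. Substituting, $c_{i\,j} = \mbf b_i \mbf a_j = d\, \mbf a_i^{-1} \mbf a_j$. Setting $c := d$ and $c_i := \mbf a_i^{-1}$ (all nonzero), we get $c_{i\,j} = c\, c_i\, c_j^{-1}$, which is exactly~\eqref{lem:MatRkOneConstDiagDecomp:eqq1}.

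There is essentially no obstacle; the only point requiring care is justifying the factorization $C = J(\mbf a,\mbf b)$. If one prefers not to rely on~\ref{prop:RankOneExpression}, one can argue directly: since $\rk(C)=1$, $\colsp(C) = \langle \mbf v\rangle$ for some nonzero $\mbf v$, so each column $j$ equals $\mbf a_j \mbf v$ for some scalar $\mbf a_j$, giving $c_{i\,j} = \mbf v_i \mbf a_j$ with $\mbf b = \mbf v$. After that, the nonvanishing of the diagonal forces all coordinates to be invertible and the computation above finishes the proof.
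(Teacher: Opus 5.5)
Your proposal is correct and is essentially the paper's proof: factor $C = J(\mbf a, \mbf v)$ via \ref{prop:RankOneExpression}, use the constant nonzero diagonal to see that every coordinate of both vectors is invertible, and eliminate one vector in terms of the other. The only difference is cosmetic. The paper solves $\mbf a_i = c\,\mbf v_i^{-1}$ and takes $c_i = \mbf v_i$, while you solve for $\mbf b_i$ and take $c_i = \mbf a_i^{-1}$; with $\mbf b = \mbf v$ this differs from the paper's choice only by the harmless overall factor $d^{-1}$.
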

\begin{proof}Let $c \in F$ be the common value of the entries belonging to the main diagonal of $C$, i.e., $c$ is such that 
\begin{equation}\label{lem:MatRkOneConstDiagDecomp:eq2}
c = c_{i\,i} \neq 0\;\;\mbox{for all}\;\;1 \le i \le n.
\end{equation}
Since $\rk(C) = 1$, then $\colsp(C) = \langle\mathbf v\rangle$ for some $0 \neq \mathbf v \in \F^n$. Hence, there is $0 \neq \mbf a \in \F^n$ such that
\begin{equation}\label{lem:MatRkOneConstDiagDecomp:eq1}
C = J(\mathbf a,\mathbf v).
\end{equation}
The equality~\eqref{lem:MatRkOneConstDiagDecomp:eq1} implies in particular that
\begin{equation}\label{lem:MatRkOneConstDiagDecomp:eq3}
\mathbf a_{i}\mathbf v_{i} = c_{i\,i} \overset{\eqref{lem:MatRkOneConstDiagDecomp:eq2}}{=\joinrel=\joinrel=} c\;\;\mbox{for all}\;\;1 \le i \le n.
\end{equation}
Thus, since $c \neq 0$, then $\mbf a_{i}, \mbf v_{i} \neq 0$ for all $1 \le i \le n$. This allows to rewrite~\eqref{lem:MatRkOneConstDiagDecomp:eq3} as follows
\begin{equation}\label{lem:MatRkOneConstDiagDecomp:eq4}
\mathbf a_{i} =  c \mathbf v^{-1}_{i}\;\;\mbox{for all}\;\;1 \le i \le n.
\end{equation}
Now, by setting 
\begin{equation}\label{lem:MatRkOneConstDiagDecomp:eq5}
c_i = \mathbf v_{i}\;\;\mbox{for all}\;\;1 \le i \le n
\end{equation}
we obtain that $c$ and $c_1, \ldots, c_n$ satisfy the condition~\eqref{lem:MatRkOneConstDiagDecomp:eqq1}. Indeed,
\begin{multline*}
c_{i\,j} = \left(J(\mathbf a,\mathbf v)\right)_{i\,j} \overset{\mbox{\scriptsize Definition~\ref{def:JabMat}}}{=\joinrel=\joinrel=\joinrel=\joinrel=\joinrel=\joinrel=\joinrel=\joinrel=} \mbf a_{j} \mbf v_{i} = \mbf v_{i} \mbf a_{j}\\
\overset{\eqref{lem:MatRkOneConstDiagDecomp:eq4}}{=\joinrel=\joinrel=}  \mbf v_{i} \left(c \mbf v^{-1}_{j}\right) \overset{\eqref{lem:MatRkOneConstDiagDecomp:eq5}}{=\joinrel=\joinrel=} c  c_{i} c_{j}^{-1}\;\;\mbox{for all}\;\;1 \le i, j \le n.
\end{multline*}
\end{proof}

\begin{lemma}\label{lem:RK1Sim} Let $\F$ be an arbitrary field and $J \in \M_{n}(\F)$ be a rank-one matrix. Then
\begin{enumerate}[label=(\alph*), ref=(\alph*)]
\item $J$ is nilpotent if and only if $J$ is similar to $E_{1\,2}$.
\item $J$ is non-nilpotent if and only if $J$ is similar to $cE_{1\,1}$ for some $0 \neq c \in \F$.
\end{enumerate} 
\end{lemma}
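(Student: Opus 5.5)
Since $\rk(J) = 1$, I will write $J = J(\mbf a, \mbf v)$ with nonzero $\mbf a, \mbf v \in \F^n$, using property~\ref{prop:RankOneExpression} and taking $\mbf v$ to span $\colsp(J)$. In matrix language this is $J = \mbf v\mbf a^t$, so $J^2 = (\mbf a, \mbf v)J$. Hence $J$ is nilpotent exactly when $(\mbf a,\mbf v) = 0$, which is also property~\ref{prop:JNilpotentCrit}. The plan is to build, in each of the two cases, an explicit basis of $\F^n$ in which $J$ takes the required form. The ``only if'' directions of (a) and (b) follow from this construction; the ``if'' directions are immediate, because similarity preserves nilpotency, $E_{1\,2}^2 = 0$, and $(cE_{1\,1})^k = c^kE_{1\,1} \neq 0$ for $c \neq 0$.

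\emph{Non-nilpotent case.} Put $c = (\mbf a, \mbf v) \neq 0$. Then $J\mbf v = \mbf v(\mbf a^t\mbf v) = c\mbf v$, while $\Ker J = \{\mbf x : (\mbf a, \mbf x) = 0\}$ is a hyperplane, since $\mbf a \neq 0$. Because $(\mbf a,\mbf v) \neq 0$, the vector $\mbf v$ does not lie in this hyperplane. So $\F^n = \langle \mbf v\rangle \oplus \Ker J$. Choosing a basis $\mbf v, \mbf w_2, \ldots, \mbf w_n$ with $\mbf w_i \in \Ker J$ and letting $S$ be the matrix with these columns gives $S^{-1}JS = cE_{1\,1}$.

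\emph{Nilpotent case.} Here $(\mbf a, \mbf v) = 0$, so $\mbf v \in \Ker J$. Pick $\mbf u$ with $(\mbf a, \mbf u) = 1$, which is possible since $\mbf a \neq 0$; then $J\mbf u = \mbf v$. Since $\mbf v$ lies in the hyperplane $\Ker J$ and $\mbf u$ does not, I can extend $\mbf v$ to a basis $\mbf v, \mbf w_3, \ldots, \mbf w_n$ of $\Ker J$. Then $\mbf v, \mbf u, \mbf w_3, \ldots, \mbf w_n$ is a basis of $\F^n$. With these vectors as the columns of $S$, in this order, $J$ sends the second basis vector to the first and kills all the others, so $S^{-1}JS = E_{1\,2}$.

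The only point needing care is the ordering of the basis in the nilpotent case, so that the result is $E_{1\,2}$ and not $E_{2\,1}$, together with the dimension count showing that the chosen vectors form a basis. Both checks are routine, so I do not expect a real obstacle. Nothing depends on the field: no eigenvalue has to be adjoined, because $c = (\mbf a,\mbf v)$ already lies in $\F$.
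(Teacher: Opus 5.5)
Your proof is correct and follows the same route as the paper. In both cases the conjugating matrix has as its columns a spanning vector $\mbf v$ of $\colsp(J)$, a preimage of $\mbf v$ (nilpotent case only) and a basis of $\Ker J$. The only difference is cosmetic: you detect nilpotency and the eigenvalue $c$ through the factorization $J = \mbf v\mbf a^t$ and the identity $J^2 = (\mbf a,\mbf v)J$, whereas the paper uses $J\mbf v \in \langle \mbf v\rangle$ together with rank--nullity.
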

\begin{proof}It is clear that if $J$ is similar to $E_{1\,2}$, then it is nilpotent, and if $J$ is similar to $cE_{1\,1}$ for some $0 \neq c \in \F$, then $J$ is non-nilpotent. Thus, it is suffices to show the following:
\begin{enumerate}[label=Claim~(\roman*), ref=(\roman*), leftmargin=*]
\item\label{lem:RK1Sim:claim1} If $J$ is nilpotent, then $J$ is similar to $E_{1\,2}$.
\item\label{lem:RK1Sim:claim2} If $J$ is non-nilpotent, then $J$ is similar to $cE_{1\,1}$ for some $0 \neq c \in \F$.
\end{enumerate}

\paragraph{Proof of Claim~\ref{lem:RK1Sim:claim1}} Let $0 \neq \mbf v$ be such that $\colsp (J) = \langle v \rangle$.  Since $J$ is nilpotent, then 
\begin{equation}\label{lem:RK1Sim:eq12}
J\mbf v = \mbf 0
\end{equation}
and consequently $\mbf v \in \ker J$. Let $\mbf v'$ be such that
\begin{equation}\label{lem:RK1Sim:eq11}
J\mbf v' = \mbf v.
\end{equation}

By rank-nullity theorem, $\dim(\ker J) = n - 1$. Let us extend $\mbf v$ to a basis $\mbf v^{(1)} = \mbf v, \mbf v^{(2)}, \ldots, \mbf v^{(n-1)}$ of $\ker J$. The equality~\eqref{lem:RK1Sim:eq11} implies that the vectors $\mbf v', \mbf v^{(1)}, \ldots, \mbf v^{(n-1)}$ are linearly independent. Therefore, these vectors form a basis of $\F^n$. 

Let $A \in \M_{n}(\F)$ be a matrix such that 
\begin{equation}\label{lem:RK1Sim:eq1}
A\mbf e^{(1)} = \mbf v^{(1)},\; A\mbf e^{(2)} = \mbf v',\;\mbox{and}\; A\mbf e^{(i)} = A\mbf v^{(i-2)}\;\;\mbox{for all}\;\;3 \le i \le n.
\end{equation}
Since $\mbf v', \mbf v^{(1)}, \ldots, \mbf v^{(n-1)}$ form a basis of $\F^n$, then $A$ is invertible. Let $J' = A^{-1}JA$. Then
\begin{equation*}
J'\mbf e^{(1)} = A^{-1}JA\mbf e^{(1)} \overset{\eqref{lem:RK1Sim:eq1}}{=\joinrel=\joinrel=} A^{-1}J\mbf v \overset{\eqref{lem:RK1Sim:eq12}}{=\joinrel=\joinrel=} \mbf 0,
\end{equation*}
\begin{equation*}
J'\mbf e^{(2)} = A^{-1}JA\mbf e^{(2)} \overset{\eqref{lem:RK1Sim:eq1}}{=\joinrel=\joinrel=} A^{-1}J\mbf v' \overset{\eqref{lem:RK1Sim:eq11}}{=\joinrel=\joinrel=} A^{-1}\mbf v \overset{\eqref{lem:RK1Sim:eq1}}{=\joinrel=\joinrel=} \mbf e^{(1)},
\end{equation*}
and
\begin{equation*}
J'\mbf e^{(i)} = A^{-1}JA\mbf e^{(i)} \overset{\eqref{lem:RK1Sim:eq1}}{=\joinrel=\joinrel=} A^{-1}J\mbf v^{(i-2)} \overset{\mbf v^{(i-2)} \in \ker J}{=\joinrel=\joinrel=\joinrel=\joinrel=\joinrel=\joinrel=} \mbf 0\;\;\mbox{for all}\;\;3 \le i \le n.
\end{equation*}
Therefore, $J' = E_{1\,2}$  and Claim~\ref{lem:RK1Sim:claim1} is established.
 
\paragraph{Proof of Claim~\ref{lem:RK1Sim:claim2}} Let $0 \neq \mbf v$ be such that $\colsp (J) = \langle v \rangle$.  Since $J$ is non-nilpotent, then 
\begin{equation}\label{lem:RK1Sim:eq13}
J\mbf v = c \mbf v\;\;\mbox{for some}\;\;0 \neq c \in \F.
\end{equation}
By rank-nullity theorem, $\dim(\ker J) = n - 1$. Let $\mbf u^{(1)}, \ldots, \mbf u^{(n-1)}$ be a basis of $\ker J$. The equality~\eqref{lem:RK1Sim:eq13} implies that $\mbf v, \mbf u^{(1)}, \ldots, \mbf u^{(n-1)}$ are linearly independent. Therefore, these vectors form a basis of $\F^n$. 

Let $A \in \M_{n}(\F)$ be a matrix such that 
\begin{equation}\notag
A\mbf e^{(1)} = \mbf v^{(1)},\;\mbox{and}\;  A\mbf e^{(i)} = A\mbf u^{(i-1)}\;\;\mbox{for all}\;\;2 \le i \le n.
\end{equation}
Since $\mbf v, \mbf u^{(1)}, \ldots, \mbf u^{(n-1)}$ form a basis of $\F^n$, then $A$ is invertible. Let $J' = A^{-1}JA$. By the same calculations as for the previous case we obtain that $J' = cE_{1\,1}$ which establishes Claim~\ref{lem:RK1Sim:claim2}.
\end{proof}

The property~\ref{enum:DiscPropertyC} of the matrix discriminant implies that if the pair the two matrices are differ by a scalar matrix, then they are indistinguishable by $\D$ and any invariants derived from it. Accordingly, we consider the following class of matrices extending the class of all rank one matrices.

\begin{definition}\label{def:Rset}By $\R_n(\F) \subseteq \M_{n}(\F)$ we denote the set of all matrices of the form $J + cI_n$, where $\rk(J) \le 1$ and $c \in \F$.
\end{definition}

\begin{lemma}\label{lem:RsetProp}If $M \in \M_{n}(\F)$ is an invertible matrix and $c \in \F$, then\linebreak $M\R_n(\F)M^{-1} + cI_n = \R_n(\F)$.
\end{lemma}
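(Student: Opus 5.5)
We show the two inclusions $M\R_n(\F)M^{-1} + cI_n \subseteq \R_n(\F)$ and $\R_n(\F) \subseteq M\R_n(\F)M^{-1} + cI_n$, using Definition~\ref{def:Rset} and the fact that conjugation by an invertible matrix preserves rank.

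For the first inclusion, take an arbitrary element of $M\R_n(\F)M^{-1} + cI_n$. By Definition~\ref{def:Rset} it has the form $M(J + dI_n)M^{-1} + cI_n$ with $\rk(J)\le 1$ and $d\in\F$. Since $MI_nM^{-1} = I_n$, this equals $MJM^{-1} + (c+d)I_n$. Conjugation by the invertible $M$ preserves rank, so $\rk(MJM^{-1}) = \rk(J) \le 1$. Hence the element belongs to $\R_n(\F)$.

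For the reverse inclusion, take $X = J + dI_n \in \R_n(\F)$ with $\rk(J) \le 1$, and set $J' = M^{-1}JM$, so that $\rk(J') \le 1$. Then
\[
X = M\bigl(J' + (d - c)I_n\bigr)M^{-1} + cI_n ,
\]
and $J' + (d-c)I_n \in \R_n(\F)$. Therefore $X \in M\R_n(\F)M^{-1} + cI_n$, and the two inclusions give the stated equality.

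The argument has no real obstacle. The only facts used are that scalar matrices commute with $M$, so they are fixed under conjugation, and that rank is invariant under similarity. In the second inclusion one must choose the preimage carefully, absorbing the shift $-c$ into the scalar part $(d-c)I_n$, so that the set equality holds and not only containment.
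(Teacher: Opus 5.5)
Your proof is correct and takes essentially the same route as the paper: both rest on rank being invariant under similarity and on scalar matrices being fixed by conjugation. The only difference is organizational. The paper first splits off the translation invariance $\R_n(\F)+cI_n=\R_n(\F)$, then proves $M\R_n(\F)M^{-1}=\R_n(\F)$ and leaves the reverse inclusion as ``similar''. You handle the shift and the conjugation together and write the preimage for the reverse inclusion explicitly.
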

\begin{proof}It is clear that $\R_n(\F) + cI_n = \R_n(\F)$. Hence, it is sufficient to show only that $M\R_n(\F)M^{-1} = \R_{n}(\F)$. 
Let $X \in \R_n(\F)$. By the definition, there are $J \in \M_{n}(\F)$ and $c \in \F$ such that $\rk(J) \le 1$ and $X = J + cI_n$. Then 
\begin{equation}\label{lem:RsetProp:eq1}
MXM^{-1} = MJM^{-1} + cI_n.
\end{equation}
The similarity transformation preserves the rank of a matrix 
(see, for example~\cite[Theorem 3.4]{Friedberg2002-wu}). Therefore, the equality~\eqref{lem:RsetProp:eq1} implies that 
\begin{equation}\label{lem:RsetProp:eq2}
MXM \in \R_{n}(\F).
\end{equation}
The inclusion~\eqref{lem:RsetProp:eq2} holds for all $X \in \R_n(\F)$. This establishes the inclusion $M\R_n(\F)M^{-1} \subseteq \R_{n}(\F)$. In order to prove that $M\R_n(\F)M^{-1} = \R_{n}(\F)$ we need to establish the reverse inclusion, which is obtained similarly.
\end{proof}

\begin{lemma}\label{lem:RankOneScalarFromExtToGroundField}Assume that $\F' \supseteq \F$ are fields and $n \ge 3$. Let $B \in \M_{n}(\F)$. If $B \in \R_n(\F'),$ then $B \in \R_n(\F).$
\end{lemma}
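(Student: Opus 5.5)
Suppose $B \in \R_n(\F')$, so that $B = J + cI_n$ with $J \in \M_n(\F')$, $\rk(J) \le 1$ and $c \in \F'$. The aim is to show that $c$ can be chosen in $\F$. Once this is done, $J = B - cI_n$ has entries in $\F$. Its rank, computed over $\F$, agrees with its rank over $\F'$, because rank is determined by the vanishing of minors, and these vanishing conditions do not depend on the field extension. Hence $\rk(J) \le 1$ over $\F$, and $B \in \R_n(\F)$.

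If $J = O_n$, then $B = cI_n$ is a scalar matrix with entries in $\F$. Its diagonal entry $c$ therefore lies in $\F$, and we are done. So assume from now on that $\rk(J) = 1$.

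The main step is to recover $c$ from the entries of $B$ using $n \ge 3$. Write $J = J(\mbf a, \mbf v)$ as in Definition~\ref{def:JabMat}, with $\mbf a, \mbf v \in (\F')^n$. For $i \neq j$, the off-diagonal entry $b_{i\,j} = \mbf v_i \mbf a_j$ lies in $\F$. The diagonal entries satisfy $b_{i\,i} - c = \mbf v_i \mbf a_i$.

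Consider the $2\times 2$ minors of $B - cI_n$ that use exactly one diagonal entry. Take distinct indices $i, j, k$, which exist because $n \ge 3$. The submatrix of $J$ with rows $i, k$ and columns $j, i$ has determinant
\[
\mbf v_i\mbf a_j \cdot \mbf v_k \mbf a_i - \mbf v_i \mbf a_i \cdot \mbf v_k \mbf a_j = 0 ,
\]
which says exactly
\[
(b_{i\,i} - c)\, b_{k\,j} = b_{i\,j}\, b_{k\,i}.
\]
So whenever $b_{k\,j} \neq 0$ for some pair $j \neq k$ with $i \notin \{j,k\}$, we obtain
\[
c = b_{i\,i} - \frac{b_{i\,j} b_{k\,i}}{b_{k\,j}} \in \F .
\]
Since $n \ge 3$, for every off-diagonal pair $(k,j)$ there is an index $i$ distinct from both. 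It follows that $c \in \F$ as soon as $B$ has at least one nonzero off-diagonal entry.

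The remaining case, which I expect to be the main obstacle, is when all off-diagonal entries of $B$ vanish. Then $\mbf v_i \mbf a_j = 0$ for all $i \neq j$, and $J$ is a rank-one diagonal matrix. This forces $J = d E_{i\,i}$ for a single index $i$: if two distinct indices $i,k$ had $\mbf v_i\mbf a_i \neq 0$ and $\mbf v_k\mbf a_k \neq 0$, then $\mbf v_i \neq 0$ and $\mbf a_k \neq 0$, so $\mbf v_i\mbf a_k \neq 0$, which is a contradiction.

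Consequently $B = \diag(c, \ldots, c + d, \ldots, c)$. Because $n \ge 3$, at least two diagonal entries equal $c$, so $c$ is a diagonal entry of $B$ and lies in $\F$. (This is precisely where $n \ge 3$ is needed; for $n = 2$ every diagonal matrix lies in $\R_2(\F')$, and the argument breaks down.) This completes the proof.
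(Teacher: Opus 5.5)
Your proof is correct and follows the paper's route. It uses the same case split ($J=O_n$, $J$ diagonal, and $B$ with a nonzero off-diagonal entry), and the last case is settled by the same $2\times 2$ minor through a third index, which is where $n\ge 3$ genuinely enters.

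Your closing parenthetical is mistaken, though. In the diagonal case $n\ge 2$ already suffices, since $c$ still appears as a diagonal entry of $B$. The lemma fails for $n=2$ only through the off-diagonal case: for example, $\begin{psmallmatrix}0&-1\\1&0\end{psmallmatrix}$ lies in $\R_2(\mathbb{C})$ but not in $\R_2(\mathbb{R})$.
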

\begin{proof}Let  $B \in \M_{n}(\F)$, $c' \in \F'$ and $J'\in \M_{n}(\F')$ be such that
\begin{equation}\label{lem:RankOneScalarFromExtToGroundField:eqq1}
B =  J' + c'I_n
\end{equation}
and $\rk(J') \le 1$. If $\rk(J')=0$, then $J' = O_n$. Hence, the equality~\eqref{lem:RankOneScalarFromExtToGroundField:eqq1} and the condition $B \in \M_{n}(\F)$ imply that $c' \in \F$. Therefore, $J \in \M_{n}(\F)$ and $c' \in \F$ which implies that $B \in \R_{n}(\F)$.

Assume now that $\rk(J') = 1$. Note that this equality~\eqref{lem:RankOneScalarFromExtToGroundField:eqq1} implies that all entries of $J'$ not lying on its main diagonal belong to $\F$. There are two cases possible:
\begin{enumerate}
  \item All entries of $J'$ not lying on the main diagonal are equal to zero.
  \item There are $1 \le i \neq j \le n$ such that $J'_{i\,j} \neq 0$.
\end{enumerate}
We consider them separately.

\paragraph{All entries of $J'$ not lying on the main diagonal are equal to zero} Since $\rk\left(J'\right) = 1$, then this assumption implies that there is only one non-zero element in $J'$ lying on the main diagonal. Since $n \ge 3$, from this and~\eqref{lem:RankOneScalarFromExtToGroundField:eqq1} we conclude that $c' \in \F$. Therefore, $J' \in \M_{n}(\F)$. Thus, all the summands in the right-hand side of~\eqref{lem:RankOneScalarFromExtToGroundField:eqq1} belong to $\M_{n}(\F)$ consequently  $B \in \R_{n}(\F)$.

\paragraph{There are $1 \le i \neq j \le n$ such that $J'_{i\,j} \neq 0$} Since $n \ge 3,$ there is $1 \le k \le n$ such that $i\,j \neq k$. Since $J'$ is a rank-one matrix, then every $2\times 2$ minor of $J'$ is equal to zero. In particular, $\det\left(J''[i, k| j, k]\right) = 0.$ By expressing it through elements of $J'',$ we obtain that
\[
J'_{i\,j}J'_{k\,k} - J'_{i\,k} J'_{k\,j} = 0.
\] 
Since $J'_{i\,k}$ and  $J'_{k\,j}$ are not lying on the main diagonal of $J''$, then they belong to $\F$. Hence, 
\[
J'_{i\,j}J'_{k\,k} \in \F
\]
Similarly to $J'_{i\,k}$ and  $J'_{k\,j}$, the multiplier $J'_{i\,j}$ belongs to $\F$ and is not equal to zero by the assumption made for this case. Hence, $J'_{k\,k} \in \F$. By considering the $(k,k)$-th elements in the summands of both the sides of the equality~\eqref{lem:RankOneScalarFromExtToGroundField:eqq1}, we obtain that
\[
b_{k\,k} = c' + J'_{k\,k} \in \F.
\]
Therefore, $c' \in \F$, which implies that $J' \in \M_{n}(\F)$. This means that all the summands in the right-hand side of~\eqref{lem:RankOneScalarFromExtToGroundField:eqq1} belong to $\M_{n}(\F)$ which infers that $B \in \R_{n}(\F)$.
\end{proof}

\begin{lemma}\label{lem:DiagonalR1Crit}Let $A \in \M_{n}(\F)$ be a diagonal matrix. Then $A \in \R_{n}(\F)$ if and only if
\begin{equation}\label{lem:DiagonalR1Crit:eqq1}
\forall 1 \le i_1 < i_2 < i_3 \le n\;\; \exists 1 \le \alpha < \beta \le 3  \colon a_{i_\alpha\,i_\alpha} = a_{i_\beta\,i_\beta} 
\end{equation}
and
\begin{equation}\label{lem:DiagonalR1Crit:eqq2}
\forall 1 \le i_1 < i_2 < i_3 < i_4 \le n\;\; \exists 1 \le \alpha < \beta < \gamma \le 4 \colon a_{i_\alpha\,i_\alpha} = a_{i_\beta\,i_\beta} = a_{i_\gamma\,i_\gamma}.
\end{equation}
\end{lemma}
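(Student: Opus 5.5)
The plan is to reduce both sides of the equivalence to one intermediate condition:
\begin{equation}\notag
(\ast)\qquad \mbox{there is } c \in \F \mbox{ such that } a_{i\,i} = c \mbox{ for all but at most one index } 1 \le i \le n,
\end{equation}
that is, all diagonal entries of $A$ coincide except possibly one. I will prove that $A \in \R_n(\F)$ is equivalent to $(\ast)$, and then that $(\ast)$ is equivalent to the conjunction of~\eqref{lem:DiagonalR1Crit:eqq1} and~\eqref{lem:DiagonalR1Crit:eqq2}.

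For the first equivalence, assume $A \in \R_n(\F)$, so $A = J + cI_n$ with $\rk(J) \le 1$ by Definition~\ref{def:Rset}. Then $J = A - cI_n$ is diagonal. The rank of a diagonal matrix equals the number of its nonzero diagonal entries, so $a_{i\,i} - c \neq 0$ for at most one $i$, which is $(\ast)$. Conversely, given $(\ast)$ with constant $c$, the matrix $J = A - cI_n$ is diagonal with at most one nonzero entry. Hence $\rk(J) \le 1$ and $A = J + cI_n \in \R_n(\F)$.

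For the second equivalence, $(\ast)$ implies~\eqref{lem:DiagonalR1Crit:eqq1} and~\eqref{lem:DiagonalR1Crit:eqq2} by pigeonhole. Among any three indices at most one entry differs from $c$, so at least two entries equal $c$. Among any four indices at least three entries equal $c$.

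For the converse, assume both conditions hold. If the diagonal took three distinct values, choosing one index for each value would violate~\eqref{lem:DiagonalR1Crit:eqq1}. So at most two distinct values $a \neq b$ occur; if only one value occurs, $(\ast)$ is immediate. If both $a$ and $b$ occur at least twice, choosing two indices of each gives four indices among which no three entries coincide, violating~\eqref{lem:DiagonalR1Crit:eqq2}. Hence one of the two values occurs exactly once, and $(\ast)$ holds with $c$ the other value. For small $n$ (for instance $n \le 3$) the relevant quantifiers are vacuous, and the same argument still goes through.

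There is no real obstacle here. The only point needing care is the fact that the rank of a diagonal matrix is its number of nonzero diagonal entries, together with the case bookkeeping in the pigeonhole step.
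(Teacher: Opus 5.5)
Your proof is correct and is essentially the paper's argument. For necessity, $A - cI_n$ is diagonal of rank at most one and so has at most one nonzero entry. For sufficiency, \eqref{lem:DiagonalR1Crit:eqq1} allows at most two distinct diagonal values, and \eqref{lem:DiagonalR1Crit:eqq2} forces one of them to occur exactly once. Making the intermediate condition $(\ast)$ explicit only packages the same steps more cleanly. One small precision: for $n = 3$ only \eqref{lem:DiagonalR1Crit:eqq2} is vacuous, not \eqref{lem:DiagonalR1Crit:eqq1}, but your case analysis already handles this, since two values each occurring twice needs $n \ge 4$.
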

\begin{proof}Assume that $A \in \R_{n}$. This implies that there is $c \in \F$ such that $\rk(A - cI_n) \le 1$. Hence, $A - cI_n$ has at most one nonzero entry and consequently satisfies the conditions~\eqref{lem:DiagonalR1Crit:eqq1} and~\eqref{lem:DiagonalR1Crit:eqq2}. These conditions are preserved by addition of a scalar matrix. Therefore, $A$ satisfies these conditions as well.

Assume now that $A$ is diagonal matrix such that the conditions~\eqref{lem:DiagonalR1Crit:eqq1} and~\eqref{lem:DiagonalR1Crit:eqq2} hold. The condition~\eqref{lem:DiagonalR1Crit:eqq1} implies that $|\{a_{i\,i} \mid 1 \le i \le n\}| \le 2$. If $|\{a_{i\,i} \mid 1 \le i \le n\}| = 1$, then $A - a_{1\,1} = O_n$ and the statement of the lemma is established. Consider now the case where $|\{a_{i\,i} \mid 1 \le i \le n\}| = 2$. 

Let $1 \le i_1 < i_2 \le n$ are such that $a_{i_1\,i_1} \neq a_{i_2\,i_2}$. The condition~\eqref{lem:DiagonalR1Crit:eqq2} implies that either $|\{i \mid a_{i\,i} = a_{i_1\,i_1}, 1 \le i \le n\}| = 1$, or $|\{i \mid a_{i\,i} = a_{i_2\,i_2}, 1 \le i \le n\}| = 1.$ Assume without loss of generality that $|\{i \mid a_{i\,i} = a_{i_1\,i_1}, 1 \le i \le n\}| = 1.$ Then $A - a_{i_2\,i_2}I_n$ is a diagonal matrix with only one nonzero entry. Therefore, $\rk(A - a_{i_2\,i_2}I_n) = 1$.
\end{proof}

\begin{lemma}\label{lem:DegDetAMinusXIn} Let $A \in \M_n(\F)$, $1 \le i_1 \neq i_2 \le n$, $1 \le j_1 \neq j_2 \le n$ and  $P \in \F[x; a_{1\,1}, \ldots, a_{n\,n}$ be defined by
\[
P = \det\left(A^{(x)}[i_1,i_2|j_1,j_2]\right),
\]
where  $A^{(x)} \in \M_{n}(\F)$ is defined by $A^{(x)} = A - xI_n$. Then 
\begin{enumerate}[label=(\alph*), ref=(\alph*)]
\item\label{lem:DegDetAMinusXIn:prop1} $\deg_x(P) \le 2$.
\item\label{lem:DegDetAMinusXIn:prop2} If $\deg_x(P) = 2$, then ($i_1 = j_1$ and $i_2 = j_2$) or ($i_1 = j_2$ and $i_2 = j_1$). 
\item\label{lem:DegDetAMinusXIn:prop3} $[i]P$ is a homogeneous polynomial of degree $2-i$ for all $0 \le i \le 2$.
 \end{enumerate}
\end{lemma}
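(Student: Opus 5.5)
The plan is to expand the $2\times 2$ minor directly. Write $A^{(x)} = A - xI_n$, so that $(A^{(x)})_{i\,j} = a_{i\,j} - x\,\delta_{i\,j}$, where $\delta$ is the Kronecker delta. Then
\[
P = (a_{i_1\,j_1} - x\delta_{i_1\,j_1})(a_{i_2\,j_2} - x\delta_{i_2\,j_2}) - (a_{i_1\,j_2} - x\delta_{i_1\,j_2})(a_{i_2\,j_1} - x\delta_{i_2\,j_1}).
\]
Each factor is a polynomial in $x$ and the entries $a_{k\,l}$ of the form (linear form in the $a$'s) $+$ (constant)$\cdot x$, with total degree at most $1$. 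So each of the two products has $\deg_x \le 2$, and part~\ref{lem:DegDetAMinusXIn:prop1} follows immediately.

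For part~\ref{lem:DegDetAMinusXIn:prop3}, assign to each $a_{k\,l}$ and to $x$ weight $1$. Every factor is then homogeneous of degree $1$ jointly in $(x, a_{1\,1},\ldots,a_{n\,n})$, so $P$ is jointly homogeneous of degree $2$. Writing $P = \sum_{i=0}^{2} [i]P\, x^i$, joint homogeneity forces each coefficient $[i]P$ to be homogeneous of degree $2-i$ in the $a$'s, which is~\ref{lem:DegDetAMinusXIn:prop3}. (Any coefficient $[i]P$ that vanishes is homogeneous of every degree by convention.)

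For part~\ref{lem:DegDetAMinusXIn:prop2}, read off the coefficient of $x^2$:
\[
[2]P = \delta_{i_1\,j_1}\delta_{i_2\,j_2} - \delta_{i_1\,j_2}\delta_{i_2\,j_1}.
\]
If $\deg_x P = 2$, this coefficient is nonzero, so at least one of the two products of deltas equals $1$. That gives either $i_1 = j_1$ and $i_2 = j_2$, or $i_1 = j_2$ and $i_2 = j_1$.

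The only point to check is that these two cases do not cancel when both conditions hold. Both would require $j_1 = j_2$, which contradicts $j_1 \neq j_2$, so no cancellation occurs. This care is actually needed only for the converse; for the stated implication the observation that some product of deltas must be $1$ already suffices. There is no real obstacle: the argument is direct bookkeeping of the expansion, and the main thing to watch is notation. The submatrix notation $A[i_1,i_2|j_1,j_2]$ denotes rows $i_1,i_2$ and columns $j_1,j_2$, and the homogeneity claim should be stated with respect to the variables $a_{k\,l}$.
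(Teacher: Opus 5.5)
Your proposal is correct and follows the paper's argument. Parts (a) and (c) come from the $2\times 2$ determinant being a homogeneous quadratic in entries that are themselves homogeneous linear forms in $(x, a_{1\,1},\ldots,a_{n\,n})$, and part (b) from the fact that an $x^2$ term forces both diagonal entries of $A^{(x)}$ to lie in the chosen submatrix. Your explicit formula $[2]P = \delta_{i_1\,j_1}\delta_{i_2\,j_2} - \delta_{i_1\,j_2}\delta_{i_2\,j_1}$ is in fact slightly sharper than the paper's write-up. The paper concludes only $i_1 = j_1$ and $i_2 = j_2$, which implicitly assumes ordered indices, whereas you correctly obtain the disjunction that the statement asserts.
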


\begin{proof}The properties~\eqref{lem:DegDetAMinusXIn:prop1} and~\eqref{lem:DegDetAMinusXIn:prop3} follow directly from the fact that the determinant of $2\times 2$ matrix is a homogeneous polynomial of degree 2 in the entries of a given matrix.

Assume now that $\deg_x(P) = 2$. This implies that the matrix
$A^{(x)}[i_1,i_2|j_1,j_2]$ contains two diagonal elements of the matrix $A^{(x)}$. Therefore, $i_1 = j_1$ and $i_2 = j_2$.
\end{proof}

\begin{lemma}\label{lem:RDetProp}Let $f, g, h \in \F^{\le 2}[x]$. Consider $R_{f\,g\,h} \in \M_{3}(\F)$ defined by
\begin{equation}\label{lem:RDetProp:eqq1}
R_{f\,g\,h} = \begin{pmatrix}[2]f & [1]f & [0]f\\ [2]g & [1]g & [0]g\\ [2]h & [1]h & [0]h\end{pmatrix}.
\end{equation}
Then 
\begin{enumerate}[label=(\alph*), ref=(\alph*)]
\item\label{lem:RDetProp:prop1} if $c \in \F$,  $f' = f(x + c), g' = g(x + c), h' = h(x+c)$, then $\det\left(R_{f'\,g'\,h'}\right) = \det\left(R_{f\,g\,h}\right);$
\item\label{lem:RDetProp:prop2}  if there is $x_0 \in \F$ such that $f(x_0) = g(x_0) = h(x_0)$, then $\det\left(R_{f\,g\,h}\right) = 0$.
 \end{enumerate}
\end{lemma}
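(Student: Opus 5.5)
We need to prove Lemma~\ref{lem:RDetProp}: part (a) is shift invariance of $\det R_{f\,g\,h}$, and part (b) says a common value at some $x_0$ forces the determinant to vanish.

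For part~\ref{lem:RDetProp:prop1}, we express the change of variables as a linear transformation of coefficient vectors. Write each polynomial $p \in \F^{\le 2}[x]$ as the row vector $([2]p, [1]p, [0]p)$. Expanding
\[
p(x+c) = [2]p\,(x^2 + 2cx + c^2) + [1]p\,(x + c) + [0]p
\]
shows that the row of $p(x+c)$ equals the row of $p$ multiplied on the right by
\[
U_c = \begin{pmatrix} 1 & 2c & c^2\\ 0 & 1 & c\\ 0 & 0 & 1\end{pmatrix}.
\]
Hence $R_{f'\,g'\,h'} = R_{f\,g\,h}\,U_c$. Since $U_c$ is upper unitriangular, $\det U_c = 1$, and multiplicativity of the determinant gives (a).

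For part~\ref{lem:RDetProp:prop2}, we read the hypothesis as $f(x_0)=g(x_0)=h(x_0)=0$, i.e.\ a common root; this is the reading that matches the resultant-style use. The idea is to exhibit a nonzero vector in the kernel of $R_{f\,g\,h}$. For any $p$,
\[
R_{f\,g\,h}\begin{pmatrix} x_0^2\\ x_0\\ 1\end{pmatrix} = \begin{pmatrix} f(x_0)\\ g(x_0)\\ h(x_0)\end{pmatrix},
\]
since the $i$-th entry is exactly the corresponding polynomial evaluated at $x_0$. The vector $(x_0^2, x_0, 1)^t$ is nonzero because its last coordinate is $1$. If all three values vanish, this is a nontrivial kernel vector, so $\det R_{f\,g\,h} = 0$.

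Alternatively, using (a) with $c = x_0$, we may assume $x_0 = 0$. Then the last column $([0]f, [0]g, [0]h)^t$ is zero, and the determinant vanishes immediately.

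The main subtlety is the interpretation of the hypothesis in (b). If the three values are merely equal to some common $v \ne 0$, the conclusion can fail: $f = x^2$, $g = x$, $h = 1$ agree at $x_0 = 1$, yet $\det R_{f\,g\,h} = 1$. So the statement must be read with the common value equal to zero, i.e.\ $x_0$ is a common root, and we prove it under that reading. Part (a) is routine once the matrix $U_c$ is written down.
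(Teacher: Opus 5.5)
Your proof is correct and follows the paper's: for (a) you turn the multilinearity (column-operation) argument into the factorization $R_{f'\,g'\,h'} = R_{f\,g\,h}\,U_c$ with unitriangular $U_c$, and for (b) the paper's ``columns are linearly dependent'' is exactly your kernel vector $(x_0^2, x_0, 1)^t$. You are also right that (b) needs the common value to be $0$: the literal hypothesis fails (your example $x^2, x, 1$ at $x_0=1$ gives determinant $1$), while the paper's own dependence step and its use in Lemma~\ref{lem:GeneralR1Crit}, where the common root is $x=0$, both rely on the common-root reading.
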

\begin{proof}The property~\ref{lem:RDetProp:prop1} of $R_{f\,g\,h}$ follows from the multilinearity of the determinant along the columns of the matrix $R_{f\,g\,h}$.

Now assume that $f(x_0) = g(x_0) = h(x_0)$ for some $x_0 \in \F$. This implies that the columns of the matrix $R_{f\, g\, h}$ are linearly dependent. Hence, $\det\left(R_{f\,g\,h}\right) = 0$. This establishes~\ref{lem:RDetProp:prop2}.
\end{proof}

\begin{lemma}\label{lem:GeneralR1Crit}Assume that $n \ge 3$. Let $A \in \M_n(\F)$ and $A^{(x)} \in \M_{n}(\F[x])$ be defined by $A^{(x)} = A - xI_n$. Then $A \in \R_n(\F)$ if and only if
\begin{equation}\label{lem:GeneralR1Crit:eqq1}
 \Res_{1\,2}\left(\det\left(A^{(x)}[i_1,k|j_1,k]\right), \det\left(A^{(x)}[i'_1,i'_2|j'_1,j'_2]\right)\right) = 0
\end{equation}
 for all $1 \le i_1, j_1, k \le n,$ $1 \le i'_1 < i'_2 \le n,$ $1 \le j'_1 < j'_2 \le n$ such that $k \neq i_1, j_1$, $i_1 \neq j_1$,
\begin{equation}\label{lem:GeneralR1Crit:eqq12}
 \Res_{2\,2}\left(\det\left(A^{(x)}[k_1,k_2|k_1,k_2]\right), \det\left(A^{(x)}[k_3,k_4|k_3,k_4]\right)\right) = 0
\end{equation}
 for all pairwise distinct $1 \le k_1, k_2, k_3, k_4 \le n$
and
 \begin{equation}\label{lem:GeneralR1Crit:eqq2}
 \begin{vmatrix}
 [2]\det \left(A^{(x)}[i,j|i,j]\right) & [1]\det \left(A^{(x)}[i,j|i,j]\right) & [0]\det \left(A^{(x)}[i,j|i,j]\right)\\
  [2]\det \left(A^{(x)}[i,j|i,j]\right) & [1]\det \left(A^{(x)}[i,k|i,k]\right) & [0]\det \left(A^{(x)}[i,k|i,k]\right)\\
  [2]\det \left(A^{(x)}[i,j|i,j]\right) & [1]\det \left(A^{(x)}[j,k|j,k]\right) & [0]\det \left(A^{(x)}[j,k|j,k]\right)
 \end{vmatrix} = 0
\end{equation}
for all $1 \le i < j < k \le n$.
\end{lemma}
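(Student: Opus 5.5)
The forward direction will be checked directly from the definition. For the converse, I will pass to the algebraic closure, use the three conditions to force the structure of $A-cI_n$, and then descend to $\F$ with Lemma~\ref{lem:RankOneScalarFromExtToGroundField}.

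\emph{Forward direction.} Suppose $A = J + cI_n$ with $\rk(J)\le 1$. Then $A^{(x)} = J - (x-c)I_n$.

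For every $2\times 2$ submatrix, $\det(A^{(x)}[i_1,i_2|j_1,j_2])$ equals the corresponding minor of $J-(x-c)I_n$. Every $2\times 2$ minor of $J$ vanishes, so evaluating at $x=c$ gives zero. Hence all these polynomials share the common root $x_0=c$.

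\begin{itemize}
\item For~\eqref{lem:GeneralR1Crit:eqq1}: the polynomial $\det(A^{(x)}[i_1,k|j_1,k])$ with $i_1\ne j_1$ has degree at most $1$ in $x$, and it vanishes at $x=c$. If it is identically zero, the Sylvester determinant has a zero block of rows and vanishes. Otherwise Corollary~\ref{cor:ResCommonRoot} gives the vanishing.
\item For~\eqref{lem:GeneralR1Crit:eqq12}: both polynomials are monic quadratics with the common root $c$, so Corollary~\ref{cor:ResCommonRoot} applies.
\item For~\eqref{lem:GeneralR1Crit:eqq2}: the three principal minors are monic of degree $2$ (the first column is constant $1$; I read the repeated entry $[2]\det(A^{(x)}[i,j|i,j])$ this way). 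They all vanish at $c$, so Lemma~\ref{lem:RDetProp}\ref{lem:RDetProp:prop2} gives the determinant $0$.
\end{itemize}

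\emph{Converse.} Assume the three identities hold. Since resultants and determinants are polynomial in the entries, the identities still hold over the algebraic closure $\bar\F$. By Lemma~\ref{lem:RankOneScalarFromExtToGroundField} it suffices to show $A\in\R_n(\bar\F)$, so I may assume $\F$ is algebraically closed. The key step is to find $c$ such that $c$ is a root of every $2\times2$ minor of $A^{(x)}$. Then all $2\times 2$ minors of $A-cI_n$ vanish, so $\rk(A-cI_n)\le1$.

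To produce $c$, I will proceed as follows.
\begin{itemize}
\item Each principal $2\times 2$ minor $q_{ij}(x)$ is monic quadratic and splits.
\item Condition~\eqref{lem:GeneralR1Crit:eqq2} says the coefficient rows of $q_{ij},q_{ik},q_{jk}$ are linearly dependent. After the shift of Lemma~\ref{lem:RDetProp}\ref{lem:RDetProp:prop1}, I will use this to show that three monic quadratics whose differences are dependent linear polynomials share a root. I expect this to reduce to a direct computation showing that the determinant vanishing implies $q_{ij}-q_{ik}$ and $q_{ij}-q_{jk}$ have a common root, which is also a root of $q_{ij}$.
\item Condition~\eqref{lem:GeneralR1Crit:eqq12} shows that principal minors on disjoint index pairs share a root.
\item Condition~\eqref{lem:GeneralR1Crit:eqq1} shows that each non-principal minor (degree $\le 1$) vanishes at a root of the corresponding principal minors, or vanishes identically.
\end{itemize}

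\emph{Main obstacle.} The hard part is the combinatorial consistency: choosing one $c$ that works for all pairs simultaneously, when each quadratic has two possible roots. My plan is to first fix the roots of $q_{12}$. Using the triple condition, I then propagate a common root through all the $q_{1k}$ and $q_{2k}$. Next I use $n\ge 3$ and the disjoint-pairs condition~\eqref{lem:GeneralR1Crit:eqq12} to rule out inconsistent choices for $q_{kl}$. A fallback is to first reduce to the diagonal entries: I would show that the diagonal of $A$ satisfies the pattern of Lemma~\ref{lem:DiagonalR1Crit}, take $c$ to be the majority diagonal value, and then check the off-diagonal minors via~\eqref{lem:GeneralR1Crit:eqq1}.
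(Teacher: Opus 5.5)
Your forward direction is fine, including your reading of the first column of~\eqref{lem:GeneralR1Crit:eqq2} as constant $1$. The converse, however, has a genuine gap: both of your devices for producing $c$ rest on false claims.

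\emph{The triple condition does not give a common root.} Vanishing of the determinant in~\eqref{lem:GeneralR1Crit:eqq2} does not force $q_{ij},q_{ik},q_{jk}$ to share a root. It only says that the three quadratics take a common value at some point, or have equal linear coefficients. Already in the lemma's setting, take $A=E_{1\,2}+E_{2\,1}\in\M_3(\F)$. Then $q_{12}=x^2-1$ and $q_{13}=q_{23}=x^2$. The coefficient matrix has a zero middle column, so~\eqref{lem:GeneralR1Crit:eqq2} holds, yet $q_{12}$ and $q_{13}$ have no common root over any field. Hence your step ``using the triple condition, propagate a common root through the $q_{1k}$ and $q_{2k}$'' cannot be carried out.

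\emph{The fallback also fails.} The diagonal of a matrix in $\R_n(\F)$ is arbitrary. For example, the rank-one matrix all of whose rows equal $(a,b,c)$, with $a,b,c\in\F$ pairwise distinct, has no repeated diagonal entry. So you cannot show that the diagonal satisfies the pattern of Lemma~\ref{lem:DiagonalR1Crit}, and the majority diagonal value is not the right $c$.

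\emph{The missing idea.} Condition~\eqref{lem:GeneralR1Crit:eqq1} pins $c$ down uniquely, so the consistency problem you identify never arises. For $\ell=\ell_1x+\ell_0$ with $\ell_1\neq 0$ and $Q$ of degree at most $2$, the Sylvester determinant gives $\Res_{1\,2}(\ell,Q)=\ell_1^2\,Q(-\ell_0/\ell_1)$. The proof then splits into two cases.
\begin{itemize}
\item \emph{$A$ has an off-diagonal entry $a_{i_0\,j_0}\neq0$.} Pick $k_0\neq i_0,j_0$, which is possible since $n\ge3$. Then $\ell=\det(A^{(x)}[i_0,k_0|j_0,k_0])$ is linear with leading coefficient $\pm a_{i_0\,j_0}\neq 0$, so it has a unique root $c_0\in\F$. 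Applying~\eqref{lem:GeneralR1Crit:eqq1} with every $2\times2$ minor as $Q$ shows that all $2\times 2$ minors of $A-c_0I_n$ vanish. Hence $\rk(A-c_0I_n)\le 1$ directly over $\F$, with no algebraic closure and no choice among roots. Your example $A=E_{1\,2}+E_{2\,1}$ fails precisely here: $\ell=-x$, while $q_{12}(0)=-1\neq0$.
\item \emph{$A$ is diagonal.} Here the minors in~\eqref{lem:GeneralR1Crit:eqq1} are identically zero and the principal minors are $(x-a_{i\,i})(x-a_{j\,j})$. The determinant in~\eqref{lem:GeneralR1Crit:eqq2} equals $(a_{i\,i}-a_{j\,j})(a_{i\,i}-a_{k\,k})(a_{j\,j}-a_{k\,k})$. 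The resultant in~\eqref{lem:GeneralR1Crit:eqq12} is the product of the four cross differences of diagonal entries. Together these yield exactly the two conditions of Lemma~\ref{lem:DiagonalR1Crit}.
\end{itemize}
This diagonal/non-diagonal split, with~\eqref{lem:GeneralR1Crit:eqq1} doing all the work in the non-diagonal case, is what your plan lacks.
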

\begin{proof}First note that Lemma~\ref{lem:RDetProp} implies that the resultants in~\eqref{lem:GeneralR1Crit:eqq1}\linebreak and~\eqref{lem:GeneralR1Crit:eqq12} are defined correctly.

Let us prove the necessity. Assume that $A \in \R_{n}$. This implies that there is $c \in \F$ such that 
\begin{equation}\label{lem:GeneralR1Crit:eq1}
\rk(A') \le 1,
\end{equation}
 where $A' = A - cI_n$. The condition~\eqref{lem:GeneralR1Crit:eq1} implies that every $2\times 2$ minor of $A'$ is zero and consequently all polynomials of the form $\det\left(\left(A'\right)^{(x)}[i_1,i_2|j_1,j_2]\right)$ for $1 \le i_1 < i_2 \le n,$ $1 \le  j_1 < j_2 \le n$  have the common root $x = 0$. Therefore, $A'$ satisfies the conditions~\eqref{lem:GeneralR1Crit:eqq1}, \eqref{lem:GeneralR1Crit:eqq12} by Corollary~\ref{cor:ResCommonRoot} and the condition~\eqref{lem:GeneralR1Crit:eqq2} by Lemma~\ref{lem:RDetProp}\ref{lem:RDetProp:prop2}.

The conditions~\eqref{lem:GeneralR1Crit:eqq1} and~\eqref{lem:GeneralR1Crit:eqq12} are preserved by addition of a scalar matrix by the properties~\ref{enum:ResPropertyC} and~\ref{enum:DiscPropertyC}, while the condition~\eqref{lem:GeneralR1Crit:eqq2} is preserved by this operation by Lemma~\ref{lem:RDetProp}\ref{lem:RDetProp:prop1}. Therefore, $A = A' + cI_n$ satisfies these conditions as well.

Let us prove the sufficiency of the conditions~\eqref{lem:GeneralR1Crit:eqq1}  and~\eqref{lem:GeneralR1Crit:eqq2} by contradiction. Suppose that $A \in \M_{n}(\F)$ satisfies these conditions and 
\begin{equation}\label{lem:GeneralR1Crit:eq2}
A \not\in \R_n(\F).
\end{equation}
 Let us consider the following two cases separately
\begin{enumerate}[label=Case (\alph*), ref=(\alph*), leftmargin = *]
\item\label{lem:GeneralR1Crit:case1} $A$ is non-diagonal
\item\label{lem:GeneralR1Crit:case2} $A$ is diagonal
\end{enumerate}

\paragraph{Case~\ref{lem:GeneralR1Crit:case1}}  Let $1 \le i_0 < j_0 \le n$ be such that 
\begin{equation}\label{lem:GeneralR1Crit:eq10002}
a_{i_0\,j_0} \neq 0.
\end{equation}
Since $n \ge 3,$ then there is $1 \le k_0 \le n$ such that $k_0 \neq i_0, j_0$. Let $P(x) \in \F[x]$ be defined by
\begin{equation}\label{lem:GeneralR1Crit:eq2222}
P = \det(A^{(x)}[i_0, k_0| j_0, k_0]) =   
a_{i_0\,k_0}a_{k_0\,j_0} - a_{i_0\,j_0}(a_{k_0\,k_0} - x)
\end{equation}
Since $a_{i_0\,j_0} \neq 0,$ there is $c_0 \in \F$ such that $P(c_0) = 0$. From~\eqref{lem:GeneralR1Crit:eq2222} we obtain that 
\begin{equation}\label{lem:GeneralR1Crit:eq10003}
c_0 = a_{k_0\,k_0} - \frac{a_{i_0\,k_0}a_{k_0\,j_0}}{a_{i_0\,j_0}}.
\end{equation}
From the assumption that $A \not\in R_{n}(\F)$ we conclude that $\rk(A - c_0I_n) \ge 2$ and consequently there are $1 \le i_1 < i_2 \le n$ and $1 \le j_1 < j_2 \le 2$ such that 

\begin{equation}\label{lem:GeneralR1Crit:eq10004}
\det((A - c_0I_n)[i_1, i_2| j_1, j_2]) \neq 0.
\end{equation}
Therefore,
\begin{multline*}
\Res_{1\,2}\left(\det\left(A^{(x)}[i_0,k_0|j_0,k_0]\right), \det\left(A^{(x)}[i_1,i_2|j_1,j_2]\right)\right) = \\
\overset{\eqref{lem:GeneralR1Crit:eq2222}}{=\joinrel=} \Res_{1\,2}\left(a_{i_0\,k_0}a_{k_0\,j_0} - a_{i_0\,j_0}(a_{k_0\,k_0} - x), \det\left(A^{(x)}[i_1,i_2|j_1,j_2]\right)\right)\phantom{X}\\
\phantom{XX}\overset{\eqref{lem:ResMultOneByScal:eqq1}}{=\joinrel=} a_{i_0\,j_0}^2 \Res_{1\,2}\left(x + \frac{a_{i_0\,k_0}a_{k_0\,j_0}}{a_{i_0\,j_0}} - a_{k_0\,k_0}, \det\left(A^{(x)}[i_1,i_2|j_1,j_2]\right)\right)\\
\overset{\eqref{lem:GeneralR1Crit:eq10003}}{=\joinrel=} a_{i_0\,j_0}^2 \Res_{1\,2}\left(x - c_0, \det\left(A^{(x)}[i_1,i_2|j_1,j_2]\right)\right)\\
\overset{\eqref{cor:ResXX0G:eqq1}}{=\joinrel=}  a_{i_0\,j_0}^2  \det((A - c_0I_n)[i_1, i_2| j_1, j_2]) \overset{\eqref{lem:GeneralR1Crit:eq10002},\, \eqref{lem:GeneralR1Crit:eq10004}}{=\joinrel=\joinrel=\joinrel\neq\joinrel=\joinrel=\joinrel=\joinrel=} 0,
\end{multline*} 
 which contradicts~\eqref{lem:GeneralR1Crit:eqq1}.

\paragraph{Case~\ref{lem:GeneralR1Crit:case2}} Since $A$ is diagonal, then the condition~\eqref{lem:GeneralR1Crit:eq2} and Lemma~\ref{lem:DiagonalR1Crit} imply that $A$ violates at least one of the conditions~\eqref{lem:DiagonalR1Crit:eqq1} or~\eqref{lem:DiagonalR1Crit:eqq2}. Let us consider these subcases separately.

\subparagraph{$A$ does not satisfy the condition~\eqref{lem:DiagonalR1Crit:eqq1}} If $1 \le i  < j < k \le n$ are such that the condition~\eqref{lem:DiagonalR1Crit:eqq1} does not hold for $A$ and $i_1 = i, i_2 = j, i_3 = k$, then $a_{i\,i}, a_{j\,j}, a_{k\,k}$ are pairwise distinct and consequently
\begin{multline*}
 \begin{vmatrix}
 [2]\det \left(A^{(x)}[i,j|i,j]\right) & [1]\det \left(A^{(x)}[i,j|i,j]\right) & [0]\det \left(A^{(x)}[i,j|i,j]\right)\\
  [2]\det \left(A^{(x)}[i,k|i,k]\right) & [1]\det \left(A^{(x)}[i,k|i,k]\right) & [0]\det \left(A^{(x)}[i,k|i,k]\right)\\
  [2]\det \left(A^{(x)}[j,k|j,k]\right) & [1]\det \left(A^{(x)}[j,k|j,k]\right) & [0]\det \left(A^{(x)}[j,k|j,k]\right)
 \end{vmatrix}\\
=  \begin{vmatrix}
 1 & -a_{i\,i} - a_{j\,j} & a_ia_j\\
  1 & -a_{i\,i} - a_{k\,k} & a_ia_k\\
  1 & -a_{j\,j} - a_{k\,k} & a_ja_k
 \end{vmatrix} = (a_{i\,i} - a_{j\,j})(a_{i\,i} - a_{k\,k})(a_{j\,j}-a_{k\,k}) \neq 0,
\end{multline*}
 which contradicts~\eqref{lem:GeneralR1Crit:eqq2}.
 
 \subparagraph{$A$ does not satisfy the condition~\eqref{lem:DiagonalR1Crit:eqq2}} If $1 \le i_1 < i_2 < i_3 < i_4 \le n$ are such that the condition~\eqref{lem:DiagonalR1Crit:eqq2} does not hold for $A$, then assume without loss of generality that 
\begin{equation}\label{lem:GeneralR1Crit:eq22203}
a_{i_1\, i_1} \neq a_{i_3\,i_3}, a_{i_3\,i_3}, a_{i_4\,i_4}\quad\mbox{and}\quad a_{i_2\, i_2} \neq a_{i_3\,i_3}, a_{i_4\,i_4}.
\end{equation}
Since $A$ is diagonal, then
\begin{equation}\label{lem:GeneralR1Crit:eq22201}
 \det\left(A^{(x)}[i_1,i_2|i_1,i_2]\right) = (x - a_{i_1\,i_1})(x - a_{i_2\,i_2}),
\end{equation}
\begin{equation}\label{lem:GeneralR1Crit:eq22202}
 \det\left(A^{(x)}[i_3,i_4|i_3,i_4]\right) = (x - a_{i_3\,i_3})(x - a_{i_4\,i_4}).
\end{equation}
Hence,
\begin{multline*}
\Res_{2\,2}\left(\det\left(A^{(x)}[i_1,i_2|i_1,i_2]\right), \det\left(A^{(x)}[i_3,i_4|i_3,i_4]\right)\right)\\
\overset{\eqref{lem:GeneralR1Crit:eq22201},\, \eqref{lem:GeneralR1Crit:eq22202}}{=\joinrel=\joinrel=\joinrel=\joinrel=\joinrel=\joinrel=\joinrel=} \Res_{2\,2}\left((x - a_{i_1\,i_1})(x - a_{i_2\,i_2}), (x - a_{i_3\,i_3})(x - a_{i_4\,i_4})\right)\\
\overset{\ref{enum:ResTwoSplit}}{=\joinrel=} \prod_{\substack{1 \le \alpha \le 2\\ 3 \le \beta \le 4}} (x_{i_\alpha\, i_\alpha} - x_{i_\beta\, i_\beta}) \neq 0,
\end{multline*}
 which contradicts~\eqref{lem:GeneralR1Crit:eqq12}.
\end{proof}

\begin{lemma}\label{lem:R1IndicatorPolyDeg}If $R \in \F[a_{1\,1}, \ldots, a_{n\,n}]$ is a polynomial appearing on the left-hand side of~\eqref{lem:GeneralR1Crit:eqq1}, \eqref{lem:GeneralR1Crit:eqq12} or~\eqref{lem:GeneralR1Crit:eqq2}, then $\deg (R) \le 4$.
\end{lemma}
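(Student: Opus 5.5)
We bound the degree of each of the three kinds of polynomials separately. Each is built from the coefficients $[i]P$ of $2\times 2$ minors $P = \det\left(A^{(x)}[\cdot,\cdot|\cdot,\cdot]\right)$. Throughout we use Lemma~\ref{lem:DegDetAMinusXIn}\ref{lem:DegDetAMinusXIn:prop3}: $[i]P$ is homogeneous of degree $2-i$ in the entries of $A$. It is convenient to assign weight $1$ to each entry $a_{p\,q}$ and weight $1$ to $x$. Then every minor $P$ is homogeneous of total weight $2$ in $(x, a_{1\,1},\ldots,a_{n\,n})$.

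\emph{The resultants in~\eqref{lem:GeneralR1Crit:eqq1}.} Here $f = \det\left(A^{(x)}[i_1,k|j_1,k]\right)$ with $i_1\neq j_1$. By Lemma~\ref{lem:DegDetAMinusXIn}\ref{lem:DegDetAMinusXIn:prop2} it has degree at most $1$ in $x$, so $f = f_0 + f_1x$ with $\deg f_1 \le 1$ and $\deg f_0 \le 2$. The second polynomial is $g = g_0 + g_1 x + g_2x^2$ with $\deg g_i \le 2-i$. The Sylvester determinant $\Res_{1\,2}(f,g)$ of order $3$ has rows $(f_1, f_0, 0)$, $(0, f_1, f_0)$, $(g_2, g_1, g_0)$. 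Equivalently, $\Res_{1\,2}(f,g) = f_1^2g_0 - f_0f_1g_1 + f_0^2g_2$, and every term has degree at most $2+2 = 1+2+1 = 4+0 = 4$. Thus $\deg R \le 4$ here. One can also see this by weight: the weight of a term is $2\cdot 2 - \text{(column shift)}$, i.e. $mk\cdot$weight bookkeeping as in property~\ref{enum:ResWeightedMulLin} and Lemma~\ref{lem:ResReverseGradHomogen}. Concretely, this is the explicit expansion above.

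\emph{The resultants in~\eqref{lem:GeneralR1Crit:eqq12}.} Both polynomials are principal minors, so $f = x^2 + f_1x + f_0$ and $g = x^2 + g_1 x + g_0$ with $\deg f_i, \deg g_i \le 2 - i$. The cleanest route is to apply Lemma~\ref{lem:ResReverseGradHomogen} with $\lambda$ a new variable to the polynomials obtained by substituting $\lambda a_{p\,q}$ for every entry. Scaling all entries by $\lambda$ multiplies $[i]f$ by $\lambda^{2-i}$, which is exactly the substitution in that lemma with $m=k=2$. Hence $R(\lambda A) = \lambda^{4}R(A)$ as polynomials, so $R$ is homogeneous of degree $4$ (in particular $\deg R\le 4$). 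The same argument, with $m=1$, $k=2$ and the coefficients of $f$ of degrees $\le 2,\le 1$ rather than exactly graded, gives the previous case as well. The one subtlety is that the leading coefficient of $f$ in~\eqref{lem:GeneralR1Crit:eqq1} is linear rather than constant, so there I prefer the explicit expansion.

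\emph{The determinant~\eqref{lem:GeneralR1Crit:eqq2}.} The first column consists of constants, the second of polynomials of degree $\le 1$, and the third of polynomials of degree $\le 2$. Every term of the Leibniz expansion picks one entry from each column, so each term has degree $\le 0+1+2 = 3 \le 4$.

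The only step requiring care is the first resultant, because the degree in $x$ is not homogeneous in the Sylvester layout. Writing out the $3\times 3$ Sylvester determinant explicitly handles it directly.
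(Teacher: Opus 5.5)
Your proof is correct and follows the paper's argument for \eqref{lem:GeneralR1Crit:eqq12}, where Lemma~\ref{lem:ResReverseGradHomogen} gives homogeneity of degree $4$, and for \eqref{lem:GeneralR1Crit:eqq2}, where the degree is at most $0+1+2=3$. The only difference is in \eqref{lem:GeneralR1Crit:eqq1}: you expand $\Res_{1\,2}(f,g)=f_1^2g_0-f_0f_1g_1+f_0^2g_2$ directly, whereas the paper removes the subtlety you flag by first pulling the common factor $\lambda^{2-d}$ (here just $\lambda$) out of all coefficients of the first polynomial via Corollary~\ref{lem:ResMultOneByScal}, which contributes $\lambda^{2(2-d)}$, and then applying Lemma~\ref{lem:ResReverseGradHomogen} to get $\lambda^{2d}$, so both resultant cases come out uniformly homogeneous of degree $4$.
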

\begin{proof}First, assume that $R \in \F[a_{1\,1}, \ldots, a_{n\,n}]$ is a polynomial appearing on the left-hand side of~\eqref{lem:GeneralR1Crit:eqq1} or \eqref{lem:GeneralR1Crit:eqq12}. Hence, there are $1 \le i_1 \neq i_2 \le n$, $1 \le j_1 \neq j_2 \le n$,  $1 \le i'_1 \neq i'_2 \le n,$ $1 \le j'_1 \neq j'_2 \le n$ such that
\begin{equation}\notag
R = \Res_{d\, 2}(P, Q),
\end{equation}
where
\[
P = \det\left(A^{(x)}[i_1,i_2|j_1,j_2]\right),\quad Q = \det\left(A^{(x)}[i'_1,i'_2|j'_1,j'_2]\right),\quad d \ge \deg_x\left(P\right)
\]
Let  $\lambda$ be a formal variable. Then
\begin{multline}\label{lem:R1IndicatorPolyDeg:eq2}
R(\lambda A) = \Res_{d\,2}\left(P(\lambda A), Q(\lambda A)\right)\\
= \Res_{d\,2}\left([0]P(\lambda A), \ldots , [d]P(\lambda A); [0]Q(\lambda A),  \ldots, [2]Q(\lambda A)\right)\\
\overset{\mbox{\scriptsize Lemma~\ref{lem:DegDetAMinusXIn}\ref{lem:DegDetAMinusXIn:prop3}}}{=\joinrel=\joinrel=\joinrel=\joinrel=\joinrel=\joinrel=\joinrel=\joinrel=} \Res_{d\,2}\left(\lambda^2 [0]P(A), \ldots , \lambda^{2 - d}[d]P(A); \lambda^2 [0]Q(A),  \ldots, [2]Q(A)\right)\\
\overset{\eqref{lem:ResMultOneByScal:eqq1}}{=\joinrel=} \lambda^{2(2-d)}\Res_{d\,2}\left(\lambda^d [0]P(A), \ldots , [d]P(A); \lambda^2 [0]Q( A),  \ldots, [2]Q(A)\right)\\
\overset{\eqref{lem:ResReverseGradHomogen:eqq1}}{=\joinrel=} \lambda^{2(2 - d)}\cdot \lambda^{d \cdot 2}\Res_{d\,2}\left( [0]P(A), \ldots , [d]P(A); [0]Q(A),  \ldots, [2]Q(A)\right)\\
 = \lambda^{4} R(A).
\end{multline}
Therefore, $R$ is a homogeneous polynomial and $\deg(R) \le 4$. 

Second, assume that $R \in \F[a_{1\,1}, \ldots, a_{n\,n}]$ is a polynomial appearing on the left-hand side of~\eqref{lem:GeneralR1Crit:eqq2}. Hence, there are $1 \le i < j < k \le n$ such that
\begin{equation}\notag
R = \begin{vmatrix}[2]F & [1]F & [0]F\\ [2]G & [1]G & [0]G\\ [2]H & [1]H & [0]H\end{vmatrix},
\end{equation}
where
\[
F = \det\left(A^{(x)}[i, j|i, j]\right),\;\;G = \det\left(A^{(x)}[i, k|i, k]\right),\;\;G = \det\left(A^{(x)}[j, k|j, k]\right),
\]
Let $\lambda$ be a formal variable. Then
\begin{multline*}
R(\lambda A) = \begin{vmatrix}[2]F(\lambda A) & [1]F(\lambda A) & [0]F(\lambda A)\\ [2]G(\lambda A) & [1]G(\lambda A) & [0]G(\lambda A)\\ [2]H(\lambda A) & [1]H(\lambda A) & [0]H(\lambda A)\end{vmatrix}\\
\overset{\mbox{\scriptsize Lemma~\ref{lem:DegDetAMinusXIn}\ref{lem:DegDetAMinusXIn:prop3}}}{=\joinrel=\joinrel=\joinrel=\joinrel=\joinrel=\joinrel=\joinrel=\joinrel=}  \begin{vmatrix}[2]F(A) & \lambda [1]F(A) & \lambda^2 [0]F(A)\\ [2]G(A) & \lambda [1]G(A) & \lambda^2 [0]G(A)\\ [2]H(A) & \lambda [1]H( A) & \lambda^2 [0]H(A)\end{vmatrix}\\
= \lambda^{1 + 2}\begin{vmatrix}[2]F(A) & [1]F(A) & [0]F(A)\\ [2]G(A) & [1]G(A) &  [0]G(A)\\ [2]H(A) &  [1]H( A) & [0]H(A)\end{vmatrix} = \lambda^3 R(A).\\
\end{multline*}
Therefore, $R$ is a homogeneous polynomial and $\deg(R) \le 3$. Thus, the statement of the lemma holds in all possible cases.
\end{proof}

\begin{lemma}\label{lem:RNonzeroRAll}Assume that $|\F| > 6$ and $n  \ge 3$. Let $c_1, \ldots, c_m \in \F$ and $B^{(1)}, \ldots, B^{(m)} \in \M_{n\,k}(\F)$ be such that $(c_1, \ldots, c_m)^t \neq \mathbf 0$. 
If 
\begin{equation}\label{lem:RNonzeroRAll:eqq1}
\alpha_1 B^{(1)} + \ldots + \alpha_n B^{(m)} \in \R_n(\F)\;\;\mbox{for all}\;\; \alpha_1, \ldots, \alpha_m \in \F\;\;\mbox{such that}\;\;\sum\limits_{i=1}^{m} c_i\alpha_i\neq 0,
\end{equation}
then
\begin{equation}\label{lem:RNonzeroRAll:eqq2}
\alpha_1 B^{(1)} + \ldots + \alpha_n B^{(m)} \in \R_n(\F)\quad\mbox{for all}\;\;\alpha_1, \ldots, \alpha_m \in \F.
\end{equation}
\end{lemma}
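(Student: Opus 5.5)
The plan is to turn membership in $\R_n(\F)$ into the vanishing of finitely many low-degree polynomials, and then apply the density argument of Lemma~\ref{lem:ManyPolyDensityArg} in the variables $\alpha_1,\ldots,\alpha_m$. Since $n \ge 3$, Lemma~\ref{lem:GeneralR1Crit} applies. It says that a matrix $A \in \M_n(\F)$ lies in $\R_n(\F)$ if and only if every polynomial $R$ on the left-hand sides of~\eqref{lem:GeneralR1Crit:eqq1}, \eqref{lem:GeneralR1Crit:eqq12} and~\eqref{lem:GeneralR1Crit:eqq2} vanishes at $A$, over all admissible index choices. By Lemma~\ref{lem:R1IndicatorPolyDeg}, each such $R$ is a polynomial in the entries of $A$ of degree at most $4$.

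First, for each such $R$ I define $P_R \in \F[\alpha_1,\ldots,\alpha_m]$ by $P_R(\alpha_1,\ldots,\alpha_m) = R(\alpha_1B^{(1)}+\ldots+\alpha_mB^{(m)})$. Each entry of $\alpha_1B^{(1)}+\ldots+\alpha_mB^{(m)}$ is a linear form in the $\alpha_i$. Substituting linear forms into a polynomial of degree $\le 4$ gives a polynomial of degree $\le 4$, so $\deg P_R \le 4$. Let $\mathcal S$ be the (finite) set of all $P_R$, and set $Q = c_1\alpha_1+\ldots+c_m\alpha_m$. We have $Q \neq 0$ because $(c_1,\ldots,c_m)^t \neq \mathbf 0$, and $\deg Q = 1$. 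Hence $\deg P + \deg Q \le 5 < |\F|$ for every $P \in \mathcal S$, using $|\F|>6$.

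Next I verify the hypothesis~\eqref{lem:ManyPolyDensityArg:eqq1}. If $Q(\alpha)\neq 0$, then by~\eqref{lem:RNonzeroRAll:eqq1} the matrix $\sum\alpha_iB^{(i)}$ lies in $\R_n(\F)$. The necessity part of Lemma~\ref{lem:GeneralR1Crit} then gives $P_R(\alpha)=0$ for all $P_R \in \mathcal S$. Lemma~\ref{lem:ManyPolyDensityArg} therefore yields $P_R(\alpha) = 0$ for all $\alpha \in \F^m$ and all $R$. Finally, the sufficiency part of Lemma~\ref{lem:GeneralR1Crit}, applied to $A=\sum\alpha_iB^{(i)}$ for arbitrary $\alpha$, gives~\eqref{lem:RNonzeroRAll:eqq2}.

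The only delicate point I expect is bookkeeping rather than substance. I must ensure that the polynomials in Lemma~\ref{lem:GeneralR1Crit} are understood as polynomials in all entries of $A$, so that the degree bound of Lemma~\ref{lem:R1IndicatorPolyDeg} really passes to $P_R$ under the linear substitution. I must also treat the $B^{(i)}$ as $n\times n$ matrices, so that the criterion of Lemma~\ref{lem:GeneralR1Crit} applies to them.
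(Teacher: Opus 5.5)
Your proposal is correct and follows the paper's proof essentially step for step. The paper likewise encodes membership in $\R_n(\F)$ through the polynomials of Lemma~\ref{lem:GeneralR1Crit}, bounds their degree by $4$ via Lemma~\ref{lem:R1IndicatorPolyDeg}, composes them with $\sum_i x_iB^{(i)}$, and applies Lemma~\ref{lem:ManyPolyDensityArg} with $Q=\sum_i c_ix_i$; your reading of the $B^{(i)}$ as $n\times n$ matrices correctly fixes a typo in the statement.
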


\begin{proof}Let $\mathcal S$ be the set of all the polynomials $P \in \F[a_{1\,1}, \ldots, a_{n\,n}]$ appearing on the left-hand side  of~\eqref{lem:GeneralR1Crit:eqq1}, \eqref{lem:GeneralR1Crit:eqq12} or~\eqref{lem:GeneralR1Crit:eqq2}. Lemma~\ref{lem:GeneralR1Crit} means that 
\begin{equation}\label{lem:RNonzeroRAll:eq1}
A \in \R_n(\F) \Leftrightarrow \left(\forall P \in \mathcal S \colon P(A) = 0\right)\;\;\mbox{for all}\;\; A \in \M_n(\F)
\end{equation}
In addition,
\begin{equation}\label{lem:RNonzeroRAll:eq2}
\deg(P) \le 4\;\;\mbox{for all}\;\; P \in \mathcal S
\end{equation}
by Lemma~\ref{lem:R1IndicatorPolyDeg}.
Let $\mathcal S' \subseteq \F[x_1,\ldots, x_m]$ be defined by
\begin{equation}
\mathcal S' = \left\{P\left(A\left(x_1,\ldots, x_m\right)\right) | P \in \mathcal S\right\},
\end{equation}
where $A\left(x_1,\ldots, x_m\right) = \left(a_{i\,j}(x_1,\ldots, x_m)\right) \in \M_{n}\left(\F[x_1,\ldots, x_m]\right)$ is defined by
$$A(x_1,\ldots, x_m) = x_1 B^{(1)} + \ldots + x_m B^{(m)}.$$
From~\eqref{lem:RNonzeroRAll:eq1} and~\eqref{lem:RNonzeroRAll:eq2} we conclude that
\begin{equation}\label{lem:RNonzeroRAll:eq3}
A\left(x_1,\ldots, x_m\right) \in \R_n(\F) \Leftrightarrow \left(\forall P' \in \mathcal S' \colon P\left(A\left(x_1,\ldots, x_m\right)\right) = 0\right)
\end{equation}
for all $x_1,\ldots, x_m \in \F$ and
\begin{equation}\label{lem:RNonzeroRAll:eq4}
\deg(P) \overset{\eqref{lem:RNonzeroRAll:eq2}}{\le} 4\;\;\mbox{for all}\;\; P \in \mathcal S'
\end{equation}
Let $Q = c_1x_1 + \ldots + c_mx_m$. The assumptions of the lemma imply that $Q \neq 0$. In addition,
\begin{equation}\notag
\deg(PQ) \overset{\eqref{lem:RNonzeroRAll:eq4}}{\le} 4 + 1 = 5 < |\F|\;\;\mbox{for all}\;\; P \in \mathcal S'.
\end{equation}
Hence, 
\[
P'\left(A\left(x_1,\ldots, x_m\right)\right) = 0\;\;\mbox{for all}\;\; P' \in \mathcal S',\, x_1, \ldots, x_m \in \F
\]
by Lemma~\ref{lem:ManyPolyDensityArg} and consequently
\[
\alpha_1 B^{(1)} + \ldots + \alpha_n B^{(m)} = A(\alpha_1, \ldots, \alpha_m) \in \R_n(\F)\quad\mbox{for all}\;\;\alpha_1, \ldots, \alpha_m \in \F
\]
by~\eqref{lem:RNonzeroRAll:eq3}.

\end{proof}

\begin{remark}
Lemma~\ref{lem:RNonzeroRAll} could be considered as an analogue Lemma~2.4 from~\cite{Guterman2025c}, where the set of rank-one rectangular matrices is considered instead of $\R_n(\F).$
\end{remark}

\begin{lemma}[Cf.~{\cite[the excersices for Section~3.2]{Friedberg2002-wu}}]\label{lem:RkSum}If $A, B \in \M_{n}(\F)$, then $\rk(A + B) \le \rk(A) + \rk(B)$.
\end{lemma}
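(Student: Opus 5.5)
The statement $\rk(A+B) \le \rk(A) + \rk(B)$ is the subadditivity of rank. I would prove it by comparing column spaces. By definition, $\rk(X) = \dim \colsp(X)$ for every $X \in \M_{n}(\F)$.

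The key observation is the inclusion
\[
\colsp(A+B) \subseteq \colsp(A) + \colsp(B).
\]
To see it, note that the $j$-th column of $A + B$ equals the $j$-th column of $A$ plus the $j$-th column of $B$. The first summand lies in $\colsp(A)$ and the second in $\colsp(B)$, so every column of $A+B$ lies in the sum space $\colsp(A) + \colsp(B) = \langle \colsp(A) \cup \colsp(B) \rangle$. The column space of $A+B$ is generated by these columns and the sum space is a vector space, so the inclusion follows.

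Taking dimensions, monotonicity of dimension under inclusion gives $\rk(A+B) \le \dim\left(\colsp(A) + \colsp(B)\right)$. It remains to bound the dimension of the sum. Choose a basis of $\colsp(A)$ with $\rk(A)$ vectors and a basis of $\colsp(B)$ with $\rk(B)$ vectors. Their union spans $\colsp(A) + \colsp(B)$ and has at most $\rk(A) + \rk(B)$ elements. Since a spanning set contains a basis, $\dim(\colsp(A) + \colsp(B)) \le \rk(A) + \rk(B)$. (Alternatively, one can quote the Grassmann formula $\dim(U+V) = \dim U + \dim V - \dim(U\cap V)$.)

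No step is a genuine obstacle. The only point needing a little care is the column-by-column verification of the inclusion $\colsp(A+B) \subseteq \colsp(A) + \colsp(B)$; everything after it is standard finite-dimensional linear algebra.
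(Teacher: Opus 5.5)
Your proof is correct: the inclusion $\colsp(A+B) \subseteq \colsp(A) + \colsp(B)$ combined with $\dim(U+V) \le \dim U + \dim V$ is the standard argument. The paper gives no proof of its own here and only cites the result as a textbook exercise from Friedberg et al., so there is nothing further to compare.
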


\begin{lemma}\label{lem:SumRankOneEqualScal}Let $n \ge 2$ be an integer. If $A^{(1)}, \ldots, A^{(n-1)} \in \M_{n}(\F)$ are such that  $\rk\left(A^{(i)}\right) \le 1$ for all $1 \le i \le (n-1)$ and $\sum_{i=1}^{n-1} A^{(i)}$ is a scalar matrix, then $\sum_{i=1}^{n-1} A^{(i)} = 0$.
\end{lemma}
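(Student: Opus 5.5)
The plan is a rank count via Lemma~\ref{lem:RkSum}. Let $S = \sum_{i=1}^{n-1} A^{(i)}$, and suppose $S = cI_n$ for some $c \in \F$. We want to show that $c = 0$.

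First, apply Lemma~\ref{lem:RkSum} repeatedly, by induction on the number of summands. This gives
\[
\rk(S) \le \sum_{i=1}^{n-1} \rk\left(A^{(i)}\right) \le n - 1 .
\]
Second, if $c \neq 0$, then $cI_n$ is invertible, so $\rk(S) = \rk(cI_n) = n$. This contradicts $\rk(S) \le n-1$. Hence $c = 0$, and therefore $S = cI_n = O_n$, which is the claimed equality $\sum_{i=1}^{n-1} A^{(i)} = 0$.

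No step is a real obstacle. The only care needed is in the induction extending Lemma~\ref{lem:RkSum} from two summands to $n-1$ summands, which is immediate. The hypothesis $n \ge 2$ ensures that the sum is nonempty. With $n-1 \ge 1$ summands the bound $n-1$ is strictly smaller than $n$, which is exactly what excludes a nonzero scalar matrix; no restriction on the field is used.
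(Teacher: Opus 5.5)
Your proof is correct and is essentially the paper's argument: subadditivity of rank (Lemma~\ref{lem:RkSum}) bounds $\rk\bigl(\sum_{i=1}^{n-1} A^{(i)}\bigr)$ by $n-1$, while a nonzero scalar matrix has rank $n$. Your remark that extending Lemma~\ref{lem:RkSum} to $n-1$ summands needs a trivial induction is a small detail that the paper leaves implicit.
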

\begin{proof}
Let $A = \sum_{i=1}^{n-1} A^{(i)}$. Lemma~\ref{lem:RkSum} and the assumptions of the lemma imply that 
$$\rk\left(A\right) = \rk\left(\sum_{i=1}^{n-1} A^{(i)}\right) \le  \sum_{i=1}^{n-1}\rk\left( A^{(i)}\right) \le (n-1).$$
Therefore, $A = 0$ because otherwise the assumption that $A$ is a scalar matrix implies that $\rk(A) = n$. This concludes the statement of the lemma.
\end{proof}

\begin{corollary}\label{cor:OneWayRankOnePlusScalar}Assume that $n \ge 3$. If $A, B \in \M_{n}(\F)$ and $a, b \in \F$ are such that  $\rk(A), \rk(B) \le 1$ and $A + cI_{n} = B + dI_n,$
then $A = B$ and $c = d.$
\end{corollary}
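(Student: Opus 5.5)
The plan is to rearrange the equality as $A - B = (d - c)I_n$ and then apply Lemma~\ref{lem:SumRankOneEqualScal} with $n-1$ summands. (The statement names the scalars $a,b$ in the hypothesis but $c,d$ in the equality; I read them as $c$ and $d$ throughout.)

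From $A + cI_n = B + dI_n$ we obtain
\[
A + (-B) = (d - c)I_n,
\]
so the sum of the two matrices $A$ and $-B$ is scalar. Both summands have rank at most one, since $\rk(-B) = \rk(B) \le 1$.

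Because $n \ge 3$, we have $n - 1 \ge 2$. I pad the list with $n-3$ zero matrices, which have rank $0 \le 1$, to obtain exactly $n-1$ matrices
\[
A^{(1)} = A,\quad A^{(2)} = -B,\quad A^{(3)} = \ldots = A^{(n-1)} = O_n,
\]
each of rank at most one, whose sum $(d-c)I_n$ is scalar. Lemma~\ref{lem:SumRankOneEqualScal} then gives $(d-c)I_n = O_n$, so $c = d$. Substituting back into $A + cI_n = B + dI_n$ yields $A = B$.

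There is no real obstacle here. The only point to check is that $n \ge 3$ is what allows two rank-one summands to fit into the $n-1$ slots of Lemma~\ref{lem:SumRankOneEqualScal}. Equivalently, a nonzero scalar matrix has rank $n \ge 3 > 2 \ge \rk(A - B)$ by Lemma~\ref{lem:RkSum}, so $A - B$ cannot be a nonzero scalar matrix.
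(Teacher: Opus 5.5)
Your proof is correct and is exactly the intended argument. The paper gives no separate proof and places this corollary immediately after Lemma~\ref{lem:SumRankOneEqualScal}, from which it follows just as you show: write $A - B = (d-c)I_n$ as a sum of $2 \le n-1$ matrices of rank at most one, padded with zeros, so the scalar matrix must vanish (and reading $a,b$ as $c,d$ is the right fix for the typo).
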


\section{Orbit invariants of linear maps converting one polynomial to another}
\label{sec:OrbInvariants}

We begin this section by introducing the necessary notation from~\cite{Lang2002} and~\cite{YurkovPolyPres2026} which is used to provide the definitions of necessary matrix invariants.

\begin{definition}[{Cf.~\cite[IV,\S 1, the definition on p.~178]{Lang2002}}]\label{def:onedimder}If $f(X) =
a_n x^n + \ldots + a_0 \in \F[x]$, then the \emph{derivative} of $f$ is a polynomial belonging to $\F[x]$ which is denoted by $f'$ and is defined by
\[
f'(X)= na_n x^{n-1} + \ldots + a_1.
\]
\end{definition}

\begin{definition}[{\cite[Definition~3.13]{YurkovPolyPres2026}}]\label{def:partDer}
Let $P \in \F[x_1,\ldots, x_n]$. Then $\frac{\partial P}{\partial \mathbf v}(\mathbf a) \in \F[x_1,\ldots, x_n; y_1, \ldots, y_n]$ denotes the \emph{directional derivative of $P$ along $\mathbf v$ at $\mathbf a$} and is defined as a coefficient at $t$ in the following equality in\linebreak $\F[x_1,\ldots, x_n; y_1, \ldots, y_n; t]$
\[
P(\mathbf a + t \mathbf v) = P(\mathbf a)  + P_1(\mathbf a, \mathbf v)t + \mbox{(terms of higher degree in $t$)},
\]
where the coordinates of $\mathbf a$ correspond to $x_1,\ldots, x_n$, and  the coordinates of $\mathbf v$ correspond to $y_1,\ldots, y_n$. 
\end{definition}

\begin{lemma}[{\cite[Lemma~3.14]{YurkovPolyPres2026}}]\label{lem:DirDerLin}
If $P \in \F[x_1,\ldots, x_n]$ and $\mathbf a \in \F^n$ are fixed, then the function $\mathbf v \mapsto  \frac{\partial P}{\partial \mathbf v}(\mathbf a)$ is linear.
\end{lemma}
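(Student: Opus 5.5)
The plan is to expand $P(\mathbf a + t\mathbf v)$ monomial by monomial and read off the coefficient of $t$ directly. By definition, $\frac{\partial P}{\partial \mathbf v}(\mathbf a)$ is the coefficient of $t^1$ in $P(\mathbf a + t\mathbf v)$. Extracting the coefficient of $t$ is a linear operation on $\F[\ldots][t]$, and $P \mapsto P(\mathbf a + t\mathbf v)$ respects sums and scalar multiples. So the map $P \mapsto \frac{\partial P}{\partial \mathbf v}(\mathbf a)$ is additive in $P$, and it suffices to treat a single monomial $P = x_1^{k_1}\cdots x_n^{k_n}$.

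For a monomial we have $P(\mathbf a + t\mathbf v) = \prod_{i}(\mathbf a_i + t\mathbf v_i)^{k_i}$. Expanding each factor by the binomial formula, the coefficient of $t$ comes from picking the linear term in exactly one factor and the constant term in all the others. This gives
\[
\frac{\partial P}{\partial \mathbf v}(\mathbf a) = \sum_{i=1}^{n} k_i\, \mathbf a_1^{k_1}\cdots \mathbf a_i^{k_i-1}\cdots \mathbf a_n^{k_n}\,\mathbf v_i .
\]
Here the terms with $k_i = 0$ vanish, and the integer $k_i$ is read in $\F$, so the formula is valid in any characteristic. Summing over the monomials of $P$ yields $\frac{\partial P}{\partial \mathbf v}(\mathbf a) = \sum_i c_i(\mathbf a)\,\mathbf v_i$, where each $c_i(\mathbf a)$ depends only on $P$ and $\mathbf a$. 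In the notation of Definition~\ref{def:onedimder}, $c_i$ is the formal partial derivative of $P$ in $x_i$, evaluated at $\mathbf a$.

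Once $\mathbf a$ is fixed, the $c_i(\mathbf a)$ are constants in $\F$. The expression $\sum_i c_i(\mathbf a)\mathbf v_i$ is therefore a linear form in $\mathbf v$, so it satisfies $\frac{\partial P}{\partial (\alpha\mathbf v + \beta\mathbf w)}(\mathbf a) = \alpha\frac{\partial P}{\partial \mathbf v}(\mathbf a) + \beta\frac{\partial P}{\partial \mathbf w}(\mathbf a)$, which is the claim.

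There is no real obstacle here. The only point needing care is the formal setting: Definition~\ref{def:partDer} works in a polynomial ring, so the identity should be checked as polynomials in $t$ rather than as functions. The monomial expansion does exactly this, so no cardinality assumption on $\F$ is needed.
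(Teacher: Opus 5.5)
Your proof is correct and complete. Substitution $P\mapsto P(\mathbf a+t\mathbf v)$ and extraction of the $t$-coefficient are both $\F$-linear in $P$, so the reduction to monomials is legitimate. The monomial computation then gives $\frac{\partial P}{\partial \mathbf v}(\mathbf a)=\sum_{i=1}^{n}\frac{\partial P}{\partial x_i}(\mathbf a)\,\mathbf v_i$, which is a linear form in $\mathbf v$ and holds as a polynomial identity in $t$ over any field, with no cardinality assumption.

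The paper gives no proof of this lemma; it imports it from \cite{YurkovPolyPres2026}, so there is no argument in this text to compare yours against. Your explicit gradient formula proves a little more than the bare linearity claim. It also justifies the remark after Definition~\ref{def:LP}, which the paper states without proof, that $\mathcal L_P$ is spanned by the range of the gradient field of $P$.
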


\begin{definition}[{\cite[Definition~3.15]{YurkovPolyPres2026}}] \label{def:PAV}
Let $P \in \F[x_1,\ldots, x_n]$ and $\mathbf a, \mathbf v \in \F^n$ are fixed. Then by $P_{\mathbf a, \mathbf v} \in \F[t]$ we denote a polynomial defined by
\begin{equation}\label{eq:def:PAV}
P_{\mathbf a, \mathbf v} = P(\mathbf a + t \mathbf v).
\end{equation}

\end{definition}

\begin{lemma}[{\cite[Lemma~3.16]{YurkovPolyPres2026}}] \label{lem:partDer}Let $P \in \F[x_1,\ldots, x_n]$ and $\mathbf a, \mathbf v \in \F^n$. Then $\deg(P_{\mathbf a, \mathbf v}) \le \deg(P)$ and 
\begin{equation}\label{def:partDer:eqq1}
\frac{\partial P}{\partial \mathbf v}(\mathbf a) = P'_{\mathbf a, \mathbf v}(0)\;\;\mbox{for all}\;\; \mathbf v \in \F^n.
\end{equation}
\end{lemma}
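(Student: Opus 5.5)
The statement to prove is Lemma~\ref{lem:partDer}. It has two parts: the degree bound $\deg(P_{\mathbf a,\mathbf v})\le\deg(P)$ and the identity~\eqref{def:partDer:eqq1}. Both follow from one observation. The polynomial $P(\mathbf a+t\mathbf v)$ in $\F[x_1,\ldots,x_n;y_1,\ldots,y_n;t]$ from Definition~\ref{def:partDer} specializes to $P_{\mathbf a,\mathbf v}\in\F[t]$ of Definition~\ref{def:PAV}. The specialization substitutes the coordinates of the concrete vectors $\mathbf a,\mathbf v$ for the variables $x_i,y_i$. It is a ring homomorphism that fixes $t$, so it commutes with taking coefficients of powers of $t$.

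For the degree bound, I would write $P=\sum_\alpha c_\alpha \mathbf x^\alpha$ with $|\alpha|\le\deg(P)$. Each monomial $\prod_i(\mathbf a_i+t\mathbf v_i)^{\alpha_i}$ is a product of $|\alpha|$ polynomials in $t$ of degree at most $1$. Its degree in $t$ is therefore at most $|\alpha|\le\deg P$. Summing over $\alpha$ gives $\deg(P_{\mathbf a,\mathbf v})\le\deg(P)$; cancellation can only lower the degree.

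For the identity, I would expand the specialized polynomial as $P_{\mathbf a,\mathbf v}(t)=\sum_{k\ge0}b_kt^k$. By Definition~\ref{def:partDer}, $b_0=P(\mathbf a)$ and $b_1=\frac{\partial P}{\partial\mathbf v}(\mathbf a)$. The latter holds because the coefficient at $t$ of the generic polynomial is $P_1(\mathbf x,\mathbf y)$, and specializing gives $P_1(\mathbf a,\mathbf v)$. The formal derivative (Definition~\ref{def:onedimder}) is $P'_{\mathbf a,\mathbf v}(t)=\sum_{k\ge1}kb_kt^{k-1}$. Evaluating at $t=0$ kills every term except $k=1$, which leaves $b_1$. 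This gives~\eqref{def:partDer:eqq1}.

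There is no real obstacle here. The only point needing care is the compatibility of coefficient extraction with specialization, which holds because specialization is an $\F[t]$-algebra homomorphism. The argument never divides by integers, so it works in any characteristic.
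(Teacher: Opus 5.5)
Your proof is correct and complete. Specializing $x_i\mapsto\mathbf a_i$, $y_i\mapsto\mathbf v_i$ is an $\F[t]$-algebra homomorphism, so it yields both the degree bound and the identification of the $t$-coefficient with $P'_{\mathbf a,\mathbf v}(0)$, and this works over any field and in any characteristic. The paper gives no proof of its own here; it simply imports the statement from \cite[Lemma~3.16]{YurkovPolyPres2026}, so there is no alternative route to compare against, and your direct coefficient comparison is the standard argument.
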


\begin{definition}[{\cite[Definition~4.1]{YurkovPolyPres2026}}]\label{def:PLFunc} Let $P \in \F[x_1,\ldots, x_n]$ and $\mathbf a \in \F^n$. By $l_{P,\mathbf a} \colon \mathbb F^n \to \mathbb F$ we denote a linear function defined by
\[
l_{P, \mathbf a}(\mathbf v) = \frac{\partial P}{\partial \mathbf v}(\mathbf a)\qquad(\mathbf v \in \F^n).
\] 
\end{definition}

\begin{definition}[{\cite[Definition~4.2]{YurkovPolyPres2026}}]\label{def:LP}
Let $P \in \F[x_1,\ldots, x_n]$. By $\mathcal L_P \subseteq (\F^n)^*$ we denote the vector space spanned by all the linear functions of the form $l_{P, \mathbf a}$ for some $\mathbf a \in \F^n.$ That is,
\[
\mathcal L_P = \Span (\{\mathbf v \mapsto \frac{\partial P}{\partial \mathbf v}(\mathbf a) \mid \mathbf a \in \mathbb F^n\}).
\]
Equivalently, $\mathcal L_{P} \subseteq {\F^n}^*$ spanned by the range of the gradient field of $P \in \F[x_1,\ldots, x_n]$.
\end{definition}

\begin{lemma}[{\cite[Lemma~5.9]{YurkovPolyPres2026}}]\label{thm:surjpres}Let $P \in \F[x_1,\ldots, x_n]$ be a polynomial such that $\deg (P) < |\F|$. If  $\phi, \psi \colon \F^n \to \F^n$ are such that
\begin{equation*}
P(\mathbf{x} + \lambda \mathbf{y}) = P(\phi(\mathbf{x}) + \lambda \psi(\mathbf{y}))\;\;\mbox{for all}\;\;\mathbf{x},\mathbf{y} \in \mathbb \F^n,\; \lambda \in \F,
\end{equation*}
and $\phi$ is surjective, then $l \circ \psi \in \mathcal L_P$ for all $l \in \mathcal L_P.$
\end{lemma}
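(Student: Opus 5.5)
The plan is to show that for fixed $l\in\mathcal L_P$ the functional $l\circ\psi$ is a linear combination of gradients of $P$ at points of $\F^n$. By linearity of composition it suffices to treat the spanning functionals. So I will fix $\mathbf a\in\F^n$ and show that $l_{P,\mathbf a}\circ\psi\in\mathcal L_P$.

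The first step uses surjectivity of $\phi$ to choose $\mathbf x\in\F^n$ with $\phi(\mathbf x)=\mathbf a$. For every $\mathbf y\in\F^n$ the hypothesis then gives an equality of functions of $\lambda\in\F$:
\[
P(\mathbf x+\lambda\mathbf y)=P(\mathbf a+\lambda\psi(\mathbf y)).
\]
Both sides are the polynomials $P_{\mathbf x,\mathbf y}(\lambda)$ and $P_{\mathbf a,\psi(\mathbf y)}(\lambda)$ of Definition~\ref{def:PAV}. By Lemma~\ref{lem:partDer} their degrees are at most $\deg(P)<|\F|$. Corollary~\ref{DegFGLessEqual} in one variable therefore gives equality as polynomials in $\F[t]$. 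Comparing the coefficients of $t$ and applying~\eqref{def:partDer:eqq1}, we get
\[
l_{P,\mathbf x}(\mathbf y)=\frac{\partial P}{\partial \mathbf y}(\mathbf x)=\frac{\partial P}{\partial \psi(\mathbf y)}(\mathbf a)=l_{P,\mathbf a}(\psi(\mathbf y))\quad\text{for all }\mathbf y\in\F^n.
\]
Hence $l_{P,\mathbf a}\circ\psi=l_{P,\mathbf x}$, which lies in $\mathcal L_P$.

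A consequence of this identity is that $l_{P,\mathbf a}\circ\psi$ is linear, even though $\psi$ is not assumed linear. This matters in the final step. Take a general $l=\sum_i c_i l_{P,\mathbf a^{(i)}}$. Then $l\circ\psi=\sum_i c_i\,(l_{P,\mathbf a^{(i)}}\circ\psi)=\sum_i c_i\, l_{P,\mathbf x^{(i)}}$. This is a genuine linear functional and lies in $\mathcal L_P$.

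The main obstacle is making sure that the coefficient comparison is legitimate over a possibly finite field. This is exactly where the bound $\deg(P)<|\F|$ enters, through the degree estimate $\deg(P_{\mathbf a,\mathbf v})\le\deg(P)$. Everything else is formal.
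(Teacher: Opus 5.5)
Your proof is correct and follows essentially the same route as the paper. The paper cites this lemma rather than reproving it, but your key step is exactly its Lemma~\ref{lem:eqpqd}: use $\deg(P)<|\F|$ to identify the two one-variable polynomials and compare coefficients of $t$; with a $\phi$-preimage $\mathbf x$ of $\mathbf a$ this gives $l_{P,\mathbf a}\circ\psi=l_{P,\mathbf x}$, and you extend by linearity in $l$, correctly noting that $l\circ\psi$ is then automatically linear even though $\psi$ need not be.
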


\begin{corollary}\label{cor:SurjPresComp}Let $P \in \F[x_1,\ldots, x_n]$ be a polynomial such that $\deg (P) < |\F|$. If  $\phi, \psi \colon \F^n \to \F^n$ are such that
\begin{equation*}
P(\mathbf{x} + \lambda \mathbf{y}) = P(\phi(\mathbf{x}) + \lambda \psi(\mathbf{y}))\;\;\mbox{for all}\;\;\mathbf{x},\mathbf{y} \in \mathbb \F^n,\; \lambda \in \F,
\end{equation*}
and $\phi$ is surjective, then $\psi(\mathbf v) \in \left(\FL_P\right)^0$ for all $\mbf v \in \left(\FL_P\right)^0$. 
\end{corollary}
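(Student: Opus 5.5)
The statement to prove is Corollary~\ref{cor:SurjPresComp}. It is the dual (annihilator) form of Lemma~\ref{thm:surjpres}, so the plan is to apply that lemma and then translate the statement about functionals in $\FL_P$ into a statement about vectors in $\left(\FL_P\right)^0$.

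The first step is to verify that the hypotheses are exactly those of Lemma~\ref{thm:surjpres}:
\begin{itemize}
\item $\deg(P) < |\F|$;
\item the functional identity $P(\mathbf x + \lambda \mathbf y) = P(\phi(\mathbf x) + \lambda\psi(\mathbf y))$ holds for all $\mathbf x, \mathbf y \in \F^n$ and $\lambda \in \F$;
\item $\phi$ is surjective.
\end{itemize}
Hence $l \circ \psi \in \FL_P$ for every $l \in \FL_P$.

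The second step is to fix $\mbf v \in \left(\FL_P\right)^0$, so that $l(\mbf v) = 0$ for all $l \in \FL_P$. We must show that $l(\psi(\mbf v)) = 0$ for every $l \in \FL_P$. Take any such $l$. The composition $l\circ\psi$ is a function $\F^n \to \F$, and by the first step it belongs to $\FL_P \subseteq (\F^n)^*$, so it is a linear functional lying in $\FL_P$. Since $\mbf v$ annihilates $\FL_P$, we get $(l\circ\psi)(\mbf v) = 0$, that is, $l(\psi(\mbf v)) = 0$. As $l \in \FL_P$ was arbitrary, $\psi(\mbf v) \in \left(\FL_P\right)^0$.

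One point needs care. The annihilator is defined as a subset of a dual space, whereas $\mbf v$ here is a vector in $\F^n$. So $\left(\FL_P\right)^0$ must be read via the canonical identification $(\F^n)^{**} \cong \F^n$, namely as $\{\mbf v \in \F^n : l(\mbf v) = 0 \text{ for all } l \in \FL_P\}$. I will state this reading explicitly at the start.

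Once that reading is fixed there is no real obstacle. In particular, linearity of $\psi$ is never needed, because Lemma~\ref{thm:surjpres} already places $l\circ\psi$ inside $\FL_P$ and the conclusion follows by evaluating at $\mbf v$.
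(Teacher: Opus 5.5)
Your proposal is correct and matches the paper: the paper gives no separate proof, treating the corollary as the immediate dual form of Lemma~\ref{thm:surjpres}, namely $l(\psi(\mathbf v)) = (l\circ\psi)(\mathbf v) = 0$ for $\mathbf v \in (\FL_P)^0$ and $l \in \FL_P$. Your explicit reading of $(\FL_P)^0$ as a subspace of $\F^n$ via $(\F^n)^{**}\cong \F^n$ is the convention the paper uses implicitly.
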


In order to compare $\mathcal L_{P}$ and $\mathcal L_{Q}$ for distinct $P, Q \in \F[x_1,\ldots, x_n]$ we need the following extension of Lemma~4.10 from~\cite{YurkovPolyPres2026}.

\begin{lemma}\label{lem:eqpqd}Let $P \in \F[x_1,\ldots, x_n], Q \in \F[x_1, \ldots, x_m]$, $\mathbf x_1, \mathbf y_1\in \F^n$ and $\mathbf x_2, \mathbf y_2 \in \F^m$. If $\deg(P), \deg(Q) < |\F|$ and
\begin{equation}\label{lem:eqpqd:eqlem1}
P(\mathbf x_1 + \lambda \mathbf y_1) = Q(\mathbf x_2 + \lambda \mathbf y_2)\;\;\mbox{for all}\;\;\lambda \in \F, 
\end{equation}
then
\begin{equation*}\label{lem:eqpqd:eqlem2}
l_{P,\mathbf x_1}(\mathbf y_1) = l_{Q, \mathbf x_2}(\mathbf y_2).
\end{equation*}
\end{lemma}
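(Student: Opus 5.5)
The idea is to reduce the statement to comparing the linear coefficients of two univariate polynomials in $\lambda$ (written below as $t$). Put
\[
f(t) = P(\mathbf x_1 + t\mathbf y_1) = P_{\mathbf x_1,\mathbf y_1}(t), \qquad g(t) = Q(\mathbf x_2 + t\mathbf y_2) = Q_{\mathbf x_2,\mathbf y_2}(t),
\]
both as elements of $\F[t]$ in the sense of Definition~\ref{def:PAV}. By Lemma~\ref{lem:partDer}, $\deg f \le \deg(P) < |\F|$ and $\deg g \le \deg(Q) < |\F|$. The same lemma, together with Definition~\ref{def:PLFunc}, gives
\[
l_{P,\mathbf x_1}(\mathbf y_1) = f'(0) \quad\mbox{and}\quad l_{Q,\mathbf x_2}(\mathbf y_2) = g'(0).
\]
So it suffices to prove that $f = g$ as polynomials in $\F[t]$. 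Then $f'(0) = g'(0)$, which is the required equality. Concretely, $f'(0)$ and $g'(0)$ are just the coefficients of $t$ in $f$ and $g$.

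The hypothesis~\eqref{lem:eqpqd:eqlem1} says exactly that $f(\lambda) = g(\lambda)$ for every $\lambda \in \F$. So $f$ and $g$ define the same function on $\F^1$. Both have degree less than $|\F|$ in their single variable; if $\F$ is infinite this condition is vacuous. Corollary~\ref{DegFGLessEqual} with $n = 1$ then gives $f = g$ in $\F[t]$. Equivalently, $f - g$ has degree less than $|\F|$ but vanishes at all $|\F|$ points, so it is the zero polynomial.

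The only point needing care is that the evaluation of $P(\mathbf x_1 + t\mathbf y_1)$ really is the formal polynomial whose $t$-coefficient is the directional derivative. This is Definition~\ref{def:partDer} specialised at the concrete points $\mathbf x_1, \mathbf y_1$, as recorded in Lemma~\ref{lem:partDer}. The degree bound is also needed, so that formal identity is detected by functional identity. I do not expect a real obstacle: the extension over Lemma~4.10 of~\cite{YurkovPolyPres2026} is only that $P$ and $Q$ may live in different numbers of variables, and the argument never uses this.
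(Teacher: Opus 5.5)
Your proposal is correct and follows essentially the same route as the paper. Both arguments identify $l_{P,\mathbf x_1}(\mathbf y_1)$ and $l_{Q,\mathbf x_2}(\mathbf y_2)$ with the values at $0$ of the derivatives of $P_{\mathbf x_1,\mathbf y_1}$ and $Q_{\mathbf x_2,\mathbf y_2}$ via Lemma~\ref{lem:partDer}, and both use the degree bound with Corollary~\ref{DegFGLessEqual} to turn the pointwise equality in $\lambda$ into an equality in $\F[t]$.
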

\begin{proof}Indeed, the equality~\eqref{lem:eqpqd:eqlem1} implies that 
\begin{equation}\label{lem:eqpqd:eq1}
P_{\mathbf x_1, \mathbf y_1}(\lambda) = Q_{\mathbf x_2, \mathbf y_2}(\lambda)\;\;\mbox{for all}\;\;\lambda \in \F,
\end{equation}
where $P_{\mathbf x_1, \mathbf y_1}, Q_{\mathbf x_2, \mathbf y_2} \in \F[t]$ are defined in Definition~\ref{def:PAV}. Since $\deg(P_{\mathbf x_1, \mathbf y_1}) \leq \deg(P)$ and $\deg(Q_{\mathbf x_2, \mathbf y_2}) \leq \deg(Q)$ by Lemma~\ref{def:partDer}, and $\deg(P) < |\F|$ by the statement of the lemma, then $\deg(P_{\mathbf x_1, \mathbf y_1}) < |\F|$ and $\deg(Q_{\mathbf x_2, \mathbf y_2}) < |\F|$. Therefore, from~\eqref{lem:eqpqd:eq1} and Lemma~\ref{DegFGLessEqual} we conclude that
\begin{equation}\label{lem:eqpd:eq2}
P_{\mathbf x_1, \mathbf y_1} = Q_{\mathbf x_2, \mathbf y_2}\;\;\mbox{as elements of}\;\F[t].
\end{equation}
Hence, $P'_{\mathbf x_1, \mathbf y_1} = Q'_{\mathbf x_2, \mathbf y_2}$ and, in particular,
$P'_{\mathbf x_1, \mathbf y_1}(0) = Q'_{\mathbf x_2, \mathbf y_2}(0)$

Therefore,
\[
l_{P,\mathbf x_1}(\mathbf y_1) = P'_{\mathbf x_1, \mathbf y_1}(0) = Q'_{\mathbf x_2, \mathbf y_2}(0) =  l_{Q, \mathbf x_2}(\mathbf y_2)
\]
by Lemma~\ref{def:partDer} and the definitions of $l_{P,\mathbf x_1}$ and $l_{Q,\mathbf x_2}$.
\end{proof}

\begin{lemma}\label{lem:DimLPUnderLinConvInj}Let $P \in \F[x_1,\ldots, x_n], Q \in \F[x_1, \ldots, x_m]$ be such that\linebreak  $\deg(P), \deg(Q) < |\F|$. If there is a injective linear map $T \colon \F^n \to \F^m$ such that 
\begin{equation}\label{lem:DimLPUnderLinConv:eqq1}
P(\mathbf x) = Q(T(\mathbf x))\;\;\mbox{for all}\;\;\mathbf x \in \F^{n},
\end{equation}
 then $\dim(\mathcal L_P) \le \dim(\mathcal L_Q).$
\end{lemma}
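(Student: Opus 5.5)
The plan is to relate each generator of $\mathcal L_P$ to a generator of $\mathcal L_Q$ by pulling back along $T$, using Lemma~\ref{lem:eqpqd}. Write $T^*\colon(\F^m)^*\to(\F^n)^*$ for the dual map $l\mapsto l\circ T$. The claim is that
\[
\mathcal L_P\subseteq T^*(\mathcal L_Q).
\]
Granting this, $\dim(\mathcal L_P)\le\dim T^*(\mathcal L_Q)\le\dim(\mathcal L_Q)$, because the image of a vector space under a linear map has dimension at most that of the space.

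To prove the inclusion, fix $\mathbf a\in\F^n$. For every $\mathbf v\in\F^n$ and $\lambda\in\F$, linearity of $T$ and \eqref{lem:DimLPUnderLinConv:eqq1} give
\[
P(\mathbf a+\lambda\mathbf v)=Q(T(\mathbf a)+\lambda T(\mathbf v)).
\]
Since $\deg(P),\deg(Q)<|\F|$, Lemma~\ref{lem:eqpqd} applies with $\mathbf x_1=\mathbf a$, $\mathbf y_1=\mathbf v$, $\mathbf x_2=T(\mathbf a)$, $\mathbf y_2=T(\mathbf v)$. It yields
\[
l_{P,\mathbf a}(\mathbf v)=l_{Q,T(\mathbf a)}(T(\mathbf v))\quad\text{for all }\mathbf v,
\]
that is, $l_{P,\mathbf a}=T^*(l_{Q,T(\mathbf a)})\in T^*(\mathcal L_Q)$. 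Since $\mathcal L_P$ is spanned by the $l_{P,\mathbf a}$ and $T^*(\mathcal L_Q)$ is a subspace, the inclusion follows.

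There is no real obstacle. The only delicate point is that the degree hypothesis is what lets Lemma~\ref{lem:eqpqd} turn an equality of functions of $\lambda$ into an equality of polynomials, and that hypothesis is assumed. Injectivity of $T$ is not needed for this inequality; it would matter only if one wanted $T^*$ to be surjective, for example for a reverse comparison.
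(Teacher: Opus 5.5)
Your proof is correct and uses the same key identity $l_{P,\mathbf a}=l_{Q,T(\mathbf a)}\circ T$ from Lemma~\ref{lem:eqpqd} as the paper. The paper argues by contradiction: a linear dependence among the $l_{Q,T(\mathbf x_i)}$ pulls back to a dependence among a basis $l_{P,\mathbf x_i}$ of $\mathcal L_P$. You state the same fact directly as $\mathcal L_P\subseteq T^*(\mathcal L_Q)$. Your remark that injectivity of $T$ is never used is also right; the paper's argument does not use it either.
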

\begin{proof}By contradiction. Suppose that 
\begin{equation}\label{lem:DimLPUnderLinConv:eq2}
d = \dim(\mathcal L_P) > \dim(\mathcal L_Q).
\end{equation} Let $\mathbf x_1, \ldots, \mathbf x_d \in \F^n$ be such that the functionals $l_{P, \mathbf x_1}, \ldots l_{P, \mathbf x_d}$ form a basis of $\mathcal L_P$. This implies that 
\begin{equation}\label{lem:DimLPUnderLinConv:eq1}
l_{P, \mathbf x_1}, \ldots l_{P, \mathbf x_d}\;\;\mbox{are linearly independent}.
\end{equation}
From the inequality~\eqref{lem:DimLPUnderLinConv:eq2} we obtain that there is $c_1, \ldots, c_d \in \F$ such that $(c_1,\ldots, c_d)^t \neq \mathbf 0$ and
\begin{equation}\label{lem:DimLPUnderLinConv:eq4}
\sum_{i=1}^d c_i  l_{Q, T(\mathbf x_i)} = 0 
\end{equation}

The equality~\eqref{lem:DimLPUnderLinConv:eqq1} implies that
\begin{equation*}
P(\mathbf x_i + \lambda \mathbf y) = Q(T(\mathbf x_i) + \lambda T(\mathbf y))\;\;\mbox{for all}\;\; 1 \le i \le d, \; \mathbf y \in \F^n, \;\lambda \in \F.
\end{equation*}
Hence,
\begin{equation}\label{lem:DimLPUnderLinConv:eq3}
l_{P,\mathbf x_i}(\mathbf y) = l_{Q, T(\mathbf x_i)}(T(\mathbf  y))\;\;\mbox{for all}\;\; 1 \le i \le d, \; \mathbf y \in \F^n
\end{equation}
by Lemma~\ref{lem:eqpqd}. Therefore,
\begin{equation*}
\sum_{i=1}^d c_i  l_{Q, T(\mathbf x_i)}(T(\mathbf y)) \overset{\eqref{lem:DimLPUnderLinConv:eq3}}{=\joinrel=\joinrel=} \sum_{i=1}^d c_i  l_{P, \mathbf x_i}(\mathbf y) \overset{\eqref{lem:DimLPUnderLinConv:eq4}}{=\joinrel=\joinrel=} 0 \;\;\mbox{for all}\;\;  \mathbf y \in \F^n
\end{equation*}
which contradicts with~\eqref{lem:DimLPUnderLinConv:eq1}.
\end{proof}

\begin{corollary}\label{lem:DimLPUnderLinConvBij}
Let $P, Q \in \F[x_1,\ldots, x_n]$ be such that $\deg(P), \deg(Q) < |\F|$. If there is a bijective linear map $T \colon \F^n \to \F^n$ such that $P(\mathbf x) = Q(T(\mathbf x))$ for all $\mathbf x \in \F^{n}$, then $\dim(\mathcal L_P) = \dim(\mathcal L_Q).$
\end{corollary}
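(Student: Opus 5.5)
The plan is to apply Lemma~\ref{lem:DimLPUnderLinConvInj} twice, once to $T$ and once to $T^{-1}$, since each is an injective linear map relating the two polynomials.

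First, $T\colon\F^n\to\F^n$ is bijective, so in particular it is injective. The hypothesis $P(\mathbf x)=Q(T(\mathbf x))$ for all $\mathbf x\in\F^n$, together with $\deg(P),\deg(Q)<|\F|$, is exactly the setting of Lemma~\ref{lem:DimLPUnderLinConvInj} with $m=n$. That lemma gives $\dim(\mathcal L_P)\le\dim(\mathcal L_Q)$.

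Second, I would reverse the roles. Let $S=T^{-1}$, which is again a bijective, hence injective, linear map $\F^n\to\F^n$. For any $\mathbf y\in\F^n$, put $\mathbf x=S(\mathbf y)$. Then $Q(\mathbf y)=Q(T(\mathbf x))=P(\mathbf x)=P(S(\mathbf y))$, so $Q(\mathbf y)=P(S(\mathbf y))$ for all $\mathbf y\in\F^n$. The degree hypotheses are symmetric in $P$ and $Q$, so Lemma~\ref{lem:DimLPUnderLinConvInj}, applied with $Q$ in the role of $P$ and $S$ in the role of $T$, gives $\dim(\mathcal L_Q)\le\dim(\mathcal L_P)$.

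Combining the two inequalities yields $\dim(\mathcal L_P)=\dim(\mathcal L_Q)$. There is no real obstacle here. The only point that needs checking is that the identity $Q=P\circ T^{-1}$ holds pointwise on all of $\F^n$, and this follows from the surjectivity of $T$.
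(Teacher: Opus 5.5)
Your proposal is correct and matches the paper: the statement is recorded there as an immediate corollary of Lemma~\ref{lem:DimLPUnderLinConvInj}, with no separate proof written. The intended argument is exactly yours, namely applying that lemma once to $T$ and once to $T^{-1}$, using $Q = P\circ T^{-1}$ on all of $\F^n$.
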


For introducing more invariants, we need the following variation of the notions of the partial derivative and the polynomial $P_{\mathbf a, \mathbf v}$  from Definition~\ref{def:partDer} and Definition~\ref{def:PAV}, correspondingly.

\begin{definition}\label{def:PolyPointInv}
Let $P \in \F[x_1,\ldots, x_n]$ and $\mathbf x \in \F^n$ is fixed. Then by $P_{(\mathbf x)} \in \F[a_1,\ldots, a_n, \lambda]$ we denote a polynomial defined by
\[
P_{(\mathbf x)}(\mathbf a, \lambda) = P(\mathbf a + \lambda \mathbf x)
\] 
and by $P_{(\mathbf x)}^{(0)}, \ldots, P_{(\mathbf x)}^{\left(\deg_\lambda\left(P^{(\mathbf x)}\right)\right)} \in \F[a_1,\ldots, a_n]$ we denote coefficients of $\lambda$ in the expansion
\[
P_{(\mathbf x)}(\mathbf a, \lambda) = P_{(\mathbf x)}^{(0)}(\mathbf a) + \lambda P_{(\mathbf x)}^{(1)}(\mathbf a) + \ldots + \lambda^{\deg_\lambda \left(P_{(\mathbf x)}\right)} P_{(\mathbf x)}^{\left(\deg_\lambda\left(P^{(\mathbf x)}\right)\right)}(\mathbf a)
\]
of $P^{(\mathbf x)}$ by the powers of $\lambda$ (in the expansion above $P_{(\mathbf x)}$ is considered as an element of $\F[a_1,\ldots, a_n, \lambda]$).
\end{definition}

\begin{lemma}\label{lem:DegsPPart}Let $P \in \F[x_1,\ldots, x_n]$ and $\mathbf x \in \F^n$ is fixed. Then 
\[\deg \left(P_{(\mathbf x)}^{(i)}\right) \le \deg_\lambda  \left(P_{(\mathbf x)}\right) \le \deg (P)\;\;\mbox{for all}\;\;1 \le i \le \deg_\lambda  \left(P_{(\mathbf x)}\right).
\]
\end{lemma}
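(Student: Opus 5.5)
The plan is to expand $P$ into homogeneous components and read off all the $\lambda$-coefficients of $P_{(\mathbf x)}$ at once. Write $d=\deg(P)$ and $P=\sum_{k=0}^{d}P_k$ with $P_k$ homogeneous of degree $k$. Substituting $\mathbf a+\lambda\mathbf x$ into a single monomial of degree $k$ and expanding the product gives a sum of terms $\lambda^{j}\,m(\mathbf a)$, where $m$ is a monomial in $a_1,\ldots,a_n$ of degree exactly $k-j$ and $0\le j\le k$. Summing over all monomials, the coefficient $P^{(i)}_{(\mathbf x)}$ is a sum over $k\ge i$ of polynomials in $\mathbf a$ that are homogeneous of degree $k-i$.

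From this expansion I will extract the facts that are certainly true.

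\begin{enumerate}[label=(\roman*)]
\item The power $\lambda^j$ never occurs with $j>d$. Hence $\deg_\lambda(P_{(\mathbf x)})\le\deg(P)$, which is the right-hand inequality of the lemma.
\item For every $i$ we get $\deg\bigl(P^{(i)}_{(\mathbf x)}\bigr)\le d-i\le\deg(P)$.
\end{enumerate}

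The second fact is already enough for the way the lemma is used later, namely bounding the degrees of polynomials so that Corollary~\ref{DegFLessZero} and Lemma~\ref{lem:ManyPolyDensityArg} apply when $\deg(P)<|\F|$.

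The step I expect to fail is the middle inequality $\deg\bigl(P^{(i)}_{(\mathbf x)}\bigr)\le\deg_\lambda\bigl(P_{(\mathbf x)}\bigr)$ for $i\ge1$. The expansion gives no link between the $\mathbf a$-degree of a coefficient and the top $\lambda$-degree, and in fact the inequality is false in the stated range. Take $n=2$, $P=x_1^5x_2$ and $\mathbf x=\mathbf e^{(2)}$. Then $P_{(\mathbf x)}=a_1^5a_2+\lambda a_1^5$, so $\deg_\lambda(P_{(\mathbf x)})=1$. However, $P^{(1)}_{(\mathbf x)}=a_1^5$ has degree $5$. The inequality therefore cannot be proved as literally worded.

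I would record this counterexample and prove the chain in the form that holds:
\[
\deg\bigl(P^{(i)}_{(\mathbf x)}\bigr)\le \deg(P)-i\le\deg(P)
\quad\text{and}\quad
\deg_\lambda\bigl(P_{(\mathbf x)}\bigr)\le\deg(P)
\quad\text{for all}\quad 1\le i\le\deg_\lambda\bigl(P_{(\mathbf x)}\bigr).
\]
I would then check that every later application of Lemma~\ref{lem:DegsPPart} needs only the bound by $\deg(P)$.
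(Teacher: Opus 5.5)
Your counterexample is valid, so the statement as written is false. For $P=x_1^5x_2$ and $\mathbf x=\mathbf e^{(2)}$ one gets $P_{(\mathbf x)}=a_1^5a_2+\lambda a_1^5$. Hence $\deg_\lambda(P_{(\mathbf x)})=1$, while $\deg(P^{(1)}_{(\mathbf x)})=5$.

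The paper's one-line proof does not avoid this. It asserts $\deg(P^{(i)}_{(\mathbf x)})=\deg(P_{(\mathbf x)})-i$ ``by the definition'' and then $\deg(P_{(\mathbf x)})-i\le\deg_\lambda(P_{(\mathbf x)})$. The first step is in general only an inequality. The second fails exactly on your example: the total degree is $6$, so $6-1=5>1$. The flaw is in the paper, not in your argument.

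Your expansion into homogeneous components correctly proves what is true: $\deg_\lambda(P_{(\mathbf x)})\le\deg(P)$ and $\deg(P^{(i)}_{(\mathbf x)})\le\deg(P)-i$. These are the only consequences the paper uses later. In Lemma~\ref{lem:IfPolyEqThenCompEq}, Lemma~\ref{lem:DegAndLPAreInvariant} and Lemma~\ref{lem:RankOneScalImpliesLowDeg}, the lemma is invoked only to push degrees below $|\F|$ from the hypothesis $\deg(P)<|\F|$.

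Add one item to your list of facts. In Lemma~\ref{lem:IfPolyEqThenCompEq}, Corollary~\ref{DegFGLessEqual} is applied to $P_{(\mathbf x)}$ as a polynomial in $a_1,\ldots,a_n,\lambda$. So the degree in every variable, not only in $\lambda$, must be below $|\F|$, and the paper's text there only records the $\lambda$-bound. What is needed is the total-degree bound $\deg(P_{(\mathbf x)})\le\deg(P)$. Your expansion gives it at once: every term $\lambda^j m(\mathbf a)$ coming from a monomial of degree $k$ has total degree $k\le\deg(P)$. State it explicitly alongside your facts (i) and (ii).
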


\begin{proof}
Indeed,
\begin{multline*}
\deg \left(P_{(\mathbf x)}^{(i)}\right) \overset{\mbox{\scriptsize By the definition of $P_{(\mathbf x)}^{(i)}$}}{=\joinrel=\joinrel=\joinrel=\joinrel=\joinrel=\joinrel=\joinrel=\joinrel=\joinrel=\joinrel=\joinrel=\joinrel=\joinrel=} \deg \left(P_{(\mathbf x)}\right) - i\\
 \le \deg_\lambda  \left(P_{(\mathbf x)}\right)  \le \deg (P^{(\mathbf x)}) \overset{\mbox{\scriptsize By the definition of $P_{(\mathbf x)}$}}{=\joinrel=\joinrel=\joinrel=\joinrel=\joinrel=\joinrel=\joinrel=\joinrel=\joinrel=\joinrel=\joinrel=\joinrel=\joinrel=} \deg (P)
\end{multline*}
for all $1 \le i \le \deg_\lambda  \left(P_{(\mathbf x)}\right).$
\end{proof}

\begin{lemma}\label{lem:IfPolyEqThenCompEq}
Let $P, P' \in \F[x_1,\ldots, x_n],$  $\mathbf x, \mathbf x' \in \F^n$ be such that\linebreak $\deg(P), \deg(P') < |\F|$ and $P_{(\mathbf x)}(\mathbf a, \lambda) = {P'}_{(\mathbf x')}(\mathbf a, \lambda)$ for all $\mathbf a \in \F^n, \lambda \in \F$. Then 
\begin{equation}\label{lem:IfPolyEqThenCompEq:eqq11}
\deg_\lambda\left(P_{(\mathbf x)}\right) = \deg_\lambda\left(P'_{(\mathbf x')}\right)
\end{equation}
and 
\begin{equation}\label{lem:IfPolyEqThenCompEq:eqq1}
P_{(\mathbf x)}^{(i)}=  {P'}_{(\mathbf x')}^{(i)}\;\;\mbox{for all}\;\; 1 \le i \le \deg_\lambda\left(P_{(\mathbf x)}\right). 
\end{equation}
\end{lemma}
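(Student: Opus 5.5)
The idea is to turn the functional identity into an identity of polynomials in $\F[a_1,\ldots,a_n,\lambda]$ by a degree argument, and then compare coefficients of $\lambda^i$.

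First, both $P_{(\mathbf x)}$ and $P'_{(\mathbf x')}$ are polynomials in $n+1$ variables $a_1,\ldots,a_n,\lambda$. Their degree in each variable is bounded by their total degree. The total degree of $P_{(\mathbf x)}(\mathbf a,\lambda)=P(\mathbf a+\lambda\mathbf x)$ is at most $\deg(P)$, because substituting the linear forms $a_j+\lambda x_j$ for the variables does not raise degree. Likewise the total degree of $P'_{(\mathbf x')}$ is at most $\deg(P')$. Since $\deg(P),\deg(P')<|\F|$ by hypothesis, both polynomials have degree less than $|\F|$ in every variable; for infinite $\F$ no bound is needed.

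Second, by assumption the two polynomials agree as functions on $\F^{n+1}$. Corollary~\ref{DegFGLessEqual} then gives
\[
P_{(\mathbf x)}=P'_{(\mathbf x')}\quad\mbox{in}\quad\F[a_1,\ldots,a_n,\lambda].
\]
Now regard both sides as polynomials in $\lambda$ with coefficients in $\F[a_1,\ldots,a_n]$. Equal polynomials have the same $\lambda$-degree, which gives~\eqref{lem:IfPolyEqThenCompEq:eqq11}. They also have the same coefficients at each power $\lambda^i$, and these coefficients are by Definition~\ref{def:PolyPointInv} exactly $P_{(\mathbf x)}^{(i)}$ and ${P'}_{(\mathbf x')}^{(i)}$, which gives~\eqref{lem:IfPolyEqThenCompEq:eqq1}.

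The only point needing care is the degree bound in the first step, i.e. making sure Corollary~\ref{DegFGLessEqual} applies to the substituted polynomials in $n+1$ variables; the rest is formal.
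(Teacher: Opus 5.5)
Your proposal is correct and follows the paper's proof. Both first bound degrees, then use Corollary~\ref{DegFGLessEqual} to get $P_{(\mathbf x)}=P'_{(\mathbf x')}$ in $\F[a_1,\ldots,a_n,\lambda]$, and then compare the coefficients of $\lambda^i$. Your version is slightly more careful: you bound the degree in every variable through the total degree, which is what Corollary~\ref{DegFGLessEqual} actually requires, whereas the paper only cites the bound on $\deg_\lambda$ from Lemma~\ref{lem:DegsPPart}.
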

\begin{proof}From Lemma~\ref{lem:DegsPPart} and the assumptions of the lemma we conclude that
\begin{equation*}
\deg_\lambda \left(P_{(\mathbf x)}\right) < |\F|\;\;\mbox{and}\;\; \deg_{\lambda}\left(P'_{(\mathbf x')}\right) < |\F|.
\end{equation*}
This allows us to apply the Corollary~\ref{DegFGLessEqual} to $P_{(\mathbf x)}$ and $P'_{(\mathbf x')}$ and infer that $P_{(\mathbf x)}$ and $P'_{(\mathbf x')}$ are equal as elements of $\F[a_1,\ldots, a_n, \lambda]$. Therefore, their corresponding coefficients are equal to each other. Hence, the equalities~\eqref{lem:IfPolyEqThenCompEq:eqq11} and~\eqref{lem:IfPolyEqThenCompEq:eqq1} hold.
\end{proof}

\begin{lemma}\label{lem:DegAndLPAreReallyInvariantForRad}Let $P \in \F[x_1,\ldots, x_n]$ be such that $\deg(P)< |\F|$, and $\mathbf y, \mathbf y'$ be such that $\mathbf y = \mathbf y' \pmod{\rad(P)}$. Then
\begin{equation}\label{lem:DegAndLPAreReallyInvariantForRad:eqq1111}
\deg_\lambda\left(P_{(\mathbf y)}\right) = \deg_\lambda\left(P_{(\mathbf y')}\right)
\end{equation}
and
\begin{equation}\label{lem:DegAndLPAreReallyInvariantForRad:eqq1112}
\dim\left(\mathcal L_{P_{(\mathbf y)}^{(i)}}\right) = \dim\left(\mathcal L_{P_{(\mathbf y')}^{(i)}}\right)\;\;\mbox{for all}\;\; 1 \le i \le \deg_\lambda\left(P_{(\mathbf y)}\right).
\end{equation}
\end{lemma}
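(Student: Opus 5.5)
The plan is to relate the two polynomials $P_{(\mathbf y)}$ and $P_{(\mathbf y')}$ by an invertible affine change of variables in $\mathbf a$. Write $\mathbf y' = \mathbf y + \mathbf w$ with $\mathbf w \in \rad(P)$. By Definition~\ref{def:radical},
\[
P_{(\mathbf y')}(\mathbf a, \lambda) = P(\mathbf a + \lambda \mathbf y + \lambda\mathbf w) = P(\mathbf a + \lambda \mathbf y) = P_{(\mathbf y)}(\mathbf a, \lambda)
\]
for all $\mathbf a \in \F^n$ and $\lambda \in \F$. Indeed, $P(\mathbf v + \mu \mathbf w) = P(\mathbf v)$ for every $\mathbf v$ and every scalar $\mu$, and here we take $\mathbf v = \mathbf a + \lambda\mathbf y$ and $\mu = \lambda$.

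So the two polynomials $P_{(\mathbf y)}$ and $P_{(\mathbf y')}$ in $\F[a_1,\ldots,a_n,\lambda]$ agree as functions on $\F^{n+1}$. Both have total degree at most $\deg(P) < |\F|$, and hence each variable degree is $<|\F|$. We can therefore invoke Lemma~\ref{lem:IfPolyEqThenCompEq} with $P' = P$ and $\mathbf x = \mathbf y$, $\mathbf x' = \mathbf y'$. This gives~\eqref{lem:DegAndLPAreReallyInvariantForRad:eqq1111} immediately, together with the equality of coefficient polynomials $P_{(\mathbf y)}^{(i)} = P_{(\mathbf y')}^{(i)}$ for all $1 \le i \le \deg_\lambda(P_{(\mathbf y)})$.

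Equal polynomials have equal spaces $\mathcal L$ by Definition~\ref{def:LP}, and in particular equal dimensions, which gives~\eqref{lem:DegAndLPAreReallyInvariantForRad:eqq1112}. Alternatively, one can apply Corollary~\ref{lem:DimLPUnderLinConvBij} with $T = \id$. Its degree hypothesis holds because $\deg(P^{(i)}_{(\mathbf y)}) \le \deg(P) < |\F|$ by Lemma~\ref{lem:DegsPPart}.

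The only point needing care is the degree bookkeeping that justifies passing from equality as functions to equality as polynomials, that is, checking the hypothesis of Lemma~\ref{lem:IfPolyEqThenCompEq}. This follows since $P_{(\mathbf y)}$ is obtained from $P$ by a linear substitution, so its total degree does not exceed $\deg(P)$. I do not expect any genuine obstacle here.
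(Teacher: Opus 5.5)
Your proof is correct and follows the same route as the paper: the radical property identifies $P_{(\mathbf y')}$ with $P_{(\mathbf y)}$ as functions on $\F^{n+1}$, and Lemma~\ref{lem:IfPolyEqThenCompEq} upgrades this to equality of the $\lambda$-degrees and of the coefficient polynomials, which makes the equality of the spaces $\mathcal L$ immediate. The only difference is that the paper detours through the translate $P'(\mathbf x) = P(\mathbf x + \mathbf z)$ and first proves $P' = P$, whereas you apply the radical identity directly with $\mathbf v = \mathbf a + \lambda\mathbf y$ and scalar $\lambda$, which is slightly cleaner.
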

\begin{proof}Let $\mathbf z = \mathbf y - \mathbf y'$ and $P' \in \F[x_1, \ldots, x_n]$ be defined by $P(\mathbf x) = P(\mathbf x + \mathbf z)$. This definition implies that $\deg(P') = \deg(P)$. On the one hand,
\begin{equation*}
P' = P
\end{equation*}
because $\mathbf z \in \rad(P)$ and $\deg(P)< |\F|$. Hence,
\begin{equation}\label{lem:DegAndLPAreReallyInvariantForRad:eq1}
P_{(\mathbf y')} = {P'}_{(\mathbf y')}\;\;\mbox{and}\;\;P_{(\mathbf y')}^{(i)} = {P'}_{(\mathbf y')}^{(i)}\qquad\mbox{for all}\;\; 1 \le i \le \deg_\lambda\left(P_{(\mathbf y)}\right).
\end{equation}
On the other hand,
\begin{multline*}
{P'}_{(\mathbf y')}(\mathbf a, \lambda) = {P'}(\mathbf a + \lambda \mathbf y') = P(\mathbf a + \mathbf z + \lambda \mathbf y')\\
= P(\mathbf a + \mathbf z + \lambda (\mathbf y - \mathbf z)) = P(\mathbf a + \lambda \mathbf y + (1 - \lambda) \mathbf z)\\
\overset{\scriptsize \mathbf z \in \rad(P)}{=\joinrel=\joinrel=\joinrel=\joinrel=\joinrel=} P(\mathbf a + \lambda \mathbf y) = {P}_{(\mathbf y)}(\mathbf a, \lambda).
\end{multline*}
By applying Lemma~\ref{lem:IfPolyEqThenCompEq} to $P, \mathbf y$ and $P', \mathbf y'$, correspondingly, we obtain that
 \begin{equation}\label{lem:DegAndLPAreReallyInvariantForRad:eqq11}
\deg_\lambda\left(P_{(\mathbf y)}\right) = \deg_\lambda\left(P'_{(\mathbf y')}\right)
\end{equation}
and 
\begin{equation}\label{lem:DegAndLPAreReallyInvariantForRad:eqq1}
P_{(\mathbf y)}^{(i)}=  {P'}_{(\mathbf y')}^{(i)}\;\;\mbox{for all}\;\; 1 \le i \le \deg_\lambda\left(P_{(\mathbf y)}\right). 
\end{equation}
Finally, by combining~\eqref{lem:DegAndLPAreReallyInvariantForRad:eqq11} and~\eqref{lem:DegAndLPAreReallyInvariantForRad:eqq1} with~\eqref{lem:DegAndLPAreReallyInvariantForRad:eq1} we obtain~\eqref{lem:DegAndLPAreReallyInvariantForRad:eqq1111} and~\eqref{lem:DegAndLPAreReallyInvariantForRad:eqq1112}, respectively. 
\end{proof}

\begin{lemma}\label{lem:TPreservesPThenTPreservesPI}Let $P \in \F[x_1,\ldots, x_n],$ $Q \in \F[x_1,\ldots, x_m],$ $\mathbf x \in \F^n,$ $\mathbf y \in \F^m,$ $T \colon \F^n \to \F^m$ be such that $\deg(P), \deg(Q) < |\F|$, $T$ is a linear map sending $\mathbf x$ to $\mathbf y$ and satisfying 
\begin{equation}\label{lem:TPreservesPThenTPreservesPI:eqq1}
P(\mathbf z) = Q(T(\mathbf z))\;\;\mbox{for all}\;\;\mathbf z \in \F^n.
\end{equation}
Then
\begin{equation}\label{lem:TPreservesPThenTPreservesPI:eqq2}
P_{(\mathbf x)}^{(i)}(\mathbf z) =  Q_{(\mathbf y)}^{(i)}(T(\mathbf z))\;\;\mbox{for all}\;\;\mathbf z \in \F^n,\; 1 \le i \le \deg_\lambda\left(P_{(\mathbf x)}\right). 
\end{equation}
\end{lemma}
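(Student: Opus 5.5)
The plan is to substitute $\mathbf a + \lambda\mathbf x$ into the identity~\eqref{lem:TPreservesPThenTPreservesPI:eqq1} and compare coefficients of $\lambda$, in the same way as the proof of Lemma~\ref{lem:IfPolyEqThenCompEq}.

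First, fix $\mathbf z \in \F^n$ and $\lambda \in \F$. Applying~\eqref{lem:TPreservesPThenTPreservesPI:eqq1} to the vector $\mathbf z + \lambda \mathbf x$ and using the linearity of $T$ together with $T(\mathbf x) = \mathbf y$, we get
\[
P_{(\mathbf x)}(\mathbf z, \lambda) = P(\mathbf z + \lambda\mathbf x) = Q(T(\mathbf z) + \lambda \mathbf y) = Q_{(\mathbf y)}(T(\mathbf z), \lambda).
\]
So the two polynomials $P_{(\mathbf x)}(\mathbf a,\lambda)$ and $Q_{(\mathbf y)}(T(\mathbf a),\lambda)$ coincide as functions on $\F^n\times\F$. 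The second one is a polynomial in $\mathbf a,\lambda$, since it is a composition with a linear map.

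Next, this functional equality has to be upgraded to an equality of polynomials in $\F[a_1,\ldots,a_n,\lambda]$, and this is the only step that needs care. By Lemma~\ref{lem:DegsPPart}, the total degree of $P_{(\mathbf x)}$ is at most $\deg(P) < |\F|$. Composing $Q_{(\mathbf y)}$ with the linear substitution $\mathbf a \mapsto T(\mathbf a)$ does not increase the total degree, so the second polynomial has total degree at most $\deg(Q) < |\F|$. In particular the degree in each variable is less than $|\F|$, and Corollary~\ref{DegFGLessEqual} gives
\[
P_{(\mathbf x)}(\mathbf a, \lambda) = Q_{(\mathbf y)}(T(\mathbf a), \lambda)
\]
as formal polynomials.

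Finally, expand both sides in powers of $\lambda$. The coefficient of $\lambda^i$ on the left is $P_{(\mathbf x)}^{(i)}(\mathbf a)$. On the right, since the substitution $\mathbf a\mapsto T(\mathbf a)$ does not involve $\lambda$, the coefficient of $\lambda^i$ is $Q_{(\mathbf y)}^{(i)}(T(\mathbf a))$. Equating these coefficients gives $P_{(\mathbf x)}^{(i)}(\mathbf a) = Q_{(\mathbf y)}^{(i)}(T(\mathbf a))$ as polynomials. Evaluating at $\mathbf a = \mathbf z$ yields~\eqref{lem:TPreservesPThenTPreservesPI:eqq2} for all $i$, and in particular for $1 \le i \le \deg_\lambda(P_{(\mathbf x)})$. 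As a by-product, the $\lambda$-degrees of the two sides agree.
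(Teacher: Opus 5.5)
Your proof is correct and is essentially the paper's: the paper applies Lemma~\ref{lem:IfPolyEqThenCompEq} to $P$ and $Q' = Q\circ T$, which is your upgrade of $P(\mathbf a+\lambda\mathbf x)=Q(T(\mathbf a)+\lambda\mathbf y)$ to a formal identity via Corollary~\ref{DegFGLessEqual}, and then uses ${Q'}^{(i)}_{(\mathbf x)} = Q^{(i)}_{(\mathbf y)}\circ T$, which is your coefficient comparison, so you have inlined those two steps. One small nit: the total-degree bound $\deg P_{(\mathbf x)}\le\deg(P)$ is not what Lemma~\ref{lem:DegsPPart} literally states, but it holds for the reason you give for the right-hand side, namely that $P_{(\mathbf x)}$ is $P$ composed with the linear forms $a_j+\lambda\mathbf x_j$.
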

\begin{proof}
Since the map $T$ is linear, then the composition $Q' = Q \circ T$ is a polynomial (i.e., element of $\F[x_1, \ldots, x_n]$) and $\deg (Q') \le \deg (Q)$. Together with the initial assumption that $\deg(P), \deg(Q) < |\F|$ and the equality~\eqref{lem:TPreservesPThenTPreservesPI:eqq1} this allows us to apply Lemma~\ref{lem:IfPolyEqThenCompEq} to $P$ and $P'$ and obtain that
\begin{equation}\label{TPreservesPThenTPreservesPI:eq1}
P_{(\mathbf x)}^{(i)}=  {Q'}_{(\mathbf y)}^{(i)}\;\;\mbox{for all}\;\; 1 \le i \le \deg_\lambda\left(P_{(\mathbf x)}\right). 
\end{equation}
In addition, the equality  $Q' = Q \circ T$ infers the equalities 
\begin{equation}\label{TPreservesPThenTPreservesPI:eq2}
{Q'}_{(\mathbf y)}^{(i)} = {Q}_{(\mathbf y)}^{(i)} \circ T\;\;\mbox{for all}\;\; 1 \le i \le \deg_\lambda\left(Q_{(\mathbf x)}\right).
\end{equation} 
Then, by substituting~\eqref{TPreservesPThenTPreservesPI:eq2} in~\eqref{TPreservesPThenTPreservesPI:eq1} and evaluating it in $\mathbf z \in \F^n$, we obtain~\eqref{lem:TPreservesPThenTPreservesPI:eqq2}.
\end{proof}

\begin{lemma}\label{lem:DegAndLPAreInvariant}Let $P \in \F[x_1,\ldots, x_n],$ $Q \in \F[x_1,\ldots, x_m],$ $\mathbf x \in \F^n,$ $\mathbf y \in \F^m,$ $T \colon \F^n \to \F^m$ be such that $\deg(P), \deg(Q) < |\F|$, $T$ is a linear map sending $\mathbf x$ to $\mathbf y$ and satisfying 
\begin{equation}\label{lem:DegAndLPAreInvariant:eqq1}
P(\mathbf z) = Q(T(\mathbf z))\;\;\mbox{for all}\;\;\mathbf z \in \F^n.
\end{equation}
Then
\begin{equation}\label{lem:DegAndLPAreInvariant:eqq2}
\deg_\lambda\left(P_{(\mathbf x)}\right) \leq \deg_\lambda\left(Q_{(\mathbf y)}\right)
\end{equation}
and
\begin{equation}\label{lem:DegAndLPAreInvariant:eqq3}
\dim\left(\mathcal L_{P_{(\mathbf x)}^{(i)}}\right) \leq \dim\left(\mathcal L_{Q_{(\mathbf y)}^{(i)}}\right)\;\;\mbox{for all}\;\; 1 \le i \le \deg_\lambda\left(P_{(\mathbf x)}\right).
\end{equation}
\end{lemma}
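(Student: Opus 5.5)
The plan is to compare the two-variable polynomials $P_{(\mathbf x)}$ and $Q_{(\mathbf y)}$ through $T$. The only obstacle is that $T$ need not be injective or surjective, so I cannot simply quote Lemma~\ref{lem:IfPolyEqThenCompEq} with $P$ and $Q$ on the same space.

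\textbf{Degree inequality.} Since $T$ is linear and $T(\mathbf x) = \mathbf y$, the equality~\eqref{lem:DegAndLPAreInvariant:eqq1} gives, for all $\mathbf a \in \F^n$ and $\lambda \in \F$,
\[
P_{(\mathbf x)}(\mathbf a, \lambda) = P(\mathbf a + \lambda\mathbf x) = Q(T(\mathbf a) + \lambda \mathbf y) = Q_{(\mathbf y)}(T(\mathbf a), \lambda).
\]
Put $Q' = Q\circ T \in \F[x_1,\ldots,x_n]$. Then $\deg(Q') \le \deg(Q) < |\F|$ and $Q'_{(\mathbf x)}(\mathbf a,\lambda) = Q_{(\mathbf y)}(T(\mathbf a),\lambda)$. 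By the same computation as in the proof of Lemma~\ref{lem:TPreservesPThenTPreservesPI}, together with Lemma~\ref{lem:IfPolyEqThenCompEq}, the polynomials $P_{(\mathbf x)}$ and $Q_{(\mathbf y)}(T(\cdot),\lambda)$ coincide in $\F[a_1,\ldots,a_n,\lambda]$. Hence $P^{(i)}_{(\mathbf x)} = Q^{(i)}_{(\mathbf y)}\circ T$ for every $i$, which is~\eqref{lem:TPreservesPThenTPreservesPI:eqq2} in the form of a polynomial identity.

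Substituting a linear map cannot raise the $\lambda$-degree, because the coefficient of $\lambda^k$ in $Q_{(\mathbf y)}(T(\mathbf a),\lambda)$ is $Q^{(k)}_{(\mathbf y)}\circ T$, and this vanishes whenever $Q^{(k)}_{(\mathbf y)} = 0$. Therefore $\deg_\lambda(P_{(\mathbf x)}) \le \deg_\lambda(Q_{(\mathbf y)})$, which is~\eqref{lem:DegAndLPAreInvariant:eqq2}.

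\textbf{Dimension inequality, reduction.} Fix $i$ and write $P_i = P^{(i)}_{(\mathbf x)}$ and $Q_i = Q^{(i)}_{(\mathbf y)}$, so $P_i = Q_i\circ T$. By Lemma~\ref{lem:DegsPPart} both have degree $< |\F|$. If $T$ were injective, Lemma~\ref{lem:DimLPUnderLinConvInj} would give~\eqref{lem:DegAndLPAreInvariant:eqq3} at once. Since $T$ is arbitrary, I will instead prove directly that $\dim \mathcal L_{Q\circ T} \le \dim \mathcal L_Q$ for any linear $T$.

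\textbf{Dimension inequality, main step.} Lemma~\ref{lem:eqpqd} applied to $Q_i(\mathbf z + \lambda\mathbf w) = Q_i(T\mathbf z + \lambda T\mathbf w)$ gives
\[
l_{P_i,\mathbf z} = l_{Q_i, T\mathbf z}\circ T \quad \text{for all } \mathbf z \in \F^n.
\]
So $\mathcal L_{P_i}$ is spanned by the functionals $l\circ T$ with $l$ ranging over the generators $l_{Q_i,T\mathbf z}$ of $\mathcal L_{Q_i}$. Hence $\mathcal L_{P_i} \subseteq T^*(\mathcal L_{Q_i})$, where $T^*\colon l\mapsto l\circ T$ is the dual map. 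Since $T^*$ is linear, $\dim T^*(\mathcal L_{Q_i}) \le \dim \mathcal L_{Q_i}$, and~\eqref{lem:DegAndLPAreInvariant:eqq3} follows.

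The image-under-$T^*$ argument is the key point: it removes any need for injectivity, and it is short. The only care needed is checking the degree hypotheses ($<|\F|$) each time a lemma is invoked.
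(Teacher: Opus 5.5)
Your argument is correct and is essentially the paper's. Like the paper, you derive $P^{(i)}_{(\mathbf x)} = Q^{(i)}_{(\mathbf y)}\circ T$, kill the coefficients above $\deg_\lambda(Q_{(\mathbf y)})$, and then compare the spaces $\mathcal L$; the one difference is that where the paper cites Lemma~\ref{lem:DimLPUnderLinConvInj}, whose injectivity hypothesis is not available here (though its proof never uses it), you give that same linear-dependence argument directly as the inclusion $\mathcal L_{P_i}\subseteq T^*(\mathcal L_{Q_i})$.

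One slip to fix: the identity you feed into Lemma~\ref{lem:eqpqd} should read $P_i(\mathbf z+\lambda\mathbf w)=Q_i(T\mathbf z+\lambda T\mathbf w)$, not have $Q_i$ on the left-hand side.
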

\begin{proof}Note that by applying Lemma~\ref{lem:TPreservesPThenTPreservesPI} to $P, Q, \mbf x, \mbf y$ and $T$ we obtain that the equality~\eqref{lem:TPreservesPThenTPreservesPI:eqq2} holds.

Let us prove the inequality~\eqref{lem:DegAndLPAreInvariant:eqq2}. For this let $j \ge \deg_\lambda\left(Q_{(\mathbf y)}\right)$ be nonnegative integer. Then $Q_{(\mathbf y)}^{(j)} = 0$ and consequently
\begin{equation*}
P_{(\mathbf x)}^{(j)}(\mathbf a) \overset{\eqref{lem:TPreservesPThenTPreservesPI:eqq2}}{=\joinrel=\joinrel=} Q_{(\mathbf y)}^{(j)}(T(\mathbf a)) = 0\;\;\mbox{for all}\;\;\mathbf a \in \F^n.
\end{equation*}
Hence, $P_{(\mathbf x)}^{(j)} = 0$ by Corollary~\ref{DegFLessZero}. Therefore, $\deg_\lambda\left(P_{(\mathbf x)}\right) \le \deg_\lambda\left(Q_{(\mathbf y)}\right)$ as required.

In order to prove the inequality~\eqref{lem:DegAndLPAreInvariant:eqq3}, note Lemma~\eqref{lem:DegsPPart} together with the assumption $\deg(P), \deg(Q) < |\F|$ implies that
\begin{equation}\label{lem:DegAndLPAreInvariant:eq3}
\deg \left(P_{(\mathbf x)}^{(i)}\right) < |\F|\;\;\mbox{and}\;\;\deg \left(Q_{(\mathbf y)}^{(i)}\right)\;\;\mbox{for all}\;\;1 \le i \le \deg_\lambda  \left(P_{(\mathbf x)}\right) 
\end{equation}

Relying on the equality~\eqref{lem:TPreservesPThenTPreservesPI:eqq2} and the inequalities in~\eqref{lem:DegAndLPAreInvariant:eq3}, we apply\linebreak Lemma~\ref{lem:DimLPUnderLinConvInj} to $P_{(\mathbf x)}^{(i)}$, $Q_{(\mathbf y)}^{(i)}$ and $T$ for every $1 \le i \le \deg_\lambda\left(P_{(\mathbf x)}\right)$ and obtain the inequality~\eqref{lem:DegAndLPAreInvariant:eqq3}.
\end{proof}

\begin{corollary}\label{cor:DegAndLPAreReallyInvariant}Let $P \in \F[x_1,\ldots, x_n],$ $Q \in \F[x_1,\ldots, x_m],$ $\mathbf x \in \F^n,$ $\mathbf y \in \F^m,$ $T \colon \F^n \to \F^m$ be such that $\deg(P), \deg(Q) < |\F|$, $T$ is a bijective linear map sending $\mathbf x$ to $\mathbf y$ and satisfying $P(\mathbf z) = Q(T(\mathbf z))$ for all $\mathbf z \in \F^n$.  Then
\[
\deg_\lambda\left(P_{(\mathbf x)}\right) = \deg_\lambda\left(Q_{(\mathbf y)}\right)
\]
and
\[
\dim\left(\mathcal L_{P_{(\mathbf x)}^{(i)}}\right) = \dim\left(\mathcal L_{Q_{(\mathbf y)}^{(i)}}\right)\;\;\mbox{for all}\;\; 1 \le i \le \deg_\lambda\left(P_{(\mathbf x)}\right).
\]
\end{corollary}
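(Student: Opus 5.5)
Since $T$ is bijective, I will apply Lemma~\ref{lem:DegAndLPAreInvariant} twice: once to $T$ and once to its inverse. The inequalities from the two applications will then combine into the two equalities.

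First, apply Lemma~\ref{lem:DegAndLPAreInvariant} directly to $P$, $Q$, $\mathbf x$, $\mathbf y$ and $T$. All its hypotheses are assumed in the corollary, so we get
\[
\deg_\lambda\left(P_{(\mathbf x)}\right) \le \deg_\lambda\left(Q_{(\mathbf y)}\right)
\]
and
\[
\dim\left(\mathcal L_{P_{(\mathbf x)}^{(i)}}\right) \le \dim\left(\mathcal L_{Q_{(\mathbf y)}^{(i)}}\right)\quad\mbox{for all } 1 \le i \le \deg_\lambda\left(P_{(\mathbf x)}\right).
\]

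Second, bijectivity of the linear map $T$ forces $m = n$, and $T^{-1}\colon \F^m \to \F^n$ is linear and sends $\mathbf y$ to $\mathbf x$. Substituting $\mathbf z = T^{-1}(\mathbf w)$ into $P(\mathbf z) = Q(T(\mathbf z))$ gives
\[
Q(\mathbf w) = P(T^{-1}(\mathbf w))\quad\mbox{for all } \mathbf w \in \F^m.
\]
So Lemma~\ref{lem:DegAndLPAreInvariant} applies with the roles swapped: $Q$, $P$, $\mathbf y$, $\mathbf x$ and $T^{-1}$. This yields the reverse degree inequality, and together with the first application,
\[
\deg_\lambda\left(P_{(\mathbf x)}\right) = \deg_\lambda\left(Q_{(\mathbf y)}\right).
\]

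The one point needing care is the index range. In the second application the dimension inequality holds for $1 \le i \le \deg_\lambda\left(Q_{(\mathbf y)}\right)$. By the degree equality just established, this is the same range as $1 \le i \le \deg_\lambda\left(P_{(\mathbf x)}\right)$. Hence both dimension inequalities hold on a common range, and combining them gives the equality of $\dim \mathcal L$ for every such $i$.

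No step here is a real obstacle. The only things to verify are that $T^{-1}$ is linear and carries $\mathbf y$ to $\mathbf x$, and the index-range matching described above.
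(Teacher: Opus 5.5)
Your proposal is correct and follows essentially the same approach as the paper, which gives no separate proof and treats the statement as an immediate consequence of Lemma~\ref{lem:DegAndLPAreInvariant} applied to $T$ and to $T^{-1}$. Your check that the index ranges for the dimension inequalities coincide once the degrees are shown equal is the only detail that needs to be made explicit.
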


\begin{lemma}\label{lem:DegAndPartPreservance}Let $P \in \F[x_1,\ldots, x_n],$ $\mathbf x \in \F^n,$ $\mathbf y \in \F^n,$ $T \colon \F^n \to \F^n$ be such that $\deg(P) < |\F|$, $T$ is a bijective linear map such that 
\[
T(\mathbf x) = \mathbf y \pmod{\rad(P)}
\] and $P(\mathbf z) = P(T(\mathbf z))$ for all $\mathbf z \in \F^n$.  Then
\[
\deg_\lambda\left(P_{(\mathbf x)}\right) = \deg_\lambda\left(P_{(\mathbf y)}\right)
\]
and
\[
\dim\left(\mathcal L_{P_{(\mathbf x)}^{(i)}}\right) = \dim\left(\mathcal L_{P_{(\mathbf y)}^{(i)}}\right)\;\;\mbox{for all}\;\; 1 \le i \le \deg_\lambda\left(P_{(\mathbf x)}\right).
\]
\end{lemma}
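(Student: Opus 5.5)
The plan is to combine Corollary~\ref{cor:DegAndLPAreReallyInvariant}, applied with $Q = P$, with Lemma~\ref{lem:DegAndLPAreReallyInvariantForRad}, which absorbs the discrepancy modulo $\rad(P)$.

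First, set $\mathbf y' = T(\mathbf x)$. We have $\deg(P) < |\F|$, $T$ is bijective and linear, $T(\mathbf x) = \mathbf y'$, and $P(\mathbf z) = P(T(\mathbf z))$ for all $\mathbf z$. So Corollary~\ref{cor:DegAndLPAreReallyInvariant}, with $Q = P$ and $m = n$, gives
\[
\deg_\lambda\left(P_{(\mathbf x)}\right) = \deg_\lambda\left(P_{(\mathbf y')}\right)
\]
and
\[
\dim\left(\mathcal L_{P_{(\mathbf x)}^{(i)}}\right) = \dim\left(\mathcal L_{P_{(\mathbf y')}^{(i)}}\right)
\]
for all $1 \le i \le \deg_\lambda\left(P_{(\mathbf x)}\right)$.

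Second, the hypothesis $T(\mathbf x) = \mathbf y \pmod{\rad(P)}$ says exactly that $\mathbf y' = \mathbf y \pmod{\rad(P)}$. Lemma~\ref{lem:DegAndLPAreReallyInvariantForRad}, applied to the pair $\mathbf y', \mathbf y$, then gives
\[
\deg_\lambda\left(P_{(\mathbf y')}\right) = \deg_\lambda\left(P_{(\mathbf y)}\right)
\]
and the equality of the dimensions of $\mathcal L_{P_{(\mathbf y')}^{(i)}}$ and $\mathcal L_{P_{(\mathbf y)}^{(i)}}$ for all $1 \le i \le \deg_\lambda\left(P_{(\mathbf y')}\right)$. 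Since $\deg_\lambda\left(P_{(\mathbf y')}\right) = \deg_\lambda\left(P_{(\mathbf x)}\right)$ by the first step, the index ranges in the two steps coincide. Chaining the equalities of the two steps gives both claims.

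There is no serious obstacle; the only point needing care is that the corollary is stated for maps $\F^n \to \F^m$ with two polynomials, and we specialize it to $Q = P$. Concretely, we check that its proof, via Lemma~\ref{lem:DegAndLPAreInvariant} applied to $T$ and to $T^{-1}$, works for $P = Q$. If the corollary's formulation is considered insufficiently justified, we derive it directly. Apply Lemma~\ref{lem:DegAndLPAreInvariant} to $(P, P, \mathbf x, \mathbf y', T)$ and to $(P, P, \mathbf y', \mathbf x, T^{-1})$; the second application is valid because $P(\mathbf z) = P(T(T^{-1}\mathbf z)) = P(T^{-1}\mathbf z)$. The two resulting inequalities give the equalities of degrees and of dimensions.
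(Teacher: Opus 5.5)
Your proof is correct and is essentially the paper's argument. The paper also sets $\mathbf y' = T(\mathbf x)$, applies Corollary~\ref{cor:DegAndLPAreReallyInvariant} with $Q = P$, and then passes from $\mathbf y'$ to $\mathbf y$ using $\mathbf y' \equiv \mathbf y \pmod{\rad(P)}$. The only difference is that for this second step the paper cites Lemma~\ref{lem:IfPolyEqThenCompEq}, implicitly using $P(\mathbf a + \lambda\mathbf y') = P(\mathbf a + \lambda\mathbf y)$ because $\lambda(\mathbf y' - \mathbf y) \in \rad(P)$. You cite Lemma~\ref{lem:DegAndLPAreReallyInvariantForRad} instead, which packages exactly that fact.
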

\begin{proof}Indeed, if $\mathbf y' = T(\mathbf x)$, then
\[
\deg_\lambda\left(P_{(\mathbf x)}\right) \overset{\mbox{\scriptsize Corollary~\ref{cor:DegAndLPAreReallyInvariant}}}{=\joinrel=\joinrel=\joinrel=\joinrel=\joinrel=\joinrel=\joinrel=} \deg_\lambda\left(P_{(\mathbf x')}\right) \overset{\mbox{\scriptsize Lemma~\ref{lem:IfPolyEqThenCompEq}}}{=\joinrel=\joinrel=\joinrel=\joinrel=\joinrel=\joinrel=}  \deg_\lambda\left(P_{(\mathbf y)}\right)
\]
and
\[
\dim\left(\mathcal L_{P_{(\mathbf x)}^{(i)}}\right) \overset{\mbox{\scriptsize Corollary~\ref{cor:DegAndLPAreReallyInvariant}}}{=\joinrel=\joinrel=\joinrel=\joinrel=\joinrel=\joinrel=\joinrel=} \dim\left(\mathcal L_{P_{(\mathbf x')}^{(i)}}\right) \overset{\mbox{\scriptsize Lemma~\ref{lem:IfPolyEqThenCompEq}}}{=\joinrel=\joinrel=\joinrel=\joinrel=\joinrel=\joinrel=} \dim\left(\mathcal L_{P_{(\mathbf y)}^{(i)}}\right)
\]
for all $1 \le i \le \deg_\lambda\left(P_{(\mathbf x)}\right).$
\end{proof}

As a direct consequence of the previous lemma, we obtain the following two lemmas which we use in the sequel for finding $\deg_\lambda (\D_{(X)}),$ $\deg_\lambda (\B_{(X)}),$ $\dim\left(\mathcal L_{\D_{(X)}^{(i)}}\right)$ and $\dim\left(\mathcal L_{\B_{(X)}^{(i)}}\right).$

\begin{lemma}\label{lem:SimPlusScalSameInvD} Assume that $n \ge 2$ and $|\F| > n(n-1)$. Let $X, Y \in \M_{n}(\F)$ be such that $X$ is similar to $Y + cI_n$ for some $c \in \F$. Then $\deg_\lambda (\D_{(X)}) = \deg_\lambda (\D_{(Y)})$ and $\dim\left(\mathcal L_{\D_{(X)}^{(i)}}\right) = \dim\left(\mathcal L_{\D_{(Y)}^{(i)}}\right)$ for all $1 \le \deg_\lambda (\D_{(X)}).$ 
\end{lemma}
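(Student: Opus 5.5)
We will obtain the statement as a direct application of Lemma~\ref{lem:DegAndPartPreservance} with $P = \D$, regarded as a polynomial in the $n^2$ entries of the matrix. The ambient space is $\F^{n^2} \cong \M_n(\F)$. For the bijective linear map we take the similarity $T(Z) = M^{-1}ZM$, where $M$ is an invertible matrix with $X = M(Y + cI_n)M^{-1}$; then $T(X) = Y + cI_n$.

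The first step is to check the hypotheses. By the property~\ref{enum:DiscPropertyHP}, $\D$ is homogeneous of degree $n(n-1)$, so $\deg(\D) = n(n-1) < |\F|$ by assumption. The map $T$ is linear and bijective, with inverse $Z \mapsto MZM^{-1}$. It preserves $\D$, because $\D(M^{-1}ZM) = \D(Z)$ for all $Z$ by the property~\ref{enum:DiscPropertyS}.

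The second step is to check that $T(X) \equiv Y \pmod{\rad(\D)}$, which amounts to showing $cI_n \in \rad(\D)$. By the property~\ref{enum:DiscPropertyC}, $\D(A + \lambda cI_n) = \D(A)$ for all $A \in \M_n(\F)$ and $\lambda \in \F$, since $\lambda c I_n$ is scalar. Hence $cI_n \in \rad(\D)$ by Definition~\ref{def:radical}. Lemma~\ref{lem:DegAndPartPreservance}, applied with $\mathbf x = X$ and $\mathbf y = Y$, then gives both the equality of $\deg_\lambda$ and the equality of $\dim \mathcal L_{\D^{(i)}_{(\cdot)}}$ for all $1 \le i \le \deg_\lambda(\D_{(X)})$. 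This is exactly the claim; the index range is written $1 \le \deg_\lambda(\D_{(X)})$ in the statement, but $1 \le i \le \deg_\lambda(\D_{(X)})$ is what is meant.

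There is no real obstacle here. The one point to be careful about is that Lemma~\ref{lem:DegAndPartPreservance} requires $T(\mathbf x) \equiv \mathbf y$ modulo $\rad(P)$ rather than exact equality, and this is precisely what the radical membership of scalar matrices provides. If one prefers to avoid relying on that lemma's internal step, the same conclusion follows by combining Corollary~\ref{cor:DegAndLPAreReallyInvariant}, applied to $X$ and $T(X) = Y + cI_n$, with Lemma~\ref{lem:DegAndLPAreReallyInvariantForRad}, applied to $Y + cI_n$ and $Y$, which differ by an element of $\rad(\D)$.
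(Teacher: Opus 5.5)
Your proposal is correct and is essentially the paper's own proof. The similarity $Z \mapsto M^{-1}ZM$ is a bijective $\D$-preserving map (property~\ref{enum:DiscPropertyS}) sending $X$ to $Y + cI_n \equiv Y \pmod{\rad(\D)}$, since $cI_n \in \rad(\D)$ by property~\ref{enum:DiscPropertyC}, so Lemma~\ref{lem:DegAndPartPreservance} gives both equalities, and your alternative via Corollary~\ref{cor:DegAndLPAreReallyInvariant} and Lemma~\ref{lem:DegAndLPAreReallyInvariantForRad} is equally valid.
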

\begin{proof}Let $T \colon \M_{n}(\F) \to \M_{n}(\F)$ be a linear map corresponding to a similarity transformation sending $X$ to $Y + cI_n$. This map is bijective as a similarity and preserves $\D$ by the property~\ref{enum:DiscPropertyS} of the matrix discriminant. In addition, $Y + cI_n = Y \pmod{\rad(\D)}$ because $cI_n \in \rad(\D)$ by the property~\ref{enum:DiscPropertyC} of the matrix discriminant. From this we conclude that the required equalities hold by Lemma~\ref{lem:DegAndPartPreservance} applied to $X, Y, \D$ and $T$.
\end{proof}

\begin{lemma}\label{lem:SimPlusScalSameInvD2} Assume that $n \ge 2$ and $|\F| > \frac{n(n-1)}{2}$ and $\Char(\F) = 2$. Let $X, Y \in \M_{n}(\F)$ be such that $Y$ is similar to $X + cI_n$ for some $c \in \F$. Then $\deg_\lambda \left(\B_{(X)}\right) = \deg_\lambda \left(\B_{(Y)}\right)$ and $\dim\left(\mathcal L_{\B_{(X)}^{(i)}}\right) = \dim\left(\mathcal L_{\B_{(Y)}^{(i)}}\right)$ for all $1 \le \deg_\lambda \left(\B_{(X)}\right).$ 
\end{lemma}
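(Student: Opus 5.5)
The argument mirrors the proof of Lemma~\ref{lem:SimPlusScalSameInvD}, with $\B$ in place of $\D$. Implicitly $n\le 5$, since $\B$ is only defined in that range. Since $Y$ is similar to $X + cI_n$, choose an invertible $M\in\M_n(\F)$ with $M^{-1}(X+cI_n)M = Y$. Let $T\colon \M_n(\F)\to\M_n(\F)$ be the linear map $T(Z) = M^{-1}ZM$. It is bijective, with inverse $Z\mapsto MZM^{-1}$.

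First, $T$ preserves $\B$ by property~\ref{enum:D2PropertyS}, that is, $\B(Z)=\B(T(Z))$ for all $Z$. Next, we need $\deg(\B) < |\F|$ in order to apply Lemma~\ref{lem:DegAndPartPreservance}. Property~\ref{enum:D2PropertyH} states that $\B(\lambda Z) = \lambda^{n(n-1)/2}\B(Z)$. As an identity of polynomials this shows that $\B$ is homogeneous of degree $n(n-1)/2$ in the matrix entries, directly from the explicit formulas in Corollaries~\ref{cor:D2PolyN2Char2}--\ref{cor:Disc2N5FormulaCH2} combined with the homogeneity of $E_k$. Alternatively, it follows from $\B^2=\D$ and property~\ref{enum:DiscPropertyHP}, because a polynomial whose square is homogeneous must itself be homogeneous. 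Hence $\deg(\B)=n(n-1)/2<|\F|$ by hypothesis.

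Then we locate $T(X)$ modulo the radical. We have $T(X) = Y - cI_n$, since $M^{-1}(X+cI_n)M = T(X)+cI_n$. By property~\ref{enum:D2PropertyC}, $\B(Z+\mu I_n)=\B(Z)$ for all $Z$ and $\mu$, so $I_n\in\rad(\B)$. Therefore $T(X)\equiv Y \pmod{\rad(\B)}$.

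Applying Lemma~\ref{lem:DegAndPartPreservance} to $P=\B$, $\mathbf x = X$, $\mathbf y = Y$ and this $T$ yields $\deg_\lambda(\B_{(X)})=\deg_\lambda(\B_{(Y)})$ and the equality of $\dim \mathcal L_{\B^{(i)}_{(X)}}$ and $\dim \mathcal L_{\B^{(i)}_{(Y)}}$ for all $1\le i\le \deg_\lambda(\B_{(X)})$, as required.

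The only point needing care is the degree bound, which is why the hypothesis is $|\F|>n(n-1)/2$ rather than $|\F|>n(n-1)$. The lemmas of Section~\ref{sec:OrbInvariants} use $\deg(P)<|\F|$ to pass from equality of functions to equality of polynomials, so homogeneity of $\B$ of degree $n(n-1)/2$ must hold as a polynomial identity. It is not enough that $\B(\lambda Z)=\lambda^{n(n-1)/2}\B(Z)$ holds as an equality of values over a possibly finite field. I would establish this from the explicit formulas, or from $\B^2=\D$ via unique factorization in the polynomial ring.
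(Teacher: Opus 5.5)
Your proof is correct and matches the paper's argument. The similarity $Z\mapsto M^{-1}ZM$ preserves $\B$ by~\ref{enum:D2PropertyS}, and $I_n\in\rad(\B)$ by~\ref{enum:D2PropertyC}, so $T(X)\equiv Y \pmod{\rad(\B)}$ and Lemma~\ref{lem:DegAndPartPreservance} gives both equalities; your explicit check that $\B$ is a polynomial of degree at most $n(n-1)/2$, hence $\deg(\B)<|\F|$, supplies a hypothesis of that lemma which the paper uses without comment.
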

\begin{proof}
The proof of this lemma is similar to the proof of Lemma~\ref{lem:SimPlusScalSameInvD}. We only apply  the property~\ref{enum:D2PropertyS} of $\B$ instead of the property~\ref{enum:DiscPropertyS} of the matrix discriminant and the property~\ref{enum:D2PropertyC} of $\B$ instead of the property~\ref{enum:DiscPropertyC} of the matrix discriminant.
\end{proof}

\section{Matrices satisfying $\deg_\lambda \left(\D_{(X)}\right) \le 2n - 2$}
\label{sec:MatsSmallDegDesc}
In this section we provide a description of $n\times n$ matrices $X$ satisfying the inequality $\deg_\lambda \left(\D_{(X)}\right) \le 2n - 2$, where we assume that $n \ge 3.$

\begin{lemma}\label{lem:RankOneScalImpliesLowDeg}Assume that $n \ge 1$ and $|\F| > n(n-1)$. Let $X \in \M_{n}(\F)$. If $X \in \R_n(\F)$,

then
\begin{equation}\label{lem:RankOneScalImpliesLowDeg:eqq1}
\deg_{\lambda} \left(\D_{(X)}\right) \le 2n - 2.
\end{equation}
\end{lemma}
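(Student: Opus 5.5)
The plan is to get rid of the scalar part, write the characteristic polynomial of $A+\lambda J$ as something affine in $\lambda$, and then bound the $\lambda$-degree of the resulting resultant directly from its Sylvester determinant.

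\textbf{Step 1 (remove the scalar part).} Write $X = J + cI_n$ with $\rk(J)\le 1$. Then
\[
\D_{(X)}(A,\lambda)=\D\bigl((A+\lambda J)+\lambda c I_n\bigr)=\D(A+\lambda J).
\]
The last equality uses property~\ref{enum:DiscPropertyC}, which holds as an identity of polynomials. To justify this, either note that the translation property $\D(f(x+c))=\D(f(x))$ is a formal identity, or apply Corollary~\ref{DegFGLessEqual}, since all degrees are at most $n(n-1)<|\F|$. So it suffices to show $\deg_\lambda \D(A+\lambda J)\le 2n-2$, with the entries of $A$ treated as indeterminates. If $J=O_n$ this degree is $0$. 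Otherwise $J=J(\mbf a,\mbf b)$ is a rank-one matrix.

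\textbf{Step 2 (shape of the characteristic polynomial).} Expand $\det(xI_n-A-\lambda J)$ along the multilinearity in columns. Since $J$ has rank one, every term that uses two or more columns of $\lambda J$ vanishes. This is the matrix determinant lemma. It gives
\[
p_{A+\lambda J}(x)=q_0(x)-\lambda q_1(x),
\]
where $q_0=p_A$ is monic of degree $n$, $\deg q_1\le n-1$, and the coefficients of $q_0,q_1$ do not involve $\lambda$. Hence every coefficient of $f=p_{A+\lambda J}$ has $\lambda$-degree at most $1$, and the leading coefficient of $f$ is the constant $1$. Likewise $f'=q_0'-\lambda q_1'$, whose leading coefficient $n$ is constant and whose $\lambda$-part has degree at most $n-2$.

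\textbf{Step 3 (degree count in the Sylvester determinant).} By Definition~\ref{def:PolyDiscriminant}, $\D(f)=\pm\Res_{n\,(n-1)}(f,f')$, because $f$ is monic. This is a $(2n-1)\times(2n-1)$ determinant with $n-1$ rows built from $f$ and $n$ rows built from $f'$. Each row splits as (constant part) $+\,\lambda\cdot$($\lambda$-part), so by multilinearity in rows the determinant is a sum over subsets $S$ of rows of $\lambda^{|S|}$ times a determinant whose rows in $S$ are $\lambda$-parts and whose other rows are constant parts.

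Only $S$ equal to the set of all rows can contribute $\lambda^{2n-1}$. In that determinant the first column is zero. The only nonzero entries of the first column of the Sylvester matrix are the leading coefficients $1$ of $f$ and $n$ of $f'$, both constant. Consequently their $\lambda$-parts are $0$, because $\deg q_1\le n-1$ and $\deg q_1'\le n-2$. So this term vanishes, and $\deg_\lambda \D(A+\lambda J)\le 2n-2$, which is~\eqref{lem:RankOneScalImpliesLowDeg:eqq1}.

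The main point needing care is Step 2, namely that the $\lambda$-dependence is exactly affine with no $\lambda$ in the top coefficient. Once that is in place, the vanishing first column in Step 3 makes the bound immediate. The hypothesis $|\F|>n(n-1)$ is used only to pass freely between identities of polynomial functions and identities of polynomials.
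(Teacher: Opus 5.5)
Your proof is correct, and its core is the same as the paper's. All coefficients of $p_{A+\lambda X}$ are affine in $\lambda$ and the leading ones are $\lambda$-free. Hence the first column of the $(2n-1)\times(2n-1)$ Sylvester matrix contains no $\lambda$, and at most $2n-2$ factors of $\lambda$ can occur. The paper reads this off the permutation expansion and you read it off row multilinearity; these are equivalent.

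The routes differ in the reduction. The paper does the computation only for the normal form $X=rE_{1\,1}+sE_{1\,2}$, expanding along the first row. It then passes from ``for every $A\in\M_n(\F)$'' to the polynomial statement via Corollary~\ref{DegFLessZero}. A general $X=J+cI_n$ is reached through Lemma~\ref{lem:RK1Sim}, which says a rank-one matrix is similar to $E_{1\,2}$ or a nonzero multiple of $E_{1\,1}$. This is combined with Lemma~\ref{lem:SimPlusScalSameInvD}, the invariance of $\deg_\lambda(\D_{(X)})$ under similarity and scalar shifts, which rests on the orbit-invariant machinery of Section~\ref{sec:OrbInvariants} and uses $|\F|>n(n-1)$.

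You instead strip the scalar part by translation invariance and treat an arbitrary rank-one $J$ directly, using that any two columns of $J$ are proportional. You also keep the entries of $A$ as indeterminates throughout. This makes your argument self-contained and free of the similarity and invariance lemmas. If you take the formal-identity option in Step 1, it even shows that the hypothesis $|\F|>n(n-1)$ is not needed for this particular lemma.

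In exchange, the paper's route confines the explicit determinant expansion to one concrete matrix. It also exercises Lemma~\ref{lem:SimPlusScalSameInvD}, which the paper needs anyway for Lemma~\ref{lem:NonScalarElementofRnLowBound} and Lemma~\ref{lem:IfPresDThenPresRk1PSc}.
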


\begin{proof}We will prove sequentially that the statement of the lemma holds in the following cases
\begin{enumerate}[label=Case (\alph*), ref=(\alph*), leftmargin=*]
\item\label{lem:RankOneScalImpliesLowDeg:part1} $X = (r E_{1\,1} + s E_{1\,2})$, where $r, s \in \F$;
\item\label{lem:RankOneScalImpliesLowDeg:part3} $X$ is an arbitrary element of $\R_n(\F)$.
\end{enumerate}

\paragraph{Case~\ref{lem:RankOneScalImpliesLowDeg:part1}} Assume that $X = (r E_{1\,1} + s E_{1\,2})$, where $s, t \in \F$. Let $A \in \M_{n}(\F)$. Consider $p_{A + \lambda X}(t)$:
\[
p_{A + \lambda X}(t) = \left|
\begin{matrix}
t -  a_{1\,1} - \lambda r  & - a_{1\,2} - \lambda s & \ldots & - a_{1\,n} \\
- a_{2\,1}     & t -  a_{2\,2}        & \ldots &  - a_{2\,n} \\
\vdots          & \vdots                & \ddots & \vdots\\
- a_{n\,1}     & -  a_{n\,2}        & \ldots & t - a_{n\,n}
\end{matrix}
\right| \in \F[\lambda, t; a_{1\,1}, \ldots, a_{n\,n}].
\]
Expanding this determinant along the first row we obtain that
\[
p_{A + \lambda X}(t) = P + (k\lambda  + l)Q
\]
for some $P, Q \in \F[t]$ and $k, l \in \F$ with $\deg_t (Q) = n-1$. Hence
\[
p_{A + \lambda X}(x) = x^{n} + (G_{n-1}\lambda  + H_{n-1})x^{n-1} + (G_{n-2}\lambda  + H_{n-2})x^{n-2} + \ldots + (G_{0}\lambda + H_{0}),
\]
for some $G_0, \ldots, G_{n-1}, H_{0}, \ldots, H_{n-1} \in \F.$

Consider then the polynomial $\D_{(X)}(A) \in \F[\lambda]$ expressed as a resultant:
\begin{equation}\label{lem:RankOneScalImpliesLowDeg:eq10}
\D_{(X)}(A) = \Res_{n\, (n-1)} (p_{A + \lambda X}(t),p_{A + \lambda X}'(t)) = \det (R),
\end{equation}
where
\[
R(A) = \begin{pmatrix}
1 & \cdots & G_1\lambda + H_1 & G_{0}\lambda + H_{0} & \multicolumn{1}{c}{\cdots} & 0 \\
\vdots & \ddots & \vdots & \vdots & \ddots & \vdots\\
0 & \multicolumn{1}{c}{\cdots} & 1 & G_{n-1}\lambda + H_{n-1} & \cdots & G_{0}\lambda + H_{0}\\
n & \multicolumn{1}{c}{\cdots} & 2G_{2}\lambda + 2H_{2} & G_{1}\lambda + H_{1} & \multicolumn{1}{c}{\cdots} & 0 \\

\vdots & \ddots & \vdots & \vdots & \ddots & \vdots\\
0 & \multicolumn{1}{c}{\cdots} & n & (n-1)(G_{n-1}\lambda + H_{n-1}) & \multicolumn{1}{c}{\cdots} & G_{1}\lambda + H_{1}
\end{pmatrix}.
\]

Note that $R(A) \in \M_{2n-1}\left(\F[\lambda]\right)$ and the first column of $R(A)$ does not contain $\lambda,$ while every entry in the remaining $2n-2$ rows contains at most the first power of $\lambda$. Hence
\[
\deg_{\lambda}\left(\prod_{1 \le i \le n} R(A)_{i\,\sigma(i)}\right) \le 2n - 1 - 1 = 2n - 2
\]
for all permutations $\sigma.$ Therefore,

\begin{equation}\label{lem:RankOneScalImpliesLowDeg:eq11}
\deg_\lambda (\det (R(A))) \le 2n - 2.
\end{equation}
Thus, 
\begin{equation}\label{lem:RankOneScalImpliesLowDeg:eq10002}
\deg_\lambda \left(\D_{(X)}(A)\right) \overset{\eqref{lem:RankOneScalImpliesLowDeg:eq10}}{=\joinrel=\joinrel=} \deg_\lambda \left(\det (R(A))\right) \overset{\eqref{lem:RankOneScalImpliesLowDeg:eq11}}{\le} 2n - 2.
\end{equation}
Since the equality~\eqref{lem:RankOneScalImpliesLowDeg:eq10002} holds for all $A \in \M_n(\F)$, and $\deg_\lambda \left(\D_{(X)}\right) < |\F|$ by the initial assumption of the lemma and Lemma~\ref{lem:DegsPPart}, then $\deg_\lambda \left(\D_{(X)}\right) \le 2n - 2$ by Lemma~\ref{DegFLessZero}.

\paragraph{Case~\ref{lem:RankOneScalImpliesLowDeg:part3}} Assume that $X \in \R_n(\F)$. This means that  $X = J + cI_n$ for some $c \in \F$ and a matrix $J \in \M_{n}(\F)$ with $\rk(J) \le 1$. If $\rk(J) = 0$, then $J = O_n$ and consequently $\D(A + \lambda X) = \D(A)$ for all $A \in \M_{n}(\F)$ by the property~\ref{enum:DiscPropertyC} of the matrix discriminant. Hence,
\[
\deg_{\lambda} (\D(A + \lambda X)) = 0 \le 2n-2\;\;\mbox{for all}\;\;A \in \M_{n}(\F)
\]
in this case and consequently $\deg_\lambda \left(\D_{(X)}\right) \le 2n - 2$ by Lemma~\ref{DegFLessZero}.

Suppose now that $\rk(J) = 1.$  Hence, $J$ is similar to the linear combination $Y = r E_{1\,1} + s E_{1\,2}$ for some $r, s \in \F$ by Lemma~\ref{lem:RK1Sim}. From the part~\ref{lem:RankOneScalImpliesLowDeg:part1} of the lemma we conclude that $\deg_{\lambda}\left(\D_{(Y)}\right) \le 2n - 2$. Then, by applying Lemma~\ref{lem:SimPlusScalSameInvD} to $X$ and $Y$ we obtain that $\deg_{\lambda}\left(\D_{(X)}\right) \le 2n - 2.$
\end{proof}

We  also use the lower bound $\deg_\lambda (\D_{(X)}) \ge 1$ for non-scalar elements of $\R_n(\F)$. To establish this bound, we need one supplementary lemma.

\begin{lemma}\label{lem:E12PlusStrictUpDegBound}Assume that $n \ge 3$ and $|\F| > n(n-1)$. If $C \in \M_{(n-2)}(\F)$ is strictly upper triangular and $B = \begin{psmallmatrix}0 & 1\\ 0 & 0\end{psmallmatrix}\oplus C$, then $\deg_\lambda \left(\D_{\left(B\right) }\right)) > 0$.
\end{lemma}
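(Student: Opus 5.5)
It suffices to exhibit a single matrix $A \in \M_n(\F)$ such that the polynomial $\D(A + \lambda B) \in \F[\lambda]$ is nonconstant. Since $\deg(\D) = n(n-1) < |\F|$, Lemma~\ref{lem:DegsPPart} and Corollary~\ref{DegFGLessEqual} show that $\D_{(B)}$, as an element of $\F[a_{1\,1},\ldots,a_{n\,n},\lambda]$, then has positive $\lambda$-degree. The natural choice is to take $A$ block upper triangular with the same block structure as $B$, so that Lemma~\ref{lem:DiscDirSum} applies.

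Concretely, I would take $A = A_0 \oplus \diag(b_1,\ldots,b_{n-2})$, where $A_0 = \begin{psmallmatrix}0 & 0\\ 1 & 0\end{psmallmatrix}$ and $b_1,\ldots,b_{n-2}\in\F$ are pairwise distinct and nonzero. Such values exist because $|\F| > n(n-1) \ge n-1$. Then $A + \lambda B$ has the form treated in Lemma~\ref{lem:DiscDirSum}. The strictly upper triangular block $\lambda C$ plays the role of $A'$, the scalars $b_k$ give the diagonal part, and the upper-left block is $A_0 + \lambda\begin{psmallmatrix}0&1\\0&0\end{psmallmatrix} = \begin{psmallmatrix}0&\lambda\\1&0\end{psmallmatrix}$. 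Since the lemma is stated over $\F$, I would apply it for each fixed $\lambda\in\F$. This gives
\[
\D(A+\lambda B) = \D\!\left(\begin{psmallmatrix}0&\lambda\\1&0\end{psmallmatrix}\right)\prod_{k}\left(b_k^2-\lambda\right)^2\prod_{i<j}(b_i-b_j)^2 .
\]
Here the characteristic polynomial of the $2\times 2$ block is $x^2-\lambda$. By Lemma~\ref{lem:N3CharPoly} its discriminant is $-4\lambda$.

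If $\Char(\F)\neq 2$, the right-hand side is a polynomial in $\lambda$ of degree $1 + 2(n-2) = 2n-3 \ge 1$, with nonzero leading coefficient $\pm 4\prod_{i<j}(b_i-b_j)^2$. It therefore takes at least two distinct values on $\F$, because $|\F| > 2n-3$.

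The main obstacle is $\Char(\F)=2$. There $-4\lambda = 0$, because the block $\begin{psmallmatrix}0&\lambda\\1&0\end{psmallmatrix}$ always has a repeated eigenvalue, so the choice of $A_0$ must change. I would take $A_0 = \begin{psmallmatrix}1 & 0\\ 1 & 0\end{psmallmatrix}$ instead. The block becomes $\begin{psmallmatrix}1&\lambda\\1&0\end{psmallmatrix}$, with characteristic polynomial $x^2 - x - \lambda$. Its discriminant is $1+4\lambda$, which is nonzero in every characteristic. The factors become $p(b_k) = b_k^2 - b_k - \lambda$, each of degree exactly one in $\lambda$. The total expression then has degree $2(n-2) \ge 2$ in characteristic $2$, and degree $2n-3$ otherwise, with leading coefficient $(\pm 4 \text{ or } 1)\prod_{i<j}(b_i-b_j)^2 \neq 0$. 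This single choice of $A_0$ works uniformly in all characteristics, so I would use it throughout, with the $b_k$ pairwise distinct.

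To finish, I would pass from a nonconstant polynomial function of $\lambda$ to a positive formal degree. The degree bound $< |\F|$ lets Corollary~\ref{DegFGLessEqual} identify the polynomial function with the formal polynomial $\D_{(B)}(A,\lambda)$. Hence $\deg_\lambda(\D_{(B)}) > 0$.
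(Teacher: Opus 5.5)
Your proposal is correct and is essentially the paper's argument. Both proofs take $A$ to consist of the block $\begin{psmallmatrix}1&0\\1&0\end{psmallmatrix}$, which contributes the factor $x^2-x-\lambda$ to $p_{A+\lambda B}$, followed by a diagonal block with pairwise distinct entries; both then apply Lemma~\ref{lem:DiscDirSum} and read off a positive $\lambda$-degree in every characteristic.

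The paper's $A=\begin{psmallmatrix}1&0&0\\1&0&0\\0&0&0\end{psmallmatrix}\oplus\diag(a_4,\ldots,a_n)$ is exactly your final $A$ with $b_1=0$. That is why the paper needs the $a_k$ nonzero, whereas for your final choice of $A_0$ the condition $b_k\neq 0$ is unnecessary.
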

\begin{proof}The conditions of the lemma mean that
\[
B = \left[
\begin{array}{ c | c }
    B_0 & * \\
    \hline
    0 & B'
  \end{array}\right],
\]
where $B_0 = \begin{psmallmatrix}0 & 1 & 0\\ 0 & 0 & 0\\ 0 & 0 & 0\end{psmallmatrix}$ and $B' \in \M_{(n-3)}(\F)$ is strictly upper triangular. Let $A \in \M_{n}(\F)$ be defined by $A = A_0 \oplus A'$, where  $A_0 = \begin{psmallmatrix} 1 & 0 & 0\\  1 & 0 & 0\\ 0 & 0 & 0\end{psmallmatrix}, A' = \diag(a_4,\ldots, a_n)$ and $a_4,\ldots a_n$ are all distinct and nonzero (since we assume that $|\F| > n(n-1)$, such $a_4, \ldots, a_n$ indeed exist). Then
\begin{multline}\label{lem:NonScalarElementofRnLowBound:eq10002}
\D_{(X)}(A) = \D(A + \lambda X)\\
\overset{\eqref{lem:DiscDirSum:eqq1}}{=\joinrel=\joinrel=} \D(A_0 + \lambda X_0)  \prod_{4 \le k \le n} p_{(A_0 + \lambda X_0)}^2 (a_k) \prod_{4 \le i < j \le n} (a_i - a_j)^2\\
\lambda \left(4\lambda^2 + \lambda\right)\cdot\prod_{4\le  k \le n} \left(a_k^3-a_k^2 - \lambda a_k\right)^2 \cdot \prod_{4\le i < j \le n} (a_i - a_j)^2
\end{multline}
Since  $a_4, \ldots, a_n$ are pairwise distinct and nonzero, then from~\eqref{lem:NonScalarElementofRnLowBound:eq10002} we conclude that $\deg_\lambda \left(\D_{(X)}(A)\right) \ge 2 + 2(n-3) > 0.$ Therefore, $\deg_\lambda \left(D_{(X)}\right) > 0$.
\end{proof}

\begin{lemma}\label{lem:NonScalarElementofRnLowBound}Assume that $n \ge 3$ and $|\F| > n(n-1)$. If $X \in \left(\R_n(\F) \setminus \langle I_n \rangle\right),$ then $\deg_\lambda (\D_{(X)}) > 0$.
\end{lemma}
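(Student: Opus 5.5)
Write $X = J + cI_n$ with $\rk(J)\le 1$ and $c\in\F$. Since $X \notin \langle I_n\rangle$, we have $J \neq O_n$, so $\rk(J) = 1$. By Lemma~\ref{lem:SimPlusScalSameInvD}, $\deg_\lambda(\D_{(X)})$ depends only on the similarity class of $X$ modulo scalar matrices. So it suffices to treat $J$ up to similarity, and Lemma~\ref{lem:RK1Sim} leaves two cases: $J$ is similar to $E_{1\,2}$ (the nilpotent case), or $J$ is similar to $rE_{1\,1}$ with $r\neq 0$ (the non-nilpotent case).

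\emph{Nilpotent case.} Here $X$ is similar to $E_{1\,2}+cI_n$. We may take $Y = E_{1\,2} = \begin{psmallmatrix}0&1\\0&0\end{psmallmatrix}\oplus O_{n-2}$, which is exactly the matrix $B$ of Lemma~\ref{lem:E12PlusStrictUpDegBound} with $C = O_{n-2}$ (strictly upper triangular). That lemma gives $\deg_\lambda(\D_{(Y)})>0$, and Lemma~\ref{lem:SimPlusScalSameInvD} transfers the bound to $X$.

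\emph{Non-nilpotent case.} Here $X$ is similar to $Y + cI_n$ with $Y = rE_{1\,1}$ and $r\neq0$, so again it suffices to show $\deg_\lambda(\D_{(Y)})>0$. I will exhibit a single $A$ for which $\D(A+\lambda Y)$ is a nonconstant polynomial in $\lambda$. Take $A = \diag(0,a_2,\ldots,a_n)$ with $a_2,\ldots,a_n$ pairwise distinct and nonzero; this needs $|\F|\ge n$, which holds since $|\F|>n(n-1)$. By property~\ref{enum:DiscPropertyDiag},
\[
\D(A+\lambda Y)=\prod_{k=2}^n (\lambda r - a_k)^2\prod_{2\le i<j\le n}(a_i-a_j)^2 .
\]
This has degree $2(n-1)>0$ in $\lambda$, with leading coefficient $r^{2(n-1)}\prod_{i<j}(a_i-a_j)^2\neq0$.

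\emph{Passing from one $A$ to the polynomial.} This last step is the only one needing care, since $\D_{(Y)}$ is defined as a polynomial in $\F[a_{1\,1},\ldots,a_{n\,n},\lambda]$. Suppose $\deg_\lambda(\D_{(Y)}) = 0$. Then $\D(A+\lambda Y)$, obtained by evaluating at a fixed $A$, would be constant in $\lambda$ as a polynomial. This contradicts the computation above, which is a genuine identity in $\F[\lambda]$. Hence $\deg_\lambda(\D_{(Y)})\ge 1$; the same argument is what underlies the proof of Lemma~\ref{lem:E12PlusStrictUpDegBound}. Combining the two cases via Lemma~\ref{lem:SimPlusScalSameInvD} proves the lemma. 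I expect no real obstacle; the main point is confirming that Lemma~\ref{lem:SimPlusScalSameInvD} applies under the hypothesis $|\F|>n(n-1)$, which is exactly that lemma's assumption.
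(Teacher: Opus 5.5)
Your proof is correct and follows essentially the same route as the paper: you reduce via Lemma~\ref{lem:RK1Sim} and Lemma~\ref{lem:SimPlusScalSameInvD} to $E_{1\,2}$, which is handled by Lemma~\ref{lem:E12PlusStrictUpDegBound}, and to a nonzero multiple of $E_{1\,1}$, which is handled by the diagonal test matrix $A=\diag(0,a_2,\ldots,a_n)$ giving degree $2(n-1)$. The only cosmetic difference is that you also require the $a_k$ to be nonzero, which is harmless but unnecessary.
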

\begin{proof}We will prove sequentially that the statement of the lemma holds in the following cases
\begin{enumerate}[label=Case (\alph*), ref=(\alph*), leftmargin=*]
\item\label{lem:NonScalarElementofRnLowBound:part1} $X = (c E_{1\,1})$, where $0 \neq c \in \F $;
\item\label{lem:NonScalarElementofRnLowBound:part2} $X = (E_{1\,2})$;
\item\label{lem:NonScalarElementofRnLowBound:part3} $X$ is an arbitrary element of $\R_n(\F) \setminus \langle I_n \rangle$.
\end{enumerate}

\paragraph{Case~\ref{lem:NonScalarElementofRnLowBound:part1}} Let $A \in \M_{n}(\F)$ be defined by $A = \diag(0, a_2 \ldots, a_n)$, where $a_2,\ldots a_n$ are pairwise distinct (since we assume that $|\F| > n(n-1)$, such $a_2, \ldots, a_n$ indeed exist). Then
\begin{multline}\label{lem:NonScalarElementofRnLowBound:eq10001}
\D_{(X)}(A) = \D(A + \lambda X) = \D\left(\diag\left(\lambda c, a_2, \ldots, a_n\right)\right)\\
 \overset{\mbox{\scriptsize Property~\ref{enum:DiscPropertyDiag}}}{=\joinrel=\joinrel=\joinrel=\joinrel=\joinrel=\joinrel=\joinrel=\joinrel=\joinrel=} \prod_{2 \le k  \le n} (c\lambda - a_k)^2 \times \prod_{2\le i < j \le n} (a_i - a_j)^2.
\end{multline}
Since $c \neq 0$ and $a_2, \ldots, a_n$ are pairwise distinct, then from~\eqref{lem:NonScalarElementofRnLowBound:eq10001} we conclude that $\deg_\lambda \left(\D_{(X)}(A)\right) = 2(n-1) > 0.$ Therefore, $\deg_\lambda \left(D_{(X)}\right) > 0$.

\paragraph{Case~\ref{lem:NonScalarElementofRnLowBound:part2}}  In this case the matrix $X$ satisfies the conditions of Lemma~\ref{lem:E12PlusStrictUpDegBound} and consequently $\deg_\lambda \left(D_{(X)}\right) > 0$.

\paragraph{Case~\ref{lem:NonScalarElementofRnLowBound:part3}}  The condition $X \in \left(\R_n(\F) \setminus \langle I_n \rangle\right)$ implies that  $X = J + cI_n$ for some $c \in \F$ and a matrix $J \in \M_{n}(\F)$ with $\rk(J) = 1$. 

Now there are two possibilities for $J$: either $J$ is nilpotent or $J$ is non-nilpotent. Using Lemma~\ref{lem:RK1Sim} it could be reformulated as follows: either $J$ is similar to $E_{1\,2}$ or $J$ is similar to $cE_{1\,1}$ for some $0 \neq c \in \F$. The first case is considered in the part~\ref{lem:NonScalarElementofRnLowBound:part2} of the lemma, while the second case is considered in  part~\ref{lem:NonScalarElementofRnLowBound:part1} of it. In the both cases we obtain that $\deg_\lambda \left(D_{(J)}\right) > 0$ by applying Lemma~\ref{lem:SimPlusScalSameInvD} to $J$ and $E_{1\,2}$ or $cE_{1\,1}$, correspondingly. Therefore $\deg_\lambda \left(D_{(X)}\right) > 0$ by Lemma~\ref{lem:SimPlusScalSameInvD} applied to $J$ and $X$.
\end{proof}

\begin{lemma}\label{lem:LowDegImpliesLowRankOrSeveralCasesForAlgebraicallyClosed}Assume that $\F$ is an algebraically closed field. If $X \in \M_{n}(\F)$ is such that 
\begin{equation}\label{lem:LowDegImpliesLowRankOrSeveralCasesForAlgebraicallyClosed:eqq1}
\deg_{\lambda} \left(\D_{(X)}\right) \le 2n - 2,
\end{equation}

then $X$ satisfies one of the following conditions:
\begin{enumerate}[label=C\arabic*., ref=C\arabic*]
\item\label{lem:LowDegImpliesLowRankOrSeveralCasesForAlgebraicallyClosed:cond1}\label{lem:LowDegImpliesLowRankOrSeveralCasesForAlgebraicallyClosed:condfst} $X \in \R_n(\F)$.
\item\label{lem:LowDegImpliesLowRankOrSeveralCasesForAlgebraicallyClosed:cond2} $n = 3$ and $X$ is similar to $\begin{psmallmatrix}0 & 1 & 0\\0 & 0 & 1\\ 0 & 0 & 0\end{psmallmatrix} + cI_n$ for some $c \in \F$.
\item\label{lem:LowDegImpliesLowRankOrSeveralCasesForAlgebraicallyClosed:cond3} $n = 4$ and $X$ is similar to $\begin{psmallmatrix}0 & 1 & 0 & 0\\0 & 0 & 0 & 0\\ 0 & 0 & 0 & 1\\ 0 & 0 & 0 & 0\end{psmallmatrix}  + cI_n$ for some $c \in \F$.
    
\item\label{lem:LowDegImpliesLowRankOrSeveralCasesForAlgebraicallyClosed:cond4}\label{lem:LowDegImpliesLowRankOrSeveralCasesForAlgebraicallyClosed:condlst} $n = 5$, $\Char(\F) = 2$ and  $X$ is similar to $\begin{psmallmatrix}0 & 1 & 0 & 0 & 0\\0 & 0 & 0 & 0 & 0\\ 0 & 0 & 0 & 1 & 0\\ 0 & 0 & 0 & 0 & 0\\ 0 & 0 & 0 & 0 & 0\end{psmallmatrix}  + cI_n$ for some $c \in \F$.
\end{enumerate}
\end{lemma}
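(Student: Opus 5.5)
Since $\F$ is algebraically closed, the plan is to reduce $X$ to Jordan normal form. We then compute a lower bound for $\deg_\lambda(\D(A+\lambda X))$ by choosing $A$ suitably. By Lemma~\ref{lem:SimPlusScalSameInvD}, the invariant $\deg_\lambda(\D_{(X)})$ is unchanged under similarity and under adding scalar matrices. So we may assume $X$ is in Jordan form and that some eigenvalue equals $0$. The main tool is the leading coefficient in $\lambda$ of $\D(A+\lambda X)$, which is $n(n-1)$ in total degree.

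\textbf{Diagonalizable case.} If $X=\diag(x_1,\dots,x_n)$, take $A$ diagonal. Then $\D(A+\lambda X)=\prod_{i<j}(a_i-a_j+\lambda(x_i-x_j))^2$, which has $\lambda$-degree $2\#\{i<j: x_i\neq x_j\}$ for generic $a_i$. If $X\notin\R_n(\F)$, Lemma~\ref{lem:DiagonalR1Crit} gives two cases. Either there are three distinct eigenvalues, and then the number of pairs with distinct values is at least $2(n-2)+1>n-1$. Or there are two eigenvalues, each of multiplicity at least $2$, and then the count is $k(n-k)\ge 2(n-2)\ge n-1$, with equality only if $n=3$, which is impossible here. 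In both cases the degree exceeds $2n-2$.

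\textbf{General Jordan form: block decomposition.} For $X=\bigoplus_b (\mu_b I+N_b)$, choose $A$ block-diagonal with the same block structure. Property~\ref{enum:DiscPropertyProd} then splits $\D(A+\lambda X)$ into the discriminants of the blocks times squared resultants between blocks. For a resultant between blocks with different eigenvalues $\mu_b\neq\mu_{b'}$, a generic choice of $A$ gives $\lambda$-degree $2\,\mathrm{size}_b\,\mathrm{size}_{b'}$. For a single nilpotent Jordan block $J_k$, take $A=E_{k\,1}$, a companion-type perturbation. Then $A+\lambda J_k$ has characteristic polynomial $x^k-\lambda^{k-1}$. Its discriminant is $\pm k^k\lambda^{(k-1)^2}$ when $\Char\F\nmid k$. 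In general we perturb further, e.g.\ by $A=E_{k\,1}+\alpha E_{k\,k}$, to get $x^k-\alpha x^{k-1}-\lambda^{k-1}$, whose discriminant has $\lambda$-degree about $(k-1)(k-2)$ in any characteristic. Combining these estimates with the degrees of the cross resultants between blocks with the same eigenvalue, we should get degree $>2n-2$ unless one of the following holds: all but one Jordan block is $1\times1$ with a common eigenvalue and the remaining block has size at most $2$, which is case \ref{lem:LowDegImpliesLowRankOrSeveralCasesForAlgebraicallyClosed:cond1}; or we are in the small exceptions $J_3$, $J_2\oplus J_2$, and $J_2\oplus J_2\oplus J_1$ in characteristic $2$. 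For the exceptions we verify the degree bound directly, using the explicit formulas Lemma~\ref{lem:DPolyN3}, Lemma~\ref{lem:DiscN4FormulaCHN2}, and the $\B$ formulas.

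\textbf{Expected main obstacle.} The delicate part is the single eigenvalue (nilpotent) case with several blocks, especially when $\Char\F$ divides block sizes or $n$. There, generic choices of $A$ may have degree collapse caused by characteristic-$p$ cancellations. For example, $J_2\oplus J_2\oplus J_1$ is exceptional only in characteristic $2$. To rule out all other configurations I would do the following. First, use Lemma~\ref{lem:DiscDirSum} to peel off strictly upper triangular parts and reduce to a few minimal bad configurations: $J_2\oplus J_2\oplus J_2$, $J_2\oplus J_2\oplus J_1\oplus J_1$ (in characteristic $\ne 2$ only $J_2\oplus J_2\oplus J_1$ is needed), $J_3\oplus J_1$, $J_4$, and $J_3\oplus J_2$. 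Second, for each of these, exhibit an explicit $A$ with $\deg_\lambda>2n-2$. Third, show the bad configuration persists in larger $n$ by adjoining diagonal entries with generic distinct values, which adds $2$ to the degree for each new $1\times1$ block, matching the growth of $2n-2$. Finding explicit test matrices $A$ that work uniformly in all characteristics, and especially in characteristic $2$ and $3$, is the step I expect to require the most case-checking.
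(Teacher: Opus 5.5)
\textbf{Gap: your list of survivors is wrong at $n=3$, and in characteristic $2$ the statement itself fails there.} Your family ``all but one Jordan block is $1\times 1$ with a common eigenvalue and the remaining block has size at most $2$'' is not contained in $\R_n(\F)$. For $n=3$ it contains $X=\xi E_{1\,1}+E_{2\,3}$ with $\xi\neq 0$, which is your $(\xi)\oplus J_2$.

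\begin{itemize}
\item The principal $2\times 2$ minor of $X-cI_3$ on rows and columns $2,3$ equals $c^2$. So $X\in\R_3(\F)$ would force $c=0$, but $\rk X=2$.
\item $X$ has two distinct eigenvalues, so it is not similar to $J_3+cI_3$.
\end{itemize}

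For $\Char\F\neq 2$ your own estimates rule this matrix out: the cross-resultant has $\lambda$-degree $4$ and $\deg_\lambda \D(x^2-\lambda)=1$, giving $5>4$. There it is only a misstatement.

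For $\Char\F=2$ nothing can rule it out. By Corollary~\ref{cor:D2PolyN3Char2}, $\D(M)=\bigl(\det M+E_2(M)\tr M\bigr)^2$ for $M\in\M_3(\F)$. For $M=A+\lambda X$, the coefficient of $\lambda^3$ in $\det M+E_2(M)\tr M$ is $\det X+E_2(X)\tr X=0+0\cdot\xi=0$. Hence $\deg_\lambda(\D_{(X)})\le 4=2n-2$ for every $A$.

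So the statement as written is false for $n=3$, $\Char\F=2$. Your step ``we should get degree $>2n-2$ unless \dots'' cannot be completed there. The list of exceptions must be enlarged by the matrices similar to $\xi(E_{1\,1}+E_{2\,3})+cI_3$ with $\xi\neq 0$.

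\textbf{The paper's proof breaks at the same place.} In Subsubcase~II.2 the test matrix $A_0=E_{3\,1}$ gives $p_{A_0+\lambda X_0}(x)=x^2(x-\lambda\xi_1)$. Hence $\D(A_0+\lambda X_0)=0$, not the displayed polynomial. A working choice depends on the characteristic:
\begin{itemize}
\item If $\Char\F\neq 2$, $A_0=E_{3\,2}$ gives $\pm 4\lambda^3(\lambda\xi_1^2-1)^2$, of degree $5$.
\item If $\Char\F=2$, $A_0=E_{3\,2}+E_{3\,3}$ gives $(\lambda^2\xi_1^2+\lambda\xi_1+\lambda)^2$, and each $p_{A_0+\lambda X_0}(a_k)$ has $\lambda$-degree $2$. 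The total $4n-8$ exceeds $2n-2$ only for $n\ge 4$.
\end{itemize}

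\textbf{Outside this case your plan is sound.} In the nilpotent case it is essentially the paper's route: Lemma~\ref{lem:DiscDirSum} applied to small seeds. The paper's two seeds are $J_3$ (with $A_0=E_{2\,1}+E_{3\,1}$, giving $\D=4\lambda^3-27\lambda^4$) and $J_2\oplus J_2$. Both have $p_{A_0+\lambda X_0}(a)$ of $\lambda$-degree $2$, so each adjoined diagonal entry adds $4$ and these two seeds cover all $n$. With your longer list of base configurations, you still have to do two things for each seed: supply the explicit $A$, and check that $\deg_\lambda p_{A_0+\lambda X_0}(a)\ge 1$, so that each adjoined entry really adds at least $2$.
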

\begin{proof}We will prove sequentially that the statement of the lemma holds in the following cases
\begin{enumerate}[label=Case (\alph*), ref=(\alph*), leftmargin=*]
\item\label{lem:LowDegImpliesLowRankOrSeveralCasesForAlgebraicallyClosed:part1} For all $X$ in the Jordan normal form satisfying the condition~\eqref{lem:LowDegImpliesLowRankOrSeveralCasesForAlgebraicallyClosed:eqq1} such that the eigenvalue of $X$ of maximal multiplicity is equal to zero.
\item\label{lem:LowDegImpliesLowRankOrSeveralCasesForAlgebraicallyClosed:part2} For all $X$ satisfying the condition~\eqref{lem:LowDegImpliesLowRankOrSeveralCasesForAlgebraicallyClosed:eqq1}.
\end{enumerate}

\paragraph{Case~\ref{lem:LowDegImpliesLowRankOrSeveralCasesForAlgebraicallyClosed:part1}} By contradiction. Let $X \in \M_{n}(\F)$ be a matrix in the Jordan normal form satisfying the condition~\eqref{lem:LowDegImpliesLowRankOrSeveralCasesForAlgebraicallyClosed:eqq1} such that the eigenvalue of $X$ of maximal multiplicity is equal to zero and $X$ does not satisfy any of the conditions~\ref{lem:LowDegImpliesLowRankOrSeveralCasesForAlgebraicallyClosed:condfst}--\ref{lem:LowDegImpliesLowRankOrSeveralCasesForAlgebraicallyClosed:condlst}. The following subcases are possible:
\begin{enumerate}[{label=\Roman*., ref=\Roman*}]
\item\label{lem:LowDegImpliesLowRankOrSeveralCasesForAlgebraicallyClosed:part11} $X$ has at least three nonequal eigenvalues.
\item\label{lem:LowDegImpliesLowRankOrSeveralCasesForAlgebraicallyClosed:part12} $X$ has only two nonequal eigenvalues.

\item\label{lem:LowDegImpliesLowRankOrSeveralCasesForAlgebraicallyClosed:part13} All the eigenvalues of $X$ are equal to each other.

\end{enumerate}
Let us consider them separately.

\paragraph{Subcase~\ref{lem:LowDegImpliesLowRankOrSeveralCasesForAlgebraicallyClosed:part11}. $X$ has at least three nonequal eigenvalues.} Let us choose a matrix $A = \diag(a_1,\ldots a_n)$, where $a_1, \ldots, a_n$ are all distinct (recall that $\F$ is algebraically closed). Then
\begin{multline*}
\D(A + \lambda X) = {}\\
{} = \D(\diag(a_1 + \lambda\xi_1, \ldots, a_n + \lambda\xi_n)) = \phantom{(a_i - a_j + \lambda(\xi_i - \xi_j))^2}\\
{} = \prod_{i < j} (a_i - a_j + \lambda(\xi_i - \xi_j))^2 = {}\\
{} = (-1)^{\frac{n(n-1)}{2}} \prod_{1 \le i \ne j \le n} (a_i - a_j + \lambda(\xi_i - \xi_j)).
\end{multline*}
Hence,
\begin{equation}\label{lem:LowDegImpliesLowRankOrSeveralCasesForAlgebraicallyClosed:eq3}
\deg_{\lambda}(\D(A + \lambda X)) = \#\{(i,j)\mid \xi_i \neq \xi_j\}
\end{equation}

Since $X$ has at least three nonequal eigenvalues, then there are $1 \le i_1, i_2, i_3 \le n$ such that $\xi_{i_1}, \xi_{i_2}$ and $\xi_{i_3}$ are pairwise distinct. Let $a = \#\{1 \le j \le n\mid \xi_j = \xi_1\},$ $b = \#\{1 \le j \le n \mid \xi_j = \xi_2\}$ and $c = m - a - b$. Note that $c \ge \#\{1 \le j \le n\mid \xi_j = \xi_3\}$. Therefore $a, b, c \ge 1$ and consequently
\[
abc > (a-1)(b-1)(c-1).
\]
By expanding the both sides of the inequality we obtain that
\[
0 > a + b + c - ab - bc - ac - 1
\]
which implies that
\[
ab + bc + ac > a + b + c - 1 \overset{\scriptsize\mbox{the definition of $c$}}{=\joinrel=\joinrel=\joinrel=\joinrel=\joinrel=\joinrel=\joinrel=\joinrel=\joinrel=} n - 1.
\]
Hence,
\begin{equation}\label{lem:LowDegImpliesLowRankOrSeveralCasesForAlgebraicallyClosed:eq1}
2ab + 2bc + 2ac > 2n - 2.
\end{equation}
In addition,
\begin{multline}\label{lem:LowDegImpliesLowRankOrSeveralCasesForAlgebraicallyClosed:eq2}
\#\{(i,j)\mid \xi_i \neq \xi_j\} \ge 2\#\left(\{(i,j)\mid \xi_i = \xi_1\;\mbox{and}\; \xi_j = \xi_2\}\right)\\
 + 2\#\left(\{(i,j)\mid \xi_i = \xi_1\;\mbox{and}\; \xi_j \neq \xi_1, \xi_2\}\right)\\
+ 2\#\left(\{(i,j)\mid \xi_i = \xi_2\;\mbox{and}\; \xi_i \neq \xi_1, \xi_2\}\right)\\
 = 2ab + 2bc + 2ac.
\end{multline}
By aligning the inequalities~\eqref{lem:LowDegImpliesLowRankOrSeveralCasesForAlgebraicallyClosed:eq1} and~\eqref{lem:LowDegImpliesLowRankOrSeveralCasesForAlgebraicallyClosed:eq2} together we conclude that
\[
\deg_{\lambda}(\D(A + \lambda X)) \overset{\eqref{lem:LowDegImpliesLowRankOrSeveralCasesForAlgebraicallyClosed:eq3}}{=\joinrel=\joinrel=}\#\{(i,j)\mid \xi_i \neq \xi_j\} > 2n - 2
\]
which contradicts with~\eqref{lem:LowDegImpliesLowRankOrSeveralCasesForAlgebraicallyClosed:eqq1}.

\paragraph{Subcase~\ref{lem:LowDegImpliesLowRankOrSeveralCasesForAlgebraicallyClosed:part12}. $X$ has only two nonequal eigenvalues} As we assume, the eigenvalue $\xi_2$ of maximal multiplicity is equal to zero. Let $m$ be multiplicity of $\xi_2 = 0$. Then the multiplicity of the second eigenvalue $\xi_1 \neq 0$ is equal to $n - m$. Note that these assumptions imply that
\begin{equation}\label{lem:LowDegImpliesLowRankOrSeveralCasesForAlgebraicallyClosed:eq4}
1 \le (n-m) \le m \le n.
\end{equation}
Hence, if $m = 1$, then $n - m  = 1$, which implies that $n = 2$ and $X$ has the form $\begin{psmallmatrix}* & 0\\ 0 & 0\end{psmallmatrix}$ and consequently rank one. This contradicts with the condition~\ref{lem:LowDegImpliesLowRankOrSeveralCasesForAlgebraicallyClosed:condfst}.

Suppose that $1 < m < n - 1$. Then
\begin{multline*}
2m(n-m) - (2n - 2) = 2(n(m-1) - (m^2 - 1)) = {}\\
{} = 2((n - 1 - m)(m-1)) > 0.
\end{multline*}
Hence,
\begin{equation}\label{lem:LowDegImpliesLowRankOrSeveralCasesForAlgebraicallyClosed:eq5}
2m(n-m) > (2n - 2).
\end{equation}
Similarly to the subcase~\ref{lem:LowDegImpliesLowRankOrSeveralCasesForAlgebraicallyClosed:part11}, let us choose a matrix $A = \diag(a_1,\ldots a_n)$, where $a_1, \ldots, a_n$ are all distinct. Then
\begin{multline*}
\deg_{\lambda}(\D(A + \lambda X))\\
 \overset{\eqref{lem:LowDegImpliesLowRankOrSeveralCasesForAlgebraicallyClosed:eq3}}{=\joinrel=\joinrel=} \#\{(i,j)\mid \xi_i \neq \xi_j\} = 2\#\{(i,j)\mid \xi_i = \xi_1\;\;\mbox{and}\;\; \xi_j = \xi_2\}\\
  = 2m(n - m) \overset{\eqref{lem:LowDegImpliesLowRankOrSeveralCasesForAlgebraicallyClosed:eq5}}{>} 2n - 2
\end{multline*}
which contradicts with the condition~\eqref{lem:LowDegImpliesLowRankOrSeveralCasesForAlgebraicallyClosed:eqq1}. Therefore, $m \ge n - 1$. Together with~\eqref{lem:LowDegImpliesLowRankOrSeveralCasesForAlgebraicallyClosed:eq4} this implies that $m = n-1$. Thus, the only possible case is that $1 < m = n - 1$. This implies in particular that $n \ge 3$. Let us consider two possible subsubcases.  

\begin{enumerate}[{label=\ref{lem:LowDegImpliesLowRankOrSeveralCasesForAlgebraicallyClosed:part12}.\arabic*., ref=\ref{lem:LowDegImpliesLowRankOrSeveralCasesForAlgebraicallyClosed:part12}.\arabic*}]
\item\label{lem:LowDegImpliesLowRankOrSeveralCasesForAlgebraicallyClosed:part121} every Jordan cell of $X$ corresponding to $\xi_2 = 0$ has size 1.
\item\label{lem:LowDegImpliesLowRankOrSeveralCasesForAlgebraicallyClosed:part122} there is a Jordan cell of $X$ corresponding to $\xi_2 = 0$ of size greater than 2.
\end{enumerate}

\paragraph{Subsubcase~\ref{lem:LowDegImpliesLowRankOrSeveralCasesForAlgebraicallyClosed:part121}} In this case $X$ has the form $\begin{psmallmatrix}* & 0 & \cdots & 0\\ 0 & 0 & \cdots & 0\\ \vdots & \vdots & \ddots & \vdots\\ 0 & 0 & \cdots & 0\end{psmallmatrix}$ and consequently has rank one which means that  $X \in \R_{n}(\F)$. This contradicts with the condition~\ref{lem:LowDegImpliesLowRankOrSeveralCasesForAlgebraicallyClosed:condfst}.

\paragraph{Subsubcase~\ref{lem:LowDegImpliesLowRankOrSeveralCasesForAlgebraicallyClosed:part122}} In this case $X = \left[\begin{array}{ c | c }
    X_0 & * \\
    \hline
    0 & X'
  \end{array}\right]
$
with $X_0 = \begin{psmallmatrix} \xi_1 & 0 & 0\\ 0 & 0 & 1\\ 0 & 0 & 0\end{psmallmatrix}$ and $X' \in \M_{n-3}(\F)$ is a strictly upper triangular matrix. Let us choose a matrix $A = A_0 \oplus A'$, where  $A_0 = \begin{psmallmatrix} 0 & 0 & 0\\ 0 & 0 & 0\\ 1 & 0 & 0\end{psmallmatrix}, A' = \diag(a_4,\ldots, a_n)$ and $a_4,\ldots a_n$ are all distinct. Then
\begin{multline*}
\D_{(X)}(A) = \D(A + \lambda X)\\
\overset{\eqref{lem:DiscDirSum:eqq1}}{=\joinrel=\joinrel=} \D(A_0 + \lambda X_0)  \prod_{4 \le k \le n} p_{(A_0 + \lambda X_0)}^2 (a_k) \prod_{4 \le i < j \le n} (a_i - a_j)^2\\
= (\lambda^5 \xi_1^4 + \lambda^4 \xi_1^2 + \lambda^3) \cdot \prod_{4 \le k \le n} \left(a_k^3-\lambda \xi_1 a_k^2 - \lambda a_k + \lambda^2\xi_1\right)^2\cdot \prod_{4\le i < j \le n} (a_i - a_j)^2.
\end{multline*}
Therefore, $\deg_{\lambda} (\D_{(X)}(A)) = 5 + 2(2(n-3)) + 0 = 4n - 7.$ Note that $4n - 7 \ge 2n - 1$  when $n \ge 3$. Thus, we obtain the contradiction with~\eqref{lem:LowDegImpliesLowRankOrSeveralCasesForAlgebraicallyClosed:eqq1}.

\paragraph{Subcase~\ref{lem:LowDegImpliesLowRankOrSeveralCasesForAlgebraicallyClosed:part13}. All the eigenvalues of $X$ are equal to each other} It follows from our assumptions that all eigenvalues of $X$ are equal to zero. The following subsubcases are possible.

\begin{enumerate}[{label=\ref{lem:LowDegImpliesLowRankOrSeveralCasesForAlgebraicallyClosed:part13}.\arabic*., ref=\ref{lem:LowDegImpliesLowRankOrSeveralCasesForAlgebraicallyClosed:part13}.\arabic*}]
\item\label{lem:LowDegImpliesLowRankOrSeveralCasesForAlgebraicallyClosed:part131} $X$ has a Jordan block of size at least 3.
\item\label{lem:LowDegImpliesLowRankOrSeveralCasesForAlgebraicallyClosed:part132} The size of each Jordan block of $X$ is at most 2.
\end{enumerate}

\paragraph{Subsubcase~\ref{lem:LowDegImpliesLowRankOrSeveralCasesForAlgebraicallyClosed:part131}. $X$ has a Jordan block of size at least 3}  Note that in this subsubcase $n \ge 4$ because otherwise $X$ satisfies the condition~\ref{lem:LowDegImpliesLowRankOrSeveralCasesForAlgebraicallyClosed:cond2}. In this subcase
$X = \left[\begin{array}{ c | c }
    X_0 & * \\
    \hline
    0 & X'
  \end{array}\right]
$
with $X_0 = \begin{psmallmatrix} 0 & 1 & 0\\ 0 & 0 & 1\\ 0 & 0 & 0\end{psmallmatrix}$ and strictly upper triangular $X' \in \M_{n-3}(\F).$ Let us choose a matrix $A = A_0 \oplus A'$, where  $A_0 = \begin{psmallmatrix} 0 & 0 & 0\\ 1 & 0 & 0\\ 1 & 0 & 0\end{psmallmatrix}, A' = \diag(a_4,\ldots, a_n)$ and $a_4,\ldots a_n$ are all distinct. Then
\begin{multline*}
\D_{(X)}(A) = \D(A + \lambda X)\\
 \overset{\eqref{lem:DiscDirSum:eqq1}}{=\joinrel=\joinrel=} \D(A_0 + \lambda X_0)  \prod_{4 \le k \le n} p_{(A_0 + \lambda X_0)}^2 (a_k) \prod_{4 \le i < j \le n} (a_i - a_j)^2\\
= (4\lambda^3 - 27\lambda^4) \cdot \prod_{4 \le k \le n} \left(a_k^3-\lambda a_k-\lambda^2\right)^2\cdot \prod_{4\le i < j \le n} (a_i - a_j)^2.
\end{multline*}

Therefore,
\begin{equation*}
\deg_{\lambda} \left(\D_{(X)}(A)\right)= \begin{cases}
4 + 2(2(n-3)) = 4n - 8, & \Char(\F) \neq 3;\\
3 + 2(2(n-3)) = 4n - 9, & \Char(\F) = 3
\end{cases}.
\end{equation*}
Note that if $n \ge 4$ then $4n - 8 \ge 2n - 1$ and $4n - 9 \ge 2n - 1$. The inequality $n \ge 4$ holds because otherwise $X$ satisfies the condition~\ref{lem:LowDegImpliesLowRankOrSeveralCasesForAlgebraicallyClosed:cond2}. Thus, $\deg_{\lambda} \left(\D_{(X)}(A)\right) \ge 2n - 1$ for chosen $A$ which contradicts with~\eqref{lem:LowDegImpliesLowRankOrSeveralCasesForAlgebraicallyClosed:eqq1}.

\paragraph{Subsubcase~\ref{lem:LowDegImpliesLowRankOrSeveralCasesForAlgebraicallyClosed:part132}. The size of each Jordan block of $X$ is at most 2} Since we assume that $X \not \in \R_{n}(\F)$, then $X$ is not a nilpotent matrix of rank 1. Hence, $X$ has at least two such Jordan blocks which implies that $n \ge 4$. 

Thus, $X = X_0 \oplus X'$, where $X_0 = \begin{psmallmatrix} 0 & 1\\ 0 & 0\end{psmallmatrix} \oplus \begin{psmallmatrix} 0 & 1\\ 0 & 0\end{psmallmatrix}$ and $X' \in \M_{n-4}(\F).$ Let us choose a matrix $A = A_0 \oplus A'$, where $A_0 = \begin{psmallmatrix} 1 & 0 & 0 & 0\\ 0 & 0 & 1 & 0\\ 0 & 0 & 0 & 0\\ 1 & 0 & 0 & 0 \end{psmallmatrix}, A' = \diag(a_5,\ldots, a_n)$ and $a_5,\ldots a_n$ are all distinct. Then
\begin{multline*}
\D_{(X)}(A) = \D(A + \lambda X)\\
 \overset{\eqref{lem:DiscDirSum:eqq1}}{=\joinrel=\joinrel=} \D(A_0 + \lambda X_0)  \prod_{5 \le k \le n} p_{(A_0 + \lambda X_0)}^2 (a_k) \prod_{5 \le i < j \le n} (a_i - a_j)^2\\
 = \lambda^4(27+256\lambda^2) \cdot \prod_{5 \le k \le n} \left(a_k^4-a_k^3-\lambda^2\right)^2\cdot \prod_{5\le i < j \le n} (a_i - a_j)^2.
 \end{multline*}

Hence,
\begin{equation*}
\deg_{\lambda} \left(\D_{(X)}(A)\right) =
\begin{cases}
6 + 2(2(n-4)) = 4n - 10, & \Char(\F) \neq 2;\\
4 + 2(2(n-4)) = 4n - 12, & \Char(\F) = 2
\end{cases}.
\end{equation*}
Note that $4n - 10 \ge 2n - 1$ when $n \ge 5$ and $4n - 12 \ge 2n - 1$ when $n \ge 6.$ The inequality $n \ge 5$ holds because otherwise $X$ satisfies the condition~\ref{lem:LowDegImpliesLowRankOrSeveralCasesForAlgebraicallyClosed:cond3}. If $n \ge 5$, then $\Char(\F) \neq 2$ because otherwise $X$ satisfies the condition~\ref{lem:LowDegImpliesLowRankOrSeveralCasesForAlgebraicallyClosed:cond4}. Thus, $\deg_{\lambda} \left(\D_{(X)}(A)\right) \ge 2n - 1$ for chosen $A$ which contradicts with~\eqref{lem:LowDegImpliesLowRankOrSeveralCasesForAlgebraicallyClosed:eqq1}.
\end{proof}

\section{The necessary explicit values and estimates of $\deg_\lambda \left(\D_{(X)}\right)$,\linebreak $\dim \left(\FL_{\D^{(i)}_{(X)}}\right)$, $\deg_\lambda \left(\B_{(X)}\right)$ and $\dim \left(\FL_{\B^{(i)}_{(X)}}\right)$}

\label{sec:InvariantsOfSmallMatrices}

The results of the previous section imply that for $n\ge 6$ the matrix $X \in \M_n(\F)$ belongs to $\R_n(\F)$ if and only if $\deg_\lambda \left(\D_{(X)}\right) \le 2n - 2$, while for $n \le 5$ there several exceptions exist (see Lemma~\ref{lem:LowDegImpliesLowRankOrSeveralCasesForAlgebraicallyClosed}). Thus, in order to distinguish these exceptional matrices from the elements of $\R_n(\F)$ we need to calculate more invariants in these cases. The summary table of all calculations made in this section is provided in Table~\ref{tbl:compTab} at the end of this section.

Throughout this section, if $\Char(\F) = 2$ is assumed, then the ``-'' signs are omitted and/or replaced by ``+'' signs. 

\begin{lemma}\label{lem:DistinctN3CH2RK1NNIL}Assume that $\Char (\F) = 2.$ If $B = \begin{psmallmatrix}1 & 0 & 0\\ 0 & 0 & 0\\ 0 & 0 & 0\end{psmallmatrix}$, then 
\[\deg_{\lambda} \left(\B_{(B)}\right) = 2\;\;\mbox{and}\;\;\dim(\mathcal L_{\B_{(B)}^{(1)}}) = 4.\]
 
\end{lemma}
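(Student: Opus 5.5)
The plan is to compute $\B_{(B)}$ explicitly from the formula of Corollary~\ref{cor:D2PolyN3Char2} and then read off both invariants directly. For $X \in \M_3(\F)$ the formula~\eqref{eqqq:formulaChPoly} gives $p_X(x) = x^3 + E_1(X)x^2 + E_2(X)x + E_3(X)$; signs are irrelevant in characteristic $2$. Here $E_1 = \tr$, $E_2$ is the sum of the principal $2\times 2$ minors, and $E_3 = \det$. Corollary~\ref{cor:D2PolyN3Char2} therefore yields
\[
\B(X) = E_3(X) + E_1(X)E_2(X).
\]
I substitute $X = A + \lambda E_{1\,1}$, with $A = (a_{i\,j})$ the generic matrix, and use the fact that $\lambda$ enters only the $(1,1)$ entry. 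Write $t = \tr(A)$, $s = a_{2\,2} + a_{3\,3}$ and $M = a_{2\,2}a_{3\,3} + a_{2\,3}a_{3\,2}$, the complementary minor of the $(1,1)$ entry. Then
\[
E_1 = t + \lambda, \qquad E_2 = E_2(A) + \lambda s, \qquad E_3 = \det(A) + \lambda M.
\]
Hence
\[
\B_{(B)}(A,\lambda) = \det(A) + \lambda M + (t+\lambda)\bigl(E_2(A) + \lambda s\bigr).
\]
The coefficient of $\lambda^2$ is $s = a_{2\,2} + a_{3\,3} \neq 0$, so $\deg_\lambda(\B_{(B)}) = 2$. Here I regard $\B_{(B)}$ as an element of $\F[a_{1\,1},\ldots,a_{3\,3},\lambda]$, as in Definition~\ref{def:PolyPointInv}, so no cardinality assumption is needed for this step.

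Next I compute $\B^{(1)}_{(B)} = M + E_2(A) + ts$. Expand
\[
E_2(A) = a_{1\,1}a_{2\,2} + a_{1\,2}a_{2\,1} + a_{1\,1}a_{3\,3} + a_{1\,3}a_{3\,1} + a_{2\,2}a_{3\,3} + a_{2\,3}a_{3\,2}.
\]
In characteristic $2$ the cross term $2a_{2\,2}a_{3\,3}$ vanishes, so
\[
ts = a_{1\,1}a_{2\,2} + a_{1\,1}a_{3\,3} + a_{2\,2}^2 + a_{3\,3}^2.
\]
Adding these, the terms $a_{1\,1}a_{2\,2}$, $a_{1\,1}a_{3\,3}$, $a_{2\,2}a_{3\,3}$ and $a_{2\,3}a_{3\,2}$ each occur twice and cancel. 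I expect
\[
\B^{(1)}_{(B)} = a_{1\,2}a_{2\,1} + a_{1\,3}a_{3\,1} + a_{2\,2}^2 + a_{3\,3}^2.
\]
This bookkeeping of cancellations is the only delicate step, and I will double-check it term by term.

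Finally, I compute $\FL_{\B^{(1)}_{(B)}}$ through Definition~\ref{def:partDer}. The coefficient of $t$ in $\B^{(1)}_{(B)}(\mathbf a + t\mathbf v)$ is
\[
l_{\B^{(1)}_{(B)},\mathbf a}(\mathbf v) = a_{2\,1}v_{1\,2} + a_{1\,2}v_{2\,1} + a_{3\,1}v_{1\,3} + a_{1\,3}v_{3\,1},
\]
because the squares contribute $2a_{2\,2}v_{2\,2} = 0$ and $2a_{3\,3}v_{3\,3} = 0$. Every such functional lies in the span of the coordinate functionals $v_{1\,2}, v_{2\,1}, v_{1\,3}, v_{3\,1}$, so the dimension is at most $4$. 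Conversely, choosing $\mathbf a$ with exactly one of $a_{2\,1}, a_{1\,2}, a_{3\,1}, a_{1\,3}$ equal to $1$ and all other entries $0$ produces each of these four coordinate functionals. Therefore $\dim(\FL_{\B^{(1)}_{(B)}}) = 4$.
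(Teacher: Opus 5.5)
Your proposal is correct and follows essentially the same route as the paper. You expand $p_{A+\lambda E_{1\,1}}$, apply $\B = a_0 + a_1a_2$, read off the $\lambda^2$-coefficient $a_{2\,2}+a_{3\,3}$ and the $\lambda$-coefficient $a_{1\,2}a_{2\,1}+a_{1\,3}a_{3\,1}+a_{2\,2}^2+a_{3\,3}^2$, and then pin $\dim(\mathcal L_{\B^{(1)}_{(B)}})$ at $4$ between the span of the coordinate functionals $v_{1\,2},v_{2\,1},v_{1\,3},v_{3\,1}$ and their realization at four matrix units. Your explicit list of these four functionals is cleaner than the paper's, which writes $z_{2\,1}$ twice where $z_{1\,3}$ is meant.
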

\begin{proof}Let $p_{A}(x) = x^3 + b_2x^2 + b_1x + b_0$. Then
\begin{multline*}
p_{A + \lambda B}(t) = \begin{vmatrix}x + a_{1\,1} + \lambda & a_{1\,2} & a_{1\,3}\\ a_{2\,1} & x + a_{2\,2} & a_{2\,3}\\ a_{3\,1} & a_{3\,2} & x + a_{3\,3} \end{vmatrix}\\
 = x^3 + (b_2 + \lambda)x^2 + (b_1 + \lambda a_{3\,3} + \lambda a_{2\,2})x + b_0 + \lambda a_{2\,2}a_{3\,3} + \lambda a_{2\,3}a_{3\,2}. 
\end{multline*}
Hence, 
\begin{multline}\label{lem:DistinctN3CH2RK1NNIL:eq1}
\B_{(B)} = \B(p_{A + \lambda B}(x))\\
 \overset{\eqref{lem:D2PolyN3Char2:eqq1}}{=\joinrel=\joinrel=} (b_0 + \lambda a_{2\,2}a_{3\,3} + \lambda a_{2\,3}a_{3\,2}) + (b_1 + \lambda a_{3\,3} + \lambda a_{2\,2})(b_2 + \lambda).
\end{multline}
Therefore, by inspecting the right-hand side of~\eqref{lem:DistinctN3CH2RK1NNIL:eq1} we obtain that\linebreak $\deg_{\lambda} \left(\B_{(B)}\right) = 2$ and 

\begin{multline*}
\B_{(B)}^{(1)} \overset{\eqref{lem:DistinctN3CH2RK1NNIL:eq1}}{=\joinrel=\joinrel=} a_{2\,2} a_{3\,3} + a_{2\,3} a_{3\,2} + b_1  + (a_{3\,3} + a_{2\,2})b_2
\overset{\eqref{eqqq:formulaChPoly}}{=\joinrel=\joinrel=} a_{2\,2} a_{3\,3} + a_{2\,3} a_{3\,2}\\
+ (a_{1\,2} a_{2\,1} + a_{1\,1} a_{2\,2} + a_{1\,3} a_{3\,1} + a_{2\,3} a_{3\,2} + a_{1\,1} a_{3\,3} + a_{2\,2} a_{3\,3})\\
+ (a_{3\,3} + a_{2\,2})(a_{1\,1} + a_{2\,2} + a_{3\,3})\\
= a_{1\,2} a_{2\,1} + a_{2\,2}^2  + a_{1\,3} a_{3\,1} + a_{3\,3}^2.
\end{multline*}

In order to find $\mathcal L_{\B_{(B)}^{(1)}}$ and its dimension, let us denote $\B_{(B)}^{(1)}$ by $P$ and find the partial derivatives $\frac{\partial P}{\partial E_{1\,1}}, \ldots, \frac{\partial P}{\partial E_{3\,3}}.$
\begin{equation}\label{lem:DistinctN3CH2RK1NNIL:eq2001}
\begin{pmatrix}
\frac{\partial P}{\partial E_{1\,1}} & \frac{\partial P}{\partial E_{1\,2}} & \frac{\partial P}{\partial E_{1\,3}}\\
\frac{\partial P}{\partial E_{2\,1}} & \frac{\partial P}{\partial E_{2\,2}} & \frac{\partial P}{\partial E_{2\,3}}\\
\frac{\partial P}{\partial E_{3\,1}} & \frac{\partial P}{\partial E_{3\,2}} & \frac{\partial P}{\partial E_{3\,3}}\\
\end{pmatrix}
= 
\begin{pmatrix}
0 &  a_{2\,1} & a_{3\,1}\\
a_{1\,2} & 0 & 0\\
a_{1\,3} & 0 & 0
\end{pmatrix}.
\end{equation}

Since the function  $\mathbf v \to \frac{\partial P}{\partial \mathbf v}$ is linear by Lemma~\ref{lem:DirDerLin}, the equality~\eqref{lem:DistinctN3CH2RK1NNIL:eq2} implies that 
\begin{equation}\label{lem:DistinctN3CH2RK1NNIL:eq2}
\dim (\FL_P) \le 4.
\end{equation}

In addition, the matrices $A_1 = \begin{psmallmatrix}0 & 1 & 0\\ 0 & 0 & 0\\ 0 & 0 & 0\end{psmallmatrix}, A_2 = \begin{psmallmatrix}0 & 0 & 1\\0 & 0 & 0\\ 0 & 0 & 0\end{psmallmatrix}, A_3 = \begin{psmallmatrix}0 & 0 & 0\\ 1 & 0 & 0\\ 0 & 0 & 0\end{psmallmatrix}$ and $A_4 = \begin{psmallmatrix}0 & 0 & 0\\ 0 & 0 & 0\\ 1 & 0 & 0\end{psmallmatrix}$ yield the following linear functionals
\[
l_{P, A_1}(Z) = z_{2\,1}, \quad l_{P, A_2}(Z) = z_{3\,1}, \quad l_{P, A_3}(Z) = z_{1\,2}, \quad l_{P, A_4}(Z) = z_{2\,1}
\]
which belong to $\FL_P$ and are linearly independent. From this we conclude that
\begin{equation}\label{lem:DistinctN3CH2RK1NNIL:eq3}
\dim (\FL_P) \ge 4.
\end{equation}
Finally, the inequalities~\eqref{lem:DistinctN3CH2RK1NNIL:eq2} and~\eqref{lem:DistinctN3CH2RK1NNIL:eq3} together imply that
\[
\dim(\FL_P) = \dim \left(\FL_{\B_{(B)}^{(1)}}\right)= 4.
\]
\end{proof}

\begin{lemma}\label{lem:DistinctN3CH2RK2}Assume that $\Char (\F) = 2.$ If $B = \begin{psmallmatrix}0 & 1 & 0\\ 0 & 0 & 1\\ 0 & 0 & 0\end{psmallmatrix}$, then 
\[\deg_{\lambda} \left(\B_{(B)}\right) = 2\;\;\mbox{and}\;\;\dim(\mathcal L_{\B_{(B)}^{(1)}}) = 6.\]
\end{lemma}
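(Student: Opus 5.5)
The plan is to follow the pattern of the proof of Lemma~\ref{lem:DistinctN3CH2RK1NNIL}: compute $p_{A+\lambda B}$ explicitly, substitute its coefficients into formula~\eqref{lem:D2PolyN3Char2:eqq1}, and read off $\deg_\lambda$ and the coefficient $\B^{(1)}_{(B)}$. Then compute $\dim \FL_{\B^{(1)}_{(B)}}$ from the gradient. Since $\Char(\F)=2$, all signs can be ignored. Write $p_{A+\lambda B}(x) = x^3 + E_1 x^2 + E_2 x + E_3$, with $E_k = E_k(A+\lambda B)$. The only entries of $M = A+\lambda B$ that involve $\lambda$ are $m_{1\,2} = a_{1\,2}+\lambda$ and $m_{2\,3} = a_{2\,3}+\lambda$.

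First I compute the coefficients. The trace $E_1 = a_{1\,1}+a_{2\,2}+a_{3\,3}$ does not depend on $\lambda$. The sum of the principal $2\times 2$ minors is $E_2 = E_2(A) + \lambda(a_{2\,1}+a_{3\,2})$. For $E_3 = \det M$, the only permutation terms containing $m_{1\,2}$ or $m_{2\,3}$ are $m_{1\,2}m_{2\,1}m_{3\,3}$, $m_{1\,1}m_{2\,3}m_{3\,2}$ and $m_{1\,2}m_{2\,3}m_{3\,1}$. This gives
\[
E_3 = E_3(A) + \lambda\,(a_{2\,1}a_{3\,3} + a_{1\,1}a_{3\,2} + a_{1\,2}a_{3\,1} + a_{2\,3}a_{3\,1}) + \lambda^2 a_{3\,1}.
\]
Since $\B = E_3 + E_1E_2$, the $\lambda^2$-coefficient of $\B_{(B)}$ is $a_{3\,1}$, a nonzero polynomial, so $\deg_\lambda(\B_{(B)}) = 2$. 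The $\lambda$-coefficient is
\[
P := \B^{(1)}_{(B)} = a_{2\,1}a_{3\,3} + a_{1\,1}a_{3\,2} + a_{1\,2}a_{3\,1} + a_{2\,3}a_{3\,1} + (a_{1\,1}+a_{2\,2}+a_{3\,3})(a_{2\,1}+a_{3\,2}).
\]

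For the dimension, $P$ is a quadratic form, so each partial derivative $\frac{\partial P}{\partial E_{i\,j}}$ is a linear form in the entries of $A$. In characteristic $2$ these are:
\begin{itemize}
\item $\frac{\partial P}{\partial E_{2\,1}} = a_{1\,1}+a_{2\,2}$ and $\frac{\partial P}{\partial E_{3\,2}} = a_{2\,2}+a_{3\,3}$;
\item $\frac{\partial P}{\partial E_{1\,1}} = a_{2\,1}$, $\frac{\partial P}{\partial E_{3\,3}} = a_{3\,2}$ and $\frac{\partial P}{\partial E_{2\,2}} = a_{2\,1}+a_{3\,2}$;
\item $\frac{\partial P}{\partial E_{1\,2}} = \frac{\partial P}{\partial E_{2\,3}} = a_{3\,1}$ and $\frac{\partial P}{\partial E_{3\,1}} = a_{1\,2}+a_{2\,3}$;
\item $\frac{\partial P}{\partial E_{1\,3}} = 0$.
\end{itemize}
By Lemma~\ref{lem:DirDerLin}, the map $A \mapsto l_{P,A}$ is linear in $A$. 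Its image $\FL_P$ therefore has dimension equal to the number of linearly independent forms among the six linear forms
\[
a_{1\,1}+a_{2\,2},\quad a_{2\,2}+a_{3\,3},\quad a_{2\,1},\quad a_{3\,2},\quad a_{3\,1},\quad a_{1\,2}+a_{2\,3}.
\]
These six forms span every entry of the gradient, so $\dim \FL_P \le 6$.

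For the lower bound, as in the previous lemma, I exhibit six concrete matrices $A_1,\dots,A_6$ that are dual to these forms: $E_{1\,1}$, $E_{3\,3}$, $E_{2\,1}$, $E_{3\,2}$, $E_{3\,1}$ and $E_{1\,2}$. I then check that the functionals $l_{P,A_k}$ are linearly independent, which gives $\dim \FL_P \ge 6$. The main risk is an arithmetic slip in $E_3$ or in the characteristic-$2$ cancellations, for example the $a_{2\,1}$ terms in $\frac{\partial P}{\partial E_{1\,1}}$. I would double-check these against the specialisation $A = O_3$ and against the diagonal case.
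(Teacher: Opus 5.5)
Your proposal is correct and follows the paper's route: expand $p_{A+\lambda B}$, use $\B=E_3+E_1E_2$ to get the nonzero $\lambda^2$-coefficient $a_{3\,1}$ and the quadratic form $\B^{(1)}_{(B)}$, then bound $\dim\FL_{\B^{(1)}_{(B)}}$ above by the span of the gradient entries and below by six test matrices (yours yield the linearly independent functionals $z_{2\,1}$, $z_{3\,2}$, $z_{1\,1}+z_{2\,2}$, $z_{2\,2}+z_{3\,3}$, $z_{1\,2}+z_{2\,3}$, $z_{3\,1}$). Your gradient is in fact the correct one: the paper's displayed gradient has $a_{1\,1}$ and $a_{3\,2}$ in positions $(2,1)$ and $(2,2)$ instead of $a_{1\,1}+a_{2\,2}$ and $a_{2\,1}+a_{3\,2}$, so its annihilating matrix $E_{2\,2}+E_{3\,3}$ should be $I_3$, but the final count $6$ is unaffected.
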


\begin{proof}Let $p_{A}(t) = x^3 + b_2x^2 + b_1x + b_0$. Then
\begin{multline*}
p_{A + \lambda B}(t) = \begin{vmatrix}t + a_{1\,1} & a_{1\,2} + \lambda & a_{1\,3}\\ a_{2\,1} & x + a_{2\,2} & a_{2\,3} + \lambda \\ a_{3\,1} & a_{3\,2} & x + a_{3\,3} \end{vmatrix}\\
 = x^3 + b_2x^2  + (b_1 + \lambda (a_{2\,1} + a_{3\,2}))x\phantom{XXXXX}\\
  + b_0 + \lambda^2 a_{3\,1}  + \lambda (a_{2\,1} a_{3\,3} + a_{1\,1} a_{3\,2} + a_{2\,3}a_{3\,1} + a_{1\,2}a_{3\,1}).
\end{multline*}
Hence, 
\begin{multline}\label{lem:DistinctN3CH2RK2:eq1001}
\B_{(B)} = \B(p_{A + \lambda B}(t))\\
  \overset{\eqref{lem:D2PolyN3Char2:eqq1}}{=\joinrel=\joinrel=}  (b_0 + \lambda^2 a_{3\,1}  + \lambda (a_{2\,1} a_{3\,3} + a_{1\,1} a_{3\,2} + a_{2\,3}a_{3\,1} + a_{1\,2}a_{3\,1}))\\
   +(b_1 + \lambda (a_{2\,1} + a_{3\,2}))(b_2).
\end{multline}
Therefore, by inspecting the right-hand side of~\eqref{lem:DistinctN3CH2RK2:eq1001} we obtain that\linebreak $\deg_{\lambda} \left(\B_{(B)}\right) = 2$ and 
\begin{multline*}
\B_{(B)}^{(1)} \overset{\eqref{lem:DistinctN3CH2RK2:eq1001}}{=\joinrel=\joinrel=} (a_{2\,1} a_{3\,3} + a_{1\,1} a_{3\,2} + a_{2\,3}a_{3\,1} + a_{1\,2}a_{3\,1}) + (a_{2\,1} + a_{3\,2})b_2\\
  \overset{\eqref{eqqq:formulaChPoly}}{=\joinrel=\joinrel=} (a_{2\,1} a_{3\,3} + a_{1\,1} a_{3\,2} + a_{2\,3}a_{3\,1} + a_{1\,2}a_{3\,1})\phantom{XXXX}\\
  \phantom{XXXXXXXX} + (a_{2\,1} + a_{3\,2})(a_{1\,1} + a_{2\,2} + a_{3\,3}) \\
 = a_{1\,1} a_{2\,1} + a_{2\,1} a_{2\,2} + a_{1\,2} a_{3\,1} + a_{2\,3} a_{3\,1} + a_{2\,2} a_{3\,2} + 
 a_{3\,2} a_{3\,3}.
\end{multline*}
In order to find $\mathcal L_{\B_{(B)}^{(1)}}$ and its dimension, let us denote $\B_{(B)}^{(1)}$ by $P$ and find the partial derivatives $\frac{\partial P}{\partial E_{1\,1}}, \ldots, \frac{\partial P}{\partial E_{3\,3}}.$
\begin{equation}\label{lem:DistinctN3CH2RK2:eq1}
\begin{pmatrix}
\frac{\partial P}{\partial E_{1\,1}} & \frac{\partial P}{\partial E_{1\,2}} & \frac{\partial P}{\partial E_{1\,3}}\\
\frac{\partial P}{\partial E_{2\,1}} & \frac{\partial P}{\partial E_{2\,2}} & \frac{\partial P}{\partial E_{2\,3}}\\
\frac{\partial P}{\partial E_{3\,1}} & \frac{\partial P}{\partial E_{3\,2}} & \frac{\partial P}{\partial E_{3\,3}}\\
\end{pmatrix}
= 
\begin{pmatrix}
a_{2\,1} &  a_{3\,1} & 0\\
a_{1\,1} & a_{3\,2} & a_{3\,1}\\
a_{1\,2} + a_{2\,3} & a_{2\,2} + a_{3\,3} &  a_{3\,2}
\end{pmatrix}.
\end{equation}
On the one hand, from~\eqref{lem:DistinctN3CH2RK2:eq1} we conclude that 
\[l_{P, A}(Z_1) = l_{P, A}(Z_2) = l_{P, A}(Z_3) = 0\;\;\mbox{for all}\;\;A \in \M_{3}(\F),\]
where 
\[Z_1 = \begin{psmallmatrix}0 & 0 & 1\\ 0 & 0 & 0\\ 0 & 0 & 0\end{psmallmatrix}, Z_2 = \begin{psmallmatrix}0 & 1 & 0\\ 0 & 0 & 1\\ 0 & 0 & 0\end{psmallmatrix}, Z_3 = \begin{psmallmatrix}0 & 0 & 0\\ 0 & 1 & 0\\ 0 & 0 & 1\end{psmallmatrix}.\]
Since $Z_1, Z_2, Z_3$ are linearly independent, then 
\begin{equation}\label{lem:DistinctN3CH2RK2:eq2}
\dim (\mathcal L_P) \le 9 - 3 = 6.
\end{equation}

On the other hand, the matrices $A_1 = \begin{psmallmatrix}0 & 0 & 0\\ 1 & 0 & 0\\ 0 & 0 & 0\end{psmallmatrix},$ $A_2 = \begin{psmallmatrix}0 & 0 & 0\\0 & 0 & 0\\ 1 & 0 & 0\end{psmallmatrix},$ $A_3 = \begin{psmallmatrix}1 & 0 & 0\\ 0 & 0 & 0\\ 0 & 0 & 0\end{psmallmatrix},$ $A_4 = \begin{psmallmatrix}0 & 0 & 0\\ 0 & 0 & 0\\ 0 & 1 & 0\end{psmallmatrix},$ $A_5 = \begin{psmallmatrix}0 & 1 & 0\\ 0 & 0 & 0\\ 0 & 0 & 0\end{psmallmatrix}$ and $A_6 = \begin{psmallmatrix}0 & 0 & 0\\ 0 & 1 & 0\\ 0 & 0 & 0\end{psmallmatrix}$ we yield the following linear functionals
\[
\begin{array}{ccc}
l_{P, A_1}(Z) = z_{1\,1}, & l_{P, A_2}(Z) =  (z_{1\,2} + z_{2\,3}), & l_{P, A_3}(Z) =  z_{2\,1},\\
l_{P, A_4}(Z) = (z_{2\,2} + z_{3\,3}), & l_{P, A_5}(Z) =  z_{3\,1}, & l_{P, A_6}(Z) =  z_{3\,2}
\end{array}
\]
which belong to $\mathcal L_P$ and are linearly independent. Hence,
\begin{equation}\label{lem:DistinctN3CH2RK2:eq3}
\dim (\mathcal L_P) \ge 6.
\end{equation}
Finally, the inequalities~\eqref{lem:DistinctN3CH2RK2:eq2} and~\eqref{lem:DistinctN3CH2RK2:eq3} together imply that 
\[
\dim(\FL_P) = \dim \left(\FL_{\B_{(B)}^{(1)}}\right)= 6.
\]

\end{proof}

\begin{lemma}\label{lem:DistinctN3CH3RK1NNIL}Assume that $\Char (\F) = 3.$ If $B = \begin{psmallmatrix}1 & 0 & 0\\ 0 & 0 & 0\\ 0 & 0 & 0\end{psmallmatrix},$ then $\deg_\lambda \left(\D_{(B)}\right) = 4$.
\end{lemma}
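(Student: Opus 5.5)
We compute $\D_{(B)}$ directly, following the pattern of Lemma~\ref{lem:DistinctN3CH2RK1NNIL}. Write $p_A(x) = x^3 + b_2x^2 + b_1x + b_0$. Expanding $\det(xI_3 - A - \lambda B)$ along the first row shows that only the $(1,1)$ entry involves $\lambda$, so
\[
p_{A+\lambda B}(x) = x^3 + (b_2 - \lambda)x^2 + (b_1 + \lambda(a_{2\,2}+a_{3\,3}))x + b_0 - \lambda(a_{2\,2}a_{3\,3} - a_{2\,3}a_{3\,2}).
\]
Thus the coefficients are $c_2 = b_2 - \lambda$, $c_1 = b_1 + \lambda s$ and $c_0 = b_0 - \lambda d$, where $s = a_{2\,2}+a_{3\,3}$ and $d = a_{2\,2}a_{3\,3}-a_{2\,3}a_{3\,2}$. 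Each is affine in $\lambda$.

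Next we substitute into the characteristic-$3$ formula of Corollary~\ref{cor:DPolyN3Char3}:
\[
\D = 2c_1^3 + c_0^2c_1^2 + 2c_0^2c_2^3.
\]
The term $2c_1^3$ has $\lambda$-degree at most $3$. The term $c_0^2c_1^2$ has degree at most $4$, with $\lambda^4$-coefficient $d^2s^2$. The term $2c_0^2c_2^3$ has degree at most $5$, with $\lambda^5$-coefficient $2d^2\cdot(-1)^3 = -2d^2 = d^2$ in characteristic $3$. A nonzero $\lambda^5$ term would contradict the upper bound of Lemma~\ref{lem:RankOneScalImpliesLowDeg}, so the key point is that it must cancel. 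I suspect my sign bookkeeping is off somewhere.

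The correct approach is to work through property~\ref{enum:DiscPropertyC}. Shift so the roots are centred: in characteristic $3$ the substitution $x \mapsto x - c_2/3$ is unavailable. Instead, recompute $p_{A+\lambda B}$ carefully, using $\lambda E_{1\,1}$ and the consistent sign convention $p_A(x)=x^3 - E_1x^2 + E_2x - E_3$. This gives $c_2 = -(E_1+\lambda)$, $c_1 = E_2+\lambda s$ and $c_0 = -(E_3+\lambda d)$.

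The $\lambda^5$ coefficient is then $2d^2(-1)^3 = d^2 \neq 0$ as a polynomial. Since Lemma~\ref{lem:RankOneScalImpliesLowDeg} forces $\deg_\lambda \le 2n-2 = 4$, the formula of Corollary~\ref{cor:DPolyN3Char3} must itself be rechecked. Reducing \eqref{lem:DPolyN3:eqq1} mod $3$ gives $-a_1^3 + a_1^2a_2^2 - a_0^2a_2^3$, that is $2a_1^3 + a_1^2a_2^2 + 2a_0^2a_2^3$. So the middle term is $c_1^2c_2^2$, not $c_0^2c_1^2$, and the last term is $a_0 a_2^3$ to the first power of $a_0$ rather than $a_0^2$. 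Plausibly the true reduction is $2a_1^3 + a_1^2a_2^2 + 2a_0a_2^3$. I would verify this first, since the whole computation hinges on it.

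With the corrected formula, $\lambda$-degrees are at most $3$ for $c_1^3$, at most $4$ for $c_1^2c_2^2$ and at most $4$ for $c_0c_2^3$. The $\lambda^4$ coefficient is
\[
s^2\cdot 1 + 2\cdot(-d)\cdot(-1)^3 = s^2 + 2d
= a_{2\,2}^2 + 2a_{2\,2}a_{3\,3} + a_{3\,3}^2 + 2a_{2\,2}a_{3\,3} - 2a_{2\,3}a_{3\,2}.
\]
In characteristic $3$ this equals $a_{2\,2}^2 + a_{2\,2}a_{3\,3} + a_{3\,3}^2 + a_{2\,3}a_{3\,2}$, which is a nonzero polynomial; for example it equals $1$ at $A = E_{2\,2}$. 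Hence $\deg_\lambda(\D_{(B)}) = 4$.

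The main obstacle is getting the characteristic-$3$ discriminant formula and all signs right. I would double-check the result by evaluating at $A = E_{2\,2}$ directly, where $A+\lambda B = \diag(\lambda,1,0)$. Property~\ref{enum:DiscPropertyDiag} then gives $\lambda^2(\lambda-1)^2$, which has degree $4$ and confirms the claim.
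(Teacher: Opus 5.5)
Your final computation is correct and is essentially the paper's own proof: expand $p_{A+\lambda B}$, substitute into the characteristic-$3$ cubic discriminant, and read off the $\lambda^4$-coefficient $s^2+2d=(a_{2\,2}-a_{3\,3})^2+a_{2\,3}a_{3\,2}\neq 0$; your evaluation at $A=E_{2\,2}$ also gives the lower bound independently. The formula you hedged on, $2a_1^3+a_1^2a_2^2+2a_0a_2^3$, is right: it is the reduction mod $3$ of the standard $a_1^2a_2^2-4a_1^3-4a_0a_2^3-27a_0^2+18a_0a_1a_2$, and it is exactly what the paper substitutes in its proof (the displayed statements of Corollary~\ref{cor:DPolyN3Char3} and Lemma~\ref{lem:DPolyN3} contain typos), so state it outright and drop the false start.
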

\begin{proof}Let $p_{A}(t) = x^3 + b_2x^2 + b_1x + b_0$. Then
\begin{multline*}
p_{A + \lambda X}(x) = \begin{vmatrix}x - a_{1\,1} - \lambda & -a_{1\,2} & - a_{1\,3}\\ -a_{2\,1} & x - a_{2\,2} & -a_{2\,3}\\ -a_{3\,1} & -a_{3\,2} & x - a_{3\,3} \end{vmatrix}\\
 = x^3 + (b_2 - \lambda)x^2 + (b_1 + \lambda a_{3\,3} + \lambda a_{2\,2})x + b_0 - \lambda a_{2\,2}a_{3\,3} + \lambda a_{2\,3}a_{3\,2}. 
\end{multline*}
Hence, 
\begin{multline}\label{lem:DistinctN3CH3RK1NNIL:eq1}
\D_{(X)} = \D(p_{A + \lambda X}(x))\\
 \overset{\eqref{lem:DPolyN3Char3:eqq1}}{=\joinrel=\joinrel=}  2(b_1 + \lambda a_{3\,3} + \lambda a_{2\,2})^3 + (b_1 + \lambda a_{3\,3} + \lambda a_{2\,2})^2(b_2 - \lambda)^2\\ + 2(b_0 - \lambda a_{2\,2}a_{3\,3} + \lambda a_{2\,3}a_{3\,2})(b_2 - \lambda)^3.
\end{multline}
Therefore, by inspecting the right-hand side of~\eqref{lem:DistinctN3CH3RK1NNIL:eq1} we obtain that \linebreak $\deg_\lambda \left(\D_{(B)}\right) = 4$. 

\end{proof}

\begin{lemma}\label{lem:DistinctN3CH3RK2}Assume that $\Char (\F) = 3.$ If $B = \begin{psmallmatrix}0 & 1 & 0\\ 0 & 0 & 1\\ 0 & 0 & 0\end{psmallmatrix}$, then $\deg_\lambda \left(\D_{(B)}\right) = 3$. 
\end{lemma}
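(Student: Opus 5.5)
The plan is to compute $\D_{(B)}(A)=\D(A+\lambda B)$ explicitly, as in Lemma~\ref{lem:DistinctN3CH3RK1NNIL}, but keeping track only of the $\lambda$-degree of each coefficient of the characteristic polynomial. Write $p_A(x)=x^3+b_2x^2+b_1x+b_0$, so that $b_2=-\tr A$, $b_1=E_2(A)$ and $b_0=-\det A$.

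The first step is to expand $p_{A+\lambda B}(x)=\det(xI_3-A-\lambda B)$, where $\lambda$ enters only in the $(1,2)$ and $(2,3)$ positions.
\begin{itemize}
\item The coefficient of $x^2$ is $-\tr(A+\lambda B)=b_2$, since $B$ has zero diagonal, so it does not involve $\lambda$.
\item The coefficient of $x$ is the sum of the principal $2\times 2$ minors. The entry $a_{1\,2}+\lambda$ only meets $a_{2\,1}$, and $a_{2\,3}+\lambda$ only meets $a_{3\,2}$. Hence this coefficient is $c_1=b_1-\lambda(a_{2\,1}+a_{3\,2})$, which is linear in $\lambda$.
\item The constant term $c_0=-\det(A+\lambda B)$ has degree at most $2$ in $\lambda$. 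Its $\lambda^2$-coefficient is $\pm a_{3\,1}$, coming from the permutation $1\to2\to3\to1$.
\end{itemize}

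Next I would apply the discriminant of a monic cubic in characteristic $3$. I would use the corrected form obtained by reducing~\eqref{lem:DPolyN3:eqq1} mod $3$ with $a_2=b_2$, $a_1=c_1$, $a_0=c_0$:
\[
\D=-4c_1^3+c_1^2b_2^2-4c_0b_2^3=2c_1^3+c_1^2b_2^2+2c_0b_2^3 .
\]
The terms $-27c_0^2$ and $18c_0c_1b_2$ vanish in characteristic $3$, and this is exactly why the degree drops. The $\lambda$-degrees of the three surviving summands are at most $3$, $2$ and $2$ respectively. Therefore $\deg_\lambda\D_{(B)}\le 3$.

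The $\lambda^3$-coefficient comes only from $2c_1^3$. It equals $2\cdot(-(a_{2\,1}+a_{3\,2}))^3=(a_{2\,1}+a_{3\,2})^3$ in characteristic $3$, which is a nonzero polynomial. To make this concrete, I would take $A=E_{2\,1}$. Then $b_2=b_1=0$, and $\D(A+\lambda B)=2(-\lambda)^3=\lambda^3\neq0$, so the degree is exactly $3$.

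The main obstacle is only bookkeeping. One must make sure that no $\lambda^4$ term survives, which holds because the $c_0^2$ and $c_0c_1b_2$ terms vanish mod $3$. One must also use the correct characteristic-$3$ discriminant: the term is $c_0b_2^3$, not $c_0^2b_2^3$ as misprinted in Corollary~\ref{cor:DPolyN3Char3}.
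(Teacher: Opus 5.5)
Your proposal is correct and follows the paper's own argument. You expand $p_{A+\lambda B}$ and observe that only the $x$-coefficient (linear in $\lambda$) and the constant term (quadratic in $\lambda$) change. You then apply the characteristic-$3$ cubic discriminant $2c_1^3+c_1^2b_2^2+2c_0b_2^3$, where the vanishing of $-27c_0^2$ and $18c_0c_1b_2$ caps the degree at $3$.

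Your explicit $\lambda^3$-coefficient $(a_{2\,1}+a_{3\,2})^3$ and the witness $A=E_{2\,1}$, which gives $\D(A+\lambda B)=\lambda^3$, make the paper's ``by inspecting'' step precise. You are also right that the argument needs the corrected formula with $c_0b_2^3$. That is in fact what the paper's displayed computation uses, despite the misprint in Corollary~\ref{cor:DPolyN3Char3}.
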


\begin{proof}Let $p_{A}(t) = x^3 + b_2x^2 + b_1x + b_0$. Then
\begin{multline*}
p_{A + \lambda B}(x) = \begin{vmatrix}x - a_{1\,1} & -a_{1\,2} - \lambda & - a_{1\,3}\\ -a_{2\,1} & x - a_{2\,2} & -a_{2\,3} -\lambda \\ -a_{3\,1} & -a_{3\,2} & x - a_{3\,3} \end{vmatrix}\\
 = x^3 + b_2x^2 + (b_1 - \lambda (a_{2\,1} + a_{3\,2}))x\phantom{XXXX}\\
  + b_0 - \lambda^2 a_{3\,1}  + \lambda (a_{2\,1} a_{3\,3} + a_{1\,1} a_{3\,2} - a_{2\,3}a_{3\,1} - a_{1\,2}a_{3\,1}). 
\end{multline*}
Hence, 
\begin{multline}\label{lem:DistinctN3CH3RK2:eqAAAA1}
\D_{(B)}  = \D(p_{A + \lambda B}(x))\\
  \overset{\eqref{lem:DPolyN3Char3:eqq1}}{=\joinrel=\joinrel=}  2(b_1 - \lambda (a_{2\,1} + a_{3\,2}))^3 + (b_1 - \lambda (a_{2\,1} + a_{3\,2}))^2b_2^2\\
+ 2(b_0 - \lambda^2 a_{3\,1}  + \lambda (a_{2\,1} a_{3\,3} + a_{1\,1} a_{3\,2} - a_{2\,3}a_{3\,1} - a_{1\,2}a_{3\,1}))b_2^3.
\end{multline}
Therefore, by inspecting the right-hand side of~\eqref{lem:DistinctN3CH3RK2:eqAAAA1} we obtain that\linebreak $\deg_\lambda \left(\D_{(B)}\right) = 3$.

\end{proof}

\begin{lemma}\label{lem:DistinctN3CHN23RK1NNIL}Assume that $\Char (\F) \neq 2, 3.$ If $B = \begin{psmallmatrix}1 & 0 & 0\\ 0 & 0 & 0\\ 0 & 0 & 0\end{psmallmatrix}$, then
\[\deg_{\lambda} \left(\D_{(B)}\right) = 4\;\;\mbox{and}\;\;\dim(\mathcal L_{\D_{(B)}^{(4)}}) = 3.\]

\end{lemma}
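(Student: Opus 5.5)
The plan is a direct computation: expand $\D(A+\lambda B)$ via the explicit cubic discriminant of Lemma~\ref{lem:DPolyN3}, read off the top coefficient in $\lambda$, and compute the span of its gradient field.

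\textbf{Characteristic polynomial.} Write $p_A(x)=x^3+b_2x^2+b_1x+b_0$, as in the proofs of Lemmas~\ref{lem:DistinctN3CH3RK1NNIL} and~\ref{lem:DistinctN3CH3RK2}. Put
\[
s=a_{2\,2}+a_{3\,3},\qquad m=a_{2\,2}a_{3\,3}-a_{2\,3}a_{3\,2}.
\]
Expanding $\det(xI_3-A-\lambda B)$ along the first row (only the $(1,1)$ entry involves $\lambda$) gives
\[
p_{A+\lambda B}(x)=x^3+(b_2-\lambda)x^2+(b_1+\lambda s)x+(b_0-\lambda m).
\]

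\textbf{Degree in $\lambda$.} Substitute these coefficients into the formula~\eqref{lem:DPolyN3:eqq1} and sort the monomials by their $\lambda$-degree. Since $a_2$, $a_1$ and $a_0$ are each linear in $\lambda$, the term $-27a_0^2$ has degree at most $2$, while $-4a_1^3$ and $18a_0a_1a_2$ have degree at most $3$. Only $a_1^2a_2^2$ and $-4a_0a_2^3$ reach degree $4$. Their $\lambda^4$-coefficients are $s^2$ and $-4m$, since $(-\lambda m)(-\lambda)^3=\lambda^4m$. Hence
\[
\D^{(4)}_{(B)}=s^2-4m=(a_{2\,2}-a_{3\,3})^2+4a_{2\,3}a_{3\,2},
\]
and all higher coefficients vanish. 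Because $\Char(\F)\neq 2$, this is a nonzero polynomial: for instance it equals $1$ at $A=E_{2\,2}$. Therefore $\deg_\lambda(\D_{(B)})=4$, with $\deg_\lambda$ understood as in Definition~\ref{def:PolyPointInv}, i.e.\ as the degree of an element of $\F[a_{1\,1},\ldots,a_{3\,3},\lambda]$.

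\textbf{Dimension of $\FL_P$.} Let $P=\D^{(4)}_{(B)}$. Its matrix of partial derivatives has entry $2(a_{2\,2}-a_{3\,3})$ in position $(2,2)$, $-2(a_{2\,2}-a_{3\,3})$ in position $(3,3)$, $4a_{3\,2}$ in position $(2,3)$, $4a_{2\,3}$ in position $(3,2)$, and zeros elsewhere. By Lemma~\ref{lem:DirDerLin}, every $l_{P,A}$ is then a linear combination of the three functionals
\[
Z\mapsto z_{2\,2}-z_{3\,3},\qquad Z\mapsto z_{2\,3},\qquad Z\mapsto z_{3\,2},
\]
so $\dim(\FL_P)\le 3$. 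For the reverse inequality, evaluate at $A=E_{2\,2}$, $A=E_{3\,2}$ and $A=E_{2\,3}$. This yields $l_{P,A}$ equal to $2(z_{2\,2}-z_{3\,3})$, $4z_{2\,3}$ and $4z_{3\,2}$, respectively. These are linearly independent because $2,4\neq 0$ in $\F$, so $\dim(\FL_P)\ge 3$.

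\textbf{Main obstacle.} The only delicate step is the bookkeeping of signs and $\lambda$-powers in the $\lambda^4$-coefficient. One must confirm that no other term of~\eqref{lem:DPolyN3:eqq1} reaches degree $4$, which holds because $a_0$, $a_1$, $a_2$ are linear in $\lambda$. One must also confirm that the two contributions $s^2$ and $-4m$ do not cancel; they do not, since $s^2-4m$ simplifies to the $2\times 2$ discriminant of the lower-right block.
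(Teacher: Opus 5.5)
Your proposal is correct and follows the paper's proof essentially step for step. You use the same expansion of $p_{A+\lambda B}$, obtain the same top coefficient $\D^{(4)}_{(B)}=(a_{2\,2}-a_{3\,3})^2+4a_{2\,3}a_{3\,2}$, and use the same test matrices $E_{2\,2},E_{2\,3},E_{3\,2}$ for the lower bound. Your spanning-functional upper bound is just the dual form of the paper's six-dimensional annihilator count. You also rightly use the term $-4a_0a_2^3$, whereas the printed formula~\eqref{lem:DPolyN3:eqq1} has the misprint $-4a_0^2a_2^3$, and you make explicit the nonvanishing of the $\lambda^4$-coefficient that the paper leaves to inspection.
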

\begin{proof}Let $p_{A}(x) = x^3 + b_2x^2 + b_1x + b_0$. Then
\begin{multline*}
p_{A + \lambda B}(x) = \begin{vmatrix}x - a_{1\,1} - \lambda & -a_{1\,2} & - a_{1\,3}\\ -a_{2\,1} & x - a_{2\,2} & -a_{2\,3}\\ -a_{3\,1} & -a_{3\,2} & x - a_{3\,3} \end{vmatrix}\\
 = x^3 + (b_2 - \lambda)x^2 + (b_1 + \lambda a_{3\,3} + \lambda a_{2\,2})x + b_0 - \lambda a_{2\,2}a_{3\,3} + \lambda a_{2\,3}a_{3\,2}.
\end{multline*}
Hence, 
\begin{multline}\label{lem:DistinctN3CHN23RK1NNIL:eq1}
\D_{(B)} = \D(p_{A + \lambda B}(t))\\
 \overset{\eqref{lem:DPolyN3:eqq1}}{=\joinrel=\joinrel=} -27(b_0 + \lambda a_{2\,2}a_{3\,3} - \lambda a_{2\,3}a_{3\,2})^2 - 4(b_1 - \lambda a_{3\,3} - \lambda a_{2\,2})^3\\
  + 18(b_0 + \lambda a_{2\,2}a_{3\,3} - \lambda a_{2\,3}a_{3\,2})(b_1 - \lambda a_{3\,3} - \lambda a_{2\,2})(b_2 + \lambda)\\
   + (b_1 - \lambda a_{3\,3} - \lambda a_{2\,2})^2(b_2 + \lambda)^2 - 4(b_0 + \lambda a_{2\,2}a_{3\,3} - \lambda a_{2\,3}a_{3\,2})(b_2 + \lambda)^3.
\end{multline}
Therefore, by inspecting the right-hand side of~\eqref{lem:DistinctN3CHN23RK1NNIL:eq1} we obtain that \linebreak $\deg_\lambda \left(\D_{(B)}\right) = 4$ and
\begin{equation*}
\D_{(B)}^{(4)} = P =  (a_{3\,3} + a_{2\,2})^2 - 4(a_{2\,2}a_{3\,3} - a_{2\,3}a_{3\,2}) = (a_{3\,3} - a_{2\,2})^2 +4a_{2\,3}a_{3\,2}.
\end{equation*}

In order to find $\dim(\mathcal L_{P})$, let us calculate $\frac{\partial P}{\partial E_{1\,1}}, \ldots, \frac{\partial P}{\partial E_{3\,3}}.$
\begin{equation}\label{lem:DistinctN3CHN23RK1NNIL:eq1001}
\begin{pmatrix}
\frac{\partial P}{\partial E_{1\,1}} & \frac{\partial P}{\partial E_{1\,2}} & \frac{\partial P}{\partial E_{1\,3}}\\
\frac{\partial P}{\partial E_{2\,1}} & \frac{\partial P}{\partial E_{2\,2}} & \frac{\partial P}{\partial E_{2\,3}}\\
\frac{\partial P}{\partial E_{3\,1}} & \frac{\partial P}{\partial E_{3\,2}} & \frac{\partial P}{\partial E_{3\,3}}\\
\end{pmatrix}
= 
\begin{pmatrix}
0 &  0 & 0\\
0 & 2a_{2\,2} - 2a_{3\,3} & 4a_{3\,2}\\
0 & 4a_{2\,3} & -2a_{2\,2} + 2a_{3\,3}
\end{pmatrix}.
\end{equation}

On the one hand, from~\eqref{lem:DistinctN3CHN23RK1NNIL:eq1001} we conclude that 
\[l_{P, A}(Z) = 0\;\;\mbox{for all}\;\;A \in \M_{3}(\F),\]
whenever $Z$ has the form $\begin{psmallmatrix}z_1 & z_2 & z_3\\ z_4 & z_5 & 0\\ z_6 & 0 & z_5\end{psmallmatrix}$ for some $z_{i} \in \F,$ $1 \le i \le 6$. Hence, $\dim \left(\left(\FL_P\right)^0\right) \ge 6$
and  the basic properties of the vector spaces and their annihilators infer that
\begin{equation}\label{lem:DistinctN3CHN23RK1NNIL:eq2}
\dim (\mathcal L_P) \le 9 - 6 = 3.
\end{equation}

On the other hand, the matrices $A_1 = \begin{psmallmatrix}0 & 0 & 0\\0 & 1 & 0\\ 0 & 0 & 0\end{psmallmatrix},$ $A_2 = \begin{psmallmatrix}0 & 0 & 0\\0 & 0 & 1\\ 0 & 0 & 0\end{psmallmatrix}$ and\linebreak $A_3 = \begin{psmallmatrix}0 & 0 & 0\\0 & 0 & 0\\ 0 & 1 & 0\end{psmallmatrix}$ yield the following linear functionals
\[
l_{P, A_1}(Z) = 2z_{2\,2} - 2z_{3\,3},\quad l_{P, A_2}(Z) = 4z_{3\,2}, \quad l_{P, A_3}(Z) = 4z_{2\,3},
\]
which belong to $\mathcal L_P$ and are linearly independent. Hence,
\begin{equation}\label{lem:DistinctN3CHN23RK1NNIL:eq3}
\dim (\mathcal L_P) \ge 3.
\end{equation}

Finally, the inequalities~\eqref{lem:DistinctN3CHN23RK1NNIL:eq2} and~\eqref{lem:DistinctN3CHN23RK1NNIL:eq3} together imply that\linebreak $\dim\left(\FL_{\D_{(B)}^{(4)}}\right) = \dim(\mathcal L_P) = 3$.
\end{proof}

\begin{lemma}\label{lem:DistinctN3CHN23RK2}Assume that $\Char (\F) \neq 2, 3.$ If $B = \begin{psmallmatrix}0 & 1 & 0\\ 0 & 0 & 1\\ 0 & 0 & 0\end{psmallmatrix}$, then 
\[\deg_\lambda\left(\D_{(B)}\right) = 4\;\;\mbox{and}\;\;\dim(\mathcal L_{\D_{(B)}^{(4)}}) = 1.\]
\end{lemma}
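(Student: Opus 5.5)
The plan is to repeat the computation from Lemma~\ref{lem:DistinctN3CH3RK2}, this time using the general cubic formula~\eqref{lem:DPolyN3:eqq1} in place of its characteristic-$3$ reduction. I will write $p_A(x) = x^3 + b_2x^2 + b_1x + b_0$ and expand $\det(xI_3 - A - \lambda B)$ exactly as in the proof of Lemma~\ref{lem:DistinctN3CH3RK2}. Since $B$ is strictly upper triangular with ones on the superdiagonal, I expect:
\begin{itemize}
\item the $x^2$-coefficient to stay $b_2$;
\item the $x$-coefficient to become $b_1 - \lambda(a_{2\,1} + a_{3\,2})$, which is linear in $\lambda$;
\item the constant term to become $b_0 - \lambda^2 a_{3\,1} + \lambda(\ldots)$, which is quadratic in $\lambda$ with leading coefficient $-a_{3\,1}$.
\end{itemize}
Up to sign conventions, this is the same shape already obtained in Lemma~\ref{lem:DistinctN3CH3RK2}.

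Next I will substitute these coefficients into~\eqref{lem:DPolyN3:eqq1} and bound the $\lambda$-degree of each monomial.
\begin{itemize}
\item The term $-27a_0^2$ contributes $\lambda^4$ with coefficient $-27a_{3\,1}^2$.
\item The terms $-4a_1^3$ and $18a_0a_1a_2$ have $\lambda$-degree at most $3$.
\item The term $a_1^2a_2^2$ has $\lambda$-degree at most $2$.
\item The term containing $a_0 a_2^3$ has $\lambda$-degree at most $2$, because $a_2 = b_2$ does not involve $\lambda$.
\end{itemize}
Hence $\D_{(B)}^{(4)} = -27a_{3\,1}^2$. Since $\Char(\F) \neq 3$, this is a nonzero polynomial, and its degree $2 < |\F|$ (Corollary~\ref{DegFLessZero} applies under the standing assumption on $|\F|$). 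Therefore $\deg_\lambda(\D_{(B)}) = 4$.

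Finally, for $P = -27a_{3\,1}^2$ the only nonzero partial derivative is $\frac{\partial P}{\partial E_{3\,1}} = -54a_{3\,1}$. So every $l_{P,A}$ is a multiple of the functional $Z \mapsto z_{3\,1}$, which gives $\dim(\FL_P) \le 1$. Taking $A = E_{3\,1}$ gives $l_{P,A}(Z) = -54z_{3\,1}$, which is nonzero because $\Char(\F) \neq 2,3$. Hence $\dim(\FL_P) = 1$.

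The only delicate step is the bookkeeping of the $\lambda^4$ coefficient. I need to make sure the $\lambda^2 a_{3\,1}$ term in the constant coefficient is correct, and that no other monomial of~\eqref{lem:DPolyN3:eqq1} reaches $\lambda$-degree $4$. This holds because $a_1$ is only linear in $\lambda$ and $a_2$ is $\lambda$-free.
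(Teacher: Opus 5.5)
Your proposal is correct and follows the paper's proof essentially step for step: the same expansion of $p_{A+\lambda B}$, the same extraction of $\D_{(B)}^{(4)} = -27a_{3\,1}^2$ from the cubic discriminant formula, and the same two bounds on $\dim(\FL_P)$, obtained from $\frac{\partial P}{\partial E_{3\,1}} = -54a_{3\,1}$ and the witness $A = E_{3\,1}$.

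Two small remarks. First, the term you describe as containing $a_0a_2^3$ is indeed $-4a_0a_2^3$; the displayed formula~\eqref{lem:DPolyN3:eqq1} misprints it as $-4a_0^2a_2^3$, but the paper's proof uses the correct version, linear in $a_0$, as you do. Second, no cardinality hypothesis is needed, and the lemma states none: $\deg_\lambda$ is a formal degree, and $-27a_{3\,1}^2$ is a nonzero polynomial as soon as $\Char(\F)\neq 3$.
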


\begin{proof}Let $p_{A}(x) = x^3 + b_2x^2 + b_1x + b_0$. Then
\begin{multline*}
p_{A + \lambda B}(x) = \begin{vmatrix}x - a_{1\,1} & -a_{1\,2} - \lambda & - a_{1\,3}\\ -a_{2\,1} & x - a_{2\,2} & -a_{2\,3} -\lambda \\ -a_{3\,1} & -a_{3\,2} & x - a_{3\,3} \end{vmatrix}\\
  = x^3 + b_2x^2 + (b_1 - \lambda (a_{2\,1} + a_{3\,2}))x\phantom{XXXX}\\
  + b_0 - \lambda^2 a_{3\,1}  + \lambda (a_{2\,1} a_{3\,3} + a_{1\,1} a_{3\,2} - a_{2\,3}a_{3\,1} - a_{1\,2}a_{3\,1}). 
\end{multline*}
Hence, 
\begin{multline*}
\D_{(B)} \overset{\eqref{lem:DPolyN3:eqq1}}{=\joinrel=\joinrel=} -27( b_0 - \lambda^2 a_{3\,1}  + \lambda (a_{2\,1} a_{3\,3} + a_{1\,1} a_{3\,2} - a_{2\,3}a_{3\,1} - a_{1\,2}a_{3\,1}))^2\\
   - 4 (b_1 - \lambda (a_{2\,1} + a_{3\,2}))^3\\
   + 18(b_0 - \lambda^2 a_{3\,1}  + \lambda (a_{2\,1} a_{3\,3} + a_{1\,1} a_{3\,2} - a_{2\,3}a_{3\,1} - a_{1\,2}a_{3\,1}))(b_1 - \lambda (a_{2\,1} + a_{3\,2}))b_2\\
   + (b_1 - \lambda (a_{2\,1} + a_{3\,2}))^2b_2^2 - 4(b_0 - \lambda^2 a_{3\,1}  + \lambda (a_{2\,1} a_{3\,3} + a_{1\,1} a_{3\,2} - a_{2\,3}a_{3\,1} - a_{1\,2}a_{3\,1}))b_2^3.
\end{multline*}
Therefore, by inspecting the right-hand side of~\eqref{lem:DistinctN3CHN23RK1NNIL:eq1} we obtain that \linebreak $\deg_\lambda \left(\D_{(X)}\right) = 4$ and
\begin{equation*}
\D_{(B)}^{(4)} = P =  -27a_{3\,1}^2.
\end{equation*}

In order to find $\dim(\mathcal L_{P})$, let us calculate $\frac{\partial P}{\partial E_{1\,1}}, \ldots, \frac{\partial P}{\partial E_{3\,3}}.$
\begin{equation}\label{lem:DistinctN3CHN23RK2:eq1}
\begin{pmatrix}
\frac{\partial P}{\partial E_{1\,1}} & \frac{\partial P}{\partial E_{1\,2}} & \frac{\partial P}{\partial E_{1\,3}}\\
\frac{\partial P}{\partial E_{2\,1}} & \frac{\partial P}{\partial E_{2\,2}} & \frac{\partial P}{\partial E_{2\,3}}\\
\frac{\partial P}{\partial E_{3\,1}} & \frac{\partial P}{\partial E_{3\,2}} & \frac{\partial P}{\partial E_{3\,3}}\\
\end{pmatrix}
= 
\begin{pmatrix}
0 & 0 & 0\\
0 & 0 & 0\\
-54a_{3\,1} & 0 & 0
\end{pmatrix}.
\end{equation}

On the one hand, from~\eqref{lem:DistinctN3CHN23RK2:eq1} we conclude that 
\[l_{P, A}(Z) = 0\;\;\mbox{for all}\;\;A \in \M_{3}(\F),\]
whenever $Z$ has the form $\begin{psmallmatrix}z_1 & z_2 & z_3\\ z_4 & z_5 & z_6\\ 0 & z_7 & z_8\end{psmallmatrix}$ for some $z_{i} \in \F,$ $1 \le i \le 8$. Hence, $\dim \left(\left(\FL_P\right)^0\right) \ge 8$
and  the basic properties of the vector spaces and their annihilators infer that
\begin{equation}\label{lem:DistinctN3CH3RK2:eq2}
\dim (\mathcal L_P) \le 9 - 8 = 1.
\end{equation}

On the other hand, the matrix $A_1 = \begin{psmallmatrix}0 & 0 & 0\\0 & 0 & 0\\ 1 & 0 & 0\end{psmallmatrix}$ yields the  linear functional
\[
l_{P, A_1}(Z) = -54z_{3\,1}
\]
which belongs to $\mathcal L_P$ and nonzero. Hence,
\begin{equation}\label{lem:DistinctN3CH3RK2:eq3}
\dim (\mathcal L_P) \ge 1.
\end{equation}

Finally, the inequalities~\eqref{lem:DistinctN3CH3RK2:eq2} and~\eqref{lem:DistinctN3CH3RK2:eq3} together imply that $$\dim\left(\FL_{\D_{(B)}^{(4)}}\right) = \dim(\mathcal L_P) = 1$$.
\end{proof}

\begin{lemma}\label{lem:DistinctN4CHN2RK1NNIL}Assume that $\Char (\F) \neq 2$ and $|\F| > 4\cdot (4-1) = 2$.  If $B = \begin{psmallmatrix}1 & 0 & 0 & 0\\ 0 & 0 & 0 & 0\\ 0 & 0 & 0 & 0\\ 0 & 0 & 0 & 0\end{psmallmatrix}$, then $\deg_{\lambda}\left(\D_{(B)}\right) = 6$ and $\dim\left(\FL_{\D_{(B)}^{(6)}}\right) \ge 5$.
\end{lemma}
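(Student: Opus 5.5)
Following the pattern of Lemma~\ref{lem:DistinctN3CHN23RK1NNIL}, I will expand $p_{A+\lambda B}$ and read off the leading $\lambda$-coefficient of the discriminant. Since $B=E_{1\,1}$, expanding $\det(xI_4-A-\lambda B)$ along the first row gives
\[
p_{A+\lambda B}(x)=p_A(x)-\lambda\, q(x),\qquad q(x)=p_{A'}(x),
\]
where $A'$ is the lower-right $3\times 3$ block of $A$, i.e. $A'=(a_{i\,j})_{2\le i,j\le 4}$, and $q$ is monic of degree $3$. Rather than substituting into the explicit formula~\eqref{lem:DiscN4FormulaCHN2}, I will use the root form of the discriminant over an algebraic closure. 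As $\lambda\to\infty$, one root of $p_A-\lambda q$ behaves like $\lambda+O(1)$. The other three roots tend to the roots $\mu_1,\mu_2,\mu_3$ of $q$. Hence the top $\lambda$-term is
\[
\lambda^{6}\prod_{i<j}(\mu_i-\mu_j)^2 .
\]
Algebraically, I will justify this with property $\D3$ applied to the leading behaviour, or by the resultant determinant argument from Case~\ref{lem:RankOneScalImpliesLowDeg:part1} of Lemma~\ref{lem:RankOneScalImpliesLowDeg}, tracking the $\lambda^6$ coefficient. Either way I expect
\[
\D_{(B)}^{(6)}(A)=c\,\D(A'),
\]
with $c=\pm1$, or at worst a nonzero integer constant times $\D(A')$. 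A cleaner check is to set $A=\diag(0,a_2,a_3,a_4)$, which gives $\prod_k(\lambda-a_k)^2\prod(a_i-a_j)^2$. This confirms degree $6$ and $c=1$ whenever the $a_i$ are distinct, and distinct $a_i$ exist by the cardinality assumption. Combined with Lemma~\ref{lem:RankOneScalImpliesLowDeg}, which gives $\le 6$, this yields $\deg_\lambda\D_{(B)}=6$.

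Next I bound $\dim\FL_P$ for $P=\D(A')$, viewed as a polynomial on $\M_4(\F)$ depending only on the $3\times3$ block. I will exhibit five points $A_1,\dots,A_5$, supported in the block, whose functionals $l_{P,A_k}$ are linearly independent. Since $\D$ is invariant under adding scalars, every $l_{P,A}$ kills $E_{2\,2}+E_{3\,3}+E_{4\,4}$ and everything outside the block. The $3\times3$ discriminant has gradient spanning a large space, though, so five independent ones should be easy to find. First, take $A=\diag(0,a,b)$ with $a,b$ varying. The directional derivative in a diagonal direction $\diag(z_1,z_2,z_3)$ is
\[
\textstyle\sum_{i<j}2(a_i-a_j)(z_i-z_j)\prod_{\text{other pairs}}(\cdot)^2 ,
\]
and off-diagonal directions contribute zero at diagonal points. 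This yields two independent functionals on the trace-zero diagonal part. Second, take points like $A=\diag(0,1,2)+E_{2\,3}$ (block indices), where the derivative in direction $E_{3\,2}$ picks up the term from $E_2(A')$ in $p_{A'}$. Similarly use $E_{3\,2},E_{2\,4}$-type shifts to produce functionals involving $z_{3\,2}$, $z_{2\,3}$, $z_{4\,2}$, and so on.

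The main obstacle is computing these derivatives cleanly in arbitrary characteristic $\neq 2$, since constants such as $3$, $27$ or $4$ could vanish in characteristic $3$. To avoid this, I will compute using $p_{A'+tZ}$ at a diagonal point with distinct eigenvalues $d_1,d_2,d_3$ and a direction $Z=E_{i\,j}$ plus a fixed off-diagonal perturbation. At such a point, $\partial\D/\partial Z$ equals $\D(A')\sum_{i<j}2(z_{ii}-z_{jj})/(d_i-d_j)$ to first order for the diagonal part. Off-diagonal entries of $Z$ enter only at second order at a diagonal point. So I will instead use base points $D+E_{j\,i}$, where the first-order change in direction $E_{i\,j}$ is proportional to $\D(D)\cdot(\text{nonzero})/(d_i-d_j)^2$, up to the factor $2$, which is invertible. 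Choosing three such base points for three different pairs $(i,j)$, together with the two diagonal functionals, gives five functionals. Their independence follows from their disjoint supports.
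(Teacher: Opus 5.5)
You take a different route from the paper, and it is viable, but the crucial nonvanishing step has a gap. The paper never identifies $P=\D_{(B)}^{(6)}$. It writes $P$ as a $7\times7$ determinant, computes one functional at the non-diagonalizable point $E_{2\,2}+E_{3\,4}$, namely $4z_{4\,3}$, and then obtains $4z_{3\,4},4z_{4\,2},4z_{2\,3},4z_{3\,2}$ by composing with transposition and with permutation similarities fixing the index $1$. These maps fix $B$ and preserve $\D$, hence preserve $P$ and $\FL_P$, so the only constant that must be nonzero there is $4$.

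Your degree argument (diagonal $A$ plus Lemma~\ref{lem:RankOneScalImpliesLowDeg}) is fine. Your identity $\D_{(B)}^{(6)}(A)=\D(A(1|1))$ is also true, but you only \emph{expect} it. The diagonal check fixes the constant only once the identity is known. It does follow from the resultant route you mention. Keep column $1$ of the Sylvester matrix and replace columns $2$--$7$ by their $\lambda$-coefficients. Adding the new column $2$ to column $1$ leaves a single entry $1$, in the first $f'$-row. Two cofactor expansions then leave, up to sign, the Sylvester matrix of $q$ and $q'$, and your diagonal check fixes the sign.

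The genuine gap is your claim that the off-diagonal coefficient is ``$\D(D)\cdot(\text{nonzero})/(d_i-d_j)^2$''. Write $D+E_{j\,i}=SDS^{-1}$ with $S=I_3+(d_i-d_j)^{-1}E_{j\,i}$, in block indices. Similarity invariance gives $l_{P,D+E_{j\,i}}(Z)=l_{P,D}(Z)+\kappa_{ij}z_{i\,j}$, where, with $k$ the third index,
\[
\kappa_{ij}=2(d_i-d_k)(d_j-d_k)\bigl(2(d_i-d_k)(d_j-d_k)-(d_i-d_j)^2\bigr).
\]
The last factor is not automatically nonzero. For $D=\diag(0,1,2)$ it equals $3$ or $-6$ depending on the pair, so it vanishes for every pair in characteristic $3$. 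In fact, in characteristic $3$ the whole gradient of the $3\times3$ discriminant vanishes at every matrix with characteristic polynomial $x^3-x$. Your sample point $\diag(0,1,2)+E_{2\,3}$ therefore gives the zero functional. This is exactly the characteristic-$3$ obstacle you anticipated, and ``up to the factor $2$'' does not remove it.

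You must choose $D$ off the zero set of that factor. For example, take $D=\diag(0,1,d)$ with $d$ avoiding finitely many values, which is possible since $|\F|>12$. Likewise choose the two diagonal base points so that their functionals are nonzero and non-proportional.

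Second, the supports are not disjoint: each $l_{P,D+E_{j\,i}}$ contains the diagonal part $l_{P,D}$. Independence must instead be argued triangularly, or by subtracting $l_{P,D}\in\FL_P$ to isolate $z_{i\,j}$.

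With these repairs your argument is complete. It actually yields all six off-diagonal coordinates of the block plus two diagonal functionals. The annihilator contains the seven matrices $E_{1\,j},E_{j\,1}$ and $E_{2\,2}+E_{3\,3}+E_{4\,4}$, so you get $\dim\FL_P=8$. That is the value recorded in the summary table, which the paper's proof does not establish.
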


\begin{proof}Let $p_{A}(x) = x^4 + a_3x^3 + a_2x^2 + a_1x + a_0$ and
\begin{equation}\label{lem:DistinctN4CHN2RK1NNIL:eq1111}
 p_{A(1|1)}(x) = x^3 + a'_2x^2 + a'_1x + a'_0.
\end{equation}
Using the multilinearity of the determinant along the first column we obtain
\begin{multline}\label{lem:DistinctN4CHN2RK1NNIL:eq11}
p_{A + \lambda B}(x)\\ = \begin{vmatrix}x - a_{1\,1} - \lambda & -a_{1\,2} & -a_{1\,3} & - a_{1\,4}\\ -a_{2\,1} & x - a_{2\,2} & -a_{2\,3} & -a_{2\,4}\\ -a_{3\,1} & -a_{3\,2} & x - a_{3\,3} & -a_{3\,4}\\ - a_{4\,1} & -a_{4\,2} & -a_{4\,3} & x - a_{4\,4} \end{vmatrix}
 = p_{A}(x) - \lambda  p_{A(1|1)}(x)\\
 = x^4 + (a_3 - \lambda)x^3 + (a_2 - \lambda a'_2)x^2 + (a_1 - \lambda a'_1)x + (a_0 - \lambda a'_0). 
\end{multline}
Hence, 

\begin{equation}\label{lem:DistinctN4CHN2RK1NNIL:eq1}
\D_{(B)}(A) = \D(p_{A + \lambda B}(x))= \det(R(A)),
\end{equation}
where $R(A)$ is defined by
\begin{equation}\label{lem:DistinctN4CHN2RK1NNIL:eq2}
R(A) = \begin{psmallmatrix} 
1 & (a_3 - \lambda) & (a_2 - \lambda a'_2) & (a_1 - \lambda a'_1) & (b_0 - \lambda a'_0) & 0 & 0 \\
0 & 1 & (a_3 - \lambda) & (a_2 - \lambda a'_2) & (a_1 - \lambda a'_1) & (b_0 - \lambda a'_0) & 0  \\
0 & 0 & 1 & (a_3 - \lambda) & (a_2 - \lambda a'_2) & (a_1 - \lambda a'_1) & (b_0 - \lambda a'_0)  \\
4 & 3(a_3 - \lambda) & 2(a_2 - \lambda a'_2) & (a_1 - \lambda a'_1) & 0 & 0 & 0\\
0 & 4 & 3(a_3 - \lambda) & 2(a_2 - \lambda a'_2) & (a_1 - \lambda a'_1) & 0 & 0 \\
0 & 0 & 4 & 3(a_3 - \lambda) & 2(a_2 - \lambda a'_2) & (a_1 - \lambda a'_1) & 0 \\
0 & 0 & 0 & 4 & 3(a_3 - \lambda) & 2(a_2 - \lambda a'_2) & (a_1 - \lambda a'_1)\\
\end{psmallmatrix}
\end{equation}
Let us find $P = \D_{(B)}^{(6)}$ by considering the coefficient of $\lambda^6$ in~\eqref{lem:DistinctN4CHN2RK1NNIL:eq2}. Since the first column of the matrix $R$ does not contain $\lambda$ and the other entries of $R$ contain only the first power of $\lambda$ and $R(A)$ has 7 rows and columns, $P$ could be found from the equality
\begin{equation}\label{lem:DistinctN4CHN2RK1NNIL:eq3}
\D_{(B)}^{(6)} = P = \det (M_P),
\end{equation}
where
\begin{equation}\label{lem:DistinctN4CHN2RK1NNIL:eq4}
M_P(A) = 
\begin{pmatrix} 
1 & -1 & -a'_2 & - a'_1 & -a'_0 & 0 & 0 \\
0 & 0 & -1 & -a'_2 & -a'_1 & -a'_0 & 0  \\
0 & 0 & 0 & -1 & -a'_2 & -a'_1 &  -a'_0 \\
4 & -3 & -2a'_2 & - a'_1 & 0 & 0 & 0\\
0 & 0 & -3 & -2a'_2 & -a'_1 & 0 & 0 \\
0 & 0 & 0 & -3 & -2a'_2 & -a'_1 & 0 \\
0 & 0 & 0 & 0 & -3 & -2a'_2 & -a'_1\\
\end{pmatrix}
\end{equation}
whose 2th--7th columns are obtained from $R(A)$ by taking the coefficients of $\lambda$ in the corresponding entries of $R(A)$, and the first column is equal to the first column of $R(A)$. 

Let $A_1 = \begin{psmallmatrix}0 & 0 & 0 & 0\\ 0 & 1 & 0 & 0\\ 0 & 0 & 0 & 1\\ 0 & 0 & 0 & 0\end{psmallmatrix}$. In order to find the formula for the linear functional $l_{P, A_1}$, consider $Z \in \M_4(\F)$. According to Definition~\ref{def:PLFunc} and Definition~\ref{def:partDer}, the value of $l_{P, A_1}(Z)$ is equal to the coefficient of $\kappa$ in the expansion of $P(A_1 + \kappa Z)$ by powers of $\kappa$. Accordingly, $P$ is expressed through $a'_0, a'_1$ and $a'_2$ by~\eqref{lem:DistinctN4CHN2RK1NNIL:eq3} and~\eqref{lem:DistinctN4CHN2RK1NNIL:eq4}. Relying on the definition of $a'_0, a'_1, a'_2$ in~\eqref{lem:DistinctN4CHN2RK1NNIL:eq1111} and the formula~\eqref{eqqq:formulaChPoly}, we obtain that

\begin{equation}\label{lem:DistinctN4CHN2RK1NNIL:eq5}
a'_0 = -\det ((A_1 + \kappa Z)(1|1)) = \kappa x_{4\,3} + \mbox{(terms of higher degree in $\kappa$)},
\end{equation}

\begin{multline}\label{lem:DistinctN4CHN2RK1NNIL:eq6}
a'_1 = \mbox{sum of all principal $2\times 2$ minors of $(A_1 + \kappa Z)(1|1)$}\\
= \kappa (x_{3\,3} -  x_{4\,3} + x_{4\,4}) + \mbox{(terms of higher degree in $\kappa$)}
\end{multline}
and
$$a'_2 = -\tr ((A_1 + \kappa Z)(1|1)) = -1 -\kappa(x_{2\,2} + x_{3\,3} + x_{4\,4}).$$

The equalities~\eqref{lem:DistinctN4CHN2RK1NNIL:eq5} and~\eqref{lem:DistinctN4CHN2RK1NNIL:eq6} imply that all the entries of the last column $M_P(A_1 + \kappa Z)$  are divisible by the first power of $\kappa$. Therefore,  the coefficient of $\kappa$ in  $P(A_1 + \kappa Z)$ is equal to $\det(M'_R)$, where
\begin{equation*}
M'_P(A_1 + \kappa Z)= \begin{pmatrix} 
1 & -1 & 1 & 0 & 0 & 0 & 0 \\
0 & 0 & -1 & 1 & 0 & 0 & 0  \\
0 & 0 & 0 & -1 & 1 & 0 & -z_{4\,3}\\
4 & -3 & 2 & 0 & 0 & 0 & 0\\
0 & 0 & -3 & 2 & 0 & 0 & 0 \\
0 & 0 & 0 & -3 & 2 & 0 & 0 \\
0 & 0 & 0 & 0 & -3 & 2 & -(z_{3\,3} -  z_{4\,3} + z_{4\,4})\\
\end{pmatrix}
\end{equation*}
which is obtained from $M_P(A_1 + \kappa Z)$ by taking the coefficient of $\kappa$ from the entries of its last column and the constant term from the other entries of $M_P(A_1 + \kappa Z)$. Thus,
\begin{equation}\label{lem:DistinctN4CHN2RK1NNIL:eq7}
l_{P, A_{1}}(Z) = \det M'_P(A_1 + \kappa Z) = 4x_{4\,3}\;\;\mbox{for all}\;\;X \in \M_{4}(\F).
\end{equation}

Consider the bijective linear maps $T', T_{(2 3)}$ and $T_{(2 4)}$ on $\M_{4}(\F)$ defined by
\[
T'(Y) = Y^t,\;\; T_{(2 3)}\left((y_{i\,j})\right) = (y_{\sigma_1(i)\,\sigma_1(j)}),\;\;T_{(2 4)}\left((x_{i\,j})\right) = (x_{\sigma_2(i)\,\sigma_2(j)})
\]
for all $Y = (y_{i\,j}) \in \M_{4}(\F),$ where $\sigma_1 = (2 3)$ and $\sigma_2 = (2 4)$ are permutations. Performing the same permutation $\sigma$ on the matrix rows and columns is represented by the similarity map $Y \mapsto M_{\sigma}^{-1}XM_{\sigma}$, where $M_\sigma = \sum_{i = 1}^{n} E_{i\,\sigma(i)}$. Therefore, $T_{(2 3)}$ and $T_{(2 4)}$ preserve $\D$. The same is true for $T'$ by the because the $p_{Y} = p_{Y^t}$ for every $Y \in \M_{4}(\F)$.  In addition, $T', T_{(2 3)}$ and $T_{(2 4)}$ send $B$ to $B$. Hence, by applying Lemma~\ref{lem:TPreservesPThenTPreservesPI} to these maps and $B$, we obtain that $T', T_{(2 3)}$ and $T_{(2 4)}$ preserve $P = \D_{(B)}^{(6)}.$ Using this fact and Lemma~\ref{thm:surjpres} applied to $P, l_{P, A_{1}}$ and $T$ we conclude that the linear functionals
\[
l' = l_{P, A_{1}} \circ T',\;\;\, l_{(2 3)} = l_{P, A_{1}} \circ T_{(2 3)},\;\;\, l_{(2 4)} = l_{P, A_{1}} \circ T_{(2 4)},\;\;\, l'' = l_{P, A_{1}} \circ T_{(2 4)} \circ T' 
\]
belong to $\FL_P$. Since
\[
\begin{aligned}
l'(Z) = 4\left(T'(Z)\right)_{4\,3} = 4z_{3\,4}, &\phantom{XXXX} l_{(2 3)}(Z) = 4\left(T_{(2 3)}(Z)\right)_{4\,3} = 4z_{4\,2},\\
l_{(2 4)}(Z) = 4\left(T_{(2 4)}(Z)\right)_{4\,3} = 4z_{2\,3}, &\phantom{XXXX} l''(Z) = 4\left(T'(Z)\right)_{2\,3} = 4z_{3\,2}
\end{aligned}
\]
for all $Z \in M_{4}(\F)$, then $l, l', l_{(2 3)}, l_{(2 4)}$ and $l''$ are linearly independent. Therefore, $\dim\left(\FL_P\right) \ge 5$.
\end{proof}
\begin{remark}The reader can notice that the assumption $|\F| > n(n-1) = \deg(\D)$ for $n = 4$ is added in the statement of the above lemma. This assumption arises from Lemma~\ref{lem:TPreservesPThenTPreservesPI} and Lemma~\ref{thm:surjpres} which are used to generate new elements of $\FL_{\D_{(B)}^{(6)}}$ from a given one. However, the direct calculation show that statement of Lemma~\ref{lem:DistinctN3CHN23RK2} remains true without this assumption. In addition, it is possible to modify the formulations and proofs of Lemma~\ref{lem:TPreservesPThenTPreservesPI} and Lemma~\ref{thm:surjpres} in such a way that they would not restrict the cardinality of $\F$ but would remain applicable in the context of Lemma~\ref{lem:DistinctN3CHN23RK2}. But we find it unnecessary to do that here because the assumption $|\F| > n(n-1)$ appears in further steps of the proof of the main theorem of this paper.

For the same reason the restriction on the cardinality of $\F$ appears in statements of some other lemmas in this section.
\end{remark}

\begin{lemma}\label{lem:DistinctN4CHN2RK2}Assume that $\Char (\F) \neq 2.$ If $B = \begin{psmallmatrix}0 & 1 & 0 & 0\\ 0 & 0 & 0 & 0\\ 0 & 0 & 0 & 1\\ 0 & 0 & 0 & 0\end{psmallmatrix}$, then $\deg_{\lambda} \left(\D_{(B)}\right) = 6$ and  $\dim\left(\FL_{\D_{(X)}}\right) \leq 4$.
\end{lemma}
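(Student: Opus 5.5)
The plan is to compute $p_{A+\lambda B}(x)$ explicitly enough to read off the top $\lambda$-coefficient of $\D(p_{A+\lambda B})$ by a weight argument. The dimension bound then follows from the observation that this coefficient involves only four matrix entries. (I read the second claim as $\dim\bigl(\FL_{\D_{(B)}^{(6)}}\bigr)\le 4$, in parallel with Lemma~\ref{lem:DistinctN4CHN2RK1NNIL}.)

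\emph{Step 1: the $\lambda$-structure of $p_{A+\lambda B}$.} Since $B=E_{1\,2}+E_{3\,4}$, I expand $\det(xI_4-A-\lambda B)$ by multilinearity in rows $1$ and $3$. The $\lambda^0$ part is $p_A(x)$. The $\lambda^1$ part is $-\lambda(C_{1\,2}+C_{3\,4})$, where $C_{i\,j}$ is the $(i,j)$ cofactor of $xI_4-A$. Deleting row $1$ and column $2$ (resp.\ row $3$ and column $4$) leaves exactly two diagonal entries, so each cofactor has $x$-degree at most $2$. The $\lambda^2$ part is the complementary minor on rows $\{2,4\}$ and columns $\{1,3\}$. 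It contains no diagonal entry, hence is the constant $q=a_{2\,1}a_{4\,3}-a_{2\,3}a_{4\,1}$ (up to sign).

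Thus $p_{A+\lambda B}=x^4+c_3x^3+(c_2+\lambda\ell)x^2+(c_1+\lambda m)x+(c_0+\lambda r\pm\lambda^2 q)$. Here $c_i$ are the coefficients of $p_A$ and $m,r$ are polynomials in the entries of $A$. The form $\ell$ is the $x^2$-coefficient of the cofactors; a direct check gives $\ell=\pm(a_{2\,1}+a_{4\,3})$.

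\emph{Step 2: degree and top coefficient via weights.} Give the coefficient of $x^k$ weight $4-k$. By property $\D5$, every monomial of $\D$ in \eqref{lem:DiscN4FormulaCHN2:eqq1} has weighted degree $12$. The maximal $\lambda$-degree of each coefficient is $0,1,1,2$ for $k=3,2,1,0$, i.e.\ at most half its weight. Equality holds only for $k=2$ and $k=0$. Hence $\deg_\lambda\le 6$, and the $\lambda^6$-coefficient comes only from the monomials in $a_2,a_0$ alone:
\[
256a_0^3-128a_0^2a_2^2+16a_0a_2^4=16a_0(4a_0-a_2^2)^2 .
\]
Substituting the top parts $a_2\mapsto\ell$ and $a_0\mapsto\pm q$ gives
\[
\D_{(B)}^{(6)}=\pm16\,q\,(4q\mp\ell^2)^2 .
\]
All of this is an identity in $\F[a_{1\,1},\ldots,a_{4\,4},\lambda]$. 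To see that this polynomial is nonzero when $\Char(\F)\neq 2$, evaluate at $a_{2\,3}=1$, $a_{4\,1}=-1$ and all other entries $0$. Then $q=\pm1$ and $\ell=0$, so the value is $\pm 256\ne 0$. Hence $\deg_\lambda(\D_{(B)})=6$.

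\emph{Step 3: the dimension bound.} The polynomial $P=\D_{(B)}^{(6)}$ depends only on $a_{2\,1},a_{2\,3},a_{4\,1},a_{4\,3}$. Consequently $\partial P/\partial E_{i\,j}=0$ for the other twelve positions, for every $A$. Every $l_{P,A}$ therefore vanishes on the $12$-dimensional span of those $E_{i\,j}$, so $\dim\bigl((\FL_P)^0\bigr)\ge 12$ and $\dim(\FL_P)\le 16-12=4$. This is the same annihilator argument as in Lemma~\ref{lem:DistinctN3CHN23RK2}.

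The main obstacle I expect is Step 1's bookkeeping: confirming that the $\lambda^2$-term is free of $x$, and pinning down $\ell$ and the signs. The weight argument in Step 2 is insensitive to the signs, except that I must still certify that the resulting top coefficient is nonzero in characteristic $\neq 2$. The explicit evaluation above handles this, since the factor $16$ is the only characteristic-dependent constant.
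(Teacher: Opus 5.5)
Your proposal is correct and follows essentially the same route as the paper: the same multilinear expansion of $p_{A+\lambda B}$ (with $\lambda$-degrees $0,1,1,2$ on the coefficients of $x^3,x^2,x,1$), the same three surviving terms $256a_0^3-128a_0^2a_2^2+16a_0a_2^4$ giving $16q\bigl(4q-(a_{2\,1}+a_{4\,3})^2\bigr)^2$ with $q=a_{2\,1}a_{4\,3}-a_{2\,3}a_{4\,1}$, and the same annihilator count yielding $\dim(\FL_P)\le 4$. Your weighted-homogeneity argument is a cleaner substitute for the paper's term-by-term list of $\lambda$-degrees, and your explicit evaluation showing $P\neq 0$ (hence $\deg_\lambda=6$ exactly) spells out a point the paper leaves implicit.
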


\begin{proof}Let $p_{A}(x), p'(x), p''(x), a''' \in \F[x; a_{1\,1}, \ldots, a_{1\,4}, a_{2\,1}, \ldots, a_{4\,4}]$ be defined by
\begin{equation}\label{lem:DistinctN4CHN2RK2:eq101}
p_{A}(x) = \det ((xI_4 - A),
\end{equation}
\begin{equation}\label{lem:DistinctN4CHN2RK2:eq102}
p'(x) = \det ((xI_4 - A)(1|2)),
\end{equation}
\begin{equation}\label{lem:DistinctN4CHN2RK2:eq103}
p''(x) = \det ((xI_4 - A)(3|4))
\end{equation}
and
\begin{equation}\label{lem:DistinctN4CHN2RK2:eq104}
a''' = \det((xI_4 - A)(1,3|2,4)) = \begin{vmatrix}-a_{2\,1} & - a_{2\,3}\\ -a_{4\,1} & - a_{4\,3}\end{vmatrix} \in \F.
\end{equation}
Note that the definitions of $p'$ and $p''$ imply that $\deg_x(p') = \deg_x(p'') = 2$. Hence, there are $a'_0, \ldots, a'_2, a''_0, \ldots, a''_2 \in \F$ such that
\begin{equation}\label{lem:DistinctN4CHN2RK2:eq105}
p'(t)  = a'_2x^2 + a'_1x + a'_0\;\;\mbox{and}\;\;p''(t)  = a''_2x^2 + a''_1x + a''_0.
\end{equation}

Using the multilinearity of the determinant along the second and the fourth column
\begin{multline}\label{lem:DistinctN4CHN2RK2:eq1}
p_{A + \lambda B}(x) = \begin{vmatrix}x - a_{1\,1}  & -a_{1\,2}- \lambda & -a_{1\,3} & - a_{1\,4}\\ -a_{2\,1} & x - a_{2\,2} & -a_{2\,3} & -a_{2\,4}\\ -a_{3\,1} & -a_{3\,2} & x - a_{3\,3} & -a_{3\,4} - \lambda  \\ - a_{4\,1} & -a_{4\,2} & -a_{4\,3} & x - a_{4\,4} \end{vmatrix}\\
 = p_{A}(x) + \lambda (p'(x) + p''(x)) + \lambda^2 \det\left((xI_4 - A)(1,3|2,4)\right) \\
 = x^4 + a_3x^3 + (a_2 + \lambda (a'_2 + a''_2))x^2 + (a_1 + \lambda (a'_1 + a''_1))x  + a''_0 + \lambda(a_0 + a'_0) + \lambda^2 a'''\\
 = x^4 + Q_3(A)x^3 + Q_2(A)x^2 + Q_1(A)x + Q_0, 
\end{multline}
where $Q_0, \ldots, Q_3, \in \F[\lambda, a_{1\,1}, \ldots, a_{1\, 4}, a_{2\,1}, \ldots, a_{4\,4}]$ are defined by
\begin{multline}\label{lem:DistinctN4CHN2RK2:eq3}
Q_3 = a_3, \quad Q_2(A) = a_2 + \lambda (a'_2 + a''_2), \quad Q_1 = (a_1 + \lambda (a'_1 + a''_1))\\
Q_0 =  a''_0 + \lambda(a_0 + a'_0) + \lambda^2 a'''.
\end{multline}
The equality~\eqref{lem:DistinctN4CHN2RK2:eq3} implies that
\begin{equation}\label{lem:DistinctN4CHN2RK2:eq2}
\deg_{\lambda}(Q_3) = 0, \quad \deg_{\lambda}(Q_2) = 1, \quad \deg_{\lambda}(Q_1) = 1, \quad \deg_{\lambda}(Q_0) = 2.
\end{equation}
Using the formula~\eqref{lem:DiscN4FormulaCHN2:eqq1} we obtain that
\begin{multline}\label{lem:DistinctN4CHN2RK2:eq1AAA1}
\D_{(B)}(A) = \D(p_{A + \lambda B}(x)) = 256 Q_0^3 - 27 Q_1^4 + 144 Q_0 Q_1^2 Q_2 - 128 Q_0^2 Q_2^2\\
 - 4 Q_1^2 Q_2^3 + 16 Q_0 Q_2^4 - 192 Q_0^2 Q_1 Q_3 + 18 Q_1^3 Q_2 Q_3\\
  - 80 Q_0 Q_1 Q_2^2 Q_3  - 6 Q_0 Q_1^2 Q_3^2 + 144 Q_0^2 Q_2 Q_3^2\\
  + Q_1^2 Q_2^2 Q_3^2 - 4 Q_0 Q_2^3 Q_3^2 - 4 Q_1^3 Q_3^3\\
   + 18 Q_0 Q_1 Q_2 Q_3^3  - 27 Q_0^2 Q_3^4.
\end{multline}

Let us find the maximal degree of $\lambda$ contained in each term of~\eqref{lem:DistinctN4CHN2RK2:eq1AAA1} using~\eqref{lem:DistinctN4CHN2RK2:eq2}. We obtain that
\begin{multline}\label{lem:DistinctN4CHN2RK2:eq11001}
\deg_{\lambda}(Q_0^3) = 6,\qquad \deg_{\lambda}(Q_1^4) = 4,\qquad \deg_{\lambda}(Q_0 Q_1^2 Q_2) = 5,\\
\deg_{\lambda}(Q_0^2 Q_2^2) = 6, \qquad \deg_{\lambda}(Q_1^2 Q_2^3) = 5,\qquad \deg_{\lambda}(Q_0 Q_2^4) = 6,\\
\deg_{\lambda}(Q_0^2 Q_1 Q_3) = 5 ,\qquad \deg_{\lambda}(Q_1^3 Q_2 Q_3) = 4, \qquad \deg_{\lambda}(Q_0 Q_1 Q_2^2 Q_3) = 5,\\
\deg_{\lambda}(Q_0 Q_1^2 Q_3^2) = 4,\qquad \deg_{\lambda}( Q_0^2 Q_2 Q_3^2) = 5,\qquad \deg_{\lambda}(Q_1^2 Q_2^2 Q_3^2) = 4,\\
\deg_{\lambda}(Q_0 Q_2^3 Q_3^2) = 5,\qquad \deg_{\lambda}(Q_1^3 Q_3^3) = 4,\qquad \deg_{\lambda}(Q_0 Q_1 Q_2 Q_3^3) = 5,\\
\deg_{\lambda}(Q_0^2 Q_3^4) = 4.
\end{multline}
By inspecting~\eqref{lem:DistinctN4CHN2RK2:eq11001}, we conclude that $\D_{(B)}^{(6)} = P$  is equal to the coefficient of $\lambda^6$ in
\begin{equation*}
256 Q_0^3 - 128 Q_0^2 Q_2^2 + 16 Q_0 Q_2^4.
\end{equation*}
According to~\eqref{lem:DistinctN4CHN2RK2:eq3}, 
\begin{gather*}
\mbox{the coefficient of $\lambda^6$ in $Q_0^3$ is equal to}\;\; \left(a'''\right)^3 ,\\
\mbox{the coefficient of $\lambda^6$ in $Q_0^2 Q_2^2$ is equal to}\;\; \left(a'''\right)^3(a'_2 + a''_2)^2,\\
\mbox{the coefficient of $\lambda^6$ in $ Q_0 Q_2^4$ is equal to}\;\; \left(a'''\right)(a'_2 + a''_2)^4.
\end{gather*}
Therefore,
\begin{multline}\label{lem:DistinctN4CHN2RK2:eq1001}
P = 256\left(a'''\right)^3 - 128\left(a'''\right)^3(a'_2 + a''_2)^2 + 16a'''(a'_2 + a''_2)^4\\
= 16a'''(16\left(a'''\right)^2 - 8\left(a'''\right)^2(a'_2 + a''_2)^2 + (a'_2 + a''_2)^4)\\
= 16a'''(4a''' - (a'_2 + a''_2)^2)^2.
\end{multline}
Using the definitions of $a''', a'_2$ and $a''_2$ we obtain that
\begin{equation*}
a'_2 \overset{\eqref{lem:DistinctN4CHN2RK2:eq102}}{=\joinrel=\joinrel=} -a_{2\,1}, \qquad a''_2 \overset{\eqref{lem:DistinctN4CHN2RK2:eq103}}{=\joinrel=\joinrel=} -a_{4\,3}\\
\qquad \mbox{and}\qquad a''' \overset{\eqref{lem:DistinctN4CHN2RK2:eq104}}{=\joinrel=\joinrel=} \begin{vmatrix}a_{2\,1} & a_{2\,3}\\ a_{4\,1} & a_{4\,3}\end{vmatrix}.
\end{equation*}
Thus,
\begin{multline*}
P \overset{\eqref{lem:DistinctN4CHN2RK2:eq1001}}{=\joinrel=\joinrel=} 16a'''(4a''' - (a'_2 + a''_2)^2)^2\\
= 16(a_{2\,1}a_{4\,3} - a_{2\,3}a_{4\,1})(4a_{2\,1}a_{4\,3} - 4a_{2\,3}a_{4\,1} - (a_{2\,1} + a_{4\,3})^2)^2.
\end{multline*}
This formula implies that $P$ does not depend on $a_{1\,1}, \ldots, a_{1\,4}$, $a_{2\,2},$ $a_{2\,4},$\linebreak $a_{3\,1}, \ldots, a_{3\,4}, a_{4\,2}$ and $a_{4\,4}$ and consequently $E_{1\,1}, \ldots, E_{1\,4},$ $E_{2\,2}, \ldots, E_{2\,4},$\linebreak $E_{3\,1}, \ldots, E_{3\,4}, E_{4\,2}$ and $E_{4\,4}$ belong to $\left(\FL_P\right)^0$. Hence, $\dim \left(\left(\FL_P\right)^0\right) \ge 12$
and  the basic properties of the vector spaces and their annihilators infer that
\begin{equation*}
\dim(\mathcal L_P) = 16 - \dim \left(\left(\FL_P\right)^0\right) \le 16 - 12 = 4.
\end{equation*}

\end{proof}

\begin{lemma}\label{lem:DistinctN4CH2RK1NNIL}Assume that $\Char (\F) = 2.$ If $B = \begin{psmallmatrix}1 & 0 & 0 & 0\\ 0 & 0 & 0 & 0\\ 0 & 0 & 0 & 0\\ 0 & 0 & 0 & 0\end{psmallmatrix}$, then\linebreak $\deg_\lambda \left(\B_{(B)}\right) = 3$.
\end{lemma}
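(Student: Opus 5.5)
The plan is the same direct computation used in Lemma~\ref{lem:DistinctN4CHN2RK1NNIL}, now with the char-2 polynomial $\B$ of Corollary~\ref{cor:Disc2N4FormulaCH2} in place of $\D$. First I write, as in~\eqref{lem:DistinctN4CHN2RK1NNIL:eq11} and using multilinearity of the determinant along the first column,
\[
p_{A+\lambda B}(x) = p_A(x) + \lambda\, p_{A(1|1)}(x) = x^4 + (a_3+\lambda)x^3 + (a_2+\lambda a'_2)x^2 + (a_1+\lambda a'_1)x + (a_0+\lambda a'_0),
\]
where $p_A = x^4+a_3x^3+a_2x^2+a_1x+a_0$ and $p_{A(1|1)} = x^3+a'_2x^2+a'_1x+a'_0$. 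In characteristic $2$ all signs disappear. Every coefficient is affine in $\lambda$, and the coefficient of $x^3$ has $\lambda$-leading coefficient exactly $1$.

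Next I substitute these coefficients into $\B(f) = a_1^2 + a_1a_2a_3 + a_0a_3^2$ from~\eqref{lem:Disc2N4FormulaCH2:eqq1}. Each of the three monomials has total degree $3$ in the coefficients, which are affine in $\lambda$, so $\deg_\lambda(\B_{(B)}) \le 3$. The $\lambda^3$-coefficient is obtained by replacing each coefficient by its $\lambda$-part: $a_3 \mapsto 1$, $a_2 \mapsto a'_2$, $a_1 \mapsto a'_1$, $a_0 \mapsto a'_0$. The first monomial $a_1^2$ has degree only $2$ and contributes nothing. Hence
\[
\B_{(B)}^{(3)} = a'_1a'_2 + a'_0 = \B\bigl(p_{A(1|1)}\bigr),
\]
which by Corollary~\ref{cor:D2PolyN3Char2} is $\B$ of the lower-right $3\times 3$ block of $A$.

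It then remains to check that this polynomial in the entries of $A$ is nonzero as a function on $\M_4(\F)$. I choose $A = 0\oplus\diag(0,1,c)$ with $c\neq 0,1$, which is possible since $|\F|>2$ is implicit, or otherwise pass to an extension field. Property M$\B$1 then gives $\B(p_{A(1|1)}) = (0+1)(0+c)(1+c) \neq 0$. Since $\deg \B < |\F|$, Corollary~\ref{DegFLessZero} upgrades the nonvanishing of the coefficient at this point to $\deg_\lambda(\B_{(B)}) = 3$ as a polynomial identity in $\F[a_{1\,1},\ldots,a_{4\,4},\lambda]$.

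The main obstacle is keeping the bookkeeping honest. One must confirm that the $\lambda^3$ term indeed reduces to $\B$ of a $3\times3$ characteristic polynomial, with no stray contribution from $a_1^2$. One must also ensure that the nonvanishing witness works over $\F$ itself rather than over an extension. An alternative witness, if the field is small, is $A = 0\oplus\left(\begin{smallmatrix}0&1&0\\0&0&1\\1&0&0\end{smallmatrix}\right)$. Here $p_{A(1|1)} = x^3+1$, so $a'_0 = 1$ and $a'_1 = a'_2 = 0$, and the coefficient equals $1$.
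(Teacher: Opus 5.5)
Your proposal is correct and is essentially the paper's proof. You use the same expansion $p_{A+\lambda B}=p_A+\lambda p_{A(1|1)}$ and obtain the same $\lambda^3$-coefficient $a'_1a'_2+a'_0$; identifying it with $\B(p_{A(1|1)})$ is a nice touch. Your fallback witness $0\oplus\left(\begin{smallmatrix}0&1&0\\0&0&1\\1&0&0\end{smallmatrix}\right)$ is exactly the paper's $A_0=E_{2\,3}+E_{3\,4}+E_{4\,2}$.

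One small point: the appeal to Corollary~\ref{DegFLessZero} with ``$\deg\B<|\F|$'' is unjustified, since the lemma assumes nothing about $|\F|$. It is also unnecessary, because a polynomial taking a nonzero value at one point is already nonzero. So your permutation witness alone settles the lemma over every field of characteristic $2$.
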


\begin{proof}Let $p_{A}(x) = x^4 + a_3x^3 + a_2x^2 + a_1x + a_0$, $p_{A(1|1)}(x) = x^3 + a'_2x^2 + a'_1x + a'_0$. Using the multilinearity of the determinant along the first column we obtain
\begin{multline*}
p_{A + \lambda B}(x)\\ = \begin{vmatrix}x - a_{1\,1} - \lambda & -a_{1\,2} & -a_{1\,3} & - a_{1\,4}\\ -a_{2\,1} & x - a_{2\,2} & -a_{2\,3} & -a_{2\,4}\\ -a_{3\,1} & -a_{3\,2} & x - a_{3\,3} & -a_{3\,4}\\ - a_{4\,1} & -a_{4\,2} & -a_{4\,3} & x - a_{4\,4} \end{vmatrix}
 = p_{A}(x) - \lambda  p_{A(1|1)}(x)\\
 = x^4 + (a_3 - \lambda)x^3 + (a_2 - \lambda a'_2)x^2 + (a_1 - \lambda a'_1)x + (a_0 - \lambda a'_0). 
\end{multline*}
Hence, 
\begin{multline}\label{lem:DistinctN4CH2RK1NNIL:eq10001}
\B_{(B)} = \B(p_{A + \lambda X}(t))\\
\overset{\eqref{lem:Disc2N4FormulaCH2:eqq1}}{=\joinrel=\joinrel=} (a_1 - \lambda a'_1)^2 + (a_1 - \lambda a'_1)(a_2 - \lambda a'_2) (a_3 - \lambda) + (a_0 - \lambda a'_0)(a_3 - \lambda)^2.
\end{multline}
By inspecting the right-hand side of~\eqref{lem:DistinctN4CH2RK1NNIL:eq10001} we obtain that $\deg_\lambda \left(\B_{(B)}\right) \le 3$ and 
\begin{multline}\label{lem:DistinctN4CH2RK1NNIL:eq10002}
\B_{(B)}^{(3)}(A) = a'_1a'_2 + a'_0\\
 = (\mbox{the sum of all principal $2\times 2$ minors of $A(1|1)$})\tr (A(1|1))\\
  + \det(A(1|1))
\end{multline}
Hence, if $A_0 = E_{2\,3} + E_{3\,4} + E_{4\,2}$, then
\begin{equation}
\B_{(B)}^{(3)}\left(A_0\right) = (\mbox{the sum of all principal $2\times 2$ minors of $A(1|1)$}) \cdot 0 + 1 = 1, 
\end{equation}
which infers that $\B_{(B)}^{(3)}$ is indeed nonzero polynomial and $\deg_\lambda \left(\B_{(B)}\right) = 3.$

\end{proof}

\begin{lemma}\label{lem:DistinctN4CH2RK2}Assume that $\Char (\F) = 2.$ If $B = \begin{psmallmatrix}0 & 1 & 0 & 0\\ 0 & 0 & 0 & 0\\ 0 & 0 & 0 & 1\\ 0 & 0 & 0 & 0\end{psmallmatrix}$, then\linebreak $0 < \deg_\lambda \left(\B_{(B)}\right) \le 2$.
\end{lemma}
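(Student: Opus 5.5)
We follow the scheme of the proof of Lemma~\ref{lem:DistinctN4CHN2RK2}, now using $\B$ in place of $\D$. We write $p_{A+\lambda B}(x)=x^4+Q_3x^3+Q_2x^2+Q_1x+Q_0$ in characteristic $2$. Expanding by multilinearity along the second and fourth columns, exactly as in~\eqref{lem:DistinctN4CHN2RK2:eq1}, gives $Q_3=a_3$ (no $\lambda$), while $Q_2$ and $Q_1$ have $\lambda$-degree at most $1$. The term $Q_0$ has $\lambda$-degree at most $2$, its $\lambda^2$-coefficient being $a'''=a_{2\,1}a_{4\,3}+a_{2\,3}a_{4\,1}$ (signs disappear in characteristic $2$).

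Substituting into Corollary~\ref{cor:Disc2N4FormulaCH2} gives
\[
\B_{(B)}=Q_1^2+Q_1Q_2Q_3+Q_0Q_3^2 .
\]
The three terms have $\lambda$-degree at most $2$, $2$ and $2$, respectively. Hence $\deg_\lambda(\B_{(B)})\le 2$ as an element of $\F[a_{1\,1},\ldots,a_{4\,4},\lambda]$, and this holds for every field of characteristic $2$, because it is a formal identity of polynomials. If one prefers, the formal identity can be transported through Corollary~\ref{DegFGLessEqual}, but since $\B_{(B)}$ is defined as the polynomial $\B(A+\lambda B)$, no cardinality argument is needed.

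For the lower bound $\deg_\lambda(\B_{(B)})>0$ it suffices to exhibit a single $A$ for which $\B(A+\lambda B)$ genuinely depends on $\lambda$. We compute the $\lambda^2$-coefficient
\[
(a'_1+a''_1)^2+(a'_1+a''_1)(a'_2+a''_2)a_3+a'''a_3^2
\]
and choose $A$ to make it nonzero. The simplest choice is to take $A$ with $a_3=\tr(A)=0$ and $a'_1+a''_1\neq 0$. Alternatively, we take a concrete $A$ and compute the $4\times4$ characteristic polynomial of $A+\lambda B$ directly.

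For instance, take $A=E_{2\,1}$. Then $A+\lambda B=\begin{psmallmatrix}0&\lambda\\1&0\end{psmallmatrix}\oplus\begin{psmallmatrix}0&\lambda\\0&0\end{psmallmatrix}$, so
\[
p(x)=(x^2+\lambda)x^2=x^4+\lambda x^2 .
\]
Its $\B$ is $a_1^2+a_1a_2a_3+a_0a_3^2=0$, so this choice fails. We therefore pick something with nonzero trace, e.g. $A=E_{1\,1}+E_{2\,1}$. Then the first block is $\begin{psmallmatrix}1&\lambda\\1&0\end{psmallmatrix}$ with characteristic polynomial $x^2+x+\lambda$, and the total polynomial is
\[
(x^2+x+\lambda)x^2=x^4+x^3+\lambda x^2 .
\]
This gives $a_3=1$, $a_2=\lambda$, $a_1=a_0=0$, so $\B=0$ again. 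To get a nonzero value we need $a_1$ or $a_0$ to be nonzero, so we make the second block nonnilpotent. Take $A=E_{1\,1}+E_{2\,1}+E_{4\,3}$. The second block is then $\begin{psmallmatrix}0&\lambda\\1&0\end{psmallmatrix}$ with characteristic polynomial $x^2+\lambda$, and
\[
p(x)=(x^2+x+\lambda)(x^2+\lambda)=x^4+x^3+\lambda x+\lambda^2 ,
\]
using $2\lambda=0$. This gives $a_3=1$, $a_2=0$, $a_1=\lambda$, $a_0=\lambda^2$, and hence
\[
\B=\lambda^2+0+\lambda^2\cdot 1=0
\]
in characteristic $2$. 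This fails as well, which can also be seen from property M$\B$2: $\B$ equals $\B(\text{block}_1)\,\Res\,\B(\text{block}_2)=1\cdot\Res\cdot 0$, because the block $x^2+\lambda$ has $\B=a_1=0$.

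The main obstacle is therefore the choice of $A$, and the multiplicativity in property M$\B$2 guides it: we need each block's linear coefficient to be nonzero and the two blocks to share no root. We take $A=E_{1\,1}+E_{2\,1}+E_{3\,3}+E_{4\,3}+E_{4\,4}$. The first block has characteristic polynomial $f=x^2+x+\lambda$, so $\B(f)=1$. The second block $\begin{psmallmatrix}1&\lambda\\1&1\end{psmallmatrix}$ has characteristic polynomial $g=x^2+1+\lambda$, so $\B(g)=0$. This also fails, so we switch to $A$ coupling the blocks instead.

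We expect, and would verify by a direct computation, that a suitable $A$ with off-block entries (e.g. $A=E_{2\,1}+E_{4\,3}+E_{1\,3}+E_{4\,1}$) gives a $\B$ with a nonzero $\lambda$-term. If that fails, we argue abstractly instead. By Lemma~\ref{lem:NonScalarElementofRnLowBound}-type reasoning, $\deg_\lambda(\B_{(B)})=0$ would mean $B\in\rad(\B)=\rad(\D)$. But the radical is only the scalars: Section~\ref{sec:RadDAndRadB} computes it, and alternatively Lemma~\ref{lem:LowDegImpliesLowRankOrSeveralCasesForAlgebraicallyClosed} applied over $\overline{\F}$ shows $B$ is not scalar with positive $\D$-degree, since $\deg_\lambda\D_{(B)}=2\deg_\lambda\B_{(B)}$. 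Concretely, with $A=\diag(0,0,1,1)$ plus couplings, $\D(A+\lambda B)\neq\D(A)$ for some $A$. This suffices, since $\deg_\lambda\D_{(B)}=2\deg_\lambda\B_{(B)}$.
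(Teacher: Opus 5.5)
Your upper bound $\deg_\lambda(\B_{(B)})\le 2$ is exactly the paper's argument: the expansion of $p_{A+\lambda B}$ along the second and fourth columns, followed by $\B_{(B)}=Q_1^2+Q_1Q_2Q_3+Q_0Q_3^2$.

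The gap is the lower bound $\deg_\lambda(\B_{(B)})>0$, which the proposal never actually establishes.
\begin{itemize}
\item All four matrices $A$ you compute give $\B(A+\lambda B)=0$.
\item The candidate $A=E_{2\,1}+E_{4\,3}+E_{1\,3}+E_{4\,1}$ is only announced as something you \emph{expect} to work.
\item The abstract fallback does not work. Appealing to $\rad(\B)=\rad(\D)=\langle I_n\rangle$ from Section~\ref{sec:RadDAndRadB} is circular. Lemma~\ref{lem:LinearAdditionPreserverEqualToScalarAlgClos} excludes case C3 for $n=4$, $\Char(\F)=2$ precisely via the entry of Table~\ref{tbl:compTab} that \emph{is} the present lemma.
\item Lemma~\ref{lem:LowDegImpliesLowRankOrSeveralCasesForAlgebraicallyClosed} gives nothing here. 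It only confines low-degree matrices to C1--C4, and $B$ is itself the C3 matrix.
\item Lemma~\ref{lem:NonScalarElementofRnLowBound} does not apply, since $B\notin\R_4(\F)$.
\item ``$A=\diag(0,0,1,1)$ plus couplings'' is not an argument. Without couplings, $p_{A+\lambda B}=x^2(x+1)^2$ does not depend on $\lambda$.
\end{itemize}

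The paper closes this in one line with Lemma~\ref{lem:E12PlusStrictUpDegBound}. Here $B=\begin{psmallmatrix}0&1\\0&0\end{psmallmatrix}\oplus C$ with $C=\begin{psmallmatrix}0&1\\0&0\end{psmallmatrix}$ strictly upper triangular, so $\deg_\lambda(\D_{(B)})>0$. Then $\D_{(B)}=\B_{(B)}^2$ gives $\deg_\lambda(\B_{(B)})>0$.

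An explicit witness would also do, and you were close.
\begin{itemize}
\item The witness from that lemma's proof, $A=E_{1\,1}+E_{2\,1}+E_{4\,4}$, is what your M$\B$2 heuristic was aiming at. Your last attempt failed only because the block $\begin{psmallmatrix}1&\lambda\\1&1\end{psmallmatrix}$ has trace $0$. The block $\begin{psmallmatrix}0&\lambda\\0&1\end{psmallmatrix}$ instead gives $g=x^2+x$ with $\B(g)=1$. With $f=x^2+x+\lambda$ you get $\Res_{2\,2}(f,g)=f(0)f(1)=\lambda^2$, hence $\B(A+\lambda B)=\lambda^2$.
\item Your announced candidate also works once the computation is carried out. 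One gets $p_{A+\lambda B}=x^4+\lambda x+\lambda^2$, so $a_3=a_2=0$, $a_1=\lambda$, $a_0=\lambda^2$, and $\B(A+\lambda B)=a_1^2=\lambda^2$. Together with your upper bound, this shows the degree is exactly $2$.
\end{itemize}
As written, however, the proposal rests this half of the statement on an unverified expectation and an invalid backup.
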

\begin{proof}From Lemma~\ref{lem:E12PlusStrictUpDegBound} we obtain that $0 < \deg_\lambda \left(\D_{(B)}\right).$ Therefore, $0 < \deg_\lambda \left(\B_{(B)}\right)$ because $\D = \B^2$ by the definition of $\B$. Let us show that\linebreak $\deg_\lambda \left(\B_{(B)}\right) \le 2$.

For this, let $p_{A}(x), p'(x), p''(x), a''' \in \F[a_{1\,1}, \ldots, a_{1\,4}, a_{2\,1}, \ldots, a_{4\,4}]$ be defined by
\begin{equation}\label{lem:DistinctN4CH2RK2:eq101}
p_{A}(x) = \det (xI_4 - A),
\end{equation}
\begin{equation}\label{lem:DistinctN4CH2RK2:eq102}
p'(x) = \det \left((xI_4 - A)(1|2)\right),
\end{equation}
\begin{equation}\label{lem:DistinctN4CH2RK2:eq103}
p''(x) = \det \left((xI_4 - A)(3|4)\right)
\end{equation}
and
\begin{equation}\label{lem:DistinctN4CH2RK2:eq104}
a''' = \det((xI_4 - A)(1,3|2,4)) = \begin{vmatrix}-a_{2\,1} & - a_{2\,3}\\ -a_{4\,1} & - a_{4\,3}\end{vmatrix} \in \F.
\end{equation}
Note that the definitions of $p'$ and $p''$ imply that $\deg_t(p') = \deg_t(p'') = 2$. Hence, there is $a'_0, \ldots, a'_2, a''_0, \ldots, a''_2 \in \F$ such that
\begin{equation}\label{lem:DistinctN4CH2RK2:eq105}
p'(x)  = a'_2x^2 + a'_1x + a'_0\;\;\mbox{and}\;\;p''(x)  = a''_2x^2 + a''_1x + a''_0.
\end{equation}

Using the multilinearity of the determinant along the second and the fourth column
\begin{multline}\label{lem:DistinctN4CH2RK2:eq1}
p_{A + \lambda B}(x) = \begin{vmatrix}x - a_{1\,1}  & -a_{1\,2}- \lambda & -a_{1\,3} & - a_{1\,4}\\ -a_{2\,1} & x - a_{2\,2} & -a_{2\,3} & -a_{2\,4}\\ -a_{3\,1} & -a_{3\,2} & x - a_{3\,3} & -a_{3\,4} - \lambda  \\ - a_{4\,1} & -a_{4\,2} & -a_{4\,3} & x - a_{4\,4} \end{vmatrix}\\
 = p_{A}(x) + \lambda (p'(x) + p''(x)) + \lambda^2 \det\left((xI_4 - A)(1,3|2,4)\right) \\
 = x^4 + a_3x^3 + (a_2 + \lambda (a'_2 + a''_2))x^2 + (a_1 + \lambda (a'_1 + a''_1))x  + a''_0 + \lambda(a_0 + a'_0) + \lambda^2 a'''\\
 = x^4 + Q_3(A)x^3 + Q_2(A)x^2 + Q_1(A)x + Q_0, 
\end{multline}
where $Q_0, \ldots, Q_3, \in \F[\lambda, a_{1\,1}, \ldots, a_{1\, 4}, a_{2\,1}, \ldots, a_{4\,4}]$ are defined by
\begin{multline}\label{lem:DistinctN4CH2RK2:eq3}
Q_3 = a_3, \quad Q_2(A) = a_2 + \lambda (a'_2 + a''_2), \quad Q_1 = (a_1 + \lambda (a'_1 + a''_1))\\
Q_0 =  a''_0 + \lambda(a_0 + a'_0) + \lambda^2 a'''.
\end{multline}
The equalities in~\eqref{lem:DistinctN4CH2RK2:eq3} imply that
\begin{equation}\label{lem:DistinctN4CH2RK2:eq2}
\deg_{\lambda}(Q_3) = 0, \quad \deg_{\lambda}(Q_2) = 1, \quad \deg_{\lambda}(Q_1) = 1, \quad \deg_{\lambda}(Q_0) = 2.
\end{equation}

Using the formula~\eqref{lem:Disc2N4FormulaCH2:eqq1} we obtain that
\begin{equation}\label{lem:DistinctN4CH2RK2:eq111}
\B_{(B)} = \B(p_{A + \lambda B}(t)) = Q_1^2  + Q_1 Q_2 Q_3 + Q_0 Q_3^2
\end{equation}
According to~\eqref{lem:DistinctN4CH2RK2:eq2},
\begin{equation*}
\deg_{\lambda}(Q_1^2) = 2,\qquad \deg_{\lambda}(Q_1 Q_2 Q_3) = 2\qquad\mbox{and}\;\; \deg_{\lambda}(Q_0 Q_3^2) = 2.
\end{equation*}
Therefore, from~\eqref{lem:DistinctN4CH2RK2:eq111} we conclude that $\deg_\lambda \left(\B_{(B)}\right) \le 2$.

\end{proof}

\begin{lemma}\label{lem:DistinctN5CH2RK1}Assume that $\Char (\F) = 2$ and $|\F| > \frac{5 \cdot 4}{2} = 10$. If\linebreak $B = \begin{psmallmatrix}1 & 0 & 0 & 0 & 0\\ 0 & 0 & 0 & 0 & 0\\ 0 & 0 & 0 & 0 & 0\\ 0 & 0 & 0 & 0 & 0\\ 0 & 0 & 0 & 0 & 0\end{psmallmatrix}$, then $\deg_{\lambda} (\B_{(B)} = 4$ and  $\dim\left(\FL_{\B_{(B)}^{(4)}}\right) \ge 15$.
\end{lemma}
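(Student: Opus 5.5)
We follow the pattern of Lemma~\ref{lem:DistinctN4CH2RK1NNIL} and Lemma~\ref{lem:DistinctN4CHN2RK1NNIL}. Write $p_{A}(x) = x^5 + a_4x^4 + \ldots + a_0$ and $p_{A(1|1)}(x) = x^4 + a'_3x^3 + a'_2x^2 + a'_1x + a'_0$.

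\textbf{Step 1: the polynomial $p_{A+\lambda B}$.} Multilinearity of the determinant along the first column gives
\[
p_{A+\lambda B}(x) = p_A(x) + \lambda p_{A(1|1)}(x)
\]
(signs are irrelevant in characteristic $2$). Hence the coefficients of $p_{A+\lambda B}$ are
\[
Q_4 = a_4 + \lambda,\qquad Q_i = a_i + \lambda a'_i\quad (0 \le i \le 3),
\]
and each $Q_i$ is linear in $\lambda$.

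\textbf{Step 2: the top coefficient.} Substitute these into the formula~\eqref{lem:Disc2N5FormulaCH2:eqq1},
\[
\B = Q_0^2 + Q_1Q_2^2 + Q_0Q_2Q_3 + Q_1Q_2Q_3Q_4 + Q_0Q_3^2Q_4 + Q_1^2Q_4^2 .
\]
Every term has $\lambda$-degree at most $4$, with equality only for the last three terms. The coefficient of $\lambda^4$ is therefore
\[
P = \B_{(B)}^{(4)} = a'_1a'_2a'_3 + a'_0(a'_3)^2 + (a'_1)^2 .
\]
By Corollary~\ref{cor:Disc2N4FormulaCH2}, $P$ is exactly $\B_4\bigl(p_{A(1|1)}\bigr)$, i.e.\ $P(A) = \B(A(1|1))$ for the $4\times 4$ lower-right block.

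To see $P \neq 0$ (and so $\deg_\lambda(\B_{(B)}) = 4$), exhibit $A$ with $A(1|1) = \diag(0,1,c,d)$, where $0,1,c,d$ are such that all pairwise sums are nonzero. This is possible since $|\F| > 10$. Then $P(A) = \prod (u_i + u_j) \ne 0$.

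\textbf{Step 3: the lower bound on $\dim \FL_P$.} $P$ depends only on the $16$ entries $a_{i\,j}$ with $2 \le i,j \le 5$, and equals the $4\times 4$ $\B$ on that block. So $\FL_P$ is the pullback of $\FL_{\B}$ for $4\times 4$ matrices, and we need $\dim \FL_{\B|_{\M_4}} \ge 15$.

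By property~\ref{enum:D2PropertyC}, $I_4$ lies in the radical, so $15$ is the maximum possible. To reach it:
\begin{itemize}
\item Compute the gradient of $\B$ at suitable points. A convenient choice is a diagonal point $D = \diag(u_1,\ldots,u_4)$ with distinct $u_i$. Directional derivatives along $E_{i\,j}$ with $i\neq j$ vanish there (the entries enter $\B$ only quadratically via $2\times 2$ minors $a_{i\,j}a_{j\,i}$). Derivatives along $E_{i\,i}$ give the functional $\sum_i \partial_i\!\prod(u_k+u_l)\, z_{i\,i}$.
\item Varying $D$ then yields all diagonal functionals killing $I_4$, a $3$-dimensional space. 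This should follow from the symmetric-function structure, checked by a small explicit choice.
\item For off-diagonal directions, use points $D + E_{j\,i}$. The derivative along $E_{i\,j}$ there picks up $z_{i\,j}$ with coefficient $\partial \B / \partial(a_{i\,j}a_{j\,i})$, which is nonzero for generic $D$. This gives the $12$ off-diagonal coordinate functionals.
\item Alternatively, obtain one off-diagonal functional and propagate it to all $12$ positions by transpose and permutation similarities fixing $B$, exactly as in Lemma~\ref{lem:DistinctN4CHN2RK1NNIL}, via Lemma~\ref{lem:TPreservesPThenTPreservesPI} and Lemma~\ref{thm:surjpres}. The hypothesis $|\F|>10 = \deg \B$ is what makes these lemmas applicable.
\end{itemize}
Together these give $3 + 12 = 15$ independent functionals.

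\textbf{Main obstacle.} I expect the hardest step to be the explicit verification that one off-diagonal functional, say $z_{2\,3}$, and the three independent diagonal functionals actually occur. This needs a concrete point $A$ and a direct expansion of $P(A+\kappa Z)$ to first order in $\kappa$. I would first try $A(1|1)$ equal to a companion-type matrix, such as $E_{2\,3}+E_{3\,4}+E_{4\,5}$ plus a diagonal, where the coefficients $a'_i$ are easy to perturb.
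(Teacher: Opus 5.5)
Your Steps~1--2 coincide with the paper. Your identification $P(A)=\B\bigl(A(1|1)\bigr)$ (via Corollary~\ref{cor:Disc2N4FormulaCH2}) does not appear in the paper, and it is a real simplification. It gives $P\neq 0$ at once. Since $l_{P,A}=l_{\B,\pi(A)}\circ\pi$ for the surjection $\pi\colon A\mapsto A(1|1)$, it also reduces the claim to $\dim\FL_{\B}\ge 15$ on $\M_4(\F)$. The paper instead computes a single functional $l_{P,A_1}=z_{2\,3}$ at one ad hoc point. It then transports that functional by the twelve permutation similarities fixing the index $1$ and by the similarity with $I_5+E_{2\,3}$, which produces $z_{2\,2}+z_{2\,3}+z_{3\,2}+z_{3\,3}$. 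This transport is where Lemma~\ref{lem:TPreservesPThenTPreservesPI}, Lemma~\ref{thm:surjpres} and $|\F|>10$ enter.

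As written, however, your Step~3 does not yet prove the bound, and the missing part is the whole content of the lemma.
\begin{itemize}
\item The three diagonal functionals are only asserted (``should follow from the symmetric-function structure'').
\item The off-diagonal ones rest on a coefficient ``$\partial\B/\partial(a_{i\,j}a_{j\,i})$'' that is not a well-defined quantity, claimed nonzero ``for generic $D$''. You flag this yourself as the main obstacle.
\item Your reason for the vanishing of off-diagonal derivatives at diagonal points is inaccurate, since off-diagonal entries also enter through $3$- and $4$-cycles. The conclusion is still true, because any permutation monomial containing one off-diagonal entry contains at least two.
\item The functional at $D+E_{j\,i}$ also has a diagonal part, which must be subtracted.
\end{itemize}

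With your reduction, the computations that close the gap are short.
\begin{itemize}
\item \emph{Diagonal functionals.} Put $V(u)=\prod_{k<l}(u_k+u_l)$, so $l_{\B,D}(Z)=\sum_i \partial_iV(u)\,z_{i\,i}$ for $D=\diag(u)$. At $u=(0,0,1,a)$ with $a\notin\{0,1\}$, the surviving factor $u_1+u_2$ kills $\partial_3V$ and $\partial_4V$, while $\partial_1V=\partial_2V=a^2(1+a)$. This gives $z_{1\,1}+z_{2\,2}\in\FL_\B$. Moving the coincidence to other coordinates gives $z_{1\,1}+z_{3\,3}$ and $z_{1\,1}+z_{4\,4}$.
\item \emph{Off-diagonal functionals.} At $X_0=D+E_{2\,1}$, only the direction $E_{1\,2}$ creates a cycle. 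Hence $l_{\B,X_0}(Z)=l_{\B,D}(Z)+\Phi\, z_{1\,2}$, where $\Phi$ is the $\kappa$-coefficient of $\B\bigl(\begin{psmallmatrix}u_1&\kappa\\ 1&u_2\end{psmallmatrix}\oplus\diag(u_3,u_4)\bigr)$. The direct-sum multiplicativity of $\B$ gives $(u_1+u_2)(u_3+u_4)\,q(u_3)q(u_4)$ with $q(x)=(x+u_1)(x+u_2)+\kappa$. Therefore
\[
\Phi=(u_1+u_2)(u_3+u_4)^2(u_1+u_2+u_3+u_4),
\]
which equals $a^2(1+a)\neq 0$ at $u=(0,1,0,a)$. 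So $z_{1\,2}\in\FL_\B$, and the same computation at $D+E_{j\,i}$ gives all twelve $z_{i\,j}$.
\end{itemize}

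Pulled back by $\pi$, these are fifteen independent elements of $\FL_P$. This route avoids the symmetry-transport lemmas entirely, and needs only $|\F|\ge 4$ rather than $|\F|>10$.
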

\begin{proof}Let $p_{A}(x) = x^5 + a_4x^4 + a_3x^3 + a_2x^2 + a_1x + a_0$ and
\begin{equation}\label{lem:DistinctN5CH2RK1:eq1}
p_{A(1|1)}(x) = x^4 + a'_3x^3 + a'_2x^2 + a'_1x + a'_0.
\end{equation}

Using the multilinearity of the determinant along the first column we obtain
\begin{multline}\label{lem:DistinctN5CH2RK1:eq11}
p_{A + \lambda B}(x) 
 = p_{A}(x) - \lambda  p_{A(1|1)}(x)\\
 = x^5 + Q_4(A)x^4 + Q_3(A)x^3 + Q_2(A)x^2 + Q_1(A)x + Q_0, 
\end{multline}
where $Q_0, \ldots, Q_4, \in \F[\lambda, a_{1\,1}, \ldots, a_{1\, 5}, a_{2\,1}, \ldots, a_{5\,5}]$ are defined by
\begin{multline}\label{lem:DistinctN5CH2RK1:eq3}
Q_4 = a_4 + \lambda, \quad Q_3 = a_3 + \lambda a'_3 \quad Q_2 = a_2 + \lambda a'_2 , \quad Q_1 = a_1 + \lambda a'_1\\
Q_0 =  a_0 + \lambda  a'_0.
\end{multline}
The equalities in~\eqref{lem:DistinctN5CH2RK1:eq3} imply that
\begin{equation}\label{lem:DistinctN5CH2RK1:eq2}
\deg_{\lambda}(Q_4) = \deg_{\lambda}(Q_3) = \deg_{\lambda}(Q_2) =  \deg_{\lambda}(Q_1) =  \deg_{\lambda}(Q_0) = \deg_{\lambda}(Q_3) = 1.
\end{equation}
Using the formula~\eqref{lem:Disc2N5FormulaCH2:eqq1} we obtain that
\begin{multline}\label{lem:DistinctN5CH2RK1:eq111}
\B_{(B)} = \B(p_{A + \lambda B}(x))\\
 = Q_0^2 + Q_1 Q_2^2 + Q_0 Q_2 Q_3 + Q_1 Q_2 Q_3 Q_4 + Q_0 Q_3^2 Q_4 + Q_1^2 Q_4^2.
\end{multline}
Let us find the maximal degree of $\lambda$ contained in each term of~\eqref{lem:DistinctN5CH2RK1:eq111}. The equalities in~\eqref{lem:DistinctN5CH2RK1:eq2} infer that
\begin{multline*}
\deg_{\lambda}(Q_0^2) = 2,\qquad \deg_{\lambda}( Q_1 Q_2^2) = 3,\qquad \deg_{\lambda}(Q_0 Q_2 Q_3) = 3,\\
\deg_{\lambda}(Q_1 Q_2 Q_3 Q_4) = 4,\qquad \deg_{\lambda}( Q_0 Q_3^2 Q_4) = 4,\qquad \deg_{\lambda}(Q_1^2 Q_4^2) = 4.
\end{multline*}
This implies that 
\begin{equation}\label{lem:DistinctN5CH2RK1:eq60001}
\deg_{\lambda} \left(\B_{(B)}\right) \le 4.
\end{equation}
In addition, $\B_{(B)}^{(4)} = P$  is equal to the sum of the coefficients of $\lambda^4$ in the terms $Q_1 Q_2 Q_3 Q_4$, $Q_0 Q_3^2 Q_4$ and $Q_1^2 Q_4^2$ on the right-hand side of~\eqref{lem:DistinctN5CH2RK1:eq111}. According to~\eqref{lem:DistinctN5CH2RK1:eq3}, 
\begin{gather*}
\mbox{the coefficient of $\lambda^4$ in $Q_1 Q_2 Q_3 Q_4$ is equal to}\;\; a'_1a'_2a'_3 ,\\ 
\mbox{the coefficient of $\lambda^4$ in $Q_0 Q_3^2 Q_4$ is equal to}\;\; a'_0(a'_3)^2,\\
\mbox{the coefficient of $\lambda^4$ in $Q_1^2 Q_4^2$ is equal to}\;\; (a'_1)^2.
\end{gather*}
Hence,
\begin{equation}\label{lem:DistinctN5CH2RK1:eq1666}
P = a'_1a'_2a'_3 + a'_0(a'_3)^2 + (a'_1)^2. 
\end{equation}
From~\eqref{lem:DistinctN5CH2RK1:eq1} and~\eqref{eqqq:formulaChPoly} we obtain the following expressions for $a'_0, a'_1, a'_2$ and $a'_3$:
\begin{gather}\label{lem:DistinctN5CH2RK1:eq7}
a'_0 = \det(A(1|1)), \quad a'_1 = \sum_{1 \le i_1 < i_2 < i_3 \le 4} \det\left((A(1|1))[i_1, i_2, i_3|i_1, i_2, i_3]\right)\\
a'_2 = \sum_{1 \le i_1 < i_2 \le 4} \det\left((A(1|1))[i_1, i_2|i_1, i_2]\right), \qquad a'_3 = \tr(A(1|1)).
\end{gather}

Let $A_1 = \begin{psmallmatrix}0 & 0 & 0 & 0 & 0\\ 0 & 0 & 0 & 0 & 0\\ 0 & 0 & 1 & 0 & 0\\ 0 & 0 & 1 & 0 & 0\\ 0& 0 & 0 & 0 & 1\end{psmallmatrix}$. If $Z \in \M_5(\F)$, then  $l_{P, A_1}(Z)$ is equal to the coefficient of $\kappa$ in the expression $P(A_1 + \kappa Z) \overset{\eqref{lem:DistinctN5CH2RK1:eq1666}}{=\joinrel=\joinrel=}  a'_1a'_2a'_3 + a'_0(a'_3)^2 + (a'_1)^2,$ where $a'_0, a'_1, a'_2$ could be found by~\eqref{lem:DistinctN5CH2RK1:eq7}. We have the following equalities
\begin{equation}\label{lem:DistinctN5CH2RK1:eq8}
a'_0 = \det((A_1 + \kappa X)(1|1)) = \kappa \cdot z_{2\,3}  + \mbox{(terms of higher degree in $\kappa$)},
\end{equation}
\begin{multline}\label{lem:DistinctN5CH2RK1:eq9}
a'_1 = \sum_{1 \le i_1 < i_2 < i_3 \le 4} \det\left(((A_1 + \kappa Z)(1|1))[i_1, i_2, i_3|i_1, i_2, i_3]\right)\\
= \kappa (z_{2\,3} + z_{2\,4} + z_{4\,3}) + \mbox{(terms of higher degree in $\kappa$)},
\end{multline}
\begin{multline}\label{lem:DistinctN5CH2RK1:eq10}
a'_2 = \sum_{1 \le i_1 < i_2 \le 4} \det\left(((A_1 + \kappa Z)(1|1))[i_1, i_2|i_1, i_2]\right)\\
=\kappa (z_{2\,2} + z_{2\,4} + z_{3\,3} + z_{4\,3} + z_{4\,4}) + \mbox{(terms of higher degree in $\kappa$)},
\end{multline}
and
\begin{multline}\label{lem:DistinctN5CH2RK1:eq11AA11}
a'_3 = \tr((A_1 + \kappa Z)(1|1))\\
= 1 + \kappa (z_{2\,2} + z_{3\,3} +  z_{4\,4} + z_{5\,5}) + \mbox{(terms of higher degree in $\kappa$)}.
\end{multline}
By substituting~\eqref{lem:DistinctN5CH2RK1:eq8}--\eqref{lem:DistinctN5CH2RK1:eq11AA11} into~\eqref{lem:DistinctN5CH2RK1:eq1666} we obtain that

\begin{equation*}
P(A + \kappa X) = 0 +  \kappa z_{2\,3} \cdot 1^2 + 0  + \mbox{(terms of higher degree in $\kappa$)}.
\end{equation*}
Hence,
\begin{equation}\label{lem:DistinctN5CH2RK1:eq1111}
l_{P, A_{1}}(Z) = z_{2\,3}\;\;\mbox{for all}\;\;Z \in \M_5(\F).
\end{equation}
Let $\sigma_1 = \id, \sigma_2, \ldots, \sigma_{12} \in S_5$ be permutations such that $\sigma_{i}(1) = 1$ for all $1 \le i \le $ and $$(\sigma_1(2),\sigma_1(3)) = (2,3),\; (\sigma_2(2),\sigma_2(3)) = (2, 4),\; \ldots,\; (\sigma_{12}(2),\sigma_{12}(3)) = (5,4),$$
 correspondingly. In other words, $(\sigma_i(2),\sigma_i(3))$ ranges over all possible ordered pairs of distinct elements of $\{2, 3, 4, 5\}$ when $i$ ranges from $1$ to $12$. Since $S_5$ contains all the bijections of $\{1, \ldots, 5\}$, such permutations indeed exist. 
 
Let $T^{(2)},\ldots, T^{(12)}$ be bijective linear maps on $\M_{5}(\F)$ defined by
\[
T^{(i)}(Y) = (y_{\sigma_k(i)\,\sigma_k(j)})\;\;\mbox{for all}\;\;Y = (y_{i\,j}) \in \M_{5}(\F)\;\;\mbox{and}\;\;1 \le k \le 12.
\]
As in Lemma~\ref{lem:DistinctN4CHN2RK1NNIL}, these linear maps preserve $\D$ and send $B$ to $B$. \linebreak By Lemma~\ref{lem:PresDEqPresD2}, they preserve $\B$ as well. Hence, by applying Lemma~\ref{lem:TPreservesPThenTPreservesPI} to these maps and $B$, we obtain that $T^{(1)},\ldots, T^{(12)}$  preserve $P = \B_{(B)}^{(4)}.$ Using this fact and Lemma~\ref{thm:surjpres} applied to $P, l_{P, A_{1}}$ and $T_i$ we conclude that the linear functionals
\[
l^{(1)} = l_{P, A_{1}} \circ T^{(1)} = l_{P, A_{1}},\quad l^{(2)} = l_{P, A_{1}} \circ T^{(2)}, \quad \ldots\;,\quad l^{(12)} = l_{P, A_{1}} \circ T^{(12)}
\]
belong to $\FL_P$. In addition,
\begin{equation}\label{lem:DistinctN5CH2RK1:eq100001}
\begin{gathered}
l^{(1)}(Z) = \left(T^{(1)}(Z)\right)_{2\,3} = z_{\sigma_1(2)\,\sigma_1(3)} = z_{2\,3},\\
l^{(2)}(Z) = \left(T^{(2)}(Z)\right)_{2\,3} = z_{\sigma_2(2)\,\sigma_2(3)} = z_{2\,4},\\
\vdots\\
l^{(12)}(Z) = \left(T^{(12)}(Z)\right)_{2\,3} = z_{\sigma_{12}(2)\,\sigma_{12}(3)} = z_{4\,5},
\end{gathered}
\end{equation}
for all $Z \in M_{5}(\F)$.

Now let $M' = I_5 + E_{2\,3}$ and $T'$ be a bijective linear map on $\M_{5}(\F)$ defined by
\[
T'(Z) = M'ZM'\;\;\mbox{for all}\;\;Z \in \M_5(\F).
\]
Since $M'M' = I_5$, then $T'$ is a matrix similarity. Hence, $T'$ preserves $\D$. In addition, the direct calculation shows that $T'(B) = B$.  Therefore, $T'$  preserves $P = \B_{(B)}^{(4)}$ for the same reason as for $T^{(1)}, \ldots, T^{(12)}$. Similarly as above we obtain that  $l' = l_{P, A_{1}} \circ T' \in \FL_P$ and
\begin{equation}\label{lem:DistinctN5CH2RK1:eq100002}
l'(Z) = \left(T'(Z)\right)_{2\,3} = z_{2\,2} + z_{2\,3} + z_{3\,2} + z_{3\,3}\;\;\mbox{for all}\;\;Z \in \M_{5}(\F).
\end{equation}

Finally, let $l'' = l' \circ T^{(2)}$ and $l''' = l' \circ T^{(3)}$. As above, we conclude that $l''$ and $l'''$ belong to $\FL_P$. Also the equality
\begin{multline}\label{lem:DistinctN5CH2RK1:eq100003}
l''(Z) = \left(T^{(2)}(Z)\right)_{2\,2} + \left(T^{(2)}(Z)\right)_{2\,3} + \left(T^{(2)}(Z)\right)_{3\,2} + \left(T^{(2)}(Z)\right)_{3\,3}\\
= z_{\sigma_2(2)\,\sigma_2(2)} + z_{\sigma_2(2)\,\sigma_2(3)} + z_{\sigma_2(3)\,\sigma_2(2)} + z_{\sigma_2(3)\,\sigma_2(3)}\\
= z_{2\,2} + z_{2\,4} + z_{4\,2} + z_{4\,4}
\end{multline}
and the equality
\begin{multline}\label{lem:DistinctN5CH2RK1:eq100004}
l'(Z) = \left(T^{(3)}(Z)\right)_{2\,2} + \left(T^{(3)}(Z)\right)_{2\,3} + \left(T^{(3)}(Z)\right)_{3\,2} + \left(T^{(2)}(Z)\right)_{3\,3}\\
= z_{\sigma_3(2)\,\sigma_3(2)} + z_{\sigma_3(2)\,\sigma_3(3)} + z_{\sigma_3(3)\,\sigma_3(2)} + z_{\sigma_3(3)\,\sigma_3(3)}\\
= z_{2\,2} + z_{2\,5} + z_{5\,2} + z_{5\,5}
\end{multline}
hold for all $Z \in \M_{5}(\F).$ By considering the equalities in~\eqref{lem:DistinctN5CH2RK1:eq100001},~\eqref{lem:DistinctN5CH2RK1:eq100002} and~\eqref{lem:DistinctN5CH2RK1:eq100003} we conclude that $l', l'', l''', l^{(1)}, \ldots, l^{(12)}$ are linearly independent. Therefore, $\dim(\mathcal L_P) \ge 15.$ This also implies that $P = \B_{(B)}^{(4)}$ is nonzero and consequently $\deg_{\lambda} \left(\B_{(B)}\right) \ge 4$. Together with~\eqref{lem:DistinctN5CH2RK1:eq60001} this implies that $\deg_{\lambda} \left(\B_{(B)}\right) = 4$.

\end{proof}

\begin{lemma}\label{lem:DistinctN5CH2RK2}Assume that $\Char (\F) = 2$ and $|\F| > \frac{5 \cdot 4}{2} = 10$. \linebreak If $B = \begin{psmallmatrix}0 & 1 & 0 & 0 & 0\\ 0 & 0 & 0 & 0 & 0\\ 0 & 0 & 0 & 1 & 0\\ 0 & 0 & 0 & 0 & 0\\ 0 & 0 & 0 & 0 & 0\end{psmallmatrix}$, then $0 < \deg_\lambda \left(\B_{(B)}\right) \le 4$ and  $\dim(\mathcal L_{\B_{(B)}^{(4)}}) \le 12$.
\end{lemma}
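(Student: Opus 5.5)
The plan follows the $4\times 4$ computation in Lemma~\ref{lem:DistinctN4CH2RK2}, now with $n=5$ and $\B$ given by~\eqref{lem:Disc2N5FormulaCH2:eqq1}.

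\textbf{Step 1: positivity.} The lower bound is immediate. $B$ has the form $\begin{psmallmatrix}0&1\\0&0\end{psmallmatrix}\oplus C$ with $C\in\M_3(\F)$ strictly upper triangular. Hence Lemma~\ref{lem:E12PlusStrictUpDegBound} gives $\deg_\lambda(\D_{(B)})>0$. Since $\D=\B^2$, we get $\deg_\lambda(\B_{(B)})>0$.

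\textbf{Step 2: $\lambda$-degrees of the coefficients of $p_{A+\lambda B}$.} The parameter $\lambda$ enters only entries $(1,2)$ and $(3,4)$. By multilinearity of the determinant in columns $2$ and $4$,
\[
p_{A+\lambda B}(x)=p_A(x)+\lambda\bigl(p'(x)+p''(x)\bigr)+\lambda^2 q(x),
\]
where $p'=\pm\det((xI_5-A)(1|2))$, $p''=\pm\det((xI_5-A)(3|4))$ and $q=\det((xI_5-A)(1,3|2,4))$. Here $\deg_x p',\deg_x p''\le 3$ and $\deg_x q\le 1$, because deleting a row and column that cross off $k$ diagonal entries lowers the $x$-degree by $k$. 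Write $p_{A+\lambda B}=x^5+Q_4x^4+\dots+Q_0$. Then
\[
\deg_\lambda Q_4=0,\quad \deg_\lambda Q_3\le 1,\quad \deg_\lambda Q_2\le 1,\quad \deg_\lambda Q_1\le 2,\quad \deg_\lambda Q_0\le 2.
\]
Explicitly, $Q_3$ gains $\lambda\cdot[3](p'+p'')$, and $Q_1,Q_0$ gain $\lambda^2\cdot[1]q$ and $\lambda^2\cdot[0]q$.

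\textbf{Step 3: the degree bound.} Substitute into $\B=Q_0^2+Q_1Q_2^2+Q_0Q_2Q_3+Q_1Q_2Q_3Q_4+Q_0Q_3^2Q_4+Q_1^2Q_4^2$. The term-wise $\lambda$-degree bounds are $4,4,4,4,4,4$, so $\deg_\lambda(\B_{(B)})\le 4$. Moreover, $P=\B_{(B)}^{(4)}$ is the sum of the top coefficients:
\[
P=([0]q)^2+([1]q)\,c_2^2+([0]q)\,c_2c_3+([1]q)\,c_2c_3\,\tr(A)+([0]q)\,c_3^2\,\tr(A)+([1]q)^2(\tr A)^2,
\]
where $c_3=[3](p'+p'')$ and $c_2=[2](p'+p'')$ are the $\lambda$-coefficients of $Q_3$ and $Q_2$.

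\textbf{Step 4: bounding $\dim\FL_P$.} The key observation is that $[1]q$ and $[0]q$ are polynomials in few entries, and likewise $c_3,c_2$.
\begin{itemize}
\item $q$ is the minor on rows $\{2,4,5\}$ and columns $\{1,3,5\}$ of $xI_5-A$. Its only diagonal entry is at $(5,5)$. Hence $[1]q$ is a $2\times2$ minor in entries $a_{21},a_{23},a_{41},a_{43}$, and $[0]q$ involves entries from rows $2,4,5$ and columns $1,3,5$.
\item $c_3$ is, up to sign, $a_{21}+a_{43}$.
\item $c_2$ is linear in entries from the relevant deleted minors.
\end{itemize}
The main obstacle is to show that all the variables occurring in $P$ span, in the dual, a space of dimension at most $12$. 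Equivalently, one must exhibit at least $13$ independent directions $Z$ with $l_{P,A}(Z)=0$ for all $A$; by Lemma~\ref{lem:DirDerLin} this gives $\dim\FL_P\le 25-13=12$. The candidate directions are:
\begin{itemize}
\item matrix units $E_{ij}$ whose entries do not appear in $P$, such as $E_{12}$ and $E_{34}$ (these are the $\lambda$-slots and do not occur);
\item entries from rows $1,3$ that appear in none of $q$, $c_2$, $c_3$;
\item combinations such as $E_{ii}-E_{jj}$ on diagonal positions, exploiting that diagonal entries enter only through $\tr A$ and symmetric combinations in characteristic $2$.
\end{itemize}
I would first expand $c_2$ and $[0]q$ explicitly and list the occurring variables. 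If that list exceeds $12$ coordinates, I would fall back to finding linear invariances of $P$: $P$ is invariant under $A\mapsto A+tI_5$ (this follows from~\ref{enum:D2PropertyC} together with $I_5$ commuting with $B$), and under similarities fixing $B$, e.g.\ $A\mapsto A+tB$. These invariances supply the remaining annihilating directions $I_5$, $E_{12}$, $E_{34}$, and similar ones.
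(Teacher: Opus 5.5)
Your Steps 1--3 are correct and follow the paper: positivity via Lemma~\ref{lem:E12PlusStrictUpDegBound} and $\D=\B^2$, the column-multilinear expansion, the term-by-term $\lambda$-degrees, and the resulting formula for $P$.

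\textbf{The gap is Step~4.} It carries the whole bound, and the plan you sketch there cannot reach $13$ directions.
\begin{itemize}
\item Expanding shows that exactly eight entries are absent from $P$: those of $E_{12},E_{14},E_{15},E_{32},E_{34},E_{35},E_{52},E_{54}$ (two of them in row~$5$).
\item Diagonal entries do not enter only through $\tr A$. Indeed $c_2$ contains $a_{21}a_{33}$ and $a_{11}a_{43}$, and $[0]q=\det A[2,4,5|1,3,5]$ contains $a_{55}a_{21}a_{43}$. For instance, $E_{11}+E_{33}$ is not an annihilating direction.
\item Your fallback adds only $I_5$. The map $A\mapsto A+tB$ is a translation, not a similarity, and it gives $B\in\langle E_{12},E_{34}\rangle$, which is nothing new.
\item No similarity or transpose-similarity fixing $B$ can help either. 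The eight units span exactly the matrices $X$ with $X\mbf e^{(1)}=X\mbf e^{(3)}=0$, $X\mbf e^{(5)}\in\langle\mbf e^{(1)},\mbf e^{(3)}\rangle$ and $X(\F^5)\subseteq\langle\mbf e^{(1)},\mbf e^{(3)},\mbf e^{(5)}\rangle$. These are the maps lowering the flag $\operatorname{Im}B\subset\ker B\subset\F^5$, and every map of that kind fixing $B$ preserves this space.
\end{itemize}
So you stop at $9$.

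The paper's own route runs into the same obstruction: it pushes absent variables through $T^{(1)},T^{(2)},T^{(3)}$ using Corollary~\ref{cor:SurjPresComp}. Its list has $E_{51}$ in place of $E_{52}$, although $a_{51}$ visibly occurs in $\det A[2,4,5|1,3,5]$. Some of its computed images are also off: in fact $T^{(2)}(E_{25})=E_{45}$ and $T^{(3)}(E_{25})=E_{15}+E_{25}$. So do not lean on that step.

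\textbf{The missing idea} is a cancellation that appears only after regrouping. Put
\[
c_3=a_{21}+a_{43},\qquad q_1=[1]q=a_{21}a_{43}+a_{23}a_{41},\qquad u=c_2+c_3\tr A,\qquad w=[0]q+q_1\tr A.
\]
Your formula is then exactly $P=w^2+c_3uw+q_1u^2$. Expanding in characteristic $2$ gives
\[
u=a_{21}(a_{11}+a_{22})+a_{43}(a_{33}+a_{44})+a_{23}(a_{31}+a_{42})+a_{41}(a_{13}+a_{24})+a_{25}a_{51}+a_{45}a_{53},
\]
\[
w=a_{51}(a_{23}a_{45}+a_{25}a_{43})+a_{53}(a_{21}a_{45}+a_{25}a_{41})+(a_{11}+a_{22}+a_{33}+a_{44})\,q_1 .
\]
Hence $P$ is a polynomial in the twelve linear forms
\[
a_{21},\ a_{23},\ a_{41},\ a_{43},\ a_{25},\ a_{45},\ a_{51},\ a_{53},\ a_{11}+a_{22},\ a_{33}+a_{44},\ a_{31}+a_{42},\ a_{13}+a_{24}.
\]
By the chain rule every $l_{P,A}$ lies in their span, so $\dim\FL_P\le 12$. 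The annihilating directions are the eight units above together with $E_{55}$, $E_{11}+E_{22}$, $E_{33}+E_{44}$, $E_{13}+E_{24}$ and $E_{31}+E_{42}$.

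There is also a formula-free reason for this. Over $\F((\lambda^{-1}))$, conjugating $A+\lambda B$ by $I_5+\lambda^{-1}Y$ gives $\lambda B+(A+[B,Y])+O(\lambda^{-1})$. Since $\deg_\lambda\B_{(B)}\le 4$, comparing $\lambda^4$-coefficients yields $P(A+[B,Y])=P(A)$ for all $Y$. The commutators $[B,E_{ij}]$ span the twelve listed directions other than $E_{55}$, and $I_5$ supplies the thirteenth.
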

\begin{proof}From Lemma~\ref{lem:E12PlusStrictUpDegBound} we obtain that $0 < \deg_\lambda \left(\D_{(B)}\right).$ Therefore, $0 < \deg_\lambda \left(\B_{(B)}\right)$ because $\D = \B^2$ by the definition of $\B$. Let us show that\linebreak $\deg_\lambda \left(\B_{(B)}\right) \le 4$ and $\dim(\mathcal L_{\B_{(B)}^{(4)}}) \le 12$.

Let $p'(x), p''(x), p''' \in \F[a_{1\,1}, \ldots, a_{1\,5}, a_{2\,1}, \ldots, a_{5\,5}]$ be defined by

\begin{equation}\label{lem:DistinctN5CH2RK2:eq102}
p'(x) = p_{A(1|2)}(x) = a'_3x^3+ a'_2x^2 + a'_1x + a'_0,
\end{equation}
\begin{equation}\label{lem:DistinctN5CH2RK2:eq103}
p''(x) =  p_{A(3|4)}(x) =  a''_3x^3 + a''_2x^2 + a''_1x + a''_0
\end{equation}
and
\begin{equation}\label{lem:DistinctN5CH2RK2:eq104}
p'''(x) = p_{A(1,3|2,4)}(x) = a'''_1x + a'''_0. 
\end{equation}

Using the multilinearity of the determinant along the second and the fourth column we obtain that
\begin{multline}\label{lem:DistinctN5CH2RK2:eq1}
p_{A + \lambda B}(x)
 = p_{A}(x) + \lambda (p'(x) + p''(x)) + \lambda^2 p'''(x) \\ 
 = x^5 + Q_4x^4 + Q_3x^3 + Q_2x^2 + Q_1x + Q_0, 
\end{multline}
where $Q_0, \ldots, Q_3, \in \F[\lambda, a_{1\,1}, \ldots, a_{1\, 5}, a_{2\,1}, \ldots, a_{5\,5}]$ are defined by
\begin{multline}\label{lem:DistinctN5CH2RK2:eq3}
Q_4 = \tr(A), \quad Q_3 = a_3 + \lambda (a'_3 + a''_3), \quad Q_2 = (a_2 + \lambda (a'_2 + a''_2))\\
Q_1 =  a_0 + \lambda(a'_1 + a''_1) + \lambda^2 a'''_0, \qquad Q_0 =  a_0 + \lambda(a'_0 + a''_0) + \lambda^2 a'''_0.
\end{multline}
The equalities~\eqref{lem:DistinctN5CH2RK2:eq3} imply that
\begin{equation}\label{lem:DistinctN5CH2RK2:eq2}
\deg_{\lambda}(Q_4) = 0, \quad \deg_{\lambda}(Q_3) =  \deg_{\lambda}(Q_2) = 1, \quad \deg_{\lambda}(Q_1)  = \deg_{\lambda}(Q_0) = 2.
\end{equation}
Using the formula~\eqref{lem:Disc2N5FormulaCH2:eqq1} we obtain that
\begin{multline}\label{lem:DistinctN5CH2RK2:eq111}
\B_{(B)} = \B(p_{A + \lambda B}(t))\\
 = Q_0^2 + Q_1 Q_2^2 + Q_0 Q_2 Q_3 + Q_1 Q_2 Q_3 Q_4 + Q_0 Q_3^2 Q_4 + Q_1^2 Q_4^2.
\end{multline}
From~\eqref{lem:DistinctN5CH2RK2:eq2} we conclude that
\begin{multline*}
\deg_{\lambda}(Q_0^2) = 4,\qquad \deg_{\lambda}( Q_1 Q_2^2) = 4,\qquad \deg_{\lambda}(Q_0 Q_2 Q_3) = 4,\\
\deg_{\lambda}(Q_1 Q_2 Q_3 Q_4) = 4,\qquad \deg_{\lambda}( Q_0 Q_3^2 Q_4) = 4,\qquad \deg_{\lambda}(Q_1^2 Q_4^2) = 4.
\end{multline*}
This implies that $\deg_\lambda \left(\B_{(B)}\right) \le 4$ and $\B_{(B)}^{(4)} = P$  is equal to the sum of the coefficients of $\lambda^4$ in all the terms  in the right-hand side of~\eqref{lem:DistinctN5CH2RK1:eq111}. According to~\eqref{lem:DistinctN5CH2RK2:eq3}, 
\begin{gather*}
\mbox{the coefficient of $\lambda^4$ in $Q_0^2$ is equal to}\;\; (a'''_0)^2 ,\\ 
\mbox{the coefficient of $\lambda^4$ in $Q_1 Q_2^2$ is equal to}\;\;(a'''_1)(a'_2 + a''_2)^2 ,\\ 
\mbox{the coefficient of $\lambda^4$ in $Q_0 Q_2 Q_3$ is equal to}\;\; a'''_0(a'_2 + a''_2)(a'_3 + a''_3) ,\\ 
\mbox{the coefficient of $\lambda^4$ in $Q_1 Q_2 Q_3 Q_4$ is equal to}\;\; a'''_1(a'_2 + a''_2)(a'_3 + a''_3)\tr(A) ,\\ 
\mbox{the coefficient of $\lambda^4$ in $Q_0 Q_3^2 Q_4$ is equal to}\;\; a'''_0(a'_3 + a''_3)^2\tr(A),\\
\mbox{the coefficient of $\lambda^4$ in $Q_1^2 Q_4^2$ is equal to}\;\; (a'''_1)^2(\tr(A))^2.
\end{gather*}
Hence,
\begin{multline}\label{lem:DistinctN5CH2RK1:eq6}
P = \left[(a'''_0)^2\right] + \left[(a'''_1)(a'_2 + a''_2)^2\right] + \left[a'''_0(a'_2 + a''_2)(a'_3 + a''_3)\right]\\
 + \left[a'''_1(a'_2 + a''_2)(a'_3 + a''_3)\tr(A)\right]  + \left[a'''_0(a'_3 + a''_3)^2\tr(A)\right] + \left[(a'''_1)^2(\tr(A))^2\right]. 
\end{multline}

Let $Z \in \M_{5}(\F)$. Using the chain rule and the fact that $\Char(\F) = 2$ we obtain that
\begin{multline*}
\frac{\partial P}{\partial Z}
= \left[0\right] + \left[\frac{\partial a'''_1}{\partial Z}(a'_2 + a''_2)^2\right]\\
 + \left[\frac{\partial a'''_0}{\partial Z}(a'_2 + a''_2)(a'_3 + a''_3) + \frac{\partial (a'_2 + a''_2)}{\partial Z}a'''_0(a'_3 + a''_3) + \frac{\partial (a'_3 + a''_3) }{\partial Z}a'''_0(a'_2 + a''_2)\right.\\
 \left. + \frac{\partial  a'''_1}{\partial Z}(a'_2 + a''_2)(a'_3 + a''_3)\tr(A)+ \frac{\partial (a'_2 + a''_2)}{\partial Z}a'''_1(a'_3 + a''_3)\tr(A)\right]\\
  + \left[\frac{\partial (a'_3 + a''_3)}{\partial Z}a'''_1(a'_2 + a''_2)\tr(A)+ \frac{\partial a_4}{\partial Z}a'''_1(a'_2 + a''_2)(a'_3 + a''_3)\right]\\
   + \left[\frac{\partial a'''_0}{\partial Z}(a'_3 + a''_3)^2\tr(A)+ \frac{\partial a_4}{\partial Z}a'''_0(a'_3 + a''_3)^2\right] + \left[0\right]\\
   =  \frac{\partial \tr(A)}{\partial Z}\left(a'''_1(a'_2 + a''_2)(a'_3 + a''_3) + a'''_0(a'_3 + a''_3)^2\right)\\
   + \frac{\partial (a'_2 + a''_2)}{\partial Z}\left(a'''_0(a'_3 + a''_3) + a'''_1(a'_3 + a''_3)\tr(A)\right)\\
   + \frac{\partial (a'_3 + a''_3) }{\partial Z}\left(a'''_0(a'_2 + a''_2) + a'''_1(a'_2 + a''_2)\tr(A)\right)\\
   + \frac{\partial a'''_0}{\partial Z}\left((a'_2 + a''_2)(a'_3 + a''_3) +  (a'_3 + a''_3)^2\tr(A)\right)\\
   + \frac{\partial a'''_1}{\partial Z}\left((a'_2 + a''_2)^2 + (a'_2 + a''_2)(a'_3 + a''_3)\tr(A)\right).
\end{multline*}
Using the definitions of $a'''_0, a'''_1, a'_1, a'_2, a''_1, a''_2, \in \F$ in~\eqref{lem:DistinctN5CH2RK2:eq102}--\eqref{lem:DistinctN5CH2RK2:eq104}, from~\eqref{eqqq:formulaChPoly} we deduce that
\begin{multline}\label{lem:DistinctN5CH2RK1:eq6001}
a'_2 + a''_2
 = \left(\det (A[2, 3| 1, 3]) + \det (A[2, 4| 1, 4])
  + \det (A[2, 5| 1, 5])\right)\\
   + \left(\det (A[1, 4| 1, 3]) + \det (A[2, 4| 2, 3]) + \det (A[4, 5| 3, 5])\right),
\end{multline}
\begin{equation}\label{lem:DistinctN5CH2RK1:eq6002}
a'_3 + a''_3 = a_{2\,1} + a_{4\,3},
\end{equation}
\begin{equation}\label{lem:DistinctN5CH2RK1:eq6003}
a'''_0 = \det(A[2, 4, 5| 1, 3, 5]),
\end{equation}
and
\begin{equation}\label{lem:DistinctN5CH2RK1:eq6004}
a'''_1 = \det(A[2, 4| 1, 3]).
\end{equation}
The equalities~\eqref{lem:DistinctN5CH2RK1:eq6001}--\eqref{lem:DistinctN5CH2RK1:eq6004} imply that the variables $a_{1\,2},$ $a_{1\,4},$ $a_{1\,5},$ $a_{3\,2},$ $a_{3\, 4},$ $a_{3\,5},$ $a_{5\,1}$ and $a_{5\,4}$ are not contained in $\tr(A), a'_2 + a''_2, a'_3 + a''_3, a'''_0, a'''_1$. Hence
\begin{equation}\label{lem:DistinctN5CH2RK1:eq6005}
E_{1\,2},  E_{1\,4}, E_{1\,5}, E_{3\,2}, E_{3\, 4}, E_{3\,5}, E_{5\,1}, E_{5\,4} \in \left(\FL_P\right)^0.
\end{equation}

Let $\sigma_1, \sigma_2 \in S_5$ be defined by 
$\sigma_1(1) = 2,\, \sigma_1(2) =1,\, \sigma_1(3) = 4 ,\, \sigma_1(4) = 3,\, \sigma_1(5) = 5$
and
$\sigma_2(1) = 3,\, \sigma_2(2) =4,\, \sigma_2(3) = 1 ,\, \sigma_2(4) = 2,\, \sigma_2(5) = 5,$
$M \in \M_{5}(\F)$ be defined by $M = I_5 + B = 
\begin{psmallmatrix}
1 & 1 & 0 & 0 & 0\\
0 & 1 & 0 & 0 & 0\\
0 & 0 & 1 & 1 & 0\\
0 & 0 & 0 & 1 & 0\\
0 & 0 & 0 & 0 & 1
\end{psmallmatrix},$
and $T^{(1)}, T^{(2)}, T^{(3)}\colon \M_{5}(\F) \to \M_{5}(\F)$ be defined by 
\[
T^{(1)}(Y) = (y_{\sigma_1(i)\,\sigma_1(j)})^t,\quad T^{(2)}(Y) = (y_{\sigma_2(i)\,\sigma_2(j)}),\quad  T^{(3)}(Y) = MYM.
\]

Since $M^2 = I_5$ and the matrix transposition preserves $\D$, then as in\linebreak Lemma~\ref{lem:DistinctN5CH2RK1} we obtain that $T^{(1)}, T^{(2)}$ and $T^{(3)}$ are linear maps preserving $\B$. In addition, the direct calculation shows that $T^{(i)}(B) = B$ for all $1 \le i \le 3$. Hence, by applying Lemma~\ref{lem:TPreservesPThenTPreservesPI} to these maps and $B$, we obtain that $T^{(1)}, T^{(2)}$ and $T^{(3)}$  preserve $P = \B_{(B)}^{(4)}.$ Using this fact and Lemma~\ref{cor:SurjPresComp}
applied to $P, T^{(1)}$ and $E_{5\,1}, E_{1\,5}$ we conclude that 
\begin{equation}\label{lem:DistinctN5CH2RK1:eq6006}
E_{2\,5} = T^{(1)}(E_{5\,1}),\;E_{5\,2} = T^{(1)}(E_{1\,5}) \in \left(\FL_P\right)^0
\end{equation}
Then by the similar reasons we obtain that
\begin{equation}\label{lem:DistinctN5CH2RK1:eq6007}
E_{2\,2} + E_{2\,5} + E_{5\,2} + E_{5\,5} = T^{(2)}(E_{2\,5}) \in \left(\FL_P\right)^0.
\end{equation}
Finally,
\begin{equation}\label{lem:DistinctN5CH2RK1:eq6008}
E_{4\,5} = T^{(3)}(E_{2\,5}),\;E_{5\,3} = T^{(3)}(E_{5\,1}) \in \left(\FL_P\right)^0
\end{equation}
Thus, from~\eqref{lem:DistinctN5CH2RK1:eq6005}--\eqref{lem:DistinctN5CH2RK1:eq6008} we obtain 13 linearly independent elements of $\left(\FL_P\right)^0$. Hence, $\dim \left(\left(\FL_P\right)^0\right) \ge 13$
and  the basic properties of the vector spaces and their annihilators infer that
\begin{equation*}
\dim(\mathcal L_P) = 25 - \dim \left(\left(\FL_P\right)^0\right) \le 25 - 13 = 12.
\end{equation*}

\end{proof}

\begin{landscape}
\begin{table}\label{tbl:compTab}
\caption{Invariants of small matrices discussed in Section~\ref{sec:InvariantsOfSmallMatrices}. The gray color highlights the invariants which are used to distinguish matrices in Lemma~\ref{lem:IfPresDThenPresRk1PSc}}
\vspace{20pt}
\begin{tabular}{||c|c|c|c|c|c||}
\hline
\hline
$n$ & $\Char(\F)$ & $X = $ & $\deg_\lambda \left(\D_{(X)}\right)$ or $\deg_\lambda \left(\B_{(X)}\right)$ & $\dim \left(\mathcal L_{\D_{(X)}^{(i)}}\right)$ or $\dim \left(\mathcal L_{\B_{(X)}^{(i)}}\right)$ & Reference\\
\hline
\hline
\multirow{6}{*}{$n = 3$} & \multirow{2}{*}{$\Char(\F) = 2$}  & $E_{1\,1}$  &  $\deg_\lambda \left(\B_{(X)}\right) = 2$ & \cellcolor{lightgray} $\dim \left(\mathcal L_{\B_{(X)}^{(2)}}\right) = 4$ & Lemma~\ref{lem:DistinctN3CH2RK1NNIL}\\ 
& & $E_{1\,2} + E_{2\,3}$  &  $\deg_\lambda \left(\B_{(X)}\right) = 2$ & \cellcolor{lightgray} $\dim \left(\mathcal L_{\B_{(X)}^{(2)}}\right) = 6$ & Lemma~\ref{lem:DistinctN3CH2RK2}\\

 & \multirow{2}{*}{$\Char(\F) = 3$}  & $E_{1\,1}$  & \cellcolor{lightgray} $\deg_{\lambda} \left(\D_{(X)}\right) = 4$ &  & Lemma~\ref{lem:DistinctN3CH3RK1NNIL}\\ 
& & $E_{1\,2} + E_{2\,3}$  &  \cellcolor{lightgray} $\deg_{\lambda} \left(\D_{(X)}\right) = 3$ & & Lemma~\ref{lem:DistinctN3CH3RK2}\\

 & \multirow{2}{*}{$\Char(\F) \neq 2, 3$}  & $E_{1\,1}$  & $\deg_\lambda \left(\D_{(X)}\right) = 4$ & \cellcolor{lightgray} $\dim \left(\mathcal L_{\left(\D\right)_{(X)}^{(4)}}\right) = 3$ & Lemma~\ref{lem:DistinctN3CHN23RK1NNIL}\\ 
& & $E_{1\,2} + E_{2\,3}$  & $\deg_\lambda \left(\D_{(X)}\right) = 4$ & \cellcolor{lightgray}$\dim \left(\mathcal L_{\left(\D\right)_{(X)}^{(4)}}\right) = 1$  & Lemma~\ref{lem:DistinctN3CHN23RK2}\\

\multirow{4}{*}{$n = 4$} & \multirow{2}{*}{$\Char(\F) \neq 2$}  & $E_{1\,1}$  & $\deg_\lambda \left(\D_{(X)}\right) = 6$ & \cellcolor{lightgray} $\dim \left(\mathcal L_{D_{(X)}^{(6)}}\right) = 8$ & Lemma~\ref{lem:DistinctN4CHN2RK1NNIL}\\ 
& & $E_{1\,2} + E_{3\,4}$  & $\deg_\lambda \left(\D_{(X)}\right) = 6$ & \cellcolor{lightgray} $\dim \left(\mathcal L_{D_{(X)}^{(6)}}\right) \le 4$ & Lemma~\ref{lem:DistinctN4CHN2RK2}\\

 & \multirow{2}{*}{$\Char(\F) = 2$}  & $E_{1\,1}$  & \cellcolor{lightgray} $\deg_\lambda \left(\B_{(X)}\right) = 3$ &  & Lemma~\ref{lem:DistinctN4CH2RK1NNIL}\\ 
& & $E_{1\,2} + E_{3\,4}$  & \cellcolor{lightgray} $0 < \deg_\lambda \left(\B_{(X)}\right) \le 2$  &  & Lemma~\ref{lem:DistinctN4CH2RK2}\\

\multirow{2}{*}{$n = 5$} & \multirow{2}{*}{$\Char(\F) = 2$}  & $E_{1\,1}$  & $\deg_\lambda \left(\B_{(X)}\right) = 4$ & \cellcolor{lightgray} $\dim \left(\mathcal L_{\B_{(X)}^{(4)}}\right) \ge 15 $ & Lemma~\ref{lem:DistinctN5CH2RK1}\\ 
& & $E_{1\,2} + E_{3\,4}$  & $0 < \deg_\lambda \left(\B_{(X)}\right) \le 4$ & \cellcolor{lightgray} $\dim \left(\mathcal L_{\B_{(X)}^{(4)}}\right) \le 12$ & Lemma~\ref{lem:DistinctN5CH2RK2}\\
\hline
\hline
\end{tabular}
\end{table}
\end{landscape}

\section{The radical of $\D$ for $n \ge 3$ and the radical of $\B$ for $3 \le n \le 5$}
\label{sec:RadDAndRadB}

\begin{lemma}\label{lem:LinearAdditionPreserverEqualToScalarAlgClos}Assume that $n \ge 3$ and $\F$ is algebraically closed. Let $B \in \M_{n}(\F)$. If
\begin{equation}\label{lem:LinearAdditionPreserverEqualToScalarAlgClos:eqq1}
\D(A + \lambda B) = \D(A)\;\;\mbox{for all}\;\;\lambda \in \F\;\;\mbox{and}\;\;A \in \M_{n}(\F),
\end{equation}
 then $B \in \langle I_n\rangle$.
\end{lemma}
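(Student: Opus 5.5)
The hypothesis says precisely that $B\in\rad(\D)$, which gives $\deg_\lambda\left(\D_{(B)}\right)=0$. The plan is to feed this into the machinery of Section~\ref{sec:MatsSmallDegDesc}. Since $\F$ is algebraically closed, it is infinite, so all cardinality assumptions hold. Moreover, the equality~\eqref{lem:LinearAdditionPreserverEqualToScalarAlgClos:eqq1}, holding for all $\lambda\in\F$ and all $A$, is an identity of polynomials in $\F[a_{1\,1},\ldots,a_{n\,n},\lambda]$. Hence $\D_{(B)}=\D_{(B)}^{(0)}$, i.e., $\deg_\lambda(\D_{(B)})=0\le 2n-2$.

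First, Lemma~\ref{lem:LowDegImpliesLowRankOrSeveralCasesForAlgebraicallyClosed} applies to $B$, so $B$ satisfies one of the conditions C1--C4.

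\emph{Case C1.} If $B\in\R_n(\F)$ but $B\notin\langle I_n\rangle$, then Lemma~\ref{lem:NonScalarElementofRnLowBound} gives $\deg_\lambda(\D_{(B)})>0$, a contradiction. So $B\in\langle I_n\rangle$ in this case.

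\emph{Cases C2--C4.} In each of these, $B$ is similar to $N+cI_n$, where $N$ is one of the listed nilpotent matrices. By Lemma~\ref{lem:SimPlusScalSameInvD}, it suffices to show $\deg_\lambda(\D_{(N)})>0$ for those three specific $N$.

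For $n=4$ with $N=E_{1\,2}+E_{3\,4}$, and for $n=5$ with $N=E_{1\,2}+E_{3\,4}$, the matrix $N$ has the form $\begin{psmallmatrix}0&1\\0&0\end{psmallmatrix}\oplus C$ with $C$ strictly upper triangular. So Lemma~\ref{lem:E12PlusStrictUpDegBound} gives the required positivity directly.

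For $n=3$ with $N=E_{1\,2}+E_{2\,3}$, I would exhibit one test matrix. Take $A=E_{3\,1}$. Then $p_{A+\lambda N}(x)=x^3-\lambda^2$, whose discriminant is $-27\lambda^4$. This is nonzero in characteristic $\neq 3$. In characteristic $3$ I would instead use Lemma~\ref{lem:DistinctN3CH3RK2}, which gives $\deg_\lambda(\D_{(N)})=3>0$. Alternatively, Lemmas~\ref{lem:DistinctN3CHN23RK2}, \ref{lem:DistinctN3CH2RK2} and~\ref{lem:DistinctN3CH3RK2} together already cover all characteristics. For characteristic $2$, one uses that $\D=\B^2$, so $\deg_\lambda(\D_{(N)})=2\deg_\lambda(\B_{(N)})=4$.

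Thus cases C2--C4 are impossible, and $B\in\langle I_n\rangle$. The only delicate point is checking that the already computed tables really give strict positivity in every characteristic for the $n=3$ exceptional matrix. Everything else is a direct citation.
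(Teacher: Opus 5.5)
Your proposal is correct and follows the paper's proof. Both start from $\deg_\lambda(\D_{(B)})\le 0$ and apply Lemma~\ref{lem:LowDegImpliesLowRankOrSeveralCasesForAlgebraicallyClosed}, discard the non-scalar part of C1 with Lemma~\ref{lem:NonScalarElementofRnLowBound}, and exclude C2--C4 because the exceptional normal forms have positive $\lambda$-degree. The paper justifies this last step by citing Table~\ref{tbl:compTab}; you instead verify it directly, using Lemma~\ref{lem:SimPlusScalSameInvD} (which the paper leaves implicit), Lemma~\ref{lem:E12PlusStrictUpDegBound} for $n=4,5$, and the test matrix $A=E_{3\,1}$ for $n=3$. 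That test matrix gives $-27\lambda^4$, already nonzero in every characteristic except $3$ (including $2$), so only $\Char(\F)=3$ needs Lemma~\ref{lem:DistinctN3CH3RK2}.
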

\begin{proof}The condition~\eqref{lem:LinearAdditionPreserverEqualToScalarAlgClos:eqq1} allows to use Lemma~\ref{DegFLessZero} and conclude that\linebreak  $\deg_{\lambda} (\D(A + \lambda B)) \le 0$ for all $A \in \M_{n}(\F).$ By applying Lemma~\ref{DegFLessZero} again we obtain the inequality
\begin{equation}\label{lem:LinearAdditionPreserverEqualToScalarAlgClos:eq1}
\deg_{\lambda} \left(\D_{(B)}\right) \le 0.
\end{equation}
 Hence, $\deg_{\lambda} \left(\D_{(B)}\right) \le 2n - 2$ and Lemma~\ref{lem:LowDegImpliesLowRankOrSeveralCasesForAlgebraicallyClosed} infers that there exist only four possibilities for $B$
\begin{enumerate}[label=C\arabic*., ref=C\arabic*]
\item\label{lem:LinearAdditionPreserverEqualToScalarAlgClos:case1} $B \in \R_n(\F)$.
\item\label{lem:LinearAdditionPreserverEqualToScalarAlgClos:case2} $n = 3$ and $B$ is similar to $\begin{psmallmatrix}0 & 1 & 0\\0 & 0 & 1\\ 0 & 0 & 0\end{psmallmatrix} + cI_n$ for some $c \in \F$.
\item\label{lem:LinearAdditionPreserverEqualToScalarAlgClos:case3} $n = 4$ and $B$ is similar to $\begin{psmallmatrix}0 & 1 & 0 & 0\\0 & 0 & 0 & 0\\ 0 & 0 & 0 & 1\\ 0 & 0 & 0 & 0\end{psmallmatrix}  + cI_n$ for some $c \in \F$.
    
\item\label{lem:LinearAdditionPreserverEqualToScalarAlgClos:case4} $n = 5$, $\Char(\F) = 2$ and  $B$ is similar to $\begin{psmallmatrix}0 & 1 & 0 & 0 & 0\\0 & 0 & 0 & 0 & 0\\ 0 & 0 & 0 & 1 & 0\\ 0 & 0 & 0 & 0 & 0\\ 0 & 0 & 0 & 0 & 0\end{psmallmatrix}  + cI_n$ for some $c \in \F$.
\end{enumerate}
From Table~\ref{tbl:compTab} we conclude that the cases~\ref{lem:LinearAdditionPreserverEqualToScalarAlgClos:case2}--\ref{lem:LinearAdditionPreserverEqualToScalarAlgClos:case4} are incompatible with the inequality~\eqref{lem:LinearAdditionPreserverEqualToScalarAlgClos:eq1}. Hence, $B \in \R_n(\F)$, where Lemma~\ref{lem:NonScalarElementofRnLowBound} implies that $B \in \langle I_n \rangle$.
\end{proof}

\begin{lemma}\label{lem:LinearAdditionPreserverEqualToScalar} The statement of Lemma~\ref{lem:LinearAdditionPreserverEqualToScalarAlgClos} holds for any field $\F$ satisfying $|\F| > n(n-1)$.
\end{lemma}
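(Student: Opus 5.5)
The plan is to reduce to the algebraically closed case via the algebraic closure $\overline{\F}$, and to carry the hypothesis across using polynomial identity arguments.

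Let $B \in \M_n(\F)$ satisfy $\D(A + \lambda B) = \D(A)$ for all $\lambda \in \F$ and $A \in \M_n(\F)$. Consider the polynomial
\[
\D(A + \lambda B) - \D(A) \in \F[\lambda; a_{1\,1}, \ldots, a_{n\,n}].
\]
By the property~\ref{enum:DiscPropertyHP} its total degree is at most $n(n-1)$, and hence its degree in each variable is less than $|\F|$. It vanishes on all of $\F^{n^2+1}$ by assumption, so by Corollary~\ref{DegFLessZero} it is the zero polynomial. Consequently it also vanishes when $A$ ranges over $\M_n(\overline{\F})$ and $\lambda$ ranges over $\overline{\F}$. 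In other words, $B$, regarded as an element of $\M_n(\overline{\F})$, satisfies the hypothesis~\eqref{lem:LinearAdditionPreserverEqualToScalarAlgClos:eqq1} over $\overline{\F}$.

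Since $\overline{\F}$ is algebraically closed and $n \ge 3$, Lemma~\ref{lem:LinearAdditionPreserverEqualToScalarAlgClos} gives $B = cI_n$ for some $c \in \overline{\F}$. Because $B$ has entries in $\F$, the entry $c = b_{1\,1}$ lies in $\F$, and therefore $B \in \langle I_n \rangle$ over $\F$. Alternatively, one could note that $B \in \R_n(\overline{\F})$ and invoke Lemma~\ref{lem:RankOneScalarFromExtToGroundField}, but the direct argument via the diagonal entry suffices.

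The only real subtlety is making sure the identity transfers to $\overline{\F}$. This is exactly where the hypothesis $|\F| > n(n-1)$ is used: it guarantees that a function-level identity over $\F$ becomes a formal polynomial identity. One should also check that the algebraically closed lemma itself made no further cardinality assumptions. Its use of Table~\ref{tbl:compTab} involves lemmas that require $|\F| > n(n-1)$ or similar bounds, and these hold automatically over the infinite field $\overline{\F}$. So no further obstacle is expected.
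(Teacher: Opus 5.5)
Your proof is correct and follows the paper's route. You transfer the hypothesis to $\overline{\F}$ through a formal polynomial identity, which is justified by $|\F| > n(n-1)$; you then apply the algebraically closed case and read off $c = b_{1\,1} \in \F$. The difference is cosmetic: the paper first shows $\deg_\lambda(\D_{(B)}) \le 0$ and then evaluates at $\lambda = 0$, whereas you show directly that $\D(A+\lambda B) - \D(A)$ is the zero polynomial, which is a slightly cleaner single application of Corollary~\ref{DegFLessZero}.
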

\begin{proof}Let $B \in M_{n}(\F)$ be a matrix satisfying the equality~\ref{lem:LinearAdditionPreserverEqualToScalarAlgClos:eqq1}. Since $|\F| > n(n-1)$, than as in Lemma~\ref{lem:LinearAdditionPreserverEqualToScalarAlgClos} we obtain that 
\begin{equation}\label{lem:LinearAdditionPreserverEqualToScalar:eq1}
\deg_{\lambda} \left(\D_{(B)}\right) \le 0.
\end{equation}

Let $\overline{\F}$ be an algebraic closure of $\F$. Since $\D_{(B)} \in \F[\lambda; a_{1\,1}, \ldots, a_{n\,n}] \subseteq \overline{\F}[\lambda; a_{1\,1}, \ldots, a_{n\,n}]$, the inequality~\eqref{lem:LinearAdditionPreserverEqualToScalar:eq1} implies that $\deg_{\lambda}\D_{(B)}(\overline{A}) \le 0$ for all $\overline{A} \in \M_{n}(\overline{\F}).$ Therefore, 
\begin{equation}\label{lem:LinearAdditionPreserverEqualToScalar:eq1AA1}
\D_{(B)}(\overline{\lambda}; \overline{A}) = \D(\overline{A} + \overline{\lambda} B) = \mathrm{const}\;\;\mbox{for all}\;\;\overline{\lambda} \in \overline{\F},
\end{equation}
where $\overline{A} \in \M_{n}(\overline{\F})$ is fixed. Since 
\[
\D_{(B)}(0; \overline{A}) = \D(\overline{A} + 0\cdot B) = \D(\overline{A}),
\]
then~\eqref{lem:LinearAdditionPreserverEqualToScalar:eq1AA1} implies that
\begin{equation}\label{lem:LinearAdditionPreserverEqualToScalar:eq2}
\D(\overline{A} + \overline{\lambda} B) = \D(\overline{A})\;\;\mbox{for all}\;\;\overline{\lambda} \in \overline \F\;\;\mbox{and}\;\; \overline A \in \M_{n}(\overline \F).
\end{equation}
Hence, $B = cI_n$ for some $c \in \overline{\F}$ by Lemma~\ref{lem:LinearAdditionPreserverEqualToScalarAlgClos}. In particular $b_{1\,1} = c$. Therefore $c \in \F$ and the statement of the lemma is established.
\end{proof}

\begin{corollary}\label{cor:DRad} 
Assume that $n \ge 3$ and $|\F| > n(n-1)$. Then $\rad (\D) = \langle I_n\rangle$.
\end{corollary}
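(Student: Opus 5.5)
The corollary asserts two inclusions, and I will prove each one directly from results already established.

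For $\langle I_n\rangle \subseteq \rad(\D)$, I will use the property~\ref{enum:DiscPropertyC} of the matrix discriminant. Take $\mathbf w = cI_n$. For every $A \in \M_n(\F)$ and $\lambda \in \F$ we have $\D(A + \lambda c I_n) = \D(A)$, because $\lambda c I_n$ is a scalar matrix. By Definition~\ref{def:radical} this means exactly that $cI_n \in \rad(\D)$.

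For $\rad(\D) \subseteq \langle I_n\rangle$, let $B \in \rad(\D)$. Definition~\ref{def:radical} then gives $\D(A + \lambda B) = \D(A)$ for all $A \in \M_n(\F)$ and $\lambda \in \F$. This is precisely the hypothesis~\eqref{lem:LinearAdditionPreserverEqualToScalarAlgClos:eqq1} of Lemma~\ref{lem:LinearAdditionPreserverEqualToScalarAlgClos}. Since $n \ge 3$ and $|\F| > n(n-1)$, Lemma~\ref{lem:LinearAdditionPreserverEqualToScalar} extends that lemma to $\F$, so we conclude $B \in \langle I_n\rangle$.

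There is no real obstacle here, since all the substantive work was done in Lemma~\ref{lem:LinearAdditionPreserverEqualToScalar}. The only point to check is that the hypothesis of that lemma is stated pointwise over $\F$, which is what membership in the radical provides. The passage to polynomial identities needed to move to $\overline{\F}$ is handled inside that lemma, using $|\F| > n(n-1) = \deg \D$.
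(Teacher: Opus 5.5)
Your proof is correct and is essentially the paper's own argument, which it leaves implicit by stating the corollary directly after Lemma~\ref{lem:LinearAdditionPreserverEqualToScalar}. Property~\ref{enum:DiscPropertyC} gives $\langle I_n\rangle \subseteq \rad(\D)$, and Lemma~\ref{lem:LinearAdditionPreserverEqualToScalar}, whose hypothesis is exactly membership in the radical, gives the reverse inclusion.
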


\begin{corollary}\label{cor:D2Rad}
Assume that $n \ge 3$, $\Char(\F) = 2$ and $|\F| > n(n-1)$. Then $\rad (\B) = \langle I_n\rangle$.
\end{corollary}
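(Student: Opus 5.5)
The plan is to mirror the proofs of Lemma~\ref{lem:LinearAdditionPreserverEqualToScalarAlgClos}, Lemma~\ref{lem:LinearAdditionPreserverEqualToScalar} and Corollary~\ref{cor:DRad}, using the relation $\D = \B^2$ to pass between the two radicals.

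First, the inclusion $\langle I_n\rangle \subseteq \rad(\B)$ follows at once from the property~\ref{enum:D2PropertyC} of $\B$.

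For the reverse inclusion, let $B \in \rad(\B)$, so that $\B(A + \lambda B) = \B(A)$ for all $A \in \M_n(\F)$ and $\lambda \in \F$. Squaring this identity gives
\[
\D(A + \lambda B) = \B^2(A+\lambda B) = \B^2(A) = \D(A)
\]
for all $A$ and $\lambda$. Hence $B \in \rad(\D)$. Since $n \ge 3$ and $|\F| > n(n-1)$, Corollary~\ref{cor:DRad} gives $\rad(\D) = \langle I_n\rangle$, so $B \in \langle I_n \rangle$. Together with the first inclusion this yields $\rad(\B) = \langle I_n\rangle$.

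The only point needing care is that $\B$ is defined only for $1 \le n \le 5$. The statement is therefore implicitly restricted to $3 \le n \le 5$, as in the section title, and I would note this restriction explicitly.

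I see no real obstacle. The key point is that $\rad(\B) \subseteq \rad(\D)$ holds as a pointwise identity of functions, with no polynomial-identity argument and hence no cardinality condition needed at that step. The assumption $|\F| > n(n-1)$ enters only through Corollary~\ref{cor:DRad}.
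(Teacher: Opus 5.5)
Your proof is correct and follows the route the paper intends: the paper states this corollary without proof as an immediate consequence of Corollary~\ref{cor:DRad}, using $\D = \B^2$ to get $\rad(\B) \subseteq \rad(\D) = \langle I_n\rangle$, with property~\ref{enum:D2PropertyC} giving the reverse inclusion. Your remark that the statement implicitly requires $n \le 5$, since $\B$ is only defined in that range, is also apt.
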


\section{Vector spaces of matrices belonging to $\mathfrak R_n(\F)$ and their properties}
\label{sec:VecSpacesInR1}

In this section we introduce the notion of vector space $\mbf L(\mbf v) \subseteq \R_n(\F)$ corresponding to every $0 \neq \mbf v \in \F^n$ (Definition~\ref{def:LMatSpace}) and develop the theory of such spaces which is used in the sequel.

\begin{lemma}\label{lem:DimColSpInE}Let $E \subseteq \F^n$ be a vector space. Define a vector space $W \subseteq \M_n(\F)$ by 
$$
W = \{A \in \M_{n}(\F) \mid \colsp (A) \subseteq E\}.
$$
Then $\dim(W) = n\dim(E)$.
\end{lemma}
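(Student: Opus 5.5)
The plan is to identify $W$ with the space of $n$-tuples of vectors of $E$ via the columns. Since $\colsp(A)$ is spanned by the columns $A\mbf e^{(1)}, \ldots, A\mbf e^{(n)}$ of $A$ and $E$ is a subspace, the condition $\colsp(A) \subseteq E$ is equivalent to every column $A\mbf e^{(j)}$ lying in $E$. Accordingly, I would define the map
\[
\Phi\colon W \to E^{n} = \underbrace{E \times \cdots \times E}_{n},\qquad \Phi(A) = \left(A\mbf e^{(1)}, \ldots, A\mbf e^{(n)}\right),
\]
which is well defined by the observation above.

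Next, I would check that $\Phi$ is linear, which is immediate since $(\alpha A + \beta B)\mbf e^{(j)} = \alpha A\mbf e^{(j)} + \beta B\mbf e^{(j)}$. It is injective because a matrix is determined by its columns. It is surjective because, for any $\mbf w^{(1)}, \ldots, \mbf w^{(n)} \in E$, the matrix whose $j$-th column is $\mbf w^{(j)}$ has column space $\langle \mbf w^{(1)}, \ldots, \mbf w^{(n)}\rangle \subseteq E$, so it lies in $W$ and is sent to the given tuple. Hence $W \cong E^n$, and therefore $\dim(W) = n\dim(E)$, since the dimension of a direct product of $n$ copies of $E$ is $n\dim(E)$.

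Alternatively, and more concretely, one can fix a basis $\mbf u^{(1)}, \ldots, \mbf u^{(k)}$ of $E$, where $k = \dim(E)$, and show that the $nk$ matrices $J(\mbf e^{(j)}, \mbf u^{(i)})$, which by Definition~\ref{def:JabMat} have $\mbf u^{(i)}$ as the $j$-th column and zeros elsewhere, form a basis of $W$. There is no real obstacle here. The only point needing a sentence of care is the equivalence between $\colsp(A) \subseteq E$ and all columns lying in $E$, which uses that $E$ is closed under linear combinations.
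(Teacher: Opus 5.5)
Your proof is correct and takes essentially the same approach as the paper. The paper fixes a basis $\mathbf v^{(1)},\ldots,\mathbf v^{(d)}$ of $E$ and states that the $nd$ matrices having one basis vector as a single column and zeros elsewhere form a basis of $W$; this is exactly your second, concrete argument. Your isomorphism $W \cong E^n$ via columns is the coordinate-free form of the same decomposition, and it supplies the verification that the paper leaves implicit.
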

\begin{proof}Indeed, if the set $\mathbf v^{(1)}, \ldots, \mathbf v^{(d)}$ forms a basis of $E$ for $d = \dim(E)$, then the set
$$
\begin{psmallmatrix}
\mathbf v^{(1)}_1 & 0 & \cdots & 0\\
\vdots & \vdots & \ddots & \vdots\\
\mathbf v^{(1)}_n & 0 & \cdots & 0
\end{psmallmatrix}, \ldots,
\begin{psmallmatrix}
\mathbf v^{(d)}_1 & 0 & \cdots & 0\\
\vdots & \vdots & \ddots & \vdots\\
\mathbf v^{(d)}_n & 0 & \cdots & 0
\end{psmallmatrix}, \ldots, 
\begin{psmallmatrix}
0 & \cdots & 0 & \mathbf v^{(d)}_1 \\
\vdots & \ddots & \vdots & \vdots\\
0 & \cdots & 0 & \mathbf v^{(d)}_n
\end{psmallmatrix}
$$
form a basis of $W$.
\end{proof}

\begin{lemma}[{Cf.~\cite[Theorem~3]{meshulam1985}}]\label{lem:MaxSpaceBoundedRank}
Suppose $W \subseteq \M_{n}(\F)$ is a subspace of dimension $rn$, such that $\rk(A) \le r$ for all $A \in W$. Then there is a vector space $E \subseteq \F^n$ such that either $W = \{A \in \M_{n}(\F) \mid \colsp (A) \subseteq E\}$, or $W = \{A \in \M_{n}(\F) \mid \rowsp(A) \subseteq E\}$. 
\end{lemma}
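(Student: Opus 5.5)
This is Meshulam's theorem on maximal spaces of bounded rank. I would prove it by induction on $r$, using the standard ``generic element'' argument of Flanders and Meshulam. Everything in the paper so far (Lemma~\ref{lem:DimColSpInE}) only shows that the two families in the statement have dimension $rn$. The real content is the converse.

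The case $r=0$ is trivial, with $E=0$. For $r\ge 1$ the first step is a normal form.

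\textbf{Step 1 (normal form).} If every element of $W$ has rank $<r$, then $\dim W\le (r-1)n<rn$ by the bounded-rank dimension bound (itself obtained by the same induction). So some $A_0\in W$ has rank exactly $r$. After left and right multiplication by invertible matrices, which preserve both the hypotheses and the conclusion (column spaces get transformed by the left factor), we may assume
\[
A_0=\begin{pmatrix} I_r & 0\\ 0 & 0\end{pmatrix}.
\]

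\textbf{Step 2 (block structure).} Write $B\in W$ in blocks
\[
B=\begin{pmatrix} B_{11}&B_{12}\\ B_{21}&B_{22}\end{pmatrix}.
\]
Consider $\rk(A_0+tB)\le r$ for all $t\in\F$. The $(r+1)\times(r+1)$ minors containing the $I_r$ block give $B_{22}=0$, using the coefficient of $t$. The coefficient of $t^2$ then gives $B_{21}B_{12}=0$ for every $B\in W$. Polarizing with $B,B'\in W$ gives $B_{21}B'_{12}+B'_{21}B_{12}=0$.

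This step uses that $\F$ has enough elements to separate the coefficients of $t$. Small fields need Meshulam's more careful argument; since the paper cites \cite{meshulam1985} without restriction on $\F$, I would follow his field-independent version there.

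\textbf{Step 3 (dimension count).} Let $U=\{B_{12}\}$ and $V=\{B_{21}\}$ be the projected spaces. The map $B\mapsto(B_{11},B_{12},B_{21})$ is injective, so
\[
rn=\dim W\le r^2+\dim U+\dim V,
\]
that is, $\dim U+\dim V\ge r(n-r)$. The bilinear constraint $B_{21}B'_{12}=-B'_{21}B_{12}$ forces, by a rank argument on the products, one of two outcomes: either $V=0$ with $\dim U=r(n-r)$, or $U=0$ with $\dim V=r(n-r)$.

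This is the main obstacle. The mixed case, where both $U$ and $V$ are nonzero, has to be excluded. I would first try Meshulam's trick: consider $A_0+tB$ for generic $B$ and rank-$r$ elements near $A_0$, and use them to show that $\ker$ or image is locally constant. Failing that, I would run an induction on $n$ by deleting a row and column.

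\textbf{Step 4 (conclusion).} In the case $V=0$, every $B\in W$ has its last $n-r$ rows zero, so $\colsp(B)\subseteq E=\langle \mathbf e^{(1)},\dots,\mathbf e^{(r)}\rangle$. Comparing dimensions with Lemma~\ref{lem:DimColSpInE} gives equality of $W$ with the full column-space family. The case $U=0$ is the transposed statement and yields the row-space family.
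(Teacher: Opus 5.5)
The paper does not prove this lemma; it only quotes Meshulam. Your proposal therefore has to stand on its own, and it stops exactly where the content of the theorem lies. Step~3 is only a plan, and its assertion is false as stated. The data you extract in Step~2 from the single element $A_0$ are $B_{22}=0$ and $B_{21}B'_{12}+B'_{21}B_{12}=0$. Together with $\dim U+\dim V\ge r(n-r)$, they do not rule out the mixed case. For $n=3$, $r=2$ take
\[
W=\left\{\begin{pmatrix} a & b & 0\\ c & d & y\\ x & 0 & 0\end{pmatrix} : a,b,c,d,x,y\in\F\right\}.
\]
Then $\dim W=6=rn$, $A_0\in W$, $B_{22}=0$, $B_{21}B'_{12}=x\cdot 0+0\cdot y'=0$, and $U,V$ are both one-dimensional. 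Yet $\det B=bxy$, so the permutation matrix $E_{1\,2}+E_{2\,3}+E_{3\,1}\in W$ is invertible. Hence no argument using only these products can settle Step~3. The information you discard---the block $B_{11}$ and the other rank-$r$ elements of $W$---is essential.

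Step~1 has a related hole. Flanders' inequality $\dim W\le (r-1)n$ is not produced by ``the same induction''. Your count in Step~3 bounds $\dim U+\dim V$ from below, which is the wrong direction for an upper bound on $\dim W$.

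The missing idea is an upper bound. Fix $1\le i\le n-r$ and consider the map $B\mapsto(\text{$i$-th row of } B_{21},\ \text{$i$-th column of } B_{12})$. The $(i,i)$ entry of the polarized identity says that its image is totally isotropic for the nondegenerate form $\langle(a,b),(a',b')\rangle=ab'+a'b$ on $\F^{1\times r}\times\F^{r\times 1}$. So that image has dimension at most $r$, and therefore $\dim\{(B_{12},B_{21}) : B\in W\}\le r(n-r)$. This gives Flanders' inequality at once: take $A_0$ of maximal rank $s$ to get $\dim W\le sn$, so no induction on $r$ is needed.

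In the equality case the same bound forces $\M_r(\F)\oplus O_{n-r}\subseteq W$. Then $G\oplus O_{n-r}\in W$ for every invertible $G$, and rerunning Step~2 at these elements gives $B_{21}G^{-1}B_{12}=0$. Since invertible matrices span $\M_r(\F)$, we get $B_{21}MB_{12}=0$ for all $M$, so every $B\in W$ has $B_{12}=0$ or $B_{21}=0$. Thus $W$ is a union of two subspaces, hence equals one of them, and your Step~4 finishes.

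Finally, the coefficient extraction in Step~2 needs $|\F|\ge r+2$. Deferring small fields to Meshulam just cites the statement being proved. This does not matter for the paper, which uses the lemma only with $r=1$, where it is elementary over any field.
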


\begin{definition}\label{def:LMatSpace}Let $0 \neq \mathbf v \in \F^n$. By $\L(\mathbf v) \subseteq \M_{n}(\F)$ we denote the $n+1$ dimensional vector space of matrices of the form $A + cI_{n}$, where $c \in \F$ and $\colsp(A) \subseteq \langle\mathbf v\rangle$.
\end{definition}

Using Corollary~\ref{cor:OneWayRankOnePlusScalar}, we conclude in the corollary below that if $n \ge 3$, then every element of $\L(\mathbf v)$ has the form described in Definition~\ref{def:LMatSpace}.

\begin{corollary}\label{cor:ExpressionForLElement}
Assume that $n \ge 3$. Let $0 \neq \mathbf v \in \F^n$ and $B \in \M_{n}(\F)$. Then $B \in \L(\mathbf v)$  if and only if  $B = J(\mbf a, \mbf v) + cI_{n}$, where $c \in \F$ and $\mbf a \in \F^n$.
\end{corollary}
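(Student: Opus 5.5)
We prove the two implications separately. Throughout, $B \in \L(\mbf v)$ means, by Definition~\ref{def:LMatSpace}, that $B = A + cI_n$ for some $c \in \F$ and some $A \in \M_n(\F)$ with $\colsp(A) \subseteq \langle \mbf v\rangle$.

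\emph{Sufficiency.} Suppose $B = J(\mbf a, \mbf v) + cI_n$. By Definition~\ref{def:JabMat}, the $j$-th column of $J(\mbf a,\mbf v)$ is $\mbf a_j \mbf v$. Hence $\colsp(J(\mbf a,\mbf v)) \subseteq \langle \mbf v\rangle$, and therefore $B \in \L(\mbf v)$.

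\emph{Necessity.} Suppose $B = A + cI_n$ with $\colsp(A) \subseteq \langle \mbf v\rangle$. If $A = O_n$, then $A = J(\mbf 0,\mbf v)$ and we are done. Otherwise $\colsp(A)$ is a nonzero subspace of the one-dimensional space $\langle\mbf v\rangle$, so $\colsp(A) = \langle \mbf v\rangle$. Property~\ref{prop:RankOneExpression} then gives $A = J(\mbf a,\mbf v)$ for some nonzero $\mbf a$, and hence $B = J(\mbf a,\mbf v)+cI_n$.

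\emph{Role of the hypothesis $n \ge 3$.} It makes this representation canonical: the $c$ and the $J(\mbf a,\mbf v)$ are uniquely determined by $B$. Indeed, if $J(\mbf a,\mbf v)+cI_n = J(\mbf a',\mbf v)+c'I_n$, then, since both $J$-summands have rank at most one by property~\ref{prop:J0}, Corollary~\ref{cor:OneWayRankOnePlusScalar} yields $c=c'$ and $J(\mbf a,\mbf v)=J(\mbf a',\mbf v)$. Consequently any decomposition of $B$ as a rank-$\le 1$ matrix plus a scalar matrix must be of the stated form, which is what the text preceding the corollary refers to. The same uniqueness gives $\dim \L(\mbf v) = n+1$: the map $(\mbf a,c)\mapsto J(\mbf a,\mbf v)+cI_n$ is linear by properties~\ref{prop:J1} and~\ref{prop:J2}, and it is injective because $\mbf v \neq \mbf 0$.

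There is no real obstacle here. The only care needed is the case $A = O_n$, which is handled by allowing $\mbf a = \mbf 0$.
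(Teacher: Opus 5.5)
Your proof is correct, but it takes a different route from the one the paper indicates. The paper gives no separate argument. It only says the corollary follows from Corollary~\ref{cor:OneWayRankOnePlusScalar}, which is where $n \ge 3$ enters.

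You instead show that the biconditional, as literally stated, is an unpacking of Definition~\ref{def:LMatSpace}. A matrix with $\colsp(A)\subseteq\langle\mbf v\rangle$ has $j$-th column $\mbf a_j\mbf v$ for some scalar $\mbf a_j$, i.e.\ $A=J(\mbf a,\mbf v)$. You route this through property~\ref{prop:RankOneExpression} together with the case $A=O_n$; reading off the columns directly is even shorter. This proves the equivalence for every $n\ge 1$, with no uniqueness result needed.

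The paper's cited ingredient is needed only for the stronger, canonical reading that you isolate in your last paragraph. That reading says that \emph{any} decomposition of an element of $\L(\mbf v)$ as a rank-$\le 1$ matrix plus a scalar matrix has its rank-$\le 1$ part of the form $J(\mbf a,\mbf v)$, and that the pair $(J(\mbf a,\mbf v),c)$ is unique. It genuinely fails for $n=2$: with $\mbf v=\mbf e^{(1)}$ we have $E_{1\,1}=(-E_{2\,2})+I_2$, where $\colsp(-E_{2\,2})=\langle\mbf e^{(2)}\rangle$. So separating the two claims makes clear what the hypothesis $n\ge 3$ is actually for.

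In the later applications (Lemmas~\ref{lem:CapSame} and~\ref{lem:CapTrans}), the paper uses only the existence of the representation and then applies Corollary~\ref{cor:OneWayRankOnePlusScalar} separately. So your version of the corollary is all that is required there.
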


\begin{lemma}\label{lem:ABRk2SumNotinR}Assume that $n \ge 3$. If $A, B \in \M_{n}(\F)$ are such that\linebreak $\rk(A), \rk(B) = 1$ and 
\begin{equation}\label{lem:ABRk2SumNotinR:eqq1}
\rk(A + B) = 2,
\end{equation} then there is $\alpha \in \F$ such that $\alpha A + B \not\in \R_n(\F)$.
\end{lemma}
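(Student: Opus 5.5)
The plan is to work with the representation from Definition~\ref{def:JabMat}. By property~\ref{prop:RankOneExpression}, write $A = J(\mbf a,\mbf u)$ and $B = J(\mbf b,\mbf w)$ with nonzero $\mbf a,\mbf b,\mbf u,\mbf w$. Then $A\mbf x = (\mbf a,\mbf x)\mbf u$ and $B\mbf x = (\mbf b,\mbf x)\mbf w$.

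The hypothesis $\rk(A+B)=2$ forces $\mbf u,\mbf w$ to be linearly independent, since otherwise $\colsp(A+B)\subseteq\langle\mbf u\rangle$. It likewise forces $\mbf a,\mbf b$ to be linearly independent, by the same argument on row spaces. Consequently, for every $\alpha\neq 0$ the matrix $M_\alpha=\alpha A+B$ has $\colsp(M_\alpha)=U:=\langle\mbf u,\mbf w\rangle$ and $\rk(M_\alpha)=2$. So I only need to decide when a rank-two matrix lies in $\R_n(\F)$.

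Suppose $M_\alpha = J + cI_n$ with $\rk(J)\le 1$. If $c=0$, then $\rk(M_\alpha)\le 1$, which is impossible. Hence $c\neq 0$, and Lemma~\ref{lem:RkSum} gives $2=\rk(M_\alpha)\ge n-1$.

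\textbf{Case $n\ge 4$.} Here $n-1\ge 3>2$, so already $\alpha=1$ gives $A+B\notin\R_n(\F)$.

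\textbf{Case $n=3$.} The condition $M_\alpha\in\R_3(\F)$ means that $\ker(M_\alpha-cI_3)$ has dimension at least $2$ for some $c\neq0$. Every vector $\mbf x$ in this kernel satisfies $\mbf x=c^{-1}M_\alpha\mbf x\in U$. Since $\dim U=2$, the kernel must equal $U$, so $M_\alpha$ acts on $U$ as $c\cdot\id_U$.

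Next I compute the matrix of $M_\alpha|_U$ in the basis $\mbf u,\mbf w$:
\[
\begin{pmatrix}\alpha(\mbf a,\mbf u) & \alpha(\mbf a,\mbf w)\\ (\mbf b,\mbf u) & (\mbf b,\mbf w)\end{pmatrix}.
\]
This matrix is a nonzero scalar only if $\alpha(\mbf a,\mbf w)=0$, $(\mbf b,\mbf u)=0$ and $\alpha(\mbf a,\mbf u)=(\mbf b,\mbf w)\neq0$. In particular $(\mbf a,\mbf u)\neq0$, and then $\alpha=(\mbf b,\mbf w)/(\mbf a,\mbf u)$ is forced. So at most one nonzero $\alpha$ yields $M_\alpha\in\R_3(\F)$, and any $\alpha\notin\{0,(\mbf b,\mbf w)/(\mbf a,\mbf u)\}$ gives $\alpha A+B\notin\R_3(\F)$.

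The main obstacle is the size of the field in the case $n=3$: such an $\alpha$ exists as soon as $|\F|\ge3$, and the claim genuinely fails over $\F_2$. For example, with $A=E_{1\,1}$ and $B=E_{2\,2}$ we get $A+B=I_3+E_{3\,3}$, which lies in $\R_3(\F_2)$, while $\alpha=0$ gives $B\in\R_3(\F_2)$. So in the proof I will use the standing assumption $|\F|>n(n-1)$, which is how the lemma is applied later, or at least $|\F|\ge3$. The remaining care needed is in checking the identification of $\ker(M_\alpha-cI_3)$ with $U$ and the $2\times2$ matrix of $M_\alpha|_U$.
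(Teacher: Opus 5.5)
Your proof is correct. Your caveat about the field is essential rather than cosmetic: for $n=3$ and $\F=\F_2$ the statement as written is false, exactly by your example, since $E_{2\,2}$ and $E_{1\,1}+E_{2\,2}=I_3+E_{3\,3}$ both lie in $\R_3(\F_2)$.

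The paper's proof fails at the same point. After normalizing to $\colsp(A)=\langle\mbf e^{(1)}\rangle$ and $\colsp(B)=\langle\mbf e^{(2)}\rangle$, it uses $\alpha'=1$ to get $a_1\neq0$. It then picks $\alpha''$ with $\alpha''a_1-b_2\neq0$ ``because $|\F|\ge 2$'' and concludes via $c''\neq0$. But the minor $[2,3|2,3]$ at $\alpha'=1$ already forces $b_2=a_1$. So over $\F_2$ the only admissible choice is $\alpha''=0$. There $\alpha''A+B=B$ has rank one and $c''=0$ is possible, because the implication $\rk(\alpha A+B-cI_3)\le1\Rightarrow c\neq0$ holds only for $\alpha\neq0$. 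The lemma is used only through Lemma~\ref{lem:SpaceInR1CapR1} and Lemma~\ref{lem:RepAsRkOnePScalarNP1EqToL} inside Lemma~\ref{lem:TPresDiscSendsLtoLorLt}, where $|\F|>n(n-1)$. So adding $|\F|\ge3$ here, and in those two lemmas, is harmless.

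Your route also differs from the paper's. The paper first reduces by similarity; it records only $\mbf u\neq\mbf v$ there, whereas the linear independence you derive from $\rk(A+B)=2$ is what is actually needed. It then computes specific $2\times2$ minors. For $n\ge4$, the minor $[3,4|3,4]$ of $A+B-cI_n$ equals $c^2$. For $n=3$, the minors $[1,3|1,3]$ and $[2,3|2,3]$ are compared at two values of $\alpha$.

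You replace the first computation by rank subadditivity: $n=\rk(cI_n)\le\rk(\alpha A+B)+\rk(J)\le3$. You replace the second by observing that a two-dimensional eigenspace of $\alpha A+B$ for $c\neq0$ must equal $U=\colsp(\alpha A+B)$, so $\alpha A+B$ acts on $U$ as a nonzero scalar. This argument is coordinate-free. It gives the sharp conclusion that at most one nonzero $\alpha$ can fail, which shows that $|\F|\ge3$ is the right hypothesis. The paper's computation is more explicit but, as written, does not control the value $\alpha=0$.
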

\begin{proof}The proof is divided into two parts as follows.
\begin{enumerate}[label=(\roman*), ref=(\roman*)]
\item\label{lem:ABRk2SumNotinR:part1} We prove the statement of the lemma assuming that 
\begin{equation}\label{lem:ABRk2SumNotinR:eq1}
A = \begin{psmallmatrix}a_1 & \cdots & a_n\\ 0 & \cdots & 0\\ \vdots & \ddots & \vdots\\ 0 & \cdots & 0\end{psmallmatrix}\quad\mbox{and}\quad B = \begin{psmallmatrix}0 & \cdots & 0\\ b_1 & \cdots & b_n\\ \vdots & \ddots & \vdots\\ 0 & \cdots & 0\end{psmallmatrix}.
\end{equation}
\item\label{lem:ABRk2SumNotinR:part2} We prove the statement of the lemma without any additional assumptions on $A$ and $B$.    
\end{enumerate}

\paragraph{\ref{lem:ABRk2SumNotinR:part1}} Assume that $A$ and $B$ satisfy the assumptions of the lemma the condition~\eqref{lem:ABRk2SumNotinR:eq1}. If $n \ge 4$, then $\alpha A + B \not\in \R_n(\F)$ for $\alpha = 1$. Indeed,  otherwise there is  $c \in \F$ is such that 
\begin{equation}\label{lem:ABRk2SumNotinR:eq2}
\rk(A + B - cI_n) = 1,
\end{equation}
which implies that 
\[
0 = \det \left((A + B - cI_n)[3,4|3,4])\right) = c^2.
\]
Hence, $c = 0$ which yields the contradicting equalities~\eqref{lem:ABRk2SumNotinR:eqq1} and~\eqref{lem:ABRk2SumNotinR:eq2}.

Assume now that $n = 3$ and prove the statement of the lemma by the contradiction. Suppose that $A, B \in \M_{3}(\F)$ satisfy the conditions of the lemma and $\alpha A + B \in \R_n(\F)$ for all $\alpha \in F$. Equivalently, for every $\alpha \in \F$ there is $c$ such that 
\begin{equation}\label{lem:ABRk2SumNotinR:eq3}
\rk(\alpha A + B - cI_3) \le 1.
\end{equation}
The equality~\eqref{lem:ABRk2SumNotinR:eqq1} implies that
\begin{equation}\label{lem:ABRk2SumNotinR:eq4}
\rk(\alpha A + B - cI_3) \le 1\quad \Rightarrow \quad c \neq 0. 
\end{equation}

Let $\alpha = \alpha' = 1$ and $c'$ be such that the condition~\eqref{lem:ABRk2SumNotinR:eq3} is satisfied for this $\alpha$. From~\eqref{lem:ABRk2SumNotinR:eq3} we obtain that
\begin{gather}
\det \left(\left((\alpha'A + B - c'I_3)[1,3|1,3]\right)\right)=0 \notag \\ 
(\alpha' a_1 - c')c' = 0.\label{lem:ABRk2SumNotinR:eq5}
\end{gather}
Since $c' \neq 0$ by~\eqref{lem:ABRk2SumNotinR:eq4} and~\eqref{lem:ABRk2SumNotinR:eq3}, then~\eqref{lem:ABRk2SumNotinR:eq5} implies that $a_1 \neq 0$. Therefore, there is $\alpha'' \in \F$ such that 
\begin{equation}\label{lem:ABRk2SumNotinR:eq6}
\alpha''a_1 - b_2 \neq 0
\end{equation}
because $|\F| \ge 2$.

Let $\alpha = \alpha''$ and $c''$ be such that the condition~\eqref{lem:ABRk2SumNotinR:eq3} is satisfied for this $\alpha$. From~\eqref{lem:ABRk2SumNotinR:eq3} we obtain that
\begin{gather}
\left\{\begin{aligned}
\det \left(\left((\alpha''A + B - c''I_3)[1,3|1,3]\right)\right) & =  0\\
\det \left(\left((\alpha''A + B - c''I_3)[2,3|2,3]\right)\right) & =  0
\end{aligned}
\right.\notag\\
\left\{\begin{aligned}
(\alpha'' a_1 - c'')c'' & =  0\\
(b_2 - c'')c'' & =  0
\end{aligned}
\right. \label{lem:ABRk2SumNotinR:eq7}
\end{gather}
By subtracting the second equality in~\eqref{lem:ABRk2SumNotinR:eq7} from the first we obtain that
\begin{equation}\label{lem:ABRk2SumNotinR:eq8}
(\alpha'' a_1 - b_2)c'' =  0.
\end{equation}
Since $c_1 \neq 0$ by~\eqref{lem:ABRk2SumNotinR:eq4} and~\eqref{lem:ABRk2SumNotinR:eq3}, then~\eqref{lem:ABRk2SumNotinR:eq8} implies that $\alpha'' a_1 - b = 0$ which contradicts~\eqref{lem:ABRk2SumNotinR:eq6}. This establishes the part~\ref{lem:ABRk2SumNotinR:part1} of the lemma. 
\paragraph{\ref{lem:ABRk2SumNotinR:part2}} Assume that $A$ and $B$ satisfy the conditions of the lemma. Hence, there are $0 \neq \mbf u \neq \mbf v \in \F^n$ such that $\colsp(A) = \langle \mbf u\rangle$ and  $\colsp(B) = \langle \mbf v\rangle$. Since $0 \neq \mbf u \neq \mbf v$, there is an invertible matrix $M \in \M_{n}(\F)$ such that 
\begin{equation}\label{lem:ABRk2SumNotinR:eq9}
M\mbf u = \mbf e_1\quad \mbox{and}\quad M\mbf v = \mbf e_2.
\end{equation} 
Let $A' = MAM^{-1}, B' = MBM^{-1}$.
Since the similarity transformation preserves the rank of a matrix (see, for example~\cite[Theorem 3.4]{Friedberg2002-wu}), then\linebreak $\rk(A'), \rk(B') = 1$ and  $\rk(A' + B') = 2$. In addition, the equality~\eqref{lem:ABRk2SumNotinR:eq9} implies that $\colsp(A') = \langle \mbf e_1\rangle$ and $\colsp(B') = \langle \mbf e_2\rangle$. Therefore, the part~\eqref{lem:ABRk2SumNotinR:part1} of the lemma is applicable to $A'$ and $B'$, from which we conclude that there is $\alpha \in \F$ such that 
$\alpha A' + B' \not\in \R_{n}(\F).$ This equality infers that
$$\alpha A + B = M^{-1}\left(\alpha A' + B'\right)M \not\in \R_{n}(\F)$$
because $M^{-1}\R_n(\F)M = \R_{n}(\F)$ by Lemma~\ref{lem:RsetProp}. 
\end{proof}

\begin{lemma}\label{lem:SpaceInR1CapR1}Assume that $n \ge 3$. If $W$ is a vector subspace of $W \subseteq \R_n(\F)$ then
\begin{equation}\label{lem:SpaceInR1CapR1:eqq1}
U = \{X \in W \mid \rk(X) \le 1\}
\end{equation}
is a vector subspace of $W$.
\end{lemma}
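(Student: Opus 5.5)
We have to show that $U$ is closed under scalar multiplication and addition. Closure under scalar multiplication is immediate, since $\rk(\alpha X) \le \rk(X)$. The zero matrix lies in $U$, so $U$ is nonempty. The substantive step is closure under addition: given $X, Y \in U$, we must show $\rk(X+Y) \le 1$.

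We argue by contradiction. Suppose $X, Y \in U$ but $\rk(X+Y) \ge 2$. By Lemma~\ref{lem:RkSum}, $\rk(X+Y) \le \rk(X)+\rk(Y) \le 2$. Hence $\rk(X+Y) = 2$ and $\rk(X) = \rk(Y) = 1$, since if either were $0$ the sum would have rank at most $1$. Thus $X$ and $Y$ satisfy exactly the hypotheses of Lemma~\ref{lem:ABRk2SumNotinR} with $A = X$ and $B = Y$ (here $n \ge 3$ is used). That lemma gives $\alpha \in \F$ with $\alpha X + Y \notin \R_n(\F)$. But $W$ is a vector space and $X, Y \in W$, so $\alpha X + Y \in W \subseteq \R_n(\F)$, which is a contradiction. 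Therefore $\rk(X+Y) \le 1$, so $X + Y \in U$.

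Combining the two closure properties, $U$ is a vector subspace of $W$. There is no real obstacle here: all the work is contained in Lemma~\ref{lem:ABRk2SumNotinR}. The only point requiring care is the reduction to the case $\rk(X) = \rk(Y) = 1$ and $\rk(X+Y) = 2$, which Lemma~\ref{lem:RkSum} supplies.
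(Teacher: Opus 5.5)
Your argument is the same as the paper's: you reduce via Lemma~\ref{lem:RkSum} to $\rk(X)=\rk(Y)=1$, $\rk(X+Y)=2$, and contradict $W\subseteq\R_n(\F)$ using Lemma~\ref{lem:ABRk2SumNotinR}; your one-line treatment of scalar multiples is more direct than the paper's appeal to Corollary~\ref{cor:OneWayRankOnePlusScalar}. One caveat, which you inherit from the paper: Lemma~\ref{lem:ABRk2SumNotinR}, and in fact the statement itself, fails for $n=3$, $\F=\F_2$ (take $W=\langle E_{1\,1},E_{2\,2}\rangle$, where $E_{1\,1}+E_{2\,2}=I_3+E_{3\,3}\in\R_3(\F_2)$ yet $U=\{O_3,E_{1\,1},E_{2\,2}\}$ is not closed under addition), so for $n=3$ one needs $|\F|\ge 3$; this is harmless in the paper's application, where $|\F|>n(n-1)$.
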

\begin{proof}Since $n \ge 3$, then $\lambda A \in U$ for all $\lambda \in \F$ and $A \in U$ by Corollary~\ref{cor:OneWayRankOnePlusScalar}. Thus, it is sufficient to show that $A + B \in U$ for all $A, B \in U$. We prove this fact by contradiction. Suppose that $A + B \not\in U$. Hence, $\rk(A + B) = 2$ and consequently $\rk(A), \rk(B) = 1$ by Lemma~\ref{lem:RkSum}. Therefore, by Lemma~\ref{lem:ABRk2SumNotinR}, there exists $\alpha \in \F $ such that $\alpha A + B \not\in \R_n(\F)$ which contradicts the inclusion $(\alpha A + B) \in W \subseteq \R_n(\F)$.
\end{proof}

\begin{lemma}\label{lem:RepAsRkOnePScalarNP1EqToL}Assume that $n \ge 3$. Let $W$ be a subspace of $\M_n(\F)$ such that $\dim(W) = n + 1$, $I_n \in W$ and $W \subseteq \R_n(\F)$.
Then there is $0 \neq \mathbf v \in \F^n$ such that either $W = \L(\mathbf v)$, or $ W = \L^t(\mathbf v)$.
\end{lemma}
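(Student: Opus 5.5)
We will take $U = \{X \in W \mid \rk(X) \le 1\}$, which is a vector subspace of $W$ by Lemma~\ref{lem:SpaceInR1CapR1}. The plan is to show that $W = U \oplus \langle I_n\rangle$ and that $\dim U = n$. After that, Lemma~\ref{lem:MaxSpaceBoundedRank} with $r=1$ will identify $U$.

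\textbf{Step 1: the decomposition.} Since $W \subseteq \R_n(\F)$, every $X \in W$ can be written as $X = J + cI_n$ with $\rk(J)\le 1$. Because $I_n \in W$, we get $J = X - cI_n \in W$, and hence $J \in U$. This gives $W = U + \langle I_n\rangle$.

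The sum is direct. If $cI_n \in U$, then $\rk(cI_n) \le 1$, and since $n \ge 3$ this forces $c=0$. So $U \cap \langle I_n \rangle = 0$, and therefore $\dim U = \dim W - 1 = n$.

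\textbf{Step 2: identifying $U$.} We now have a subspace $U$ of dimension $1\cdot n$ in which every element has rank at most $1$. Lemma~\ref{lem:MaxSpaceBoundedRank} then gives a subspace $E \subseteq \F^n$ such that either $U = \{A \mid \colsp(A)\subseteq E\}$ or $U = \{A \mid \rowsp(A) \subseteq E\}$.

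\textbf{Step 3: $E$ is a line.} Lemma~\ref{lem:DimColSpInE} gives $\dim U = n\dim E$ in the column case. The row case is the same statement applied to transposes. Since $\dim U = n$, we get $\dim E = 1$, so $E = \langle \mathbf v\rangle$ for some $\mathbf v \neq 0$.

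\textbf{Step 4: conclusion.} In the column case, $W = U + \langle I_n\rangle$ is exactly $\L(\mathbf v)$ by Definition~\ref{def:LMatSpace}. In the row case, $U^t = \{A \mid \colsp(A) \subseteq \langle \mathbf v\rangle\}$, and so $W = \L^t(\mathbf v)$. Here we read $\L^t(\mathbf v)$ as $\{X^t \mid X \in \L(\mathbf v)\}$; this is the natural meaning of the notation, though it is not spelled out earlier in the paper.

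\textbf{Where the difficulty lies.} The main obstacle is hidden in Lemma~\ref{lem:SpaceInR1CapR1}, which is what makes $U$ a subspace. Everything after that is dimension counting. The one point needing care is the row-space case of Lemma~\ref{lem:DimColSpInE}: we apply the lemma to $U^t$, using that transposition is a linear bijection that preserves rank. We also use that Lemma~\ref{lem:MaxSpaceBoundedRank} is valid over arbitrary fields, as it is cited.
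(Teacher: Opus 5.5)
Your proof is correct and follows the paper's argument. Like the paper, you take $U$ to be the rank-$\le 1$ part of $W$, make it a subspace via Lemma~\ref{lem:SpaceInR1CapR1}, obtain $W = U + \langle I_n\rangle$ with $\dim U = n$, and then invoke Lemma~\ref{lem:MaxSpaceBoundedRank}. Your Step~3, which deduces $\dim E = 1$ from Lemma~\ref{lem:DimColSpInE}, makes explicit a point the paper skips when it passes directly to $E = \langle \mathbf v\rangle$.
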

\begin{proof}Let $U$ is defined as in~\eqref{lem:SpaceInR1CapR1:eqq1}.  Lemma~\ref{lem:SpaceInR1CapR1} implies that $U$ is a vector space. Since $n \ge 3$, then $I_n \not\in U$ and consequently $U \subsetneq W$. Hence, $\dim(U) < \dim(W)$. In addition, from the inclusions $W \subseteq \R_n(\F)$ and $I_n \in W$ we conclude that $W \subseteq (U + \langle I_n \rangle)$. Therefore, 
\[
\dim(U) = \dim(W) - 1 = (n + 1) - 1 = n
\]
and
\begin{equation}\label{lem:RepAsRkOnePScalarNP1EqToL:eq1}
W = (U + \langle I_n \rangle).
\end{equation}
It follows from Lemma~\ref{lem:MaxSpaceBoundedRank} that there is  $0 \neq \mathbf v \in \F^n$ such that either $U = \{A \in \M_{n}(\F) \mid \colsp (A) \subseteq \langle v \rangle \}$, or $U = \{A \in \M_{n}(\F) \mid \rowsp(A) \subseteq \langle v \rangle \}$. In the first case using~\eqref{lem:RepAsRkOnePScalarNP1EqToL:eq1} we obtain that $W = \L(\mathbf v)$, while in the second $ W = \L^t(\mathbf v)$.
\end{proof}

\begin{lemma}\label{lem:CapSame}Assume that $n \ge 2$.  If $0 \neq \mathbf v, \mathbf v' \in \F^n$ are linearly independent, then $\L(\mathbf v) \cap \L(\mathbf v') = \langle I_n\rangle$. 
\end{lemma}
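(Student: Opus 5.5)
The plan is to work directly from Definition~\ref{def:LMatSpace} and reduce everything to a rank comparison. The inclusion $\langle I_n\rangle \subseteq \L(\mathbf v)\cap\L(\mathbf v')$ is immediate: take $A = O_n$ in the definition. For the reverse inclusion, take $X \in \L(\mathbf v)\cap\L(\mathbf v')$ and write
\[
X = A + cI_n = A' + c'I_n,\qquad \colsp(A)\subseteq\langle\mathbf v\rangle,\quad \colsp(A')\subseteq\langle\mathbf v'\rangle .
\]
By property~\ref{prop:RankOneExpression} (or trivially, if the matrix is zero) this means $A = J(\mathbf a,\mathbf v)$ and $A' = J(\mathbf a',\mathbf v')$ for some $\mathbf a,\mathbf a'\in\F^n$. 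The whole argument then rests on the identity
\[
J(\mathbf a,\mathbf v) - J(\mathbf a',\mathbf v') = (c'-c)I_n .
\]

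The first key step is to show $c = c'$. The left-hand side is a difference of two matrices of rank at most one, so by Lemma~\ref{lem:RkSum} its rank is at most $2$. Its column space is also contained in $\langle\mathbf v,\mathbf v'\rangle$. A nonzero scalar matrix has rank $n$ and column space $\F^n$. So whenever $n > \dim\langle\mathbf v,\mathbf v'\rangle = 2$, Lemma~\ref{lem:SumRankOneEqualScal} (equivalently Corollary~\ref{cor:OneWayRankOnePlusScalar}) forces $c = c'$.

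The second key step is to show $A = 0$. Once $c = c'$, we have $J(\mathbf a,\mathbf v) = J(\mathbf a',\mathbf v')$. Every column of this common matrix lies in $\langle\mathbf v\rangle\cap\langle\mathbf v'\rangle = \{\mathbf 0\}$, using the linear independence of $\mathbf v$ and $\mathbf v'$. Hence $A = O_n$ and $X = cI_n\in\langle I_n\rangle$.

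The main obstacle is the boundary case $n = 2$, where the rank bound in the first step gives no contradiction: $\langle\mathbf v,\mathbf v'\rangle = \F^2$, and $J(\mathbf a,\mathbf v)-J(\mathbf a',\mathbf v')$ can genuinely equal a nonzero scalar matrix. Here I would first reduce by a similarity $M$ with $M\mathbf v=\mathbf e^{(1)}$ and $M\mathbf v'=\mathbf e^{(2)}$, exactly as in part~\ref{lem:ABRk2SumNotinR:part2} of the proof of Lemma~\ref{lem:ABRk2SumNotinR}. This is legitimate because $M\L(\mathbf v)M^{-1} = \L(M\mathbf v)$. After the reduction I would compare entries explicitly.

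Carrying out that comparison for $\mathbf v=\mathbf e^{(1)}$, $\mathbf v'=\mathbf e^{(2)}$ gives
\[
\begin{pmatrix} a_1+c & a_2\\ 0 & c\end{pmatrix} = \begin{pmatrix} c' & 0\\ a'_1 & a'_2+c'\end{pmatrix}.
\]
This forces $a_2 = a'_1 = 0$ but leaves the diagonal entries free. So $\diag(1,0)$ appears to lie in both spaces, and the intersection would be the $2$-dimensional space of diagonal matrices. This agrees with a dimension count: for $n = 2$, $\dim\L(\mathbf v)=\dim\L(\mathbf v')=3$ inside the $4$-dimensional space $\M_2(\F)$. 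Thus the first two steps prove the statement for every $n\ge 3$ (the range in which it is used later), but for $n = 2$ this computation suggests the equality fails, and so does my argument, as the statement is worded. I would re-check this computation before going further, since it seems the $n = 2$ case cannot be proved in the form stated.
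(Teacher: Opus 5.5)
Your argument for $n\ge 3$ is essentially the paper's: it writes $X=J(\mathbf a,\mathbf v)+cI_n=J(\mathbf a',\mathbf v')+c'I_n$, gets $c=c'$ and $J(\mathbf a,\mathbf v)=J(\mathbf a',\mathbf v')$ from Corollary~\ref{cor:OneWayRankOnePlusScalar}, and then concludes $J(\mathbf a,\mathbf v)=O_n$ from linear independence via property~\ref{prop:JABUNiqExpr}, which is your column-space observation. Your $n=2$ computation is also correct and needs neither re-checking nor the similarity reduction: $E_{1\,1}=(-E_{2\,2})+I_2$ lies in $\L(\mathbf e^{(1)})\cap\L(\mathbf e^{(2)})$, and the count $3+3-4=2$ shows the intersection is never $\langle I_2\rangle$, so the hypothesis should read $n\ge 3$ --- precisely what the paper's proof silently uses (Corollary~\ref{cor:OneWayRankOnePlusScalar} fails for $n=2$ by this very example), and the only range in which the lemma and Corollary~\ref{cor:DimCapSame} are applied later.
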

\begin{proof}The inclusion $\langle I_n\rangle \subseteq \L(\mathbf v) \cap \L(\mathbf v')$ clearly holds. In order to prove the inverse inclusion, let $X \in \L(\mathbf v) \cap \L(\mathbf v')$. Corollary~\ref{cor:ExpressionForLElement} implies that there are $\mbf a, \mbf a' \in \F^n$ and $c, c' \in \F$ such that
\begin{equation}\label{lem:CapSame:eq1}
X = J(\mbf a, \mbf v) + cI_n = J(\mbf a', \mbf v') + c'I_n.
\end{equation} 
By applying Corollary~\eqref{cor:OneWayRankOnePlusScalar} to~\eqref{lem:CapSame:eq1} we obtain that $J(\mbf a, \mbf v) = J(\mbf a', \mbf v').$ Since we assume that $\mbf v, \mbf v'$ are linearly independent, then 
\begin{equation}\label{lem:CapSame:eq2}
J(\mbf a, \mbf v) = J(\mbf a', \mbf v') = O_n
\end{equation}
by the property~\ref{prop:JABUNiqExpr}. Hence,
\[
X \overset{\eqref{lem:CapSame:eq1}}{=\joinrel=} J(\mbf a, \mbf v) + cI_n \overset{\eqref{lem:CapSame:eq2}}{=\joinrel=} O_n + cI_n = cI_n.
\]
Therefore, the required equality is established.
\end{proof}

\begin{corollary}\label{cor:DimCapSame}Assume that $n \ge 2$.  If $0 \neq \mathbf v, \mathbf v' \in \F^n$ are linearly independent, then $\dim\left(\L(\mathbf v) \cap \L(\mathbf v')\right) = 1$. 
\end{corollary}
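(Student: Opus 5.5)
This corollary follows directly from Lemma~\ref{lem:CapSame}. Since $\mathbf v, \mathbf v'$ are nonzero and linearly independent, Lemma~\ref{lem:CapSame} gives $\L(\mathbf v) \cap \L(\mathbf v') = \langle I_n\rangle$. It then remains only to compute $\dim\langle I_n\rangle$.

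The space $\langle I_n \rangle$ is spanned by the single matrix $I_n$. For $n \ge 2$ (indeed for any $n \ge 1$) we have $I_n \neq O_n$, so this spanning set is linearly independent. Hence $\dim\langle I_n\rangle = 1$.

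The only care point concerns the hypotheses of Lemma~\ref{lem:CapSame}. Its proof invokes Corollary~\ref{cor:ExpressionForLElement} and Corollary~\ref{cor:OneWayRankOnePlusScalar}, which are stated for $n \ge 3$, whereas the corollary assumes only $n \ge 2$. The corollary inherits exactly the hypotheses under which Lemma~\ref{lem:CapSame} is stated, so I will cite that lemma as given; the gap, if any, lies in that lemma and not here.

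If one wanted to cover the case $n = 2$ independently, a direct check works. An element of both spaces has the form $X = J(\mathbf a, \mathbf v) + cI_2 = J(\mathbf a', \mathbf v') + c'I_2$, so $J(\mathbf a,\mathbf v) - J(\mathbf a',\mathbf v')$ is scalar. A scalar matrix $dI_2$ with $d \neq 0$ has column space all of $\F^2$. It would then have to be spanned by the two rank-$\le 1$ pieces, giving $\mathbf a,\mathbf a' \neq 0$. Comparing $(J(\mathbf a,\mathbf v) - J(\mathbf a',\mathbf v'))\mathbf w = (\mathbf a,\mathbf w)\mathbf v - (\mathbf a',\mathbf w)\mathbf v'$ with $d\mathbf w$ shows this is consistent. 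So for $n = 2$ the intersection can in fact be larger than $\langle I_2\rangle$: for example $E_{1\,1} = J(\mathbf e^{(1)},\mathbf e^{(1)})$ and $E_{1\,1} = I_2 - E_{2\,2} = I_2 - J(\mathbf e^{(2)}, \mathbf e^{(2)})$. Thus the dimension claim really needs $n \ge 3$, and under that assumption the one-line argument above is complete.
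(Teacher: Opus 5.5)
Your argument is exactly the paper's: the corollary is read off from Lemma~\ref{lem:CapSame} together with $\dim\langle I_n\rangle = 1$. Your caveat is also correct and worth keeping. For $n=2$ the uniqueness in Corollary~\ref{cor:OneWayRankOnePlusScalar} fails (e.g.\ $E_{1\,1} = -E_{2\,2} + I_2$). Concretely, $\L(\mathbf e^{(1)})$ is the space of upper triangular matrices and $\L(\mathbf e^{(2)})$ the space of lower triangular ones, so their intersection is the $2$-dimensional space of diagonal matrices; in general $\L(\mathbf v)+\L(\mathbf v') = \M_2(\F)$ forces dimension $3+3-4=2$. Hence both Lemma~\ref{lem:CapSame} and this corollary genuinely require $n\ge 3$, which is the only case in which the paper applies them.
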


\begin{lemma}\label{lem:CapTrans}Assume that $n \ge 2$.  If $0 \neq \mathbf v, \mathbf v' \in \F^n$, then $\L(\mathbf v) \cap \L^t(\mathbf v') = \left\langle I_n, J\left(\mathbf v', \mathbf v\right)\right\rangle.$ 
\end{lemma}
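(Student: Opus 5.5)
The plan is to prove the two inclusions separately, reading $\L^t(\mathbf v')$ as $\{X^t \mid X \in \L(\mathbf v')\}$. Then every element of $\L^t(\mathbf v')$ has the form $J(\mathbf v',\mathbf b)+dI_n$, because $J(\mathbf a,\mathbf v')^t = J(\mathbf v',\mathbf a)$ by Definition~\ref{def:JabMat}. Recall that $J(\mathbf a,\mathbf b)$ has column space in $\langle\mathbf b\rangle$ and row space in $\langle\mathbf a\rangle$.

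\emph{Inclusion $\supseteq$.} Clearly $I_n$ lies in both spaces. The matrix $J(\mathbf v',\mathbf v)$ has column space $\langle\mathbf v\rangle$, so it lies in $\L(\mathbf v)$ by Definition~\ref{def:LMatSpace}. It is also the transpose of $J(\mathbf v,\mathbf v')\in\L(\mathbf v')$, so it lies in $\L^t(\mathbf v')$.

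\emph{Inclusion $\subseteq$ for $n\ge 3$.} Let $X$ be in the intersection. By Corollary~\ref{cor:ExpressionForLElement} and its transpose, $X = J(\mathbf a,\mathbf v)+cI_n = J(\mathbf v',\mathbf b)+dI_n$ for some $\mathbf a,\mathbf b\in\F^n$ and $c,d\in\F$. By property~\ref{prop:J0} both $J$'s have rank at most one, so Corollary~\ref{cor:OneWayRankOnePlusScalar} gives $c=d$ and $J(\mathbf a,\mathbf v)=J(\mathbf v',\mathbf b)=:K$. The matrix $K$ has column space in $\langle\mathbf v\rangle$ and row space in $\langle\mathbf v'\rangle$. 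Hence every column of $K$ is a multiple of $\mathbf v$, and the coefficients form a vector proportional to $\mathbf v'$, so $K=\mu J(\mathbf v',\mathbf v)$ for some $\mu\in\F$. Concretely: if $K\ne O_n$, compare $J(\mathbf a,\mathbf v)=J(\mathbf v',\mathbf b)$ using property~\ref{prop:JABUNiqExpr} to get $\mathbf b=\alpha^{-1}\mathbf v$, and then property~\ref{prop:J1} finishes. Thus $X\in\langle I_n, J(\mathbf v',\mathbf v)\rangle$.

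\emph{The case $n=2$ is the main obstacle,} because Corollary~\ref{cor:OneWayRankOnePlusScalar} fails there: $J(\mathbf a,\mathbf v)-J(\mathbf v',\mathbf b)$ could equal a nonzero scalar matrix. I would first normalize by similarity. For invertible $M$, the map $X\mapsto MXM^{-1}$ sends $\L(\mathbf v)$ to $\L(M\mathbf v)$, $\L^t(\mathbf v')$ to $\L^t(M^{-t}\mathbf v')$, and $J(\mathbf v',\mathbf v)$ to $J(M^{-t}\mathbf v',M\mathbf v)$. So I may take $\mathbf v=\mathbf e^{(1)}$.

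Then $\L(\mathbf e^{(1)})$ is the set of matrices $\begin{psmallmatrix} p & q\\ 0 & r\end{psmallmatrix}$ with $p, q, r$ arbitrary, since its second row is $(0,c)$. Next, $\L^t(\mathbf v')$ is the set of $X$ such that $X-dI_2$ has both rows proportional to $\mathbf v'$ for some $d$. This is a $3$-dimensional space cut out by one linear equation, namely that $\mathbf w^t (X-dI)$ lies in $\langle \mathbf v'\rangle$ for a row vector $\mathbf w$ orthogonal to the row-direction condition; explicitly, for $\mathbf v'=(s,t)$ the condition is $t x_{11} - s x_{12} = t x_{21} - s x_{22} + \ldots$, which I would compute directly.

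The intersection is then a subspace of $\F^4$ defined by two independent linear equations, namely $x_{21}=0$ and the equation of $\L^t(\mathbf v')$. The two equations are independent because $\L(\mathbf e^{(1)})\ne\L^t(\mathbf v')$: $E_{1\,1}$ and $E_{1\,2}$ cannot both satisfy the transpose condition. So the intersection has dimension exactly $2$. Since $I_2$ and $J(\mathbf v',\mathbf e^{(1)})$ are linearly independent (the latter is nonzero of rank one, and a nonzero scalar matrix has rank $2$), the inclusion $\supseteq$ forces equality.
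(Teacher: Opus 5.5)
You prove the inclusion $\supseteq$ and the case $n\ge 3$ correctly, and by the paper's own argument. You write $X=J(\mathbf a,\mathbf v)+cI_n=J(\mathbf v',\mathbf b)+dI_n$ via Corollary~\ref{cor:ExpressionForLElement} and its transpose, match the rank-one parts by Corollary~\ref{cor:OneWayRankOnePlusScalar}, and finish with properties~\ref{prop:JABUNiqExpr} and~\ref{prop:J1} as in Lemma~\ref{lem:CapSame}. You are also right that Corollary~\ref{cor:OneWayRankOnePlusScalar} needs $n\ge 3$. The paper's proof uses it (and Corollary~\ref{cor:ExpressionForLElement}, also stated for $n\ge 3$) despite the stated hypothesis $n\ge 2$.

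\textbf{The gap is your $n=2$ case, and it cannot be repaired, because the statement is false there.} The failing step is your claim that $\L(\mathbf e^{(1)})\neq\L^t(\mathbf v')$.

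\emph{Counterexample.} Take $\mathbf v=\mathbf e^{(1)}$ and $\mathbf v'=\mathbf e^{(2)}$. Both $\L(\mathbf e^{(1)})$ and $\L^t(\mathbf e^{(2)})$ equal the full space of upper triangular $2\times 2$ matrices. For instance, $E_{1\,1}=-E_{2\,2}+I_2$ with $\rowsp(-E_{2\,2})=\langle\mathbf e^{(2)}\rangle$. So the intersection is $3$-dimensional. It contains $E_{1\,1}$, which is not in $\langle I_2,E_{1\,2}\rangle=\langle I_2,J(\mathbf e^{(2)},\mathbf e^{(1)})\rangle$.

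\emph{Exactly when your count works.} For $\mathbf v=\mathbf e^{(1)}$ and $\mathbf v'=(s,t)$, the defining equation of $\L^t(\mathbf v')$ is $st\,x_{1\,1}-s^2x_{1\,2}+t^2x_{2\,1}-st\,x_{2\,2}=0$. This is independent of $x_{2\,1}=0$ if and only if $s\neq 0$. Your similarity normalization preserves $(\mathbf v,\mathbf v')$, since $(M^{-t}\mathbf v',M\mathbf v)=(\mathbf v',\mathbf v)$. Hence your dimension count proves the case $n=2$ exactly when $(\mathbf v,\mathbf v')\neq 0$, i.e.\ when $J(\mathbf v',\mathbf v)$ is not nilpotent. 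When $(\mathbf v,\mathbf v')=0$ both the lemma and Corollary~\ref{cor:DimCapTrans} fail.

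\emph{Fix.} Assume $n\ge 3$, which is all that is needed where the lemma is applied (Lemmas~\ref{lem:TPresDiscSendsLtoLorLtSimultan} and~\ref{lem:TSendLtoLHasRightForm}). Alternatively, for $n=2$ add the hypothesis $(\mathbf v,\mathbf v')\neq 0$.
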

\begin{proof}The inclusion $\left\langle I_n, J\left(\mathbf v', \mathbf v\right)\right\rangle \subseteq \L(\mathbf v) \cap \L^t(\mathbf v')$ clearly holds. In order to prove the inverse inclusion, let $X \in \L(\mathbf v) \cap \L^t(\mathbf v')$. Corollary~\ref{cor:ExpressionForLElement} implies that there are $\mbf a, \mbf a' \in \F^n$ and $c, c' \in \F$ such that
\begin{equation}\label{lem:CapTrans:eq1}
X = J(\mbf a, \mbf v) + cI_n = J(\mbf v',\mbf a') + c'I_n.
\end{equation} 
Then, by applying to~\eqref{lem:CapTrans:eq1} the same arguments as in Lemma~\ref{lem:CapSame}, we eventually obtain $X \in \left\langle I_n, J\left(\mathbf v', \mathbf v\right)\right\rangle$. Therefore, the required equality is established.

\end{proof}

\begin{corollary}\label{cor:DimCapTrans}Assume that $n \ge 2$.  If $0 \neq \mathbf v, \mathbf v' \in \F^n$, then\linebreak $\dim\left(\L(\mathbf v) \cap \L^t(\mathbf v')\right) = 2.$ 
\end{corollary}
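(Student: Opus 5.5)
Corollary~\ref{cor:DimCapTrans} follows from Lemma~\ref{lem:CapTrans}: since $\L(\mathbf v) \cap \L^t(\mathbf v') = \langle I_n, J(\mathbf v', \mathbf v)\rangle$, it suffices to show that $I_n$ and $J(\mathbf v', \mathbf v)$ are linearly independent. Because the span has at most two generators, the dimension is at most $2$. We then verify that it is exactly $2$.

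First, $J(\mathbf v', \mathbf v) \neq O_n$. Indeed, $(J(\mathbf v',\mathbf v))_{i\,j} = \mathbf v_i \mathbf v'_j$, and since $\mathbf v, \mathbf v' \neq \mathbf 0$ we may pick indices $i, j$ with $\mathbf v_i \neq 0$ and $\mathbf v'_j \neq 0$, which gives a nonzero entry. By the property~\ref{prop:J0}, $\rk J(\mathbf v',\mathbf v) \le 1$, so in fact its rank is exactly $1$.

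Next, suppose $\alpha I_n + \beta J(\mathbf v', \mathbf v) = O_n$. If $\beta \neq 0$, then $J(\mathbf v',\mathbf v) = -\frac{\alpha}{\beta} I_n$ is a scalar matrix. Its rank is then either $0$ or $n \ge 2$, which contradicts rank $1$. Hence $\beta = 0$, and then $\alpha I_n = O_n$ forces $\alpha = 0$. So the two generators are linearly independent and the dimension equals $2$.

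There is no real obstacle here. The only point needing care is that $n \ge 2$ is used to rule out a nonzero scalar matrix of rank one; the rest is an immediate consequence of Lemma~\ref{lem:CapTrans}.
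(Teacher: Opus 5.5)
Your argument is the intended one: the corollary is Lemma~\ref{lem:CapTrans} plus the linear independence of $I_n$ and $J(\mathbf v',\mathbf v)$. Your check of that independence is correct and needs only $n\ge 2$. However, your closing claim is wrong. You say there is no real obstacle and that $n\ge 2$ matters only to exclude a rank-one scalar matrix. In fact the statement is false for $n=2$, and the failure sits inside Lemma~\ref{lem:CapTrans}. Its proof goes through Corollary~\ref{cor:ExpressionForLElement} and Corollary~\ref{cor:OneWayRankOnePlusScalar}, and both need $n\ge 3$. For $n=2$ the decomposition ``rank one plus scalar'' is not unique: $E_{1\,1} = E_{1\,1} + 0\cdot I_2 = (-E_{2\,2}) + 1\cdot I_2$.

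Here is a concrete counterexample for $n=2$. Take $\mathbf v=\mathbf e^{(1)}$ and $\mathbf v'=\mathbf e^{(2)}$.
\begin{itemize}
\item $\L(\mathbf e^{(1)})$ consists of all $\begin{psmallmatrix} a+c & b\\ 0 & c\end{psmallmatrix}$, that is, all upper triangular matrices.
\item $\L(\mathbf e^{(2)})$ consists of all $\begin{psmallmatrix} c & 0\\ a & b+c\end{psmallmatrix}$, that is, all lower triangular matrices. So $\L^t(\mathbf e^{(2)})$ is again the space of all upper triangular matrices.
\item Hence $\L(\mathbf e^{(1)})\cap\L^t(\mathbf e^{(2)})$ has dimension $3$, not $2$.
\item In particular $E_{1\,1}$ lies in this intersection but not in $\langle I_2, J(\mathbf e^{(2)},\mathbf e^{(1)})\rangle = \langle I_2, E_{1\,2}\rangle$.
\end{itemize}
So the hypothesis must be $n\ge 3$, which is the only case in which the paper uses this corollary. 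Under that hypothesis your proof is complete, but the point that actually needs care is the $n\ge 3$ hidden in the cited lemma, not the one you identified.
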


\begin{lemma}\label{lem:DimCupLSpaceAll} 
If $0 \neq v^{(1)}, \ldots, v^{(k)} \in \F^n$
\begin{equation}\label{lem:DimCupLSpaceAll:eqq1}
\dim\left(\L(v^{(1)}) + \ldots + \L(v^{(k)})\right) \le kn + 1.
\end{equation}
\end{lemma}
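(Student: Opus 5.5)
Each space $\L(\mathbf v^{(i)})$ decomposes as $U(\mathbf v^{(i)}) + \langle I_n\rangle$, where $U(\mathbf v) = \{A \in \M_n(\F) \mid \colsp(A) \subseteq \langle \mathbf v\rangle\}$. This is immediate from Definition~\ref{def:LMatSpace}. By Lemma~\ref{lem:DimColSpInE} applied with $E = \langle \mathbf v\rangle$, we have $\dim U(\mathbf v) = n$.

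The plan is therefore to rewrite the sum as
\[
\L(\mathbf v^{(1)}) + \ldots + \L(\mathbf v^{(k)}) = U(\mathbf v^{(1)}) + \ldots + U(\mathbf v^{(k)}) + \langle I_n\rangle .
\]
This holds because every summand $\L(\mathbf v^{(i)})$ contains $\langle I_n\rangle$, and a sum of subspaces is unchanged when a repeated subspace is absorbed into a single copy.

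Next we apply the elementary subadditivity bound $\dim(V_1 + \ldots + V_m) \le \sum \dim V_i$, which follows by induction from the formula $\dim(V+W) = \dim V + \dim W - \dim(V\cap W)$. This gives
\[
\dim\left(\L(\mathbf v^{(1)}) + \ldots + \L(\mathbf v^{(k)})\right) \le \sum_{i=1}^k \dim U(\mathbf v^{(i)}) + \dim\langle I_n\rangle = kn + 1,
\]
which is exactly~\eqref{lem:DimCupLSpaceAll:eqq1}.

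There is no real obstacle here. The only point requiring care is to avoid bounding naively by $k(n+1)$, which is what one gets by adding $\dim \L(\mathbf v^{(i)}) = n+1$ for each $i$. Splitting off the common scalar part first is what removes the $k-1$ redundant copies of $\langle I_n\rangle$. This decomposition needs no assumption on $n$, since it uses only the definition of $\L(\mathbf v)$ and not Corollary~\ref{cor:ExpressionForLElement}.

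Alternatively, one could argue by induction on $k$. The step would use the inequality $\dim(S + \L(\mathbf v^{(k)})) \le \dim S + (n+1) - 1$, which holds because $I_n \in S \cap \L(\mathbf v^{(k)})$, so this intersection has dimension at least $1$.
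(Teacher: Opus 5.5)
Your proof is correct and is essentially the paper's argument. The paper extends $I_n$ to a basis $I_n, A^{(i\,1)},\ldots,A^{(i\,n)}$ of each $\L(\mathbf v^{(i)})$ and notes that the sum is spanned by $I_n$ together with the $kn$ matrices $A^{(i\,j)}$; this is the same counting as your splitting $\L(\mathbf v)=U(\mathbf v)+\langle I_n\rangle$ with $\dim U(\mathbf v)=n$. Your version has the minor advantage of not relying on the paper's claim that $\dim\L(\mathbf v)=n+1$, which fails for $n=1$.
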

\begin{proof}Indeed, by extending $I_n$ to a basis $I_n, A^{(i\,1)}, \ldots A^{(i\,n)}$ for every $1 \le i \le k$ we obtain that 
\[
\L(v^{(1)}) + \ldots + \L(v^{(k)}) \subseteq \left\langle I_n, A^{(i\,1)}, \ldots, A^{(i\,n)}, \ldots, A^{(k\,n)}\right\rangle,
\]
which implies~\eqref{lem:DimCupLSpaceAll:eqq1}.
\end{proof}

\begin{lemma}\label{lem:DimCupLSpaceIndepCrit}Assume that $n \ge 2$. Let $1 \le l \le n$  and $0 \neq \mathbf v^{(1)}, \ldots, \mathbf v^{(k)} \in \F^n$. Then $\mathbf v^{(1)}, \ldots, \mathbf v^{(k)}$ are linearly independent if and only if
\begin{equation}\label{lem:DimCupLSpaceIndep:eqq1}
\dim\left(\L(v^{(1)}) + \ldots + \L(v^{(k)}) \right) \ge kn.
\end{equation}
\end{lemma}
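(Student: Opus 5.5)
We prove the two directions separately, computing the dimension of the sum through the subspaces of matrices with prescribed column space. Write $U(\mathbf v) = \{A \in \M_n(\F) \mid \colsp(A) \subseteq \langle \mathbf v\rangle\}$. Then $\L(\mathbf v) = U(\mathbf v) + \langle I_n\rangle$ by Definition~\ref{def:LMatSpace}, and $\dim U(\mathbf v) = n$ by Lemma~\ref{lem:DimColSpInE}. Consequently
\[
\L(\mathbf v^{(1)}) + \ldots + \L(\mathbf v^{(k)}) = U(\mathbf v^{(1)}) + \ldots + U(\mathbf v^{(k)}) + \langle I_n\rangle .
\]

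\emph{Key observation.} The sum $U(\mathbf v^{(1)}) + \ldots + U(\mathbf v^{(k)})$ equals $\{A \in \M_n(\F)\mid \colsp(A) \subseteq E\}$, where $E = \langle \mathbf v^{(1)}, \ldots, \mathbf v^{(k)}\rangle$.
\begin{itemize}
\item The inclusion $\subseteq$ is clear.
\item For $\supseteq$, take $A$ with $\colsp(A) \subseteq E$. Write each column of $A$ as a linear combination of the $\mathbf v^{(i)}$. This splits $A$ into a sum of matrices whose columns lie in $\langle\mathbf v^{(i)}\rangle$, one for each $i$.
\end{itemize}
Hence, by Lemma~\ref{lem:DimColSpInE}, this sum has dimension $n\dim(E)$. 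The whole sum therefore has dimension $n\dim(E)$ or $n\dim(E)+1$, according to whether $I_n$ lies in it or not.

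\emph{Necessity.} Suppose the vectors $\mathbf v^{(1)}, \ldots, \mathbf v^{(k)}$ are linearly independent. Then $\dim(E) = k$, so the dimension of the sum is at least $kn$, which gives~\eqref{lem:DimCupLSpaceIndep:eqq1}.

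\emph{Sufficiency.} Suppose the vectors are dependent, so $\dim(E) \le k-1$. The sum then has dimension at most $n(k-1) + 1$. Since $n \ge 2$, we have $n(k-1)+1 < kn$, which contradicts~\eqref{lem:DimCupLSpaceIndep:eqq1}.

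The only step requiring care is the identification of the sum of the $U(\mathbf v^{(i)})$ with the column-space-bounded space. This identification is elementary, so we do not expect a real obstacle. The hypothesis $n \ge 2$ is exactly what makes the strict inequality $n(k-1)+1 < kn$ hold in the sufficiency argument.
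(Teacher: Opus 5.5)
Your proof is correct and essentially matches the paper's: the independent case is the same argument, namely that $\{A \mid \colsp(A)\subseteq E\}$ has dimension $kn$ by Lemma~\ref{lem:DimColSpInE} and is contained in the sum. In the dependent case the paper discards redundant vectors to reduce to $l<k$ of them and applies the bound $ln+1$ from Lemma~\ref{lem:DimCupLSpaceAll}, whereas you identify the sum exactly as $\{A \mid \colsp(A)\subseteq E\}+\langle I_n\rangle$ and get the bound $n\dim(E)+1$ directly; this is a minor and slightly cleaner variant of the same idea.
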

\begin{proof}

Assume that $\mathbf v^{(1)}, \ldots, \mathbf v^{(k)} \in \F^n$ are linearly independent. This means that 
\begin{equation}\label{lem:DimCupLSpaceIndep:eq1}
\dim(E) = k,
\end{equation}
 where $E = \left\langle v^{(1)}, \ldots, v^{(k)}\right\rangle$. Let $W = \{A \in \M_{n}(\F) \mid \colsp (A) \subseteq E\}.$ Then
\begin{equation}\notag
\dim(W) \overset{\mbox{\scriptsize Lemma~\ref{lem:DimColSpInE}}}{=\joinrel=\joinrel=\joinrel=\joinrel=\joinrel=} n\dim(E) \overset{\eqref{lem:DimCupLSpaceIndep:eq1}}{=\joinrel=\joinrel=\joinrel=}  kn.
\end{equation}
In addition, the definitions of $W$ and $\L(\mathbf v^{(1)}), \ldots, \L(\mathbf v^{(k)})$ imply that 
\begin{equation}\label{lem:DimCupLSpaceIndep:eq2}
W \subseteq \L(\mathbf v^{(1)}) + \ldots + \L(\mathbf v^{(k)}).
\end{equation}
Therefore,
\[
nk = \dim(W) \le \dim\left( \L(\mathbf v^{(1)}) + \ldots + \L(\mathbf v^{(k)})\right).
\]
Thus, the required inequality holds.

Assume that $\mathbf v^{(1)}, \ldots, \mathbf v^{(k)} \in \F^n$ are not linearly independent. Since all $\mathbf v^{(1)}, \ldots, \mathbf v^{(k)}$ are nonzero vectors, then in addition we assume without loss of generality that $1 \le l < k$ is such that 
\begin{equation}\label{lem:DimCupLSpaceIndep:eq3}
\mathbf v^{(i)} \in \left\langle\mathbf v^{(1)}, \ldots, \mathbf v^{(l)}\right\rangle\;\;\mbox{for all}\;\;l < i \le k.
\end{equation}  
Therefore, from the definition of $\L(\mathbf v^{(1)}), \ldots, \L(\mathbf v^{(k)})$ we conclude that
\begin{equation*}
\L(\mathbf v^{(i)} \subseteq \L(v^{(1)}) + \ldots + \L(v^{(l)})\;\;\mbox{for all}\;\;l < i \le k
\end{equation*}
and consequently
\begin{equation}\label{lem:DimCupLSpaceIndep:eq4}
\L(v^{(1)}) + \ldots + \L(v^{(k)}) \subseteq \L(v^{(1)}) + \ldots + \L(v^{(l)}).
\end{equation}
In addition
\begin{equation}\label{lem:DimCupLSpaceIndep:eq5}
\dim\left(\L(v^{(1)}) + \ldots + \L(v^{(l)})\right) \overset{\mbox{\scriptsize Lemma~\ref{lem:DimCupLSpaceAll}}}{\le } ln + 1
\end{equation}
and consequently
\begin{equation}\label{lem:DimCupLSpaceIndep:eq6}
\dim\left(\L(v^{(1)}) + \ldots + \L(v^{(k)})\right) \overset{\eqref{lem:DimCupLSpaceIndep:eq4}}{\le} \dim\left(\L(v^{(1)}) + \ldots + \L(v^{(l)})\right) \overset{\eqref{lem:DimCupLSpaceIndep:eq5}}{\le} ln + 1.
\end{equation}
Since $l < k$, then $l \le k - 1$, which implies that 
\begin{equation}\label{lem:DimCupLSpaceIndep:eq7}
ln + 1\le (k-1)n + 1 = kn + (1-n) < kn.
\end{equation}
Therefore,
\[
\dim\left(\L(v^{(1)}) + \ldots + \L(v^{(k)})\right) \overset{\eqref{lem:DimCupLSpaceIndep:eq6}}{\le} ln + 1 \overset{\eqref{lem:DimCupLSpaceIndep:eq7}}{<} kn,
\]
which concludes the statement of the lemma in this case.
\end{proof}

\begin{lemma}\label{lem:DiscPresPresIncidence}Assume that $n \ge 2$. Let $T$ be a bijective linear map on $\M_{n}(\F)$ transforming the spaces of the form $\L(\mathbf v)$ to the spaces of the same form, $1 \le k \le n$ is a natural number, and nonzero vectors $\mathbf u^{(1)}, \ldots, \mathbf u^{(k)}, \mathbf v^{(1)}, \ldots, \mathbf v^{(k)} \in \F^n$ are such that
\[T(\L(\mathbf u^{(i)})) = \L(\mathbf v^{(i)})\;\;\mbox{for all}\;\;1 \le i \le k.\]
Then $\mathbf u^{(1)}, \ldots, \mathbf u^{(k)}$ are linearly independent if and only if  $\mathbf v^{(1)}, \ldots, \mathbf v^{(k)}$ are linearly independent.
\end{lemma}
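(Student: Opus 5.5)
The plan is to reduce the statement to the dimension criterion of Lemma~\ref{lem:DimCupLSpaceIndepCrit}, which characterises linear independence of nonzero vectors purely in terms of $\dim\left(\L(\mathbf v^{(1)}) + \ldots + \L(\mathbf v^{(k)})\right)$. A bijective linear map preserves dimensions of subspaces and commutes with sums of subspaces, so the criterion should transfer directly through $T$.

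First I will show that
\[
T\left(\L(\mathbf u^{(1)}) + \ldots + \L(\mathbf u^{(k)})\right) = T(\L(\mathbf u^{(1)})) + \ldots + T(\L(\mathbf u^{(k)})) = \L(\mathbf v^{(1)}) + \ldots + \L(\mathbf v^{(k)}).
\]
This follows from linearity: the image of the span of a union is the span of the images. Since $T$ is bijective, it is injective on every subspace, and therefore
\[
\dim\left(\L(\mathbf u^{(1)}) + \ldots + \L(\mathbf u^{(k)})\right) = \dim\left(\L(\mathbf v^{(1)}) + \ldots + \L(\mathbf v^{(k)})\right).
\]

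Next I will apply Lemma~\ref{lem:DimCupLSpaceIndepCrit} on both sides; it applies because $n \ge 2$ and all the vectors are nonzero. Then $\mathbf u^{(1)}, \ldots, \mathbf u^{(k)}$ are linearly independent if and only if the left-hand dimension is at least $kn$. By the equality above, that holds if and only if the right-hand dimension is at least $kn$, which in turn holds if and only if $\mathbf v^{(1)}, \ldots, \mathbf v^{(k)}$ are linearly independent.

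There is no serious obstacle. The only thing to check is that Lemma~\ref{lem:DimCupLSpaceIndepCrit} is used correctly as an equivalence. Its \lq\lq only if\rq\rq{} direction bounds the dimension by $ln+1<kn$ in the dependent case, which needs $n \ge 2$, so the hypotheses match. The assumption that $T$ maps every space $\L(\mathbf v)$ to a space of the same form is not actually needed beyond the given equalities $T(\L(\mathbf u^{(i)})) = \L(\mathbf v^{(i)})$.
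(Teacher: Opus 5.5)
Your proposal is correct and is essentially the paper's proof. Both push the dimension criterion of Lemma~\ref{lem:DimCupLSpaceIndepCrit} through $T$, using that $T$ carries $\L(\mathbf u^{(1)})+\ldots+\L(\mathbf u^{(k)})$ onto $\L(\mathbf v^{(1)})+\ldots+\L(\mathbf v^{(k)})$ and, being bijective, preserves its dimension; your chain of equivalences handles both directions at once, where the paper proves one and says the other is similar. One small slip: the bound $ln+1<kn$ in the dependent case proves the \emph{if} direction of that criterion (dimension $\ge kn$ implies independence), not the \emph{only if} direction.
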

\begin{proof}Suppose that $\mathbf u^{(1)}, \ldots, \mathbf u^{(k)}$ are linearly independent. Then it follows from Lemma~\ref{lem:DimCupLSpaceIndepCrit} that
\[\dim\left(\L(v^{(1)}) + \ldots + \L(v^{(k)})\right)\ge kn.\]
Since the linear map $T$ is bijective and $T(\L(\mathbf u^{(i)})) = \L(\mathbf v^{(i)})$ for all $1 \le i \le k,$ then
\begin{multline*}
\dim\left(\L(\mathbf v^{(1)}) + \ldots + \L(\mathbf v^{(k)})\right)\\
= \dim\left(T\left(\L(\mathbf u^{(1)}) + \ldots + \L(\mathbf u^{(k)})\right)\right)\\
= \dim\left(\L(\mathbf u^{(1)}) + \ldots + \L(\mathbf u^{(k)})\right) \ge kn.
\end{multline*}
Therefore, $\mathbf v^{(1)}, \ldots, \mathbf v^{(k)}$ are linearly independent by Lemma~\ref{lem:DimCupLSpaceIndepCrit}.

The proof in the opposite direction is carried out by the similar arguments.
\end{proof}

\begin{corollary}\label{cor:DiscPresPresIncidence}Assume that $n \ge 2$.  Let $T$ be a bijective linear map on $\M_n(\F)$ satisfying the conditions of Lemma~\ref{lem:DiscPresPresIncidence}, $1 \le k < n$ and \linebreak $\mathbf v^{(1)}, \ldots, \mathbf v^{(k+1)},$ $\mathbf u^{(1)}, \ldots, \mathbf u^{(k+1)}$ are nonzero vectors such that
\[T(\L (\mathbf u^{(i)})) = \L(\mathbf v^{(i)})\;\;\mbox{for all}\;\;1\le i \le k+1.\]
Then 
\[\mathbf u^{(k+1)} \in \left\langle\mathbf u^{(1)},\ldots, \mathbf u^{(k)}\right\rangle\]
implies that
\[\mathbf v^{(k+1)} \in \left\langle\mathbf v^{(1)},\ldots, \mathbf v^{(k)}\right\rangle.\]
\end{corollary}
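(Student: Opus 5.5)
The plan is to derive this corollary directly from Lemma~\ref{lem:DiscPresPresIncidence} by contraposition. We argue that if $\mathbf v^{(k+1)} \notin \left\langle\mathbf v^{(1)},\ldots, \mathbf v^{(k)}\right\rangle$, then $\mathbf u^{(k+1)} \notin \left\langle\mathbf u^{(1)},\ldots, \mathbf u^{(k)}\right\rangle$. The difficulty is that the lemma speaks only about full linear independence of a family. Our vectors $\mathbf u^{(1)},\ldots,\mathbf u^{(k)}$ need not be independent, so we first reduce to an independent subfamily.

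First choose indices $i_1<\ldots<i_l$ in $\{1,\ldots,k\}$ such that $\mathbf v^{(i_1)},\ldots,\mathbf v^{(i_l)}$ form a basis of $\left\langle\mathbf v^{(1)},\ldots,\mathbf v^{(k)}\right\rangle$; note $l\ge 1$ since all vectors are nonzero, and $l \le k < n$. Apply Lemma~\ref{lem:DiscPresPresIncidence} to this subfamily, with $l$ in place of $k$. It gives that $\mathbf u^{(i_1)},\ldots,\mathbf u^{(i_l)}$ are linearly independent.

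Next show that this subfamily spans $\left\langle\mathbf u^{(1)},\ldots,\mathbf u^{(k)}\right\rangle$. For any $j\le k$ not among the $i_s$, the family $\mathbf v^{(i_1)},\ldots,\mathbf v^{(i_l)},\mathbf v^{(j)}$ is dependent. It has $l+1\le n$ members, so Lemma~\ref{lem:DiscPresPresIncidence} applies and shows that $\mathbf u^{(i_1)},\ldots,\mathbf u^{(i_l)},\mathbf u^{(j)}$ is dependent. Since the first $l$ of these are independent, $\mathbf u^{(j)}$ lies in their span. Hence $\left\langle\mathbf u^{(1)},\ldots,\mathbf u^{(k)}\right\rangle=\left\langle\mathbf u^{(i_1)},\ldots,\mathbf u^{(i_l)}\right\rangle$.

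Finally suppose $\mathbf v^{(k+1)}\notin\left\langle\mathbf v^{(1)},\ldots,\mathbf v^{(k)}\right\rangle$. Then $\mathbf v^{(i_1)},\ldots,\mathbf v^{(i_l)},\mathbf v^{(k+1)}$ are independent, a family of $l+1\le k+1\le n$ vectors. Lemma~\ref{lem:DiscPresPresIncidence} then makes $\mathbf u^{(i_1)},\ldots,\mathbf u^{(i_l)},\mathbf u^{(k+1)}$ independent. So $\mathbf u^{(k+1)}$ lies outside their span, which by the previous step equals $\left\langle\mathbf u^{(1)},\ldots,\mathbf u^{(k)}\right\rangle$. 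This is the contrapositive we wanted. The only delicate point is tracking that every family to which Lemma~\ref{lem:DiscPresPresIncidence} is applied has size at most $n$; this is guaranteed by $k<n$.
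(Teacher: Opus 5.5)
Your argument is correct and is the natural deduction from Lemma~\ref{lem:DiscPresPresIncidence}, which the paper leaves implicit by stating the corollary without proof. You also reduce to a basis subfamily $\mathbf v^{(i_1)},\ldots,\mathbf v^{(i_l)}$ and check that every family fed into the lemma has at most $n$ members; this correctly covers the case where $\mathbf u^{(1)},\ldots,\mathbf u^{(k)}$ are dependent, which a one-line appeal to the lemma would overlook.
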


\begin{lemma}\label{lem:MatMultOnLn}If  $M \in \M_{n}(\F)$ is an invertible matrix, then 
\[
M\L(\mbf v)M^{-1} = \L(M\mbf v)\;\;\mbox{for all}\;\;\mbf v \in \F.
\] 
\end{lemma}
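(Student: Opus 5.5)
We prove the two inclusions $M\L(\mbf v)M^{-1} \subseteq \L(M\mbf v)$ and $\L(M\mbf v) \subseteq M\L(\mbf v)M^{-1}$ directly from Definition~\ref{def:LMatSpace}. The statement should be read with $0 \neq \mbf v \in \F^n$; then $M\mbf v \neq \mbf 0$ because $M$ is invertible, so $\L(M\mbf v)$ is defined.

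For the first inclusion, take $X \in \L(\mbf v)$. By Definition~\ref{def:LMatSpace}, $X = A + cI_n$ with $c \in \F$ and $\colsp(A) \subseteq \langle \mbf v\rangle$. Then
\[
MXM^{-1} = MAM^{-1} + cI_n .
\]
Every column of $MAM^{-1}$ has the form $MAM^{-1}\mbf w = M\bigl(A(M^{-1}\mbf w)\bigr)$. Since $A(M^{-1}\mbf w) \in \langle \mbf v\rangle$, this column lies in $M\langle\mbf v\rangle = \langle M\mbf v\rangle$. Hence $\colsp(MAM^{-1}) \subseteq \langle M\mbf v\rangle$, and $MXM^{-1} \in \L(M\mbf v)$.

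Alternatively, one can use the column structure from property~\ref{prop:RankOneExpression}. Write $A = J(\mbf a, \mbf v)$, which is $\mbf v \mbf a^t$ in product form. Then $MAM^{-1} = (M\mbf v)\bigl((M^{-1})^t\mbf a\bigr)^t = J\bigl((M^{-1})^t\mbf a, M\mbf v\bigr)$. This form, however, requires $n \ge 3$ if one goes through Corollary~\ref{cor:ExpressionForLElement}, so the column-space argument above is preferable.

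For the reverse inclusion, apply the first inclusion to the invertible matrix $M^{-1}$ and the nonzero vector $M\mbf v$. This gives $M^{-1}\L(M\mbf v)M \subseteq \L(\mbf v)$. Conjugating both sides by $M$ yields $\L(M\mbf v) \subseteq M\L(\mbf v)M^{-1}$.

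There is no real obstacle here. The only points needing care are the implicit hypothesis $\mbf v \neq \mbf 0$ (the statement writes $\mbf v \in \F$, which is evidently a typo for $0 \neq \mbf v \in \F^n$), and avoiding any dependence on $n \ge 3$, which the column-space argument achieves.
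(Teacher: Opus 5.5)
Your proof is correct and follows the paper's: the forward inclusion is the same column-space computation $(MAM^{-1})\mbf u = M\bigl(A(M^{-1}\mbf u)\bigr) \in \langle M\mbf v\rangle$. The only difference is the reverse inclusion. The paper gets it from the invertibility of $X \mapsto MXM^{-1}$, i.e.\ a dimension count, since both spaces have dimension $n+1$. You instead apply the forward inclusion to $M^{-1}$ and $M\mbf v$, which is equally valid, avoids any dimension bookkeeping, and (like your use of the definition rather than Corollary~\ref{cor:ExpressionForLElement}) needs no assumption $n \ge 3$.
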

\begin{proof}Let $\mathbf v \in \F^n$. Since the linear map $X \mapsto MXM^{-1}$ is invertible, then it is sufficient to verify that 
\begin{equation}\label{lem:MatMultOnLn:eq11}
M\L(\mbf v)M^{-1} \subseteq \L(M\mbf v).
\end{equation}
For establishing~\eqref{lem:MatMultOnLn:eq11}, let $B \in \L(\mbf v)$. By Lemma~\ref{cor:ExpressionForLElement}, there is $c \in \F$ and $A \in \M_{n}(\F)$ such that
\begin{equation}\label{lem:MatMultOnLn:eq1}
\colsp(A) \subseteq \langle \mathbf v\rangle
\end{equation}
and 
\begin{equation}\label{lem:MatMultOnLn:eq2}
B = A + cI_{n}.
\end{equation}
Then
\begin{equation}\label{lem:MatMultOnLn:eq4}
MBM^{-1} \overset{\eqref{lem:MatMultOnLn:eq2}}{=\joinrel=\joinrel=} M(A + cI_{n})M^{-1} = MAM^{-1} + c I_{n}.
\end{equation}
Consider $MAM^{-1}$. The condition~\eqref{lem:MatMultOnLn:eq1} is equivalent to the inclusion
\begin{equation}\label{lem:MatMultOnLn:eq3}
A\mathbf u \in \langle\mbf v\rangle\;\;\mbox{for all}\;\;\mathbf u \in \F^n.
\end{equation}
Relying on that fact we obtain that
\begin{multline}\label{lem:MatMultOnLn:eq5}
\left(MAM^{-1}\right)\mathbf u = M\left(A\left(M^{-1}\mathbf u\right)\right) \overset{\eqref{lem:MatMultOnLn:eq3}}{\in} M (\langle\mathbf v\rangle)\\
\overset{\mbox{\scriptsize $M$ is invertible}}{=\joinrel=\joinrel=\joinrel=\joinrel=\joinrel=\joinrel=\joinrel=\joinrel=} \langle M\mathbf v\rangle\;\;\mbox{for all}\;\;\mathbf u \in \F^n.
\end{multline}
Since~\eqref{lem:MatMultOnLn:eq5} is equivalent to that $\colsp(MAM^{-1}) \subseteq \langle M\mathbf v\rangle$, then using~\eqref{lem:MatMultOnLn:eq4} we conclude~\eqref{lem:MatMultOnLn:eq11}.
\end{proof}

\section{Linear maps preserving the matrix discriminant}
\label{sec:LinDiscPres}

In this section, relying on the facts obtained above, we study the properties of linear maps preserving the matrix discriminant, which allows to provide their characterisation as it is stated in Theorem~\ref{thm:MainThmInIntro}  (see Theorem~\ref{thm:DPresHasForm}).

\begin{lemma}\label{lem:IfPresDThenPresRk1PSc}Assume that $n \ge 3$ and  $|\F| > n(n-1)$. Let $T\colon \M_{n}(\F) \to \M_{n}(\F)$ be a bijective linear map such that $\D(T(X)) = \D(X)$ for all $X \in \M_{n}(\F)$. If $B \in \M_{n}(\F)$ is such that $B = J + cI_n$ for some $c$ and a rank one non-nilpotent matrix $J \in \M_{n}(\F)$, then $T(B) \in \R_n(\F)$.
\end{lemma}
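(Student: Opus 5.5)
Set $Y = T(B)$. The plan is to show that $\deg_\lambda(\D_{(Y)}) \le 2n-2$, and then to use the invariants of Section~\ref{sec:OrbInvariants} to rule out the exceptional classes listed in Lemma~\ref{lem:LowDegImpliesLowRankOrSeveralCasesForAlgebraicallyClosed}.

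\textbf{Step 1: the degree bound.} Since $T$ is bijective and preserves $\D$, and $\deg(\D) = n(n-1) < |\F|$, Corollary~\ref{cor:DegAndLPAreReallyInvariant} (with $P = Q = \D$) gives
\[
\deg_\lambda(\D_{(Y)}) = \deg_\lambda(\D_{(B)}),
\]
together with equality of all the dimensions $\dim(\FL_{\D^{(i)}_{(\cdot)}})$. In characteristic $2$ with $n \le 5$, Lemma~\ref{lem:PresDEqPresD2} shows that $T$ also preserves $\B$, so the same equalities hold for $\B$. By Lemma~\ref{lem:RankOneScalImpliesLowDeg}, $\deg_\lambda(\D_{(B)}) \le 2n-2$, hence also $\deg_\lambda(\D_{(Y)}) \le 2n-2$.

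\textbf{Step 2: passing to $\overline{\F}$.} The degree bound only needs to be transferred, not re-derived. Since $\D_{(Y)}$ is a polynomial over $\F$, its $\lambda$-degree is unchanged when we regard $Y$ as a matrix over $\overline{\F}$. Lemma~\ref{lem:LowDegImpliesLowRankOrSeveralCasesForAlgebraicallyClosed} then says that either $Y \in \R_n(\overline{\F})$, or $Y$ falls into one of the cases C2--C4. In the first case Lemma~\ref{lem:RankOneScalarFromExtToGroundField} gives $Y \in \R_n(\F)$, and we are done. In particular, for $n \ge 6$, and for $n = 5$ with $\Char(\F) \neq 2$, nothing more is required.

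\textbf{Step 3: ruling out the exceptional cases.} This is the main obstacle. By Lemma~\ref{lem:RK1Sim}, $B$ is similar to $cE_{1\,1}$ plus a scalar, and by homogeneity of the invariants we may take $c = 1$. Suppose $Y$ is similar over $\overline{\F}$ to $N + dI_n$, where $N$ is one of the exceptional nilpotent matrices. Apply Lemma~\ref{lem:SimPlusScalSameInvD}, or Lemma~\ref{lem:SimPlusScalSameInvD2}, over $\overline{\F}$, which is infinite. This lets us compare the invariants of $E_{1\,1}$ and of $N$ computed in Section~\ref{sec:InvariantsOfSmallMatrices}, using the shaded entries of Table~\ref{tbl:compTab}.

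There is a subtlety: the invariants of $Y$ are computed over $\F$ but then compared over $\overline{\F}$. The dimension of $\FL_P$ is the rank of the span of the gradient values, and this can only grow under field extension. So I would instead transfer everything to $\overline{\F}$ from the start. The identity $\D(T(X)) = \D(X)$ holds as a polynomial identity because $|\F| > \deg$, so the $\overline{\F}$-linear extension of $T$ still preserves $\D$ (respectively $\B$). Applying Lemma~\ref{lem:DegAndPartPreservance} over $\overline{\F}$, with $\rad = \langle I_n \rangle$ by Corollary~\ref{cor:DRad}, the invariants of $B$ and of $Y$ then agree over $\overline{\F}$.

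The case checks are as follows.
\begin{itemize}
\item $n = 3$: in characteristic $2$, compare $\dim(\FL_{\B^{(1)}})$, which is $4$ versus $6$; in characteristic $3$, compare the degrees $4$ versus $3$; otherwise compare $\dim(\FL_{\D^{(4)}})$, which is $3$ versus $1$.
\item $n = 4$: in characteristic $\neq 2$, compare $\dim(\FL_{\D^{(6)}})$, which is $\ge 5$ versus $\le 4$; in characteristic $2$, compare the degrees of $\B$, which are $3$ versus $\le 2$.
\item $n = 5$, characteristic $2$: compare $\dim(\FL_{\B^{(4)}})$, which is $\ge 15$ versus $\le 12$ (this covers the case where the degree of $\B$ is $4$; if that degree is lower, the degrees already differ).
\end{itemize}

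Each comparison yields a contradiction, so $Y \in \R_n(\F)$.
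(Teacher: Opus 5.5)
Your proof is correct and follows essentially the paper's route. You bound $\deg_\lambda(\D_{(T(B))})$ via the orbit invariants, pass to $\overline{\F}$ (where the extended map still preserves $\D$, resp.\ $\B$), apply Lemma~\ref{lem:LowDegImpliesLowRankOrSeveralCasesForAlgebraicallyClosed}, exclude C2--C4 by comparing the tabulated invariants of $E_{1\,1}$ with those of the exceptional nilpotent matrices, and descend with Lemma~\ref{lem:RankOneScalarFromExtToGroundField}. The only differences are minor: you normalise $J\sim eE_{1\,1}$ by homogeneity of the invariants rather than by rescaling $B$; your case list uses the correct invariant $\B^{(1)}$ for $n=3$, $\Char(\F)=2$ (the one the lemmas actually compute); and you explicitly handle a possible drop of $\deg_\lambda\B_{(E_{1\,2}+E_{3\,4})}$ below $4$ when $n=5$.
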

\begin{proof}The proof is divided into three parts as follows.
\begin{enumerate}[label=(\roman*), ref=(\roman*)]
\item\label{lem:IfPresDThenPresRk1PSc:part1} We prove the statement of the lemma assuming that $\F$ is algebraically closed and $B$ is similar to $E_{1\,1} + cI_n$ for some $c \in \F$.
\item\label{lem:IfPresDThenPresRk1PSc:part2} We prove the statement of the lemma assuming that $\F$ is algebraically closed and without any additional assumption on $B$.
\item\label{lem:IfPresDThenPresRk1PSc:part3} We prove the statement of the lemma without any additional assumption on $\F$ and $B$.    
\end{enumerate}
\paragraph{\ref{lem:IfPresDThenPresRk1PSc:part1}}Assume that $\F$ is algebraically closed and $B$ is similar to $E_{1\,1} + cI_n$ for some $c \in \F$. It follows from Lemma~\ref{lem:RankOneScalImpliesLowDeg} that
\begin{equation}\label{lem:IfPresDThenPresRk1PSc:eq11}
\deg_{\lambda} (\D(A + \lambda B)) \le 2n - 2\;\;\mbox{for all}\;\;A \in \M_{n}(\F).
\end{equation}
Let $B' = T(B)$. Fix $A \in \M_{n}(\F)$. Since $T$ is bijective, then there is $A' \in \M_{n}(\F)$ such that $T(A') = A$. Consider the polynomial $P = \D(A + \lambda B') \in \F[\lambda]$. We have
\begin{multline}\label{lem:IfPresDThenPresRk1PSc:eq2}
\D(A + \lambda B') = \D(T(A') + \lambda T(B)) = \D(T(A' + \lambda B))\\
 = \D(A' + \lambda B)\;\;\mbox{for all}\;\lambda \in \F
\end{multline}
because $T$ preserves $\D$. Since $\F$ is algebraically closed, then $\F$ infinite, and this implies that $\D(A + \lambda B')$ and $ \D(A' + \lambda B)$ are equal as elements of $\F[\lambda]$ by Lemma~\ref{DegFGLessEqual}. Therefore,
\begin{equation}\label{lem:IfPresDThenPresRk1PSc:eq3}
\deg_{\lambda} (\D(A + \lambda B')) \overset{\eqref{lem:IfPresDThenPresRk1PSc:eq2}}{=\joinrel=\joinrel=\joinrel=} \deg_{\lambda} (\D(A' + \lambda B))  \overset{\eqref{lem:IfPresDThenPresRk1PSc:eq11}}{\le} 2n - 2.
\end{equation}
The inequality~\eqref{lem:IfPresDThenPresRk1PSc:eq3} holds for all $A \in \M_{n}(\F)$. Consequently,
\begin{equation*}
\deg_{\lambda} (\D(A + \lambda B')) \le 2n - 2\;\;\mbox{for all}\;\;A \in \M_{n}(\F).
\end{equation*}
This allows us to apply Lemma~\ref{lem:LowDegImpliesLowRankOrSeveralCasesForAlgebraicallyClosed} to $B'$ and conclude that $B'$ satisfies one of the following conditions:
\begin{enumerate}[label=C\arabic*., ref=C\arabic*]
\item\label{lem:IfPresDThenPresRk1PSc:cond1} $B' \in \R_n(\F)$.
\item\label{lem:IfPresDThenPresRk1PSc:cond2} $n = 3$ and $B'$ is similar to $\begin{psmallmatrix}0 & 1 & 0\\0 & 0 & 1\\ 0 & 0 & 0\end{psmallmatrix} + cI_n$ for some $c \in \F$.
\item\label{lem:IfPresDThenPresRk1PSc:cond3} $n = 4$ and $B'$ is similar to $\begin{psmallmatrix}0 & 1 & 0 & 0\\0 & 0 & 0 & 0\\ 0 & 0 & 0 & 1\\ 0 & 0 & 0 & 0\end{psmallmatrix}  + cI_n$ for some $c \in \F$.
    
\item\label{lem:IfPresDThenPresRk1PSc:cond4} $n = 5$, $\Char(\F) = 2$ and  $B'$ is similar to $\begin{psmallmatrix}0 & 1 & 0 & 0 & 0\\0 & 0 & 0 & 0 & 0\\ 0 & 0 & 0 & 1 & 0\\ 0 & 0 & 0 & 0 & 0\\ 0 & 0 & 0 & 0 & 0\end{psmallmatrix}  + cI_n$ for some $c \in \F$.
\end{enumerate}

The condition~\ref{lem:IfPresDThenPresRk1PSc:cond1} directly implies the statement of the lemma. Let us show that the conditions~\ref{lem:IfPresDThenPresRk1PSc:cond2}--\ref{lem:IfPresDThenPresRk1PSc:cond4} could not hold.

Suppose that the condition~\ref{lem:IfPresDThenPresRk1PSc:cond2} holds. On the one hand, since $B$ is similar to $E_{1\,1} + cI_n$, then $\deg_\lambda \left(\D_{(B')}\right)$  and $\dim \left(\mathcal L_{\D_{(B')}^{(i)}}\right)$ (or $\deg_\lambda \left(\B_{(B')}\right)$  and $\dim \left(\mathcal L_{\B_{(B')}^{(i)}}\right)$ for $\Char(\F) = 2$) are equal to the corresponding invariants of $\D_{(B)}$ by Corollary~\ref{cor:DegAndLPAreReallyInvariant} (Lemma~\ref{lem:PresDEqPresD2} is used additionally for $\Char(\F) = 2$). In turn, the corresponding invariants of $\D_{(B)}$ are equal to the corresponding invariants of $\D_{(E_{1\,1})}$ Lemma~\ref{lem:SimPlusScalSameInvD} (or Lemma~\ref{lem:SimPlusScalSameInvD2}) 

On the other hand, the condition~\ref{lem:IfPresDThenPresRk1PSc:cond2} and Lemma~\ref{lem:SimPlusScalSameInvD} (or Lemma~\ref{lem:SimPlusScalSameInvD2}) imply that these invariants of $B'$ are equal to the corresponding invariants of $E_{1\,2} + E_{2\,3} = \begin{psmallmatrix}0 & 1 & 0\\ 0 & 0 & 1\\ 0 & 0 & 0 \end{psmallmatrix}$. According to Table~\ref{tbl:compTab}, these invariants are not equal for $E_{1\,1}$ and $E_{1\,2} + E_{2\,3}$ in all possible cases of $\Char(\F)$. This lead to a contradiction. Hence, this condition does not hold.

The conditions~\ref{lem:IfPresDThenPresRk1PSc:cond2}--\ref{lem:IfPresDThenPresRk1PSc:cond4} lead to a contradiction similar to that was shown above for the condition~\ref{lem:IfPresDThenPresRk1PSc:cond2}, where we used Table~\ref{tbl:compTab} to distinguish between the required matrices.

Thus, none of the conditions~\ref{lem:IfPresDThenPresRk1PSc:cond2}--\ref{lem:IfPresDThenPresRk1PSc:cond4} could not hold. Therefore, only the condition~\ref{lem:IfPresDThenPresRk1PSc:cond1} holds, which concludes the statement of the lemma in this case.

\paragraph{\ref{lem:IfPresDThenPresRk1PSc:part2}}  Assume that $\F$ is algebraically closed. Let $B \in \M_{n}(\F)$ be such that $B = J + cI_n$ for some  $c \in \F$ and a non-nilpotent rank one matrix $J \in \M_{n}(\F)$. Lemma~\ref{lem:RK1Sim} implies that $J$ is similar to $eE_{1\,1}$ for some $0 \neq e \in \F$. Therefore, if $C \in \M_{n}(\F)$ is defined by $C = (ec)^{-1}B$, then  $C$ is similar to $E_{1\,1} + fI_n$ for some $f \in \F$. The part~\ref{lem:IfPresDThenPresRk1PSc:part1} of the lemma implies that $T(C) \in \R_n(\F)$. Hence,
\[
T(B) = T((ec)\cdot C) = ec \cdot T(C) \in \R_n(\F)
\]
because $ec \neq 0$. 

\paragraph{\ref{lem:IfPresDThenPresRk1PSc:part3}} 
Let $B \in \M_{n}(\F)$ be such that $B = J + cI_n$ for some  $c\in \F$ and a non-nilpotent rank one matrix $J \in \M_{n}(\F)$. In addition, let $\overline{\F}$ be an algebraic closure of $\F$. Define a linear $\overline{T} \colon \overline{\F} \to \overline{\F}$ by 
\begin{equation*}
\overline{T}(A) = a_{1\,1} T(E_{1\,1}) + \ldots + a_{n\,n} T(E_{n\,n})\;\;\mbox{for all}\;\;A = (a_{i\,j}) \in \M_n(\overline{\F}).
\end{equation*}
Since $E_{1\,1}, \ldots, E_{n\,n}$ form a basis of $\M_n(\overline{\F})$, this definition is correct.\footnote{We refer the reader to the section "Change of Base Field" in~\cite{roman_advanced_2007} for the detailed overview of this kind of operations on vector spaces and linear maps.}  In addition,
\begin{equation}\label{lem:IfPresDThenPresRk1PSc:eq102}
\overline{T}(A) = T(A)\;\;\mbox{for all}\;\;A \in \M_{n}(\F).
\end{equation}
Let us show that $\overline{T}$ preserves $\D$ on $\M_n(\overline{\F})$. For this, consider a polynomial $P(X) = \D(X) - \D(T(X)) \in \F[x_{1\,1}, \ldots, x_{n\,n}]$. Since $T$ is a linear map, then $\deg(P) \le \deg(\D)$. Hence, the initial assumption of the lemma  on $\deg(\D)$ and the property~\ref{enum:DiscPropertyHP} of the matrix discriminant imply that $\deg(P) < |\F|$. Therefore, 
\begin{equation}\label{lem:IfPresDThenPresRk1PSc:eq101}
P = 0\;\;\mbox{as an element of $\F[x_{1\,1}, \ldots, x_{n\,n}]$}
\end{equation}
by Lemma~\ref{DegFLessZero} because $T$ preserves $\D$.

The definition of $\overline{T}$ implies that $\D(X) - \D(\overline{T}(X)) = \D(X) - \D(T(X))$ as elements of $\overline{\F}[x_{1\,1}, \ldots, x_{n\,n}]$. Hence, 
\[
\D(X) - \D(\overline{T}(X)) = \D(X) - \D(T(X)) = P \overset{\eqref{lem:IfPresDThenPresRk1PSc:eq101}}{=\joinrel=\joinrel=} 0
\]
an consequently
\[
\D(X) - \D(\overline{T}(X)) = 0\;\;\mbox{for all}\;\;X \in \M_n(\overline{\F})
\]
which means that $\overline{T}$ preserves $\D$ on $\M_{n}(\overline{\F})$. Therefore, we can apply the part~\ref{lem:IfPresDThenPresRk1PSc:part2} of the lemma to $\overline{\F}$, $\overline{T}$ and $B$ (as an element of $\M_{n}(\F)$ and obtain that $\overline{T}(B) \in \R_n(\F')$. Note that $\overline{T}(B) = T(B) \in \M_n(\F)$.  Therefore, by Lemma~\ref{lem:RankOneScalarFromExtToGroundField} applied to $\overline \F, \F$ and $T(B)$ we obtain that $T(B) \in \R_n(\F)$.

This concludes the proof of this part of the lemma.
\end{proof}

\begin{lemma}\label{lem:TPresDiscSendsLtoLorLt}Assume that $n \ge 3$ and $|\F| > n(n-1)$. Let $T$ be a bijective linear map preserving the matrix discriminant. Then for every $0 \neq \mathbf u \in \F^n$ there is $0 \neq \mathbf v \in \F^n$ such that $T\left(\mbf L(\mbf u)\right) = \mathbf L(\mathbf v)$  or $T\left(\mbf L(\mbf u)\right) = \mathbf L^t(\mathbf v).$
\end{lemma}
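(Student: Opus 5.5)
The plan is to show that $W = T(\L(\mbf u))$ is an $(n+1)$-dimensional subspace containing $I_n$ and contained in $\R_n(\F)$, and then to invoke Lemma~\ref{lem:RepAsRkOnePScalarNP1EqToL}. Since $T$ is bijective, $\dim W = n+1$ is immediate.

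\textbf{Step 1: $I_n \in W$.} By Corollary~\ref{cor:DRad}, $\rad(\D) = \langle I_n\rangle$. Because $T$ is a bijection preserving $\D$, so is $T^{-1}$. For any $\mbf w \in \rad(\D)$ we have $\D(\mbf v + \lambda T(\mbf w)) = \D(T^{-1}\mbf v + \lambda \mbf w) = \D(T^{-1}\mbf v) = \D(\mbf v)$. Hence $T(\rad(\D)) \subseteq \rad(\D)$, which is Lemma~\ref{lem:RadicalInvariant}\ref{lem:RadicalInvariant:part1}. Thus $T(I_n) = cI_n$, and $c \neq 0$ by bijectivity. Since $I_n \in \L(\mbf u)$, we get $I_n \in W$.

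\textbf{Step 2: $W \subseteq \R_n(\F)$.} Write $\mbf u$ as a nonzero vector and consider the elements $J(\mbf a, \mbf u) + cI_n$ of $\L(\mbf u)$ (Corollary~\ref{cor:ExpressionForLElement}). By property~\ref{prop:JNilpotentCrit}, $J(\mbf a,\mbf u)$ is non-nilpotent exactly when $(\mbf a, \mbf u) \neq 0$, and it has rank one in that case. Lemma~\ref{lem:IfPresDThenPresRk1PSc} then gives $T(J(\mbf a,\mbf u) + cI_n) \in \R_n(\F)$ whenever $(\mbf a,\mbf u) \neq 0$.

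To reach the remaining elements, fix a basis $B^{(1)},\dots,B^{(n+1)}$ of $\L(\mbf u)$: the $J(\mbf e^{(i)},\mbf u)$ for $1 \le i \le n$, together with $I_n$. Set $B'^{(i)} = T(B^{(i)})$. A general element $\sum_i \alpha_i B^{(i)}$ equals $J(\mbf a,\mbf u) + \alpha_{n+1}I_n$ with $\mbf a = (\alpha_1,\dots,\alpha_n)$. The condition $(\mbf a, \mbf u) \neq 0$ is the nonvanishing of the linear form $\sum_{i\le n} \mbf u_i \alpha_i$, whose coefficient vector is nonzero since $\mbf u \neq 0$. So $\sum \alpha_i B'^{(i)} \in \R_n(\F)$ whenever $\sum_i c_i\alpha_i \neq 0$, where $c_i = \mbf u_i$ for $i \le n$ and $c_{n+1} = 0$. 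Since $|\F| > n(n-1) \ge 6$, Lemma~\ref{lem:RNonzeroRAll} applies and yields $\sum \alpha_i B'^{(i)} \in \R_n(\F)$ for all $\alpha$, that is, $W \subseteq \R_n(\F)$.

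\textbf{Step 3: conclusion.} Lemma~\ref{lem:RepAsRkOnePScalarNP1EqToL} now gives $0 \neq \mbf v$ with $W = \L(\mbf v)$ or $W = \L^t(\mbf v)$. The only delicate point is Step 2, where nilpotent rank-one elements of $\L(\mbf u)$ are not covered by Lemma~\ref{lem:IfPresDThenPresRk1PSc}. The density argument of Lemma~\ref{lem:RNonzeroRAll} bypasses this, provided the hypothesis $|\F| > 6$ is checked (it holds for $n \ge 3$) and the linear form is confirmed nonzero.
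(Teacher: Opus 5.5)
Your proof is correct and follows the paper's route. You apply Lemma~\ref{lem:IfPresDThenPresRk1PSc} to the elements with $(\mbf a,\mbf u)\neq 0$, use the density Lemma~\ref{lem:RNonzeroRAll} to reach the nilpotent ones, get $T(I_n)\in\langle I_n\rangle$ from $\rad(\D)=\langle I_n\rangle$, and finish with Lemma~\ref{lem:RepAsRkOnePScalarNP1EqToL}. The differences are cosmetic: you feed $I_n$ into Lemma~\ref{lem:RNonzeroRAll} with $c_{n+1}=0$ and use the standard basis. Your direct check that $T(\rad(\D))\subseteq\rad(\D)$ justifies $T(I_n)=dI_n$ more accurately than the paper's appeal to $\ker T\subseteq\langle I_n\rangle$.
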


\begin{proof}Assume that $T$ is a linear map preserving $\D$ and $0\neq \mbf u \in \F$. Let $\mbf a^{(1)}, \ldots, \mbf a^{(n)} \in \F^n$ be vectors forming a basis of $\F$  such that 
\begin{equation}\label{lem:TPresDiscSendsLtoLorLt:eq5}
(\mbf a^{(i)}, \mbf u) \neq 0\;\;\mbox{for all}\;\;1 \le i \le n.
\end{equation}
Such vectors exist because the bilinear form $(*,*)$ is non-degenerate an consequently $\dim(\{\mbf w \in \F^n \mid (\mbf w, \mbf u) = 0\} = n - 1$. Let $A^{(1)}, \ldots, A^{(n)}, B^{(1)}, \ldots, B^{(n)} \in \M_{n}(\F),$ $c_1, \ldots, c_n \in \F$ be defined by
\begin{equation}\label{lem:TPresDiscSendsLtoLorLt:eq1}
A^{(i)} = J(\mbf a^{i}, \mbf u),\;\; B^{(i)} = T\left(A^{(i)}\right) \quad 1 \le i \le n
\end{equation}
and
\begin{equation}\notag
c_i = (\mbf a^{(i)}, \mbf u).
\end{equation}
Let us show that $B^{(1)}, \ldots, B^{(n)}$ and $c_1, \ldots, c_n$ satisfy the conditions of \linebreak Lemma~\ref{lem:RNonzeroRAll}. Indeed, assume that $\alpha_1, \ldots, \alpha_n \in \F$ are such that
\begin{equation}\notag
\sum_{i=1}^{n} c_i \alpha_i \neq 0.
\end{equation}
Let $A, B \in \M_{n}(\F)$ be defined by
\begin{equation}\label{lem:TPresDiscSendsLtoLorLt:eq4}
A = \alpha_1 A^{(1)} + \ldots + \alpha_n A^{(n)}
\end{equation}
Then
\begin{multline}\label{lem:TPresDiscSendsLtoLorLt:eq2}
A = \alpha_1 A^{(1)} + \ldots + \alpha_n A^{(n)}\\
\overset{\eqref{lem:TPresDiscSendsLtoLorLt:eq1}}{=\joinrel=} \alpha_1 J(\mbf a^{1}, \mbf u) + \ldots + \alpha_1 J(\mbf a^{1}, \mbf u)\\
\overset{\mbox{\scriptsize \ref{prop:J1},\,\ref{prop:J2}}} J(\alpha_1 \mbf a^{(1)} + \ldots + \alpha_n \mbf a^{(n)}, \mbf u)
\end{multline}
Hence, $\rk(A) \le 1$ by~\ref{prop:J0}. In addition~\eqref{lem:TPresDiscSendsLtoLorLt:eq2} implies that $A$ is non-nilpotent by 
Lemma~\eqref{prop:JNilpotentCrit}. Hence, by applying Lemma~\ref{lem:IfPresDThenPresRk1PSc} to $T$ and $A$ we obtain that 
\begin{equation}\label{lem:TPresDiscSendsLtoLorLt:eq3}
T(A) \in \R_n(\F).
\end{equation}
Therefore,
\begin{multline}\notag
\alpha_1 B^{(1)} + \ldots + \alpha_n B^{(n)} = \alpha_1 T\left(A^{(1)}\right) + \ldots + \alpha_n \left(A^{(n)}\right)\\
= T(\alpha_1 A^{(1)} + \ldots + \alpha_n A^{(n)}) \overset{\eqref{lem:TPresDiscSendsLtoLorLt:eq4}}{=\joinrel=} T(A) \overset{\eqref{lem:TPresDiscSendsLtoLorLt:eq2}}{\in} \R_{n}(\F).
\end{multline}
Thus, $B^{(1)}, \ldots, B^{(n)}$ and $c_1, \ldots, c_n$ satisfy the conditions of Lemma~\ref{lem:RNonzeroRAll}. This implies that
\begin{equation}\label{lem:TPresDiscSendsLtoLorLt:eq8}
\left\langle B^{(1)}, \ldots, B^{(n)}\right\rangle \subseteq \R_{n}(\F).
\end{equation}

By Corollary~\ref{cor:DRad} and Lemma~\ref{lem:KernelContainedInRad}, $\ker(T) \subseteq \langle I_n\rangle$. Since $T$ is bijective by the initial assumptions of the lemma, this implies that there is $0 \neq d \in \F$ such that
\begin{equation}\label{lem:TPresDiscSendsLtoLorLt:eq7}
T(I_n) = dI_n.
\end{equation}
From the  definition of $A^{(1)}, \ldots, A^{(n)}$ and $\mbf a^{(1)}, \ldots, \mbf a^{(n)}$ in~\eqref{lem:TPresDiscSendsLtoLorLt:eq1} and~\eqref{lem:TPresDiscSendsLtoLorLt:eq5}, correspondingly we conclude that 
\begin{equation}\label{lem:TPresDiscSendsLtoLorLt:eq6}
\left\langle I_n, A^{(1)}, \ldots, A^{(n)}\right\rangle = \mbf L(\mbf u).
\end{equation}
Therefore, 
\begin{multline}\notag
T\left(\mbf L(\mbf u)\right) \overset{\eqref{lem:TPresDiscSendsLtoLorLt:eq6}}{=\joinrel=} T\left(\left\langle I_n, A^{(1)}, \ldots, A^{(n)}\right\rangle\right)\\
\overset{\eqref{lem:TPresDiscSendsLtoLorLt:eq1}}{=\joinrel=}\langle dI_n\rangle + \left\langle B^{(1)}, \ldots, B^{(n)}\right\rangle \overset{\eqref{lem:TPresDiscSendsLtoLorLt:eq8}}{\subseteq} \R_n(\F).
\end{multline}
In addition,  $I_n \in  T\left(\mbf L(\mbf u)\right)$
by~\eqref{lem:TPresDiscSendsLtoLorLt:eq7} and
\begin{equation}\notag
\dim\left(T\left(\mbf L(\mbf u)\right) \right) = \dim\left(\mbf L(\mbf u)\right) = n + 1
\end{equation}
by bijectivity of $T$. Therefore, $W =  T\left(\mbf L(\mbf u)\right)$ satisfies the conditions of\linebreak Lemma~\ref{lem:RepAsRkOnePScalarNP1EqToL}, which implies that there is $0 \neq \mbf v \in \F^n$ such that $T\left(\mbf L(\mbf u)\right) = \mathbf L(\mathbf v)$  or $T\left(\mbf L(\mbf u)\right) = \mathbf L^t(\mathbf v).$
\end{proof}

\begin{corollary}\label{cor:IfPresDThenPresRk1PScFull}
Assume that $n \ge 3$ and $|\F| > n(n-1)$. Let $T$ be a bijective linear map on $\M_{n}(\F)$  such that $\D(T(X)) = \D(X)$ for all $X \in \M_{n}(\F)$. Then $T(\R_n(\F)) = \R_n(\F)$.
\end{corollary}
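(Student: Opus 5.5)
The plan is to deduce the corollary from Lemma~\ref{lem:TPresDiscSendsLtoLorLt}, using that $\R_n(\F)$ is exactly the union of the spaces $\L(\mbf u)$ and $\L^t(\mbf u)$ over $0 \neq \mbf u \in \F^n$, together with $\langle I_n\rangle$.

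\emph{Decomposition of $\R_n(\F)$.} Take $X = J + cI_n$ with $\rk(J) \le 1$. If $J \neq O_n$, then $\colsp(J) = \langle \mbf u\rangle$ for some $\mbf u \neq \mbf 0$, so $X \in \L(\mbf u)$ by Definition~\ref{def:LMatSpace}. If $J = O_n$, then $X \in \langle I_n\rangle \subseteq \L(\mbf e^{(1)})$. Conversely, Corollary~\ref{cor:ExpressionForLElement} and property~\ref{prop:J0} give $\L(\mbf u) \subseteq \R_n(\F)$. By transposition, $\L^t(\mbf u) \subseteq \R_n(\F)$ as well, since $J(\mbf a,\mbf v)^t = J(\mbf v,\mbf a)$ still has rank at most one. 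Hence
\[
\R_n(\F) = \bigcup_{\mbf u \neq \mbf 0} \L(\mbf u).
\]
The same set is also the union of all $\L^t(\mbf u)$.

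\emph{Forward inclusion.} For $X \in \R_n(\F)$, choose $\mbf u$ with $X \in \L(\mbf u)$. Lemma~\ref{lem:TPresDiscSendsLtoLorLt} gives $T(\L(\mbf u)) = \L(\mbf v)$ or $\L^t(\mbf v)$, and both are contained in $\R_n(\F)$. So $T(X) \in \R_n(\F)$, which shows $T(\R_n(\F)) \subseteq \R_n(\F)$.

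\emph{Reverse inclusion.} $T$ is a bijective linear map preserving $\D$, so $T^{-1}$ also preserves $\D$: indeed $\D(T^{-1}(Y)) = \D(T(T^{-1}Y)) = \D(Y)$. Applying the forward inclusion to $T^{-1}$ gives $T^{-1}(\R_n(\F)) \subseteq \R_n(\F)$, hence $\R_n(\F) \subseteq T(\R_n(\F))$. The two inclusions together give equality.

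The only mildly delicate point is that $\R_n(\F)$ is the union of the $\L(\mbf u)$. This requires rank-one matrices to be written via the column space, and it requires the scalar matrices to lie in every $\L(\mbf u)$; both are immediate from the definitions. Everything else is formal.
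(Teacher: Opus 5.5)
Your proof is correct and follows the route the paper intends. The paper states this corollary without proof right after Lemma~\ref{lem:TPresDiscSendsLtoLorLt}, and it follows exactly as you argue: $\R_n(\F)$ is covered by the spaces $\L(\mbf u)$, and $T$ sends each of them onto some $\L(\mbf v)$ or $\L^t(\mbf v)$, both contained in $\R_n(\F)$. Your reverse inclusion via $T^{-1}$, which is again bijective, linear and $\D$-preserving, supplies the one step the paper leaves implicit.
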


\begin{remark}
It is possible to obtain Corollary~\ref{cor:IfPresDThenPresRk1PScFull} by distinguishing $E_{1\,2} \in \M_{n}(\F)$ from $E_{1\,2} + E_{2\,3}$ and from $E_{1\,2} + E_{3\,4}$ for $n = 3, 4, 5$ as it is done for $E_{1\,1}$ in Section~\ref{sec:InvariantsOfSmallMatrices}. On the one hand, this approach does not involve Lemma~\ref{lem:RNonzeroRAll} and all underlying theory. On the other hand, we need to show that  $E_{1\,2}$ could not be sent to $E_{1\,2} + E_{2\,3}$ or $E_{1\,2} + E_{3\,4}$  by a linear map preserving $\D$. However, if $n = 4$ and $\Char(\F) = 2$, then $E_{1\,2}$ and $E_{1\,2} + E_{3\,4}$ have the same invariants of the form $\deg_\lambda \left(\B_{(X)}\right)$ and $\dim \left(\B_{(X)}^{(i)}\right)$. Thus, the additional invariants must be introduced to cover this case.
\end{remark}

\begin{remark}
As the reader can see from the statement of Lemma~\ref{lem:LowDegImpliesLowRankOrSeveralCasesForAlgebraicallyClosed} and the proof of Lemma~\ref{lem:IfPresDThenPresRk1PSc}, in the case when $n \ge 6$ we do not need the results from Section~\ref{sec:OrbInvariants} and Section~\ref{sec:InvariantsOfSmallMatrices} in order to obtain Corollary~\ref{cor:IfPresDThenPresRk1PScFull}.
\end{remark}

\begin{lemma}\label{lem:TPresDiscSendsLtoLorLtSimultan}Assume that $n \ge 3$ and $|\F| > n(n-1)$. Let $T$ be a bijective linear map preserving the matrix discriminant. Then either for every $0 \neq \mathbf u \in \F^n$ there is $0\neq \mathbf v \in \F^n$ such that $T\left(\mathbf L(\mathbf u)\right) = \mathbf L(\mathbf v)$, or for every $0 \neq \mathbf u \in \F^n$ there is $0\neq \mathbf v \in \F^n$ such that $T\left(\mathbf L(\mathbf u)\right) = \mathbf L^t(\mathbf v).$
\end{lemma}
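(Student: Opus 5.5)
By Lemma~\ref{lem:TPresDiscSendsLtoLorLt}, every $\mbf L(\mbf u)$ is sent either to some $\mbf L(\mbf v)$ or to some $\mbf L^t(\mbf v)$. I will split the set of nonzero vectors into
\[
S_1=\{\mbf u\neq 0 \mid T(\mbf L(\mbf u))=\mbf L(\mbf v)\ \mbox{for some}\ \mbf v\}
\quad\mbox{and}\quad
S_2=\{\mbf u\neq 0 \mid T(\mbf L(\mbf u))=\mbf L^t(\mbf v)\ \mbox{for some}\ \mbf v\}.
\]
The goal is to show that one of these sets is empty. First I check that a space cannot be both of type $\mbf L$ and of type $\mbf L^t$. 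If $\mbf L(\mbf v)=\mbf L^t(\mbf w)$, then by Lemma~\ref{lem:CapTrans} the intersection would have dimension $2$ (Corollary~\ref{cor:DimCapTrans}). But $\dim \mbf L(\mbf v)=n+1\ge 4$, so this is impossible. Hence $S_1\cap S_2=\emptyset$ and $S_1\cup S_2=\F^n\setminus\{0\}$.

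The key step compares the images of two linearly independent vectors $\mbf u,\mbf u'$. By Corollary~\ref{cor:DimCapSame},
\[
\dim\bigl(\mbf L(\mbf u)\cap\mbf L(\mbf u')\bigr)=1.
\]
Since $T$ is bijective, the images must also meet in a $1$-dimensional space. Now suppose $\mbf u\in S_1$ and $\mbf u'\in S_2$. Then the images are $\mbf L(\mbf v)$ and $\mbf L^t(\mbf v')$, and by Corollary~\ref{cor:DimCapTrans} their intersection has dimension $2$, a contradiction. So any two linearly independent vectors lie in the same class.

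Next I handle a vector and its scalar multiples. If $\mbf u'=\alpha\mbf u$, then $\mbf L(\mbf u')=\mbf L(\mbf u)$, so $\mbf u$ and $\mbf u'$ lie in the same class automatically. Since $n\ge 2$, for any two nonzero vectors I can pick a third vector independent of both, or simply note that dependent vectors give the same $\mbf L$-space. Either way, all nonzero vectors lie in a single class. This yields the dichotomy claimed in Lemma~\ref{lem:TPresDiscSendsLtoLorLtSimultan}.

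The main point requiring care is the first one: that a space equal to some $\mbf L(\mbf v)$ cannot simultaneously equal some $\mbf L^t(\mbf w)$, and that $\mbf L^t(\mbf v)\cap\mbf L^t(\mbf v')$ behaves like $\mbf L(\mbf v)\cap\mbf L(\mbf v')$ (by transposition). I will verify this by applying Corollary~\ref{cor:DimCapTrans} to the equal pair, which forces dimension $2<n+1$. The intersection-dimension argument itself only needs $\dim\bigl(T(X)\cap T(Y)\bigr)=\dim(X\cap Y)$, which holds because $T$ is injective.
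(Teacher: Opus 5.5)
Your proof is correct and follows the paper's argument: the paper also assumes that some $\mathbf L(\mathbf u_1)$ is sent to an $\mathbf L^t$-space and some $\mathbf L(\mathbf u_2)$ to an $\mathbf L$-space. It then gets a contradiction, using bijectivity of $T$, by comparing $\dim\bigl(\mathbf L(\mathbf u_1)\cap\mathbf L(\mathbf u_2)\bigr)\in\{1,n+1\}$ (Corollary~\ref{cor:DimCapSame}) with the dimension $2$ of the image intersection (Corollary~\ref{cor:DimCapTrans}). Your partition into $S_1,S_2$ with the disjointness check is a reorganization of the paper's two cases, linearly independent versus dependent $\mathbf u_1,\mathbf u_2$.
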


\begin{proof}By contradiction. Suppose that $0\neq \mbf u^{(1)}, \mbf u^{(2)} \in \F^n $ are such that
\begin{equation}\notag
T\left(\mathbf L(\mathbf u_1)\right) \neq \mathbf L(\mathbf v)\;\;\mbox{for all} 0 \neq \mbf v \in \F^n
\end{equation}
and
\begin{equation}\notag
 T\left(\mathbf L(\mathbf u_2)\right) \neq \mathbf L^t(\mathbf v)\;\;\mbox{for all} 0 \neq \mbf v \in \F^n.
\end{equation}
Then Lemma~\ref{lem:TPresDiscSendsLtoLorLtSimultan} implies that there are $\mbf v^{(1)}, \mbf v^{(2)} \in \F^n$ such that
\begin{equation}\notag
T\left(\mathbf L(\mathbf u_1)\right) \neq \mathbf L^t(\mathbf v_1)\quad\mbox{and}\quad T\left(\mathbf L(\mathbf u_2)\right) = \mathbf L(\mathbf v_2).
\end{equation}

Consider $V = \mbf L^t(\mbf v_1) \cap \mbf L^t(\mbf v_2)$. On the other hand, since
\[
\dim \left(\mathbf L(\mathbf u_1) \cap \mathbf L(\mathbf u_2)\right) =
\begin{cases}
1, & \mbf u_1\;\mbox{and}\;\mbf u_2\;\mbox{are linearly independent}\\
n + 1, & \mbox{otherwise}
\end{cases}
\]
by Corollary~\ref{cor:DimCapSame}, then
\begin{multline}\label{lem:TPresDiscSendsLtoLorLtSimultan:eq1}
\dim (V) = \dim\left(\mbf L^t(\mbf v_1) \cap \mbf L^t(\mbf v_2)\right)\\
 = \dim\left(T\left(\mbf L^t(\mbf u_1)\right) \cap T\left(\mbf L^t(\mbf u_2)\right)\right)
= \dim\left(T\left(\mathbf L(\mathbf u_1) \cap \mathbf L(\mathbf u_2)\right)\right)\\
 = 
\begin{cases}
1, & \mbf u_1\;\mbox{and}\;\mbf u_2\;\mbox{are linearly independent}\\
n + 1, & \mbox{otherwise}
\end{cases}
\end{multline}
by bijectivity of the linear map $T$. On the other hand,
\[
\dim (V) = \dim \left(\mathbf L(\mathbf u_1) \cap \mathbf L(\mathbf u_2)\right) = 2
\]
by Corollary~\ref{cor:DimCapTrans}, which contradicts~\eqref{lem:TPresDiscSendsLtoLorLtSimultan:eq1}.
\end{proof}

\begin{lemma}\label{lem:TSendLtoLHasRightForm}Assume that $n \ge 3$ and $|\F| > n(n-1)$. Let $T$ be a bijective linear map on $\M_{n}(\F)$ such that $\D(T(X)) = \D(X)$ for all $X \in \M_{n}(\F)$ and for every $0 \neq \mathbf v \in \F^n$ there is $0 \neq \mathbf u \in \F^n$ such that $T(\mathbf L(\mathbf v)) = \mathbf L(\mathbf u)$. Then there are an invertible matrix $M \in \M_n(\F)$, a linear map $\alpha \colon \M_n(\F) \to \F$ and $c \in \F$ such that $c^{n(n-1)} = 1$ and
\begin{equation}\label{lem:TSendLtoLHasRightForm:eqq1}
T(X) = cM^{-1}XM + \alpha(X)I_n\;\;\mbox{for all}\;\; X \in \M_n(\F)
\end{equation}
\end{lemma}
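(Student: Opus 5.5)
The plan is to turn the action of $T$ on the spaces $\L(\mbf v)$ into a map on vectors, show that this map comes from a matrix $M$, and then pin down the remaining freedom using the determinant‑like rigidity of rank‑one matrices.

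\textbf{Step 1 (induced map on lines).} For each nonzero $\mbf v$ let $\phi(\langle\mbf v\rangle)=\langle\mbf u\rangle$, where $T(\L(\mbf v))=\L(\mbf u)$. This is well defined, because $\L(\mbf v)=\L(\mbf v')$ iff $\langle\mbf v\rangle=\langle\mbf v'\rangle$ by Lemma~\ref{lem:CapSame}. It is injective for the same reason together with the bijectivity of $T$.

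\textbf{Step 2 (structure of $T$ on one space).} By the argument of Lemma~\ref{lem:TPresDiscSendsLtoLorLt}, $T(I_n)=dI_n$ with $d\neq 0$. Define $\pi$ as the projection modulo $\langle I_n\rangle$, which is legitimate by Corollary~\ref{cor:OneWayRankOnePlusScalar}. Then $T$ induces a linear bijection
\[
\{J(\mbf a,\mbf v)\}\;\to\;\{J(\mbf b,\mbf u)\}
\]
modulo scalars. Hence there is a linear bijection $S_{\mbf v}\colon\F^n\to\F^n$ with
\[
T(J(\mbf a,\mbf v))=J(S_{\mbf v}\mbf a,\mbf u)+\alpha_{\mbf v}(\mbf a)I_n .
\]

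\textbf{Step 3 (passing to a matrix $M$).} By Corollary~\ref{cor:DiscPresPresIncidence} and Lemma~\ref{lem:DiscPresPresIncidence}, $\phi$ preserves linear independence and collinearity of lines in both directions, i.e.\ it is a collineation of the projective space. By the fundamental theorem of projective geometry (valid since $n\ge3$), $\phi$ is induced by a semilinear map.

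The key point is to upgrade this to a linear map. Fix $\mbf a$. The map
\[
\mbf v\mapsto T(J(\mbf a,\mbf v))\pmod{I_n}
\]
is linear in $\mbf v$, and its image for each $\mbf v$ has column space in $\phi(\langle\mbf v\rangle)$. Apply this to $\mbf v_1$, $\mbf v_2$ and $\mbf v_1+\mbf v_2$: additivity forces the column vectors $\mbf u$ to be chosen compatibly, so $\phi$ is induced by a linear bijection $N$. In other words, there is an invertible $N$ with $T(\L(\mbf v))=\L(N\mbf v)$.

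I then replace $T$ by $X\mapsto N^{-1}T(X)N$. This composition still preserves $\D$ by property~\ref{enum:DiscPropertyS}, and by Lemma~\ref{lem:MatMultOnLn} it now fixes every $\L(\mbf v)$. So
\[
T(J(\mbf a,\mbf v))\equiv J(S_{\mbf v}\mbf a,\mbf v)\pmod{I_n}.
\]

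\textbf{Step 4 (the main obstacle: making $S_{\mbf v}$ constant and scalar).} Use bilinearity of $J$ in both arguments. Since $T$ is linear in $\mbf v$ for fixed $\mbf a$, comparing column spaces for $\mbf v_1$, $\mbf v_2$ and $\mbf v_1+\mbf v_2$ shows that $S_{\mbf v}$ is independent of $\mbf v$, say $S_{\mbf v}=S$. Applying the same reasoning to rows — via the transposed family $\L^t$, i.e.\ the spaces of matrices with fixed row space — and using Corollary~\ref{cor:DimCapTrans}, I expect to obtain
\[
T(J(\mbf a,\mbf v))\equiv J(S\mbf a,\mbf v)
\quad\text{with}\quad
T(X)\equiv XR\ \text{ for some invertible } R .
\]

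Next, preservation of non‑nilpotence (the trace $(\mbf a,\mbf v)$ of Lemma~\ref{prop:JNilpotentCrit}) must be respected. More precisely, $\D$ restricted to $J(\mbf a,\mbf v)$ plus a diagonal perturbation detects $(\mbf a,\mbf v)$. From this, nilpotent rank‑one matrices go to nilpotent ones, which gives $(S\mbf a,\mbf v)=0\iff(\mbf a,\mbf v)=0$. Hence $S^t$ is scalar on all hyperplanes, so $S=cI$ up to the already absorbed conjugation. Thus
\[
T(X)=cX+\alpha(X)I_n
\]
on the rank‑one matrices, and therefore on all of $\M_n(\F)$, since they span.

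\textbf{Step 5 (the constant $c$).} Finally, evaluate $\D$ at a diagonal matrix with distinct entries, using properties~\ref{enum:DiscPropertyH} and~\ref{enum:DiscPropertyC}. This gives $c^{n(n-1)}=1$.

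I expect Step 4 — extracting that the induced semilinear data are linear and then scalar — to be the delicate part. I would first try the trace/nilpotency argument described there.
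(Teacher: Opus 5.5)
\textbf{Gap at the end of Step 4.} Your claim that nilpotent rank-one matrices go to nilpotent rank-one matrices modulo $\langle I_n\rangle$ is true. But ``$\D$ restricted to $J(\mbf a,\mbf v)$ plus a diagonal perturbation detects $(\mbf a,\mbf v)$'' is not an argument for it. To prove it you would need an orbit invariant that separates the two classes. For instance, $\deg_\lambda\bigl(\D_{(X)}\bigr)$ equals $2n-2$ for non-nilpotent rank-one $X$ (Lemma~\ref{lem:RankOneScalImpliesLowDeg} and Case~(a) of Lemma~\ref{lem:NonScalarElementofRnLowBound}). It is at most $2n-3$ for $X=E_{1\,2}$, because the first two columns of the Sylvester matrix contain no $\lambda$. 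Combining this with Lemma~\ref{lem:DegAndPartPreservance} would finish that route.

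None of this is needed, though. Once $S_{\mbf v}=S$ is constant, $T(X)-XS^t\in\langle I_n\rangle$ for every $X=J(\mbf a,\mbf v)=\mbf v\mbf a^t$. These matrices span $\M_n(\F)$, so $T(X)=XS^t+\alpha(X)I_n$ for all $X$, and the detour through $\L^t$ is unnecessary. Evaluate at $X=I_n$ and use $T(I_n)=dI_n$ from your own Step~2 (the conjugation by $N$ does not change this). This gives $S^t=(d-\alpha(I_n))I_n$, a nonzero scalar since $S$ is invertible, and Step~5 then finishes.

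\textbf{Two smaller points in Step 3.} First, drop the fundamental theorem of projective geometry. It requires $\phi$ to be surjective, which you have not shown, and your additivity argument makes it superfluous anyway. Second, the additivity argument needs two details stated explicitly. Put $\mbf v_3=\mbf v_1+\mbf v_2$ and $\mbf b_k=S_{\mbf v_k}\mbf a$.
\begin{itemize}
\item The scalar in $J(\mbf b_3,\mbf u_3)-J(\mbf b_1,\mbf u_1)-J(\mbf b_2,\mbf u_2)\in\langle I_n\rangle$ vanishes, because the column space lies in $\langle\mbf u_1,\mbf u_2\rangle$. This uses collinearity and is essential for $n=3$.
\item After that, rank at most $1$ forces $S_{\mbf v_1}\mbf a\parallel S_{\mbf v_2}\mbf a$ for every $\mbf a$. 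Hence the $S_{\mbf v}$ are proportional, and the $\mbf u$'s can be rescaled to depend linearly on $\mbf v$.
\end{itemize}

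\textbf{Comparison with the paper.} With these repairs your argument is correct and genuinely different from the paper's. The paper proceeds as follows:
\begin{itemize}
\item It normalizes only the coordinate lines, $T(\L(\mbf e^{(i)}))=\L(\mbf e^{(i)})$.
\item It uses that $T$ sends each $\L^t(\mbf e^{(j)})$ to some $\L^t(\mbf u^{(j)})$, and Lemma~\ref{lem:CapTrans}, to write $T(E_{i\,j})=c'_{i\,j}J(\mbf u^{(j)},\mbf e^{(i)})+d'_{i\,j}I_n$.
\item It compares $\sum_i T(E_{i\,i})$ with $T(I_n)=cI_n$ to get $\mbf u^{(i)}\parallel\mbf e^{(i)}$. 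This is the same evaluation at $I_n$ that closes your proof.
\item It tests $T$ on $E_{i_1\,j_1}+E_{i_1\,j_2}+E_{i_2\,j_1}+E_{i_2\,j_2}$ to show the coefficient matrix has rank one. Hence $c_{i\,j}=cc_ic_j^{-1}$, which a diagonal conjugation removes.
\end{itemize}
Your additivity-in-$\mbf v$ argument replaces both the $\L^t$ analysis and the $2\times 2$ test matrices at once. It also shows that $\D$ enters only through $T(I_n)\in\langle I_n\rangle$ and the final normalization $c^{n(n-1)}=1$. The first of these already follows from $I_n\in\L(\mbf e^{(1)})\cap\L(\mbf e^{(2)})$, Lemma~\ref{lem:DiscPresPresIncidence} and Lemma~\ref{lem:CapSame}. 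So your route gives the structural conclusion for any bijective linear map sending each $\L(\mbf v)$ onto some $\L(\mbf u)$. The paper's route is more computational but stays entrywise: it needs only the images of coordinate lines and of the lines $\langle\mbf e^{(i_1)}+\mbf e^{(i_2)}\rangle$, and avoids the bookkeeping of rank-one representatives modulo $I_n$.
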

\begin{proof}We will prove sequently that the statement of the lemma holds with the following assumptions:
\begin{enumerate}[label=(\alph*), ref=(\alph*)]

\item \label{lem:TSendLtoLHasRightForm:partm1} We assume that there are $0 \neq c \in \F$ and $D = (d_{i\,j}) \in \M_{n}(\F)$ such that 
    \begin{equation}\label{lem:TSendLtoLHasRightForm:eq10001}
    T\left(E_{i\,j}\right) = cE_{i\,j} + d_{i\,j}I_n\;\;\mbox{for all}\;\;1 \le i, j \le n.
    \end{equation}
\item \label{lem:TSendLtoLHasRightForm:part0} We assume that there are $C = (c_{i\,j}), D = (d_{i\,j}) \in \M_{n}(\F)$ such that $c_{i\,i} = c_{j\,j} \neq 0$ and 
    \begin{equation}\label{lem:TSendLtoLHasRightForm:eq10002}
    T\left(E_{i\,j}\right) = c_{i\,j}E_{i\,j} + d_{i\,j}I_n\;\;\mbox{for all}\;\;1 \le i, j \le n.
    \end{equation}
\item\label{lem:TSendLtoLHasRightForm:part1} We assume that 
\begin{equation}\label{lem:TSendLtoLHasRightForm:eq1AAA1}
T\left(\L\left(\mathbf e^{(i)}\right)\right) = \L\left(\mathbf e^{(i)}\right)\;\;\mbox{for all}\;\;1 \le i \le n.
\end{equation}
\item\label{lem:TSendLtoLHasRightForm:part2} We prove the statement of the lemma without any additional assumptions on $T$.
\end{enumerate}
\paragraph{\ref{lem:TSendLtoLHasRightForm:partm1}} From linearity of $T$ and the assumption~\eqref{lem:TSendLtoLHasRightForm:eq10001} we obtain that 
\begin{multline}\label{lem:TSendLtoLHasRightForm:eq111}
T(X) = T\left(\sum_{1 \le i, j \le n}x_{i\,j}E_{i\,j}\right)\\
= \sum_{1 \le i, j \le n}x_{i\,j} \left(cE_{i\,j} + d_{i\,j}I_n\right) = c\sum_{1 \le i, j \le n}x_{i\,j} E_{i\,j} +  \left(\sum_{1 \le i, j \le n} x_{i\,j} d_{i\,j}\right)I_n\\
= cX + \alpha(X)I_n\;\;\mbox{for all}\;\;X \in \M_{n}(\F),
\end{multline}
where 
\[
\alpha(X) = \sum_{1 \le i, j \le n} x_{i\,j} d_{i\,j}\;\;\mbox{for all}\;\;X \in \M_{n}(\F).
\]
Thus, in order to show that $T$ has the form~\eqref{lem:TSendLtoLHasRightForm:eqq1} it is only necessary to check that $c^{n(n-1)} = 1$. For this, let $A = \diag (a_1, \ldots, a_n)$ for some pairwise distinct $a_1, \ldots, a_n \in F^n$ (such $a_1,\ldots, a_n$ exist because $|\F| > n(n-1)$). Then 
\begin{multline}\label{lem:TSendLtoLHasRightForm:eq112}
\D(A) \overset{\mbox{\scriptsize $T$ preserves $\D$}}{=\joinrel=\joinrel=\joinrel=\joinrel=\joinrel=\joinrel=\joinrel=}\D(T(A))\\ \overset{\eqref{lem:TSendLtoLHasRightForm:eq111}}{=\joinrel=\joinrel=} \D(cA + \alpha(A) I_n) \overset{\ref{enum:DiscPropertyC}}{=\joinrel=\joinrel=} \D(cA) \overset{\ref{enum:DiscPropertyH}}{=\joinrel=\joinrel=} c^{n(n-1)}\D(A).
\end{multline}
Since $a_1, \ldots, a_n$ are pairwise distinct, then $\D(A) \neq 0$ by the property~\ref{enum:DiscPropertyDiag} of the matrix discriminant. Therefore, by dividing the both sides of~\eqref{lem:TSendLtoLHasRightForm:eq112} by $\D(A)$ we obtain that $c^{n(n-1)} = 1$. This establishes the statement of the lemma in this case.

\paragraph{\ref{lem:TSendLtoLHasRightForm:part0}} Let us show that $\rk(C) = 1$. Indeed, let $1 \le i_1 < i_2 \le n$ and $1 \le j_1 < j_2 \le n$ and let $B \in \M_{n}(\F)$ be defined by 
\[
B = E_{i_1\,j_1} + E_{i_1\,j_2} + E_{i_2\,j_1} + E_{i_2\,j_2}.
\]

On the one hand,
\begin{equation}\label{lem:TSendLtoLHasRightForm:eq100001}
T(B) = T\left(\sum_{1 \le \alpha, \beta \le 2} E_{i_{\alpha}\,j_{\beta}}\right) = \sum_{1 \le \alpha, \beta \le 2} c_{i_{\alpha}\,j_{\beta}} E_{i_{\alpha}\,j_{\beta}} + \sum_{1 \le \alpha, \beta \le 2} d_{i_{\alpha}\,j_{\alpha}}I_n
\end{equation}
by our initial assumption on $T$ and the linearity of $T$.

On the other hand, using the fact that $B \in \L(\mbf e_{i_1} + \mbf e_{i_2}) \cap \L^t(\mbf e_{j_1} + \mbf e_{j_2})$ and relying on Lemma~\ref{lem:TPresDiscSendsLtoLorLt} and Corollary~\ref{cor:DiscPresPresIncidence} we obtain that $T(B) \in \L(\mbf u) \cap \L^t(\mbf v)$ for some $0\neq \mbf u \in \left\langle \mbf e_{i_1}, \mbf e_{i_2}\right\rangle$ and $0\neq \mbf v \in \left\langle \mbf e_{j_1}, \mbf e_{j_2}\right\rangle$ . This implies that
\begin{equation}\label{lem:TSendLtoLHasRightForm:eq101}
T(B) = J(\mathbf v, \mathbf u) + dI_n
\end{equation}
Since $n \ge 3$, then by comparing~\eqref{lem:TSendLtoLHasRightForm:eq100001} and~\eqref{lem:TSendLtoLHasRightForm:eq101} we obtain that 
\begin{equation}\label{lem:TSendLtoLHasRightForm:eq100002}
\sum_{1 \le \alpha, \beta \le 2} c_{i_{\alpha}\,j_{\beta}} E_{i_{\alpha}\,j_{\beta}} = J(\mathbf v, \mathbf u)
\end{equation}
by Lemma~\ref{cor:OneWayRankOnePlusScalar}. Hence, relying on the equality
\[
C[i_1, i_2| j_1, j_2] = \sum_{1 \le \alpha, \beta \le 2} c_{i_{\alpha}\,j_{\beta}} E_{i_{\alpha}\,j_{\beta}} \overset{\eqref{lem:TSendLtoLHasRightForm:eq100002}}{=\joinrel=}  J(\mathbf v, \mathbf u)
\] 
we obtain that 
\begin{equation}\label{lem:TSendLtoLHasRightForm:eq100003}
\rk\left(C[i_1, i_2| j_1, j_2]\right) \le 1.
\end{equation} 
because $\rk\left(J(\mathbf v, \mathbf u)\right) \le 1$ by the property~\ref{prop:J0}.
The equality~\eqref{lem:TSendLtoLHasRightForm:eq100003} holds for all $1 \le i_1 < i_2 \le n$ and $1 \le j_1 < j_2 \le n$. This implies that $\rk(C) \le 1$. In addition, $c_{i\,i} = c_{j\,j} \neq 0$ for all $1 \le i, j \le n$ by the assumptions of this part of the lemma. Therefore, by Lemma~\ref{lem:MatRkOneConstDiagDecomp} we obtain that there are $0\neq c, c_1, \ldots, c_n$ such that 
\[
c_{i\,j} = c c_{i} c_{j}^{-1}\;\;\mbox{for all}\;\;1 \le i, j \le n.
\]

Let $A \in \M_{n}(\F)$ be defined by 
\begin{equation}\label{lem:TSendLtoLHasRightForm:eq102}
  A = \diag(c_{1}, \ldots, c_{n}).
\end{equation}
Since $c_i \neq 0$, then $A$ is invertible and $A^{-1} = \diag(c^{-1}_{1}, \ldots, c^{-1}_{n}).$ Consider a linear map $T_A \colon \M_{n}(\F) \to \M_{n}(\F)$ defined by
\begin{equation}\label{lem:TSendLtoLHasRightForm:eq103}
T_A(X) = A^{-1}T(X)A\;\;\mbox{for all}\;\;X \in \M_{n}(\F).
\end{equation}
Then
\begin{multline*}
T_A(E_{i\,j}) = A^{-1}T(E_{i\,j})A =  A^{-1}\left(c_{i\,j}E_{i\,j} + d_{i\,j}I_n\right)A\\
=  A^{-1}\left(c_{i\,j}E_{i\,j}\right)A +  d_{i\,j}I_n = c^{-1}_i \left(c_{i\,j}E_{i\,j}\right) c_j +   d_{i\,j}I_n\\
 = \left(c^{-1}_i\left(c c_{i} c_{j}^{-1}\right) c_{j} \right) E_{i\,j} +   d_{i\,j}I_n\\
= cE_{i\,j} +   d_{i\,j}I_n\;\;\mbox{for all}\;\;1 \le i, j \le n.
\end{multline*}
In addition, $T_A$ preserves $\D$ by~\ref{enum:DiscPropertyS}. The linear map $T_A$ is bijective by bijectivity of matrix similarity. Similarly, since $A^{-1}\mbf L(\mathbf v)A = \mbf L(A^{-1}\mathbf v)$ for all $\mbf v \in \F^n$, then for every $0 \neq \mathbf v \in \F^n$ there is $0 \neq \mathbf u \in \F^n$ such that $T_A(\mathbf L(\mathbf v)) = \mathbf L(\mathbf u)$. Hence, $T_A$ satisfies the conditions of the part~\ref{lem:TSendLtoLHasRightForm:partm1} of the lemma. Therefore, $T_A$ has the form~\eqref{lem:TSendLtoLHasRightForm:eqq1}. This implies that $T$ also has the required form because by~\eqref{lem:TSendLtoLHasRightForm:eq103} we have $T(X) =  AT_A(X)A^{-1}$ for all $X \in \M_{n}(\F)$.

\paragraph{\ref{lem:TSendLtoLHasRightForm:part1}}Since $T$ sends the spaces of the form $\L(\mathbf v)$ to the spaces of the same form, then Lemma~\ref{lem:TPresDiscSendsLtoLorLtSimultan}, Corollary~\ref{cor:DimCapSame} and Corollary~\ref{cor:DimCapTrans} imply that there are $0 \neq \mathbf u^{(1)}, \ldots, \mathbf u^{(n)} \in \F$ such that
\begin{equation}\label{lem:TSendLtoLHasRightForm:eq1}
T\left(\L^t\left(\mbf e^{(i)}\right)\right) = \L^t\left(\mbf u^{(i)}\right)\;\;\mbox{for all}\;\;1 \le i \le n.
\end{equation}

Since $T$ is bijective, then
\begin{equation}\label{lem:TSendLtoLHasRightForm:eq2}
T\left(\L\left(\mbf e^{(i)}\right)\cap \L^t\left(\mbf e^{(j)}\right)\right) = T\left(\L\left(\mbf e^{(i)}\right)\right)\cap T\left( \L^t\left(\mbf e^{(j)}\right)\right)\;\;\mbox{for all}\;\; 1 \le i, j \le n.
\end{equation}
Hence,
\begin{multline}\label{lem:TSendLtoLHasRightForm:eq3}
\L\left(\mbf e^{(i)}\right)\cap \L^t\left(\mbf u^{(j)}\right) \overset{\eqref{lem:TSendLtoLHasRightForm:eq1AAA1}}{=\joinrel=} T\left(\L\left(\mbf e^{(i)}\right)\right)\cap \L^t\left(\mbf u^{(j)}\right)\\
\overset{\eqref{lem:TSendLtoLHasRightForm:eq1}}{=\joinrel=} T\left(\L\left(\mbf e^{(i)}\right)\right)\cap T\left(\L^t\left(\mbf e^{(i)}\right)\right) \overset{\eqref{lem:TSendLtoLHasRightForm:eq2}}{=\joinrel=} T\left(\L\left(\mbf e^{(i)}\right)\cap \L^t\left(\mbf e^{(j)}\right)\right)
\end{multline}
for all $1 \le i, j \le n.$

Let $1 \le i, j \le n$. 
Since $E_{i\, j} \in \L\left(\mbf e^{(i)}\right)\cap \L^t\left(\mbf e^{(j)}\right)$, then $T(E_{i\, j}) \in \L\left(\mbf e^{(i)}\right)\cap \L^t\left(\mbf u^{(j)}\right)$ by~\eqref{lem:TSendLtoLHasRightForm:eq3}. Therefore, Lemma~\ref{lem:CapTrans} infers that there are $c'_{i\,j}$ and $d'_{i\,j}$ such that
\begin{equation}\label{lem:TSendLtoLHasRightForm:eq4}
T(E_{i\, j}) = c'_{i\,j}J(\mbf u^{(j)}, \mbf e^{(i)}) + d'_{i\,j}I_{n}.
\end{equation}

On the one hand, the assumption that $T$ is bijective and preserves $\D$, Corollary~\ref{cor:DRad} and Lemma~\ref{lem:RadicalInvariant}\ref{lem:RadicalInvariant:part2} imply that there is $c \neq 0$ such that
\begin{equation}\label{lem:TSendLtoLHasRightForm:eq5}
T(I_n) = cI_n.
\end{equation}

On the other hand, the linearity of $T$ implies that 
\begin{equation}\label{lem:TSendLtoLHasRightForm:eq6}
T(I_n) = T(E_{1\,1} + \ldots + E_{n\,n}) = \sum_{i=1}^{n}T(E_{i\,i}) = \sum_{i=1}^{n} c'_{i\,i}J(\mbf u^{(i)}, \mbf e^{(i)}) + d'_{i\,i}I_n
\end{equation}
Note that by comparing~\eqref{lem:TSendLtoLHasRightForm:eq4} and~\eqref{lem:TSendLtoLHasRightForm:eq5} we obtain that 
\begin{equation}\label{lem:TSendLtoLHasRightForm:eq8}
c'_{i\,j} \neq 0\;\;\mbox{for all}\;\;1 \le i, j \le n.
\end{equation}
Indeed, since $c \neq 0$, then $cE_{i\,j} - d'_{i\,j}I_{n} \neq 0$ and consequently
$T(cE_{i\,j} - d'_{i\,j}I_{n}) \neq 0.$ This implies that
\begin{multline*}
0 \neq T(cE_{i\,j} - d'_{i\,j}I_{n})\\ \overset{\eqref{lem:TSendLtoLHasRightForm:eq4}\;\;\mbox{and}\;\;\eqref{lem:TSendLtoLHasRightForm:eq5}}{=\joinrel=\joinrel=\joinrel=\joinrel=\joinrel=\joinrel=\joinrel=\joinrel=\joinrel=\joinrel=} 
c\left(c'_{i\,j}J(\mbf u^{(j)}, \mbf e^{(i)}) + d'_{i\,j}I_{n}\right) - d'_{i\,j}\left(cI_{i\,j}\right)\\
 = c c'_{i\,j}J(\mbf u^{(j)}, \mbf e^{(i)}).
\end{multline*}
Hence, $c'_{i\,j} \neq 0$ for all $1 \le i, j \le n$.

By expressing $\mbf u^{(j)}$ through the basis $\mbf e^{(1)}, \ldots, \mbf e^{(n)}$ we obtain that
\begin{multline}\label{lem:TSendLtoLHasRightForm:eq7}
J(\mbf u^{(j)}, \mbf e^{(i)}) = \sum_{l = 1}^{n} J(\mbf u^{(j)}, \mbf e^{(i)}) = \sum_{k = 1}^{n} \mbf u^{(j)}_k J(\mbf e^{(k)}, \mbf e^{(i)})\\
\overset{\mbox{\scriptsize the definition of $J(\mbf u, \mbf v)$}}{=\joinrel=\joinrel=\joinrel=\joinrel=\joinrel=\joinrel=\joinrel=\joinrel=\joinrel=\joinrel=\joinrel=\joinrel=\joinrel=} \sum_{k = 1}^{n} \mbf u^{(j)}_k E_{i\,k}\quad\mbox{for all}\;\;1 \le j \le n.
\end{multline}
Using~\eqref{lem:TSendLtoLHasRightForm:eq7}, we rewrite the equality~\eqref{lem:TSendLtoLHasRightForm:eq6} as follows
\begin{multline}\label{lem:TSendLtoLHasRightForm:eq9}
T(I_n) = \sum_{i=1}^{n} \left(c'_{i\,i} \left(\sum_{k = 1}^{n} \mbf u^{(i)}_k E_{i\,k}\right) + d'_{i\,i}I_n\right)\\
= \left(\sum_{l=1}^{n} d'_{l\,l}\right)I_n + \sum_{1 \le i, k \le n}  c'_{i\,i}\mathbf u^{(i)}_k E_{i\,k}
\end{multline}
By entrywise comparison of the matrices on the left-hand sides of~\eqref{lem:TSendLtoLHasRightForm:eq6} and~\eqref{lem:TSendLtoLHasRightForm:eq9} we infer that
\[
c'_{i\,i}\mathbf u^{(i)}_k = \delta_{i\,k} \left(c - \sum_{l=1}^{n} d'_{l\,l}\right)\;\;\mbox{for all}\;\;1\le i, k \le n
\]
and obtain in consequence that
\begin{equation*}
\mathbf u^{(i)}_k = \delta_{i\,k} \frac{c - \left(\sum_{l=1}^{n} d'_{l\,l}\right)}{c'_{i\,i}};\;\mbox{for all}\;\;1\le i, k \le n
\end{equation*}
by~\eqref{lem:TSendLtoLHasRightForm:eq8}. Hence, 
\begin{equation}\label{lem:TSendLtoLHasRightForm:eq10}
\mathbf u^{(i)} = \frac{c - \left(\sum_{l=1}^{n} d'_{l\,l}\right)}{c'_{i\,i}}\mathbf e^{(i)}\;\;\mbox{for all}\;\;1 \le i \le n.
\end{equation}
By substituting~\eqref{lem:TSendLtoLHasRightForm:eq10} into~\eqref{lem:TSendLtoLHasRightForm:eq4} we conclude that
\begin{multline}\label{lem:TSendLtoLHasRightForm:eq11}
T(E_{i\, j}) = c'_{i\,j}\frac{c - \left(\sum_{l=1}^{n} d'_{l\,l}\right)}{c'_{i\,i}}J(\mbf e^{(j)}, \mbf e^{(i)}) + d'_{i\,j}I_{n}\\
\overset{\mbox{\scriptsize the definition of $J(\mbf u, \mbf v)$}}{=\joinrel=\joinrel=\joinrel=\joinrel=\joinrel=\joinrel=\joinrel=\joinrel=\joinrel=\joinrel=\joinrel=\joinrel=\joinrel=} c'_{i\,j}\frac{c - \left(\sum_{l=1}^{n} d'_{l\,l}\right)}{c'_{i\,i}}E_{i\,j} + d'_{i\,j}I_{n}\;\;\mbox{for all}\;\;1 \le i \le n.
\end{multline}
Let $C = (c_{i\,j}), D = (d_{i\,j})\in \M_{n}(\F)$ be defined by
\begin{equation}\label{lem:TSendLtoLHasRightForm:eq12}
c_{i\,j} = c'_{i\,j}\frac{c - \left(\sum_{l=1}^{n} d'_{l\,l}\right)}{c'_{i\,i}}\qquad\mbox{and}\qquad d_{i\,j} = d'_{i\,j}\;\;\mbox{for all}\;\;1 \le i, j \le n.
\end{equation}
Using~\eqref{lem:TSendLtoLHasRightForm:eq12}, the equality~\eqref{lem:TSendLtoLHasRightForm:eq11} is rewritten as follows
\begin{equation*}
T(E_{i\, j}) = c_{i\,j}E_{i\,j} + d_{i\,j}I_n\;\;\mbox{for all}\;\;1 \le i \le n.
\end{equation*}
Let us check that $C$ satisfies the assumptions of the part~\ref{lem:TSendLtoLHasRightForm:part0} of the lemma. Indeed, $c_{i\,i}$ by the definition of $c_{i\,j}$ in~\eqref{lem:TSendLtoLHasRightForm:eq12} and~\eqref{lem:TSendLtoLHasRightForm:eq10} because $\mathbf u^{(i)} \neq 0$ for all $1 \le i \le n$. In turn, for all $1 \le i, j \le n$ we have
\begin{equation*}
c_{i\,i} \overset{\eqref{lem:TSendLtoLHasRightForm:eq12}}{=\joinrel=\joinrel=}  c'_{i\,i}\frac{c - \left(\sum_{i=1}^{n} d'_{i\,i}\right)}{c'_{i\,i}} = c - \left(\sum_{i=1}^{n} d'_{i\,i}\right)\\
= c'_{j\,j}\frac{c - \left(\sum_{i=1}^{n} d'_{i\,i}\right)}{c'_{j\,j}} \overset{\eqref{lem:TSendLtoLHasRightForm:eq12}}{=\joinrel=\joinrel=} c_{j\,j}.
\end{equation*}
Hence, the required assumptions of the part~\ref{lem:TSendLtoLHasRightForm:part0} of the lemma are satisfied. Using this part of the lemma we conclude that $T$ has the required form.

\paragraph{\ref{lem:TSendLtoLHasRightForm:part2}} Since $T$ sends the spaces of the form $\L(\mathbf v)$ to the spaces of the same form, then Lemma~\ref{lem:TPresDiscSendsLtoLorLtSimultan} implies that there are $0 \neq \mathbf u^{(1)}, \ldots, \mathbf u^{(n)} \in \F$ such that
\begin{equation}\label{lem:TSendLtoLHasRightForm:eq1301}
T\left(\L\left(\mbf e^{(i)}\right)\right) = \L\left(\mbf u^{(i)}\right)\;\;\mbox{for all}\;\;1 \le i \le n.
\end{equation}
The linear map $T$ is bijective and $\mbf e^{(1)}, \ldots, \mbf e^{(n)}$ are linearly independent. Hence, Lemma~\ref{lem:DiscPresPresIncidence} infers that $\mbf u^{(1)}, \ldots, \mbf u^{(n)}$ are linearly independent as well. Therefore, $\mbf u^{(1)}, \ldots, \mbf u^{(n)}$ form a basis of $\F^n$ and there exists a matrix $A \in M_{n}(\F)$  such that 
\begin{equation}\label{lem:TSendLtoLHasRightForm:eq1302}
A\mathbf u^{(i)} = \mathbf e^{(i)}\;\;\mbox{for all}\;\;1 \le i \le n.
\end{equation}

Let $T_A \colon \M_{n}(\F) \to \M_{n}(\F)$ be defined by
\begin{equation}\label{lem:TSendLtoLHasRightForm:eq1303}
T_A(X) = AT(X)A^{-1}\;\;\mbox{for all}\;\;X \in \M_n(\F).
\end{equation}
Then $T'$ is bijective and
\begin{multline*}
T_A\left(\L\left(\mbf e^{(i)}\right)\right) \overset{\eqref{lem:TSendLtoLHasRightForm:eq1303}}{=\joinrel=\joinrel=} AT\left(\L\left(\mbf e^{(i)}\right)\right)A^{-1} \overset{\eqref{lem:TSendLtoLHasRightForm:eq1301}}{=\joinrel=\joinrel=} AL\left(\mbf u^{(i)}\right)A^{-1}\\
\overset{\mbox{\scriptsize Lemma~\ref{lem:MatMultOnLn}}}{=\joinrel=\joinrel=\joinrel=\joinrel=\joinrel=\joinrel=\joinrel=} L\left(A\mbf u^{(i)}\right) \overset{\eqref{lem:TSendLtoLHasRightForm:eq1302}}{=\joinrel=\joinrel=}  L\left(\mbf e^{(i)}\right)\;\;\mbox{for all}\;\;1 \le i \le n.
\end{multline*}
In addition, $T_A$ preserves $\D$ by~\ref{enum:DiscPropertyS}. The linear map $T_M$ is bijective by bijectivity of matrix similarity. Similarly, since $A\mbf L(\mathbf v)A^{-1} = \mbf L(A\mathbf v)$ for all $\mbf v \in \F^n$, then for every $0 \neq \mathbf v \in \F^n$ there is $0 \neq \mathbf u \in \F^n$ such that $T_A(\mathbf L(\mathbf v)) = \mathbf L(\mathbf u)$. Thus, $T_A$ satisfies all the conditions of the part~\ref{lem:TSendLtoLHasRightForm:part1} of the lemma and consequently has the form~\eqref{lem:TSendLtoLHasRightForm:eqq1}. Therefore, $T$  also has the required form because $T = A^{-1}T_A(A)A$ for all $X \in \M_n(\F)$ by~\eqref{lem:TSendLtoLHasRightForm:eq1303}.
\end{proof}

\begin{theorem}\label{thm:DPresHasForm}Assume that $n \ge 3$ and $|\F| > n(n-1)$. Let $T$ be a linear map on $\M_{n}(\F)$ such that $\D(T(X)) = \D(X)$ for all $X \in \M_{n}(\F)$. Then there are an invertible matrix $M \in \M_n(\F)$, a linear map $\alpha \colon \M_n(\F) \to \F$ and $c \in \F$ such that $c^{n(n-1)} = 1$ and
\begin{equation}\label{thm:DPresHasForm:eqq1}
T(X) = cM^{-1}XM + \alpha(X)I_n\;\;\mbox{for all}\;\; X \in \M_n(\F)
\end{equation}
or
\begin{equation}\label{thm:DPresHasForm:eqq2}
T(X) = cM^{-1}X^tM + \alpha(X)I_n\;\;\mbox{for all}\;\; X \in \M_n(\F).
\end{equation}
\end{theorem}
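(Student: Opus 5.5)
The plan is to reduce to the bijective case and then combine Lemma~\ref{lem:TPresDiscSendsLtoLorLtSimultan} with Lemma~\ref{lem:TSendLtoLHasRightForm}, treating the transpose alternative by composing with $X\mapsto X^t$.

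\emph{Step 1: reduction to bijective $T$.} $T$ need not be injective a priori. By Lemma~\ref{lem:KernelContainedInRad} and Corollary~\ref{cor:DRad} we have $\Ker T\subseteq\rad(\D)=\langle I_n\rangle$. Suppose $T(I_n)=0$. By Lemma~\ref{lem:RadicalInvariant}, $T$ induces a map $T_{\rad}$ on $\M_n(\F)/\langle I_n\rangle$ preserving $\D_{\rad}$. The map $T_{\rad}$ is injective: if $T(X)\in\langle I_n\rangle$, then $\D(X+\lambda Y)=\D(T(X)+\lambda T(Y))=\D(\lambda T(Y))=\D(\lambda Y)$ for all $Y,\lambda$, and a degree argument (Lemma~\ref{DegFLessZero}) shows $X\in\rad(\D)$. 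Since $T_{\rad}$ is injective on a finite-dimensional space, it is bijective.

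Hence we may modify $T$ by a rank-one perturbation $T'(X)=T(X)+\beta(X)I_n$, where $\beta$ is a linear functional with $\beta(I_n)\neq0$ chosen so that $T'$ is bijective. Concretely, the image of $T$ is a hyperplane complementary to $\langle I_n\rangle$ modulo the radical. The map $T'$ still preserves $\D$ by~\ref{enum:DiscPropertyC}, and a formula of the required form for $T'$ yields one for $T$ after adjusting $\alpha$. This bookkeeping is where I expect the only real subtlety. If instead $T(I_n)\neq0$, then $\Ker T=0$ and $T$ is already bijective.

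\emph{Step 2: two cases.} By Lemma~\ref{lem:TPresDiscSendsLtoLorLtSimultan}, either $T$ maps every $\L(\mathbf u)$ onto some $\L(\mathbf v)$, or it maps every $\L(\mathbf u)$ onto some $\L^t(\mathbf v)$.

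In the first case, Lemma~\ref{lem:TSendLtoLHasRightForm} gives~\eqref{thm:DPresHasForm:eqq1} directly.

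In the second case, set $S(X)=T(X)^t$. Then $S$ is bijective and preserves $\D$ by~\ref{enum:DiscPropertyT}, and $S(\L(\mathbf u))=(\L^t(\mathbf v))^t=\L(\mathbf v)$. Applying Lemma~\ref{lem:TSendLtoLHasRightForm} to $S$ gives $T(X)^t=cM^{-1}XM+\alpha(X)I_n$, so $T(X)=cM^tX^t(M^t)^{-1}+\alpha(X)I_n$. Renaming $M'=(M^t)^{-1}$ yields~\eqref{thm:DPresHasForm:eqq2}.

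\emph{Main obstacle.} The substantive difficulty is already packed into the earlier lemmas. The only remaining care is Step 1, where one must verify that the non-injective case is handled consistently by the scalar-functional correction.
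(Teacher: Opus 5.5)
Your proof is correct and follows the paper's argument: when $T(I_n)=O_n$ you make $T$ bijective by adding a scalar-valued functional, then you split via Lemma~\ref{lem:TPresDiscSendsLtoLorLtSimultan} and apply Lemma~\ref{lem:TSendLtoLHasRightForm} either to $T$ or to $X\mapsto T(X)^t$. The one difference is that your quotient-map detour in Step 1 is unnecessary: since $T'(X)=T(X)+\beta(X)I_n$ itself preserves $\D$, Lemma~\ref{lem:KernelContainedInRad} and Corollary~\ref{cor:DRad} give $\Ker T'\subseteq\langle I_n\rangle$, and $T'(I_n)=\beta(I_n)I_n\neq 0$ then forces $\Ker T'=0$ directly (the paper takes $\beta(X)=x_{1\,1}$).
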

\begin{proof}We will prove sequently that the statement of the theorem holds with the following assumptions:
\begin{enumerate}[label=(\alph*), ref=(\alph*)]

\item\label{lem:TSendLtoLHasRightForm:part3} We assume that $T$ is bijective.
\item\label{lem:TSendLtoLHasRightForm:part4} We prove the statement of the lemma without any additional assumptions on $T$.
\end{enumerate}
\paragraph{\ref{lem:TSendLtoLHasRightForm:part3}} Since $T$ is bijective and preserves $\D$, then by Lemma~\ref{lem:TPresDiscSendsLtoLorLtSimultan} we obtain that either for every $0 \neq \mathbf v \in \F^n$ there is $\mathbf u \in \F^n$ such that $T(\mathbf L(\mathbf v)) = \mathbf L(\mathbf u)$, or for every $0 \neq \mathbf v \in \F^n$ there is $\mathbf u \in \F^n$ such that $T(\mathbf L(\mathbf v)) = \mathbf L^t(\mathbf u).$ In the first case $T$ satisfies the assumptions of Lemma~\ref{lem:TSendLtoLHasRightForm} and consequently has the form~\eqref{thm:DPresHasForm:eqq1}. 

In the second case we consider $T' \colon \M_{n}(\F) \to \M_{n}(\F)$ defined by 
\begin{equation}\label{thm:DPresHasForm:eq1301}
T'(X) = T^t(X)\;\;\mbox{for all}\;\;X \in \M_n(\F).
\end{equation}
The property~\ref{enum:DiscPropertyT} infers that $T'$ preserves $\D$. Thus, $T'$ satisfies the assumptions of Lemma~\ref{lem:TSendLtoLHasRightForm} and consequently has the form~\eqref{thm:DPresHasForm:eqq1}.  Therefore, by~\eqref{thm:DPresHasForm:eq1301} we conclude that $T$ has the form~\eqref{thm:DPresHasForm:eqq2}.

\paragraph{\ref{lem:TSendLtoLHasRightForm:part4}} By Corollary~\ref{cor:DRad} and Lemma~\ref{lem:KernelContainedInRad}, $\ker(T) \subseteq \langle I_n\rangle$. Therefore, if $T(I_n) \neq 0,$ then $T$ is bijective and consequently the statement of the lemma holds by applying  the part~\ref{lem:TSendLtoLHasRightForm:part3} of the lemma to $T$. Otherwise, 
\begin{equation}\label{lem:TSendLtoLHasRightForm:eq1402}
T(I_n) = O_n.
\end{equation}
In this case, let $T' \colon \M_n(\F) \to \M_n(\F)$ defined by
\begin{equation}\label{lem:TSendLtoLHasRightForm:eq1401}
T'(X) = T(X) + \alpha(X)I_n\;\;\mbox{for all}\;\;X \in \M_{n}(\F),
\end{equation}
where $\alpha(X) = x_{1\,1}$ for all $X = (x_{i\,j}) \in \M_{n}(\F)$. By the property~\ref{enum:DiscPropertyC} of the matrix discriminant, $T'$ preserves $\D$. In addition, $T'$ is bijective because
\[
T'(I_n) \overset{\eqref{lem:TSendLtoLHasRightForm:eq1401}}{=\joinrel=\joinrel=} T(I_n) + \left(I_n\right)_{1\,1}I_n \overset{\eqref{lem:TSendLtoLHasRightForm:eq1402}}{=\joinrel=\joinrel=} O_n + I_n = I_n \neq O_n.
\]
By applying the part~\ref{lem:TSendLtoLHasRightForm:part3} of the lemma to $T'$ we obtain that $T'$ has the \linebreak form~\eqref{thm:DPresHasForm:eqq1} or~\eqref{thm:DPresHasForm:eqq2}. Therefore, by expressing $T$ through $T'$ and $\alpha$ using~\eqref{lem:TSendLtoLHasRightForm:eq1401} we conclude that $T$ has the required form or as well.
 \end{proof}

\begin{remark}\label{rem:CaseN2} For $n = 2$ there are two cases possible: $\Char(\F) \neq 2$ and $\Char(\F) = 2$. In~\cite{Yurkov2025} we prove that Theorem~\ref{thm:DPresHasForm} holds in the case when $\Char(\F) \neq 2$ . If $\Char(\F) = 2$, then formula~\eqref{lem:D2PolyN2Char2:eqq1} implies that $\D(X) = \B^2(X) = \tr^2(X)$ for all $X \in \M_{2}$. This implies that the class of linear maps preserving $\D$ coincides with the class of linear maps preserving the matrix trace, which is wider than the class of linear maps having the form~\eqref{thm:DPresHasForm:eqq1} or~\eqref{thm:DPresHasForm:eqq2} (see Example~\ref{rem:CaseN2Ch2Example} below).
\end{remark}

\begin{example}\label{rem:CaseN2Ch2Example}Assume that $\Char(\F) = 2$. Let $T \colon \M_2(\F) \to \M_2(\F)$ be defined by
\[
T(X) = (x_{1\,1} + x_{2\,2})E_{1\,1} + x_{1\,1}E_{1\,2}\;\;\mbox{for all}\;\;X = (x_{i\,j}) \in \M_2(\F).
\]  
Then $T$ clearly preserves $\tr(X)$. In addition, $T(I_2) = E_{1\,2}$, which cannot occur if $T$ has the form~\eqref{thm:DPresHasForm:eqq1} or~\eqref{thm:DPresHasForm:eqq2} because these maps send $I_2$ to $cI_2$ for some $c \in \F$.
\end{example}

The author is grateful to his supervisor Prof. Alexander Guterman for setting the problem, attention to the work and helpful discussions. The research was supported by the scholarship of the Center for Absorption in Science, the Ministry of Aliyah and Integration, the State of Israel.

\bibliographystyle{plain}

\end{document}